\documentclass[reqno,oneside,11pt]{amsart}

\usepackage[english]{babel}
\usepackage[utf8]{inputenc}
\usepackage{amsfonts,amsmath,amsthm}
\usepackage{stmaryrd}
\usepackage{thmtools,thm-restate}
\usepackage[T1]{fontenc}
\usepackage{enumitem}
\usepackage{varioref}
\usepackage[all]{xy}
\usepackage{units}
\usepackage{hyperref}
\usepackage{graphicx}
\usepackage{amssymb}
\usepackage{mathabx}
\usepackage{times,mathptm, mathtools}
\usepackage{etoolbox}
\usepackage{tikz-cd}
\usepackage{mathrsfs}
\usepackage{mathdots}
\usepackage[left=2.4cm, right=2.4cm]{geometry}

\docsvlist{A,B,C,D,E,F,G,H,I,J,K,L,M,N,O,P,Q,R,S,T,U,V,W,X,Y,Z}

\docsvlist{A,B,C,D,E,F,G,H,I,J,K,L,M,N,O,P,Q,R,S,T,U,V,W,X,Y,Z}

\docsvlist{A,B,C,D,E,F,G,H,I,J,K,L,M,N,O,P,Q,R,S,T,U,V,W,X,Y,Z}

\docsvlist{A,B,C,D,E,F,G,H,I,J,K,L,M,N,O,P,Q,R,S,T,U,V,W,X,Y,Z}

\docsvlist{A,B,C,D,E,F,G,H,I,J,K,L,M,N,O,P,Q,R,S,T,U,V,W,X,Y,Z}

\renewcommand{\hat}{\widehat}
\newcommand{\R}{\mathbf{R}}
\newcommand{\C}{\mathbf{C}}
\newcommand{\Q}{\mathbf{Q}}
\newcommand{\Z}{\mathbf{Z}}

\newcommand{\A}{\mathbf{A}}

\renewcommand{\P}{\mathbf{P}}

\newcommand{\G}{\mathbb G}

\renewcommand{\div}{\operatorname{div}}
\DeclareMathOperator{\Pic}{Pic}
\DeclareMathOperator{\Div}{Div}
\DeclareMathOperator{\QAlb}{QAlb}
\DeclareMathOperator{\Alb}{Alb}

\DeclareMathOperator{\ord}{ord}
\newcommand{\OO}{\mathcal O}
\DeclareMathOperator{\id}{id}

\DeclareMathOperator{\pr}{pr}
\DeclareMathOperator{\Supp}{Supp}

\DeclareMathOperator{\Nef}{Nef}

\DeclareMathOperator{\spec}{Spec}

\DeclareMathOperator{\GL}{GL}

\newcommand{\DivInf}{\Div_\infty}
\DeclareMathOperator{\car}{char}

\DeclareMathOperator{\BD}{\partial_X X_0}
\DeclareMathOperator{\trdeg}{tr.deg}
\DeclareMathOperator{\reldeg}{reldeg}
\DeclareMathOperator{\codim}{codim}
\DeclareMathOperator{\Weil}{Weil}
\DeclareMathOperator{\Cartier}{Cartier}
\DeclareMathOperator{\Cinf}{\Cartier_\infty (X_0)}
\DeclareMathOperator{\Winf}{\Weil_\infty (X_0)}

\DeclareMathOperator{\cNk}{c-N^k}
\DeclareMathOperator{\wNk}{w-N^k}
\DeclareMathOperator{\cNone}{c-N^1}
\DeclareMathOperator{\wNone}{w-N^1}
\DeclareMathOperator{\cNN}{c-N}
\DeclareMathOperator{\wNN}{w-N}
\DeclareMathOperator{\cBPF}{c-BPF}
\DeclareMathOperator{\wBPF}{w-BPF}

\DeclareMathOperator{\Vinf}{\cV_\infty}
\DeclareMathOperator{\BPF}{BPF}
\DeclareMathOperator{\Vect}{Vect}

\renewcommand{\aa}{\mathfrak a}
\newcommand{\bb}{\mathfrak b}

\newtheorem{thm}{Theorem}[section]
\newtheorem{thm*}{Theorem}
\newtheorem{bigthm}{Theorem}
\newtheorem{prop}[thm]{Proposition}
\newtheorem{cor}[thm]{Corollary}
\newtheorem{lemme}[thm]{Lemma}

\newtheorem{claim}[thm]{Claim}
\theoremstyle{definition}
\newtheorem{dfn}[thm]{Definition}
\newtheorem{ex}[thm]{Example}
\newtheorem{rmq}[thm]{Remark}

\AtBeginEnvironment{dfn}{%
  \setlist[enumerate]{label={(\roman*)}}
}

\AtBeginEnvironment{thm}{%
  \setlist[enumerate,1]{label={(\arabic*)}}
}

\AtBeginEnvironment{prop}{%
  \setlist[enumerate,1]{label={(\arabic*)}}
}

\AtBeginEnvironment{lemme}{%
  \setlist[enumerate,1]{label={(\arabic*)}}
}

\begin{document}
\title{Dynamical degrees of twisted rational maps}
\author{Marc Abboud}
\address{Marc Abboud, Universit\'e de Neuch\^atel,
Rue Emile-Argand 11,
CH-2000 Neuch\^atel,
Switzerland}
\email{marc.abboud@normalesup.org}

\author{Junyi Xie}
\address{Junyi Xie, Beijing International Center for Mathematical Research, Peking University, Beijing 100871, China}
\email{xiejunyi@bicmr.pku.edu.cn}
\subjclass[2020]{37F80, 14E05, 32H50}
\keywords{Dynamical degrees, twisted rational maps, affine varieties}
\begin{abstract}
	Twisted rational maps arise naturally in relative algebraic dynamics: if a rational self-map preserves a fibration, then the induced map on the generic fiber is usually not an ordinary rational self-map over the function field, but a twisted one.
		This suggests that twisted rational maps form a natural framework for studying relative dynamics.
		In this framework we extend the theory of dynamical degrees, the numerical invariants measuring the asymptotic complexity of a dynamical system: the defining limits exist, are independent of the choice of polarization, and are birational invariants.
		We also identify the relative dynamical degrees of a semi-conjugacy with the dynamical degrees of the induced twisted rational map on the generic fiber, and prove the corresponding mixed degree formula.
		Finally, using the spectral interpretation of dynamical degrees and valuative methods at infinity, we prove an algebraicity result for the first dynamical degree of twisted endomorphisms of affine varieties.
	
\end{abstract}
\maketitle

\tableofcontents
\section{Introduction}\label{sec:intro}
Let $K$ be any field and let $f:X \dashrightarrow X$ be a rational transformation of a projective variety over $K$. The
dynamical degrees of $f$ are defined as
\begin{equation}
	\lambda_k (f) := \lim_{n \to \infty} \left( (f^n)^* H^k \cdot H^{d-k} \right)^{1/n}
\end{equation}
where $d = \dim X$, see for example \cite{dangDegreesIteratesRational2020, truongRelativeDynamicalDegrees2015}.
It is a fundamental dynamical invariant of rational transformations.
We denote the quantity $(f^n)^* H^k \cdot H^{d-k}$ by $\deg_{k, H} (f^n)$.
In this paper we propose twisted rational maps as a natural framework for extending the theory of dynamical degrees to relative algebraic dynamics.
We show that the classical construction for rational maps extends to twisted rational maps.
Let $X,Y$
be varieties over a field $K$, a \emph{twisted morphism} from $X$ to $Y$ is a scheme morphism of finite type $f_\tau : X
	\rightarrow Y$ such that there exists a finite map $\tau : \spec K \rightarrow \spec K$ making the following diagram
\begin{equation}
	\begin{tikzcd}
		X \ar[r, "f_\tau"] \ar[d] & Y \ar[d] \\ \spec K \ar[r, "\tau"] & \spec K
	\end{tikzcd}
\end{equation}
commutative.
We define in a similar way \emph{twisted rational maps}.
The motivating example of such maps comes from dynamics.
Suppose one has a rational transformation $f :X \dashrightarrow X$ preserving a fibration $\pi :X \dashrightarrow Y$, that is there exists $g : Y \dashrightarrow Y$ such that $\pi \circ f = g \circ \pi$.
If $\eta \in Y$ is the generic point of $Y$, then the generic fiber $X_\eta := \pi^{-1} (\eta)$ is a $K(Y)$-variety and the induced map $f_\eta: X_\eta \dashrightarrow X_\eta$ is a twisted rational map.

\subsection{Dynamical degrees}\label{subsec:dyn-degres-intro}
We show that the construction of dynamical degrees for dominant rational maps carries through for twisted rational maps.
The main new technical point is that intersection numbers depend on the chosen structural morphism to $\spec K$, while Picard groups, cycles, and numerical classes are canonically identified after twisting.
\begin{bigthm}
	\label{thm:dyn-degrees}
	Let $X$ be a projective variety over a field $K$ and let $f_\tau : X \dashrightarrow X$ be a dominant twisted rational
	map, then, for every $i = 1, \dots, \dim X$ and for any big and nef divisor $H$ over $X$, the limit
	\begin{equation}
		\lambda_i (f_\tau) = \lim_n \left( (f_\tau^n)^* H^i \cdot H^{d - i} \right)^{1/n}
	\end{equation}
	exists.
	It does not depend on the choice of the big and nef divisor $H$ and it is a birational invariant.
\end{bigthm}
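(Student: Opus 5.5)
The plan is to reduce to the standard argument of Dang and Truong for rational maps, after isolating what changes under twisting. A twisted rational map $f_\tau$ is, as a scheme map, just a dominant rational map $X \dashrightarrow X$. The only place $K$ enters is intersection numbers, which are computed via degrees of zero-cycles over $\spec K$. Concretely, write $X^\tau$ for $X$ with structure morphism $X \to \spec K \xrightarrow{\tau} \spec K$. Then $f_\tau$ becomes an honest $K$-rational map $X \dashrightarrow X^\tau$. Moreover, for a zero-cycle $z$ we have $\deg_{X^\tau}(z) = [K:\tau^*K]\deg_X(z)$, where $[K:\tau^*K]$ is finite since $\tau$ is finite. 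Hence intersection numbers on $X^\tau$ and on $X$ differ by the fixed factor $e(\tau)=[K:\tau^* K]$, multiplicatively in compositions. Picard groups, cycle groups and numerical classes are literally the same for $X$ and $X^\tau$. In particular $N^k(X)$, the nef, pseudo-effective and big cones, and the pull-back $f_\tau^*$ on classes are all unchanged.

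\textbf{Step 1: submultiplicativity.} The key inequality is $\deg_{k,H}(f_\tau^{n+m}) \le C\,\deg_{k,H}(f_\tau^n)\deg_{k,H}(f_\tau^m)$, with $C$ independent of $n,m$. For this I would use the Dang / Truong approach. Take the graph $\Gamma$ of $(f^n, f^m)$-compositions, and use the comparison $(g\circ h)^*\alpha \le h^* g^*\alpha$ in the pseudo-effective cone for nef $\alpha$ (in the sense of positive classes). Also use the basic bound that a nef class $\beta$ of codimension $k$ satisfies $\beta \le C (\beta\cdot H^{d-k}) H^k$ in the sense of Dang's positive-class ordering, which is valid over any field.

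Each of these is a statement about cycles on $X$, $\Gamma$, $X^\tau$ viewed as $K$-varieties. Applying them to $f_\tau: X\dashrightarrow X^\tau$ and $f_\tau^m : X \dashrightarrow X^{\tau^m}$ introduces only factors $e(\tau)^{\pm n}$. These factors combine consistently because $e(\tau^{n+m})=e(\tau)^n e(\tau)^m$. So the inequality holds with a uniform $C$. Fekete's lemma applied to $\log(C\deg_{k,H}(f_\tau^n))$ then gives existence of the limit.

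\textbf{Step 2: independence of $H$ and birational invariance.} If $H'$ is another big and nef divisor, bigness gives $H' \le aH$ and $H \le bH'$ modulo effective classes. Plugging these into $(f_\tau^n)^*H^k\cdot H^{d-k}$ yields a two-sided bound $c^{-1}\deg_{k,H}\le \deg_{k,H'}\le c\deg_{k,H}$ with $c$ independent of $n$. The constants come from the same positivity estimates, pulled back along nef classes. Taking $n$-th roots shows the limit does not depend on $H$.

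For a birational map $\phi: Y\dashrightarrow X$ with $g_\tau = \phi^{-1}\circ f_\tau\circ\phi$, note that $\phi$ is an untwisted $K$-birational map commuting with the twisting. I would apply the submultiplicativity estimate from Step 1 to the triple composition to get $\deg_k(g_\tau^n)\le C\deg_k(\phi)\deg_k(f_\tau^n)\deg_k(\phi^{-1})$, and symmetrically. This gives equality of the $\lambda_k$.

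\textbf{Main obstacle.} I expect the hard part to be Step 1 in arbitrary characteristic and for singular $X$. There one must check that Dang's positivity comparison and the ``nef class bounded by multiple of $H^k$'' inequality hold without resolution of singularities. The twist is also subtle when $\tau$ is inseparable (for example Frobenius), since then $e(\tau)>1$ and degrees scale. My first attempt is to work entirely with the $X^\tau$ bookkeeping and Dang's intersection-theoretic statements over $\spec K$, which use only Fulton intersection theory and alterations. I would verify that the scalar $e(\tau)$ cancels exactly between $\deg_{X^\tau}$ and $\deg_X$ in each inequality.
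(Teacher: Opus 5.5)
Your proposal is essentially the paper's proof. The paper likewise reduces to Dang's Theorem~1 through the associated $K$-map on a twisted variety. It obtains submultiplicativity (Proposition~\ref{prop:inegalite-composition-degrees}) by applying Dang's composition inequality to $X^{(\omega\circ\tau)}\to Y^{(\omega)}\to Z$ with twisted polarizations, checks that the factors $\deg(\tau),\deg(\omega)$ cancel (including $(H_Y^{(\omega)})^d=\deg(\omega)H_Y^d$ in the constant), and then uses Fekete's lemma, Dang's comparison of polarizations, and his birational invariance.

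One slip in your setup needs fixing. With the convention $q\circ f_\tau=\tau\circ q$ ($q$ the structure morphism), it is the source that must be twisted, so the associated $K$-map is $X^{(\tau)}\dashrightarrow X$. The scheme map $f_\tau$ is not a rational map of $K$-varieties $X\dashrightarrow X^{(\tau)}$, and you cannot switch to $\tau^{-1}$ when $\tau$ is not an automorphism. Consequently $f_\tau^{m}\circ f_\tau^{n}$ should be realized as $X^{(\tau^{n+m})}\dashrightarrow X^{(\tau^{m})}\dashrightarrow X$.
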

The main example of twisted rational maps is the following.
Let $X,Y$ be projective varieties with two $K$-rational maps $f : X \dashrightarrow X$ and $g : Y \dashrightarrow Y$ which are semiconjugated by a fibration $q : X \rightarrow Y$.
Let $\eta$ be the generic point of $Y$, the generic fiber $X_\eta = X \times_Y \spec K(\eta)$ is a variety over $K(Y)$ the function field of $Y$.
Then, the restriction of $f$ to the generic fiber induces a twisted rational map $f_\eta : X_\eta \dashrightarrow X_\eta$ where twisting on the base is given by $g$.
In this setting, there is a \emph{relative dynamical degree} introduced in \cite{truongRelativeDynamicalDegrees2015} which is denoted by $\lambda_k (f_{|q})$.
It measures the asymptotic growth of the degrees of the iterates of $f$ in the directions tangent to the fibers of $q$: roughly speaking, if $l=\dim Y$, then $\lambda_k(f_{|q})$ is the exponential growth rate of intersection numbers of the form
\[
	(f^n)^*H_X^k \cdot H_X^{\dim X-l-k}\cdot q^*H_Y^l,
\]
where $H_X$ and $H_Y$ are big and nef divisors on $X$ and $Y$ respectively.
A precise definition is recalled in \S\ref{subsec:relative-dynamical-degrees}.

These relative dynamical degrees were introduced to prove a mixed degree formula for semi-conjugated rational maps.
We show that these relative dynamical degrees are exactly the dynamical degrees of the twisted rational map induced on the generic fiber of the fibration and we prove that the mixed degree formula also holds for dynamical degrees of semi-conjugated twisted rational maps.

\begin{bigthm}
	\label{bigthm:relative-dyn-degrees-formula}
	If $f : X \dashrightarrow X$ and $g : Y \dashrightarrow Y$ are dominant rational maps of $K$-varieties semi-conjugated by a dominant morphism $q : X \rightarrow Y$, then
	\begin{equation}
		\lambda_k (f_{|q}) = \lambda_k (f_\eta).
	\end{equation}
	More generally, let $f_\tau : X \dashrightarrow X$ and $g_\omega : Y \dashrightarrow Y$ be dominant twisted rational maps and let $q_\sigma : X \dashrightarrow Y$ be a dominant twisted rational map such that
	\[
		q_\sigma \circ f_\tau = g_\omega \circ q_\sigma.
	\]
	Then we have the mixed degrees formula
	\begin{equation}
		\lambda_k (f_\tau) = \max_{\substack{s+t = k\\ s\leq \dim X-\dim Y,\ t\leq \dim Y}} \left( \lambda_s (f_\eta) \lambda_t (g_\omega) \right),
	\end{equation}
	where $f_\eta$ is the twisted rational map induced by $f_\tau$ on the generic fiber of $q_\sigma$.
\end{bigthm}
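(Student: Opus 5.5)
We plan to prove the two assertions in order. The strategy is to reduce everything to intersection numbers on $X$ computed relative to a fixed polarization, and then to import Truong's mixed degree formula \cite{truongRelativeDynamicalDegrees2015}, after checking that it survives twisting.

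\emph{Step 1: the identity $\lambda_k(f_{|q})=\lambda_k(f_\eta)$.}
Write $l=\dim Y$ and choose very ample $H_X$ on $X$ and $H_Y$ on $Y$. For generic general members $D_1,\dots,D_l\in|H_Y|$, the intersection $q^*D_1\cap\dots\cap q^*D_l$ is a union of $\deg(H_Y^l)$ fibers $X_y$ over general points $y$. Hence
\[
(f^n)^*H_X^k\cdot H_X^{\dim X-l-k}\cdot q^*H_Y^l=(H_Y^l)\cdot\bigl((f^n)^*H_X^k\cdot H_X^{\dim X-l-k}\bigr)_{X_y}.
\]
We then use that intersection numbers on a general fiber agree with those on the generic fiber $X_\eta$, where $H_X$ restricts to an ample divisor $H_\eta$. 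We must also check that $(f^n)^*H_X$ restricted to $X_\eta$ coincides with $(f_\eta^n)^*H_\eta$. This holds because pullback of cycles commutes with flat base change to the generic point, once we resolve $f^n$ by its graph $\Gamma_n\subset X\times X$ and restrict $\Gamma_n$ over $\eta$. Here the twisting is harmless: Picard groups and cycle classes are identified canonically, and degrees are computed with the structure morphism of the source. Consequently the sequences defining the two degrees agree up to the constant $(H_Y^l)$, and the limits coincide. Theorem~\ref{thm:dyn-degrees} guarantees that the right-hand limit exists and is independent of $H_\eta$.

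\emph{Step 2: the mixed degree formula for twisted maps.}
We follow Dang's approach (\cite{dangDegreesIteratesRational2020}), or Truong's. The key input is an estimate of the form
\[
\deg_k(f^n_\tau)\asymp\max_{s+t=k}\ \deg_{s}(f^n_\tau\,|\,q_\sigma)\cdot\deg_t(g_\omega^n),
\]
up to constants independent of $n$ and polynomial factors. This comes from decomposing the class of $\pi_2^*H_X^k$ on the graph using $H_X\le C\,q^*H_Y+\dots$ in the cone of positive classes, together with the submultiplicativity of relative degrees.

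Concretely we proceed in three moves:
\begin{enumerate}
\item Reduce to $q_\sigma$ a morphism by blowing up, using birational invariance from Theorem~\ref{thm:dyn-degrees}.
\item Write $H_X^k$ as bounded above and below, in numerical classes, by combinations $\sum_{s+t=k}H_X^s\cdot q^*H_Y^t$ with $H_X$ replaced by $H_X+q^*H_Y$. This uses the Siu-type inequalities valid for nef classes on any projective variety over any field.
\item Pull back by $f^n_\tau$ using $q_\sigma\circ f^n_\tau=g^n_\omega\circ q_\sigma$. The factor $(f^n)^*q^*H_Y^t=q^*(g^n)^*H_Y^t$ then splits off its $g$-degree, leaving a relative degree of $f$ on fibers, which by Step 1 grows like $\lambda_s(f_\eta)^n$.
\end{enumerate}
The constraints $s\le\dim X-\dim Y$ and $t\le\dim Y$ arise because $q^*H_Y^t=0$ for $t>\dim Y$, and because the fiber degrees vanish beyond the fiber dimension. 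Taking $n$-th roots gives both inequalities of the formula.

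\emph{Main obstacle.}
The delicate point is the splitting in item (3). One needs $(f_\tau^n)^*(\alpha\cdot q^*\beta)$ to be comparable to $(f_\tau^n)^*\alpha\cdot q^*(g_\omega^n)^*\beta$, with constants uniform in $n$, even though $f_\tau$ is only rational and twisted. We would handle this on the graph $\Gamma_n$. There we apply the projection formula to the twisted morphisms $\pi_1,\pi_2$. Intersection numbers change under twisting only by the fixed multiplicative factor $[K:\tau K]$ (and its powers along iterates), and this factor cancels out of exponential growth rates. We would then import Truong's local estimate, which bounds the pullback of products by products of pullbacks up to a constant depending only on $\dim X$ and $H_X$.
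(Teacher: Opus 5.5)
The gap is in the upper bound of Step~2. Your Step~1 is fine and is essentially the paper's argument. The paper uses the projection formula $q_*(D_1\cdots D_e\cap[X])=(D_{1,\eta}\cdots D_{e,\eta})[Y]$ instead of general fibres, which avoids Bertini and residue-field issues over an arbitrary $K$. The lower bound in Step~2 also works: expand $(L_n+M_n)^k\cdot(L+M)^{d-k}$ and keep only the terms $L_n^sL^{e-s}\cdot M_n^tM^{\dim Y-t}$, each of which equals $\deg_s(f_\eta^n)\deg_t(g_\omega^n)$.

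\textbf{Why moves (2)--(3) fail.} They do not, in general, produce fibre degrees.
\begin{itemize}
\item The expansion $(L_n+M_n)^k\cdot L^{d-k}=\sum_{s+t=k}\binom{k}{s}L_n^s\cdot M_n^t\cdot L^{d-k}$ contains the $t=0$ term, which is the full degree $\deg_k(f_\tau^n)$ again. The reduction is therefore circular.
\item For $1\le t<\dim Y$, using Siu on $Y$ to extract $\deg_t(g_\omega^n)$ leaves $L_n^s\cdot M^t\cdot L^{d-s-t}$. This still mixes fibre and base growth, and it is a fibre degree only when $t=\dim Y$.
\item The terms with $s>\dim X-\dim Y$ are not vanishing fibre degrees; for instance, the $t=0$ term is the full degree.
\end{itemize}

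\textbf{The key estimate is false as stated.} It cannot hold with $n$-independent constants. Take $f(y,z)=(y,yz)$ on $\P^1\times\P^1$ over $g=\id$. Then $\deg_1(f^n)=n+2$, while all relative and base degrees equal $1$. Your sketch contains no argument for a version with polynomial losses.

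\textbf{Two side issues.}
\begin{itemize}
\item The \emph{main obstacle} you flag is not one. Degrees are built from powers of pulled-back divisor classes, and $(f_\tau^n)^*q_\sigma^*M=q_\sigma^*(g_\omega^n)^*M$ holds exactly for Cartier $b$-classes.
\item A factor $[K:\tau K]^n$ would \emph{not} cancel from a growth rate; it would rescale $\lambda_k$ by $[K:\tau K]$. The paper avoids such powers by computing degrees on the graph with the $K$-structure of the source. Only the fixed factor $\deg\sigma$ then appears, and it appears on both sides of the fibration identity.
\end{itemize}

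\textbf{The missing idea.} The paper applies Siu's inequality (Theorem~\ref{thm:siu}) against the auxiliary big and nef class $\beta=L_{km}+aM_{km}$, where $L_n=(f_\tau^n)^*L$ and $M_n=q_\sigma^*(g_\omega^n)^*M$.
\begin{itemize}
\item Since $L_{(k+1)m}=(f_\tau^{km})^*L_m$ and $\beta=(f_\tau^{km})^*(L+aM)$, the Siu coefficient equals $L_m^i\cdot(L+aM)^{d-i}/(L+aM)^d$. This is a ratio of polynomials in $a$ of degree at most $\dim Y$.
\item By the fibration identity, its limit as $a\to\infty$ is $\deg_i(f_\eta^m)/L_\eta^{d-\dim Y}$ if $i\le d-\dim Y$, and $0$ otherwise.
\item Fix the block length $m$ first, then choose $a=a(m)$. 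Siu's constant is independent of $m$, $a$ and $k$, so one gets $L_{(k+1)m}^i\le(\lambda_i(f_\eta)+\epsilon)^mL_{km}^i+\sum_{s\ge1}C_s(m)L_{km}^{i-s}\cdot M_{km}^s$. When $i>d-\dim Y$, some $\rho<1$ replaces the first coefficient. Alongside this one has $M_{(k+1)m}^j\le(\lambda_j(g_\omega)+\epsilon)^mM_{km}^j$.
\item Iterating this upper-triangular system in $k$ gives $\lambda_k(f_\tau)\le\max_{s+t=k}(\lambda_s(f_\eta)+\epsilon)(\lambda_t(g_\omega)+\epsilon)$.
\end{itemize}
The fibre degree enters only through the $a\to\infty$ limit at a fixed finite time $m$. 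The $\epsilon$-slack replaces any uniform comparison of degrees.
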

\begin{rmq}
	It is sometimes convenient to reformulate the mixed degree formula in terms of dynamical Lyapunov multipliers.
	For a dominant (twisted) rational self-map $h$ of a $d$-dimensional variety, define
	\[
		\mu_i(h):=\frac{\lambda_i(h)}{\lambda_{i-1}(h)},\qquad i=1,\dots,d,
	\]
	with the convention $\lambda_0(h)=1$.
	These quantities were introduced in \cite{xieAlgebraicDynamicsRecursive2025}; here we call them dynamical Lyapunov multipliers.
	The log-concavity of dynamical degrees implies that the sequence $(\mu_i(h))$ is non-increasing.
	If $e=\dim X-\dim Y$ and $\ell=\dim Y$, then the mixed degree formula is equivalent to
	\[
		\bigl(\mu_1(f_\tau),\dots,\mu_{\dim X}(f_\tau)\bigr)
		=
		\operatorname{sort}_{\geq}\bigl(
		\mu_1(f_\eta),\dots,\mu_e(f_\eta),
		\mu_1(g_\tau),\dots,\mu_\ell(g_\tau)
		\bigr),
	\]
	where $\operatorname{sort}_{\geq}$ means that the listed multipliers are rearranged in non-increasing order.
\end{rmq}

The proof of the mixed degrees formula differs from \cite{truongRelativeDynamicalDegrees2015} and gives a more elementary argument even for rational transformations of varieties.

The construction of the dynamical degrees follows from the following principle: if $X$ is a scheme, the notion of invertible sheaves and dimension of closed subsets is independent of its structure as a $K$-variety.
Since dynamical degrees are computed using intersection of line bundles, the theory developed for rational transformations of varieties goes through for twisted rational maps.
The only difference to account for is that the definition of the degrees of zero cycles actually depends on the structure of $K$-varieties by multiplying intersection numbers by the degree of $\tau$.
Taking this into account suffices to define properly the theory of dynamical degrees for twisted
rational maps.

In \cite{dangSpectralInterpretationsDynamical2021}, Dang and Favre showed that the dynamical degrees of $f : X \dashrightarrow X$ can be interpreted as spectral radii of the continuous operator induced by $f$ on Banach spaces associated to the space of numerical classes of every birational model of $X$.
We show that this construction carries through in our setting and that the arguments adapt without substantial changes.
In particular, Theorem 1 of
\cite{dangSpectralInterpretationsDynamical2021} also holds in our setting
\begin{thm}
	\label{thm:croissance-degrees}
	Let $f_\tau: X \dashrightarrow X$ be a dominant twisted rational map such that $\lambda_1 (f_\tau)^2 > \lambda_2
		(f_\tau)$, then for any big and nef class $H$ over $X$ there exists a constant $C > 0$ such that
	\begin{equation}
		\deg_{1, H} (f_\tau^n) = C \lambda_1 (f_\tau)^n + O(\lambda^n)
	\end{equation}
	for any $\sqrt{\lambda_2 (f_\tau)} < \lambda < \lambda_1 (f_\tau)$.
\end{thm}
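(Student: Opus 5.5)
We follow the strategy of Theorem 1 of \cite{dangSpectralInterpretationsDynamical2021}, checking that each step only uses scheme-theoretic data (Picard groups, numerical classes, pullbacks) together with intersection numbers rescaled by $\deg \tau$. The setting is the Riemann--Zariski space $\mathfrak X$ of $X$. Let $N^1(\mathfrak X)$ be the inductive limit of the Néron--Severi spaces $N^1(X')$ over all birational models $X' \to X$, and consider the weak/Cartier completions. Following Dang--Favre, define the Banach space $L^2(\mathfrak X)$ as the completion of Cartier b-classes with respect to the norm
\[
\|\alpha\|^2 = 2(\alpha\cdot H\cdot H^{d-2})^2/(H^d) - \alpha^2\cdot H^{d-2}.
\]
This norm is positive definite on each $N^1(X')$ by the Hodge index theorem, and the resulting intersection form has Minkowski signature $(1,\infty)$. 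Nothing in the Hodge index theorem refers to the base field once intersection numbers are defined via the structure morphism. Since $f_\tau$ changes the structure morphism by $\tau$, the projection formula becomes $f_\tau^*\alpha \cdot f_\tau^*\beta \cdot (\cdots) = e(f_\tau)\,(\alpha\cdot\beta\cdots)$, up to the factor $\deg\tau$. Here $e(f_\tau)$ is the topological degree appropriately normalized, i.e.\ $\lambda_d(f_\tau)$, which is defined by Theorem~\ref{thm:dyn-degrees}.

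\textbf{Step 1.} Show that $f_\tau^*$ induces a bounded operator on $L^2(\mathfrak X)$. We adapt Dang--Favre's inequality $(f^*\alpha)^2\cdot H^{d-2} \geq \ldots$ using Siu-type/Khovanskii--Teissier inequalities on a resolution of the graph of $f_\tau$. The graph is a $K$-variety: we take the closure of the graph of the scheme morphism, with $K$-structure coming from the source. The inequalities are then the usual ones for $K$-varieties.

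\textbf{Step 2.} Prove the key estimate
\[
(f_\tau^*\alpha)^2 \cdot H^{d-2}\ \ge\ \lambda_2(f_\tau)\,\alpha^2 \text{ up to bounded constants.}
\]
More precisely, we need that $f_\tau^*$ (or its adjoint $f_{\tau*}$) satisfies $\langle f_\tau^* \alpha, f_\tau^*\alpha\rangle$ comparable with $\lambda_2$-growth on the hyperbolic space. Dang--Favre derive this by comparing $(f^n)^*\alpha^2\cdot H^{d-2}$ with $\deg_2(f^n)$. This goes through because the degree sequences $\deg_{k,H}(f_\tau^n)$ are submultiplicative up to constants, which is the content of the proof of Theorem~\ref{thm:dyn-degrees}.

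\textbf{Step 3.} Spectral gap and conclusion. With $\lambda_1^2 > \lambda_2$, Dang--Favre's argument shows that $f_\tau^*$ on $L^2(\mathfrak X)$ admits a unique eigenvector $\theta^*$, nef, with eigenvalue $\lambda_1$. The complement $\{\theta^*\}^\perp$ (for the intersection form) is invariant, and the spectral radius there is at most $\sqrt{\lambda_2}$. This uses the hyperbolic geometry: $f_\tau^*/\sqrt{\lambda_2}$ acts as a loxodromic-type map contracting lengths. Dually there is an eigenvector $\theta_*$ for $f_{\tau*}$. Decompose
\[
H = c\,\theta^* + h, \qquad h \in \{\theta_*\}^{\perp}.
\]
Then $\deg_{1,H}(f_\tau^n) = (f_\tau^n)^*H\cdot H^{d-1}$ can be written as $\lambda_1^n c\,(\theta^*\cdot H^{d-1}) + ((f_\tau^n)^*h\cdot H^{d-1})$. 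The second term is $O(\lambda^n)$ for every $\lambda > \sqrt{\lambda_2}$, by the spectral radius bound and continuity of $\alpha\mapsto \alpha\cdot H^{d-1}$ on $L^2$. Positivity $C = c\,(\theta^*\cdot H^{d-1}) > 0$ follows from $\theta^*$ nef and nonzero, $H$ big and nef, and $c = (H\cdot\theta_*)/(\theta^*\cdot\theta_*) > 0$.

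\textbf{Main obstacle.} We expect the main obstacle to be Step 1/2: checking that the pullback by a twisted map is well defined and continuous on b-classes, and tracking the factor $\deg\tau$ in the projection formula so that the Minkowski-space argument has the correct normalization. Concretely, we would first verify the formula $f_\tau^*\alpha\cdot f_\tau^*\beta\cdot H'^{d-2} = \deg(\tau)^{-1}\cdot(\ldots)$ on a resolved graph. After that, Dang--Favre's proof applies verbatim, with $f$ replaced by $f_\tau$.
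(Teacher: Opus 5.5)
Your global plan is the paper's plan: run Dang--Favre's argument for the ordinary map $f\colon X^{(\tau)}\dashrightarrow X$, and transport it along the canonical identifications of numerical classes while tracking $\deg\tau$ (Propositions \ref{prop:banach-space}, \ref{prop:action-banach-space} and Theorem \ref{thm:dynamical-degrees}). The gap is that the version of Dang--Favre you reconstruct is the surface (Boucksom--Favre--Jonsson) one, and Steps 1--3 break down once $d\geq 3$.

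\begin{itemize}
\item \textbf{Not a norm.} The fixed-polarization expression $\|\alpha\|^2=2(\alpha\cdot H^{d-1})^2/H^d-\alpha^2\cdot H^{d-2}$ is not a norm on Cartier b-classes, because Hodge index needs a class that is ample on the model where you compute. If $E$ is the exceptional divisor of the blow-up of a point $p$ of a threefold, then $E\cdot H^2=E^2\cdot H=0$, so $\|E\|=0$.
\item \textbf{Pullback is unbounded.} The standard Cremona involution of $\P^3$ contracts $\{x_0=0\}$ onto $p=[1:0:0:0]$, so $f^*E\cdot H^2=1$. Hodge index for the nef and big class $H$ then gives $\|f^*E\|^2\geq(f^*E\cdot H^2)^2/H^3=1$, while $\|E\|=0$.
\item \textbf{No isometry.} The projection formula only gives $f^*\alpha\cdot f^*\beta\cdot H^{d-2}=\alpha\cdot\beta\cdot f_*(H^{d-2})$ (up to $\deg\tau$), and $f_*(H^{d-2})$ is a different codimension $d-2$ class. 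The identity with the factor $e$ requires all $d$ factors to be pulled back. Moreover $\lambda_2\neq\lambda_d=e$ for $d\geq 3$, so there is no Minkowski space on which $f_\tau^*/\sqrt{\lambda_2}$ acts isometrically. Your spectral bound $\sqrt{\lambda_2}$ on an invariant hyperplane therefore has no proof in this framework.
\end{itemize}

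The missing ingredient is Dang--Favre's space $N^1_\Sigma$. Its norm $\|\alpha\|_\omega$ is the supremum of $2(\alpha\cdot\omega\cdot\gamma)^2-\alpha^2\cdot\gamma$ over \emph{all} $\gamma\in\cBPF^{d-2}(X)$ with $\gamma\cdot\omega^2=1$.

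\begin{itemize}
\item Since pushforwards of BPF classes are BPF, this norm is stable under pullback, with $\|f_\tau^*\|$ bounded by a constant times $\deg_1(f_\tau)$.
\item The $\lambda_2$-control comes from a one-sided estimate, not an isometry: $-((f_\tau^n)^*\beta)^2\cdot\gamma\leq\|\beta\|_\omega^2\,((f_\tau^n)^*\omega)^2\cdot\gamma$ for BPF $\gamma$, whose right-hand side grows like $\deg_2(f_\tau^n)$.
\item In this space the twisting issue you single out as the main obstacle is harmless. $\|\cdot\|_\omega$ is unchanged when $X$ is replaced by $X^{(\tau)}$, because the normalization $\gamma\cdot\omega^2=1$ absorbs the factor $\deg\tau$. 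So $f_\tau^*$ on $N^1_\Sigma(X)$ is the ordinary $f^*$ followed by an isometry, and Dang--Favre's spectral-gap argument applies to it directly.
\end{itemize}

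Finally, your positivity argument for $C$ does not work as written. It pairs two Weil classes $\theta^*\cdot\theta_*$, which is not defined here, and ``$\theta^*$ nef, $H$ big and nef'' does not by itself force $\theta^*\cdot H^{d-1}>0$. Instead, submultiplicativity (Proposition \ref{prop:inegalite-composition-degrees}) and Fekete's lemma give $\deg_{1,H}(f_\tau^n)\geq C'^{-1}\lambda_1(f_\tau)^n$, hence $C\geq C'^{-1}>0$.
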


\subsection{Dynamical degrees over affine varieties}\label{subsec:dyn-degrees-affine-varieties}
In \cite{dangSpectralInterpretationsDynamical2021}, Dang and Favre showed that for any proper polynomial endomorphism $f : \A^d_K \rightarrow \A^d_K$ such that $\lambda_1 (f)^2 > \lambda_2 (f)$, where $K$ is a field of characteristic zero and $\A^d_K$ is the affine space over $K$, the first dynamical degree $\lambda_1 (f)$ is an algebraic number of degree $\leq d$.
This is shown using their spectral interpretation of dynamical degrees and in particular Theorem \ref{thm:croissance-degrees}.
We generalise their result to twisted endomorphisms of affine varieties, also dropping the properness assumption.

\begin{bigthm}
	\label{bigthm:dyn-degrees-affine-varieties}
	Let $X_0$ be a smooth affine variety of dimension $d$ over a field of characteristic zero and let $f_\tau : X_0 \rightarrow X_0$ be a dominant twisted endomorphism.
	If $\lambda_1 (f_\tau)^2 > \lambda_2 (f_\tau)$, then $\lambda_1 (f_\tau)$ is an algebraic number of degree $\leq (d-1)^2$ if $d\geq 3$, and $\leq d$ if $d=1,2$.

  Furthermore, if the quasi-Albanese variety of $X_0$ is trivial and $\lambda_1 (f_\tau)^2 > \lambda_2 (f_\tau)$ then $\lambda_1 (f_\tau)$ is an algebraic number of degree $\leq d$.
\end{bigthm}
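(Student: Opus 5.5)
We follow the strategy of Dang and Favre for polynomial maps of $\A^d$, adapted to twisted endomorphisms of an arbitrary smooth affine variety. Valuative methods at infinity control the action of $f_\tau$ on divisors at infinity, and the spectral interpretation (Theorem \ref{thm:croissance-degrees}) produces an eigenclass.

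\textbf{Step 1: an eigenclass.} Fix a smooth projective compactification $X$ of $X_0$ whose boundary $\partial X = X\setminus X_0$ is a simple normal crossing divisor, and work in the Banach space of Weil (or Cartier) classes over all models of $X$. Since $\lambda_1(f_\tau)^2>\lambda_2(f_\tau)$, Theorem \ref{thm:croissance-degrees} and its spectral proof give a nonzero nef class $\theta$ such that $f_\tau^*\theta=\lambda_1(f_\tau)\theta$, up to the twisting factor $\deg\tau$. Moreover $\theta$ is a limit of normalized pullbacks $(f_\tau^n)^*H/\lambda_1(f_\tau)^n$. Because $f_\tau$ is an endomorphism of $X_0$, pullbacks of boundary divisors are supported at infinity, modulo divisors coming from functions on $X_0$. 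We therefore expect $\theta$ to be representable by a divisor supported at infinity. We make this precise by splitting $N^1$ of a model into two parts:
\begin{itemize}
\item the span of boundary components;
\item the image of $\Pic(X_0)$, whose numerical part relates to $\Pic^0$ and hence to the quasi-Albanese of $X_0$.
\end{itemize}

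\textbf{Step 2: reduction to a finite-dimensional invariant space.} We use the divisorial valuations centered at infinity. The map $f_\tau$ acts on this valuation space, and $\theta$ determines a function on it. Following Dang–Favre, we show that an eigenvector for $\lambda_1$ can be realized on finitely many divisorial valuations. The argument is an analogue of their ``properness'' step: choose a model where the eigenclass is Cartier, so the class lives in $\Div_\infty(X)\otimes\R$ plus the image of $\Pic(X_0)$. Dropping properness means allowing divisors at infinity contracted into $X_0$. This should be harmless, because dominance makes the pullback of any boundary divisor effective at infinity.

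\textbf{Step 3: counting.} $\lambda_1$ is then an eigenvalue of a linear map that preserves a lattice: the integral divisors at infinity together with the Néron–Severi part of the image of $\Pic(X_0)$. Hence $\lambda_1$ is algebraic, of degree at most the rank of the relevant invariant sublattice. We need that rank to be at most $d$ in the trivial quasi-Albanese case, and at most $(d-1)^2$ in general.
\begin{itemize}
\item Trivial quasi-Albanese case: $\Pic(X_0)$ contributes no continuous part, and the eigenclass is determined by a nef divisor at infinity. Its positive-dimensional contraction data, as in the $\A^d$ case, would bound the number of components involved by $d$. Concretely, the components of an eigendivisor must have linearly independent ``degree functions''.
\item General case: the extra contribution comes from the action on $H^1$-type data of the quasi-Albanese, which is an extension of a semi-abelian variety of dimension $\le d-1$ when the eigenvalue is relevant. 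The endomorphism of the quasi-Albanese $A$ induces an action on $NS(A)$. An eigenvalue there of degree at most $(\dim A)^2\le (d-1)^2$ arises from the Rosati-symmetric part, so the degree bound is $(d-1)^2$ for $d\ge3$. For $d\le2$ we have $(d-1)^2\le d$, so the bound is $d$.

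\end{itemize}

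\textbf{Main obstacle.} The main obstacle is Step 2 combined with the rank bound in Step 3: showing that the $\lambda_1$-eigenclass lies in a finite-rank $f_\tau$-invariant lattice of the stated rank, without properness. I would first attempt this via the valuative tree-type argument at infinity, using the dominance of $f_\tau$ on $X_0$.
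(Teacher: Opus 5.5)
\textbf{There is a genuine gap in your Steps 2--3.} The finite-rank $f_\tau^*$-invariant lattice of divisors at infinity that you want to count does not exist in general. Even if it did, its rank would not be bounded by $d$.

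First, the eigenclass is usually not Cartier on any completion. Take $f(x,y)=(y,xy)$ on $\A^2$, so that $\lambda_1=\varphi:=\frac{1+\sqrt5}{2}$ and $\lambda_2=1$. Along the monomial valuation with weights $w=(v(x),v(y))$, the coefficient of $\lim_n (f^n)^*H/\varphi^n$ is a positive multiple of $\max\bigl(0,-(w_1+\varphi w_2)\bigr)$. This breaks along an irrational ray, whereas a class determined on some model is piecewise linear on a rational fan. So "choose a model where the eigenclass is Cartier" fails already in the Dang--Favre setting.

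Second, for a fixed completion $X$, $f_\tau^*$ does not map $\DivInf(X)$ into itself; it only lands in $\DivInf(Y)$ for a higher completion $Y$. Also, the rank of $\DivInf(X)$ is the number of boundary components, which has nothing to do with $d$. Your "linearly independent degree functions" claim is unsupported.

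\textbf{The missing idea in the trivial-quasi-Albanese case} is to pass from the eigenclass to an \emph{eigenvaluation} and count on its value group.
\begin{enumerate}
\item Since $\QAlb(X_0)=0$, divisors at infinity inject into $N^1$. So you can restrict to Cartier divisors at infinity the limiting form $L$ defined by $(f_\tau^n)^*\alpha/\lambda_1^n\to L(\alpha)\theta^*$.
\item Pair with the dual curve $\gamma_{\ord_E}$ of a boundary component satisfying $\gamma_{\ord_E}\cdot\theta^*>0$. This exhibits a positive multiple of $L$ as the limit of the forms $L_{(f_\tau^n)_*\ord_E}/\lambda_1^n$.
\item Hence $L$ is positive and satisfies $L(D\wedge D')=\min(L(D),L(D'))$. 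Therefore $L=L_{v_*}$ for a valuation $v_*$ centered at infinity with $(f_\tau)_*v_*=\lambda_1 v_*$.
\item The relation $v_*(f_\tau^*P)=\lambda_1 v_*(P)$ shows that multiplication by $\lambda_1$ preserves $\Gamma_{v_*}\otimes\Q$. By Abhyankar's inequality this space has dimension at most $d$, which gives the bound.
\end{enumerate}
In the example, $v_*(x)=-1$, $v_*(y)=-\varphi$, and the value group is $\Z+\Z\varphi$.

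\textbf{Your Step 3 does not give an argument when $\QAlb(X_0)\neq 0$.} In that case $\DivInf(X)\to N^1(X)$ need not be injective; for example, divisors of nonconstant units are numerically trivial. So neither the eigenvaluation construction nor an ``add the N\'eron--Severi part'' lattice is available. Moreover, the claim that the relevant quasi-abelian variety has dimension $\le d-1$ is exactly what needs proof.

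The paper argues by induction on $d$, using invariant fibrations and the mixed degree formula $\lambda_1(f_\tau)=\max\{\lambda_1(f_\eta),\lambda_1(g_\tau)\}$. It also uses the lemma that $\lambda_1^2>\lambda_2$ passes to whichever factor realizes $\lambda_1$. The cases are:
\begin{enumerate}
\item If $\overline\kappa(X_0)\ge1$: the log Iitaka fibration has base dynamics coming from a twisted projective linear map, so $\lambda_1(g_\tau)=1$. After a twisted base change that produces an invariant section, the generic fibre is affine of smaller dimension, and induction applies.
\item If $\overline\kappa(X_0)=0$: use the quasi-Albanese map. The case $\dim\QAlb(X_0)=d$ forces $X_0\simeq\G_m^d$, where monomial maps give degree $\le d$. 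Otherwise the base has dimension $\le d-1$, giving the bound $(d-1)^2$.
\item If $\overline\kappa(X_0)=-\infty$: Abramovich's theorem that $V/G_V^0$ is of log general type reduces to the previous situations.
\end{enumerate}
None of these ingredients appears in your plan.
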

Since the quasi-Albanese variety of the affine space $\A^d_K$ is trivial we recover the generalisation of Theorem 2 of \cite{dangSpectralInterpretationsDynamical2021}.

The strategy of proof is as follows.
Let $\QAlb(X_0)$ be the quasi-Albanese variety of $X_0$.
If $\QAlb(X_0)$ is trivial then we deduce the result using valuative techniques as well following \cite{dangSpectralInterpretationsDynamical2021} and the work of the first author in \cite{abboudDynamicsEndomorphismsAffine2023} on affine surfaces.
Otherwise, the twisted endomorphism $f_\tau$ induces a twisted endomorphism $g_\tau : \QAlb(X_0) \rightarrow \QAlb(X_0)$ which is semiconjugated to $f_\tau$ by the quasi-Albanese map $q : X_0 \rightarrow \QAlb(X_0)$, the result will then follow by induction using Theorem \ref{bigthm:relative-dyn-degrees-formula}.

\subsection*{Organization of the paper}
The paper is organized as follows.
	In \S\ref{sec:definitions} we introduce twisted rational maps and the basic intersection-theoretic operations needed to define their degrees.
	In \S\ref{subsec:relative-dynamical-degrees} and \S\ref{subsec:siu-inequality} we prove the comparison with relative dynamical degrees and the mixed degree formula.
	In the following sections we adapt the Banach-space interpretation of dynamical degrees and develop the valuative tools at infinity.
	The final section applies these results to prove the algebraicity theorem for twisted endomorphisms of affine varieties.

\section{Definitions}\label{sec:definitions}
Let $K$ be a field and $X, X'$ be varieties over $K$, i.e. integral schemes of finite type over $K$.
A twisted rational map of $K$-varieties from $X$ to $X'$ is a rational map $f_\tau : X \dashrightarrow X'$
such that there exists a finite map $\tau : \spec K
	\rightarrow \spec K$ (equivalently, a finite field homomorphism $\tau^*:K\to K$) making the following diagram
\begin{equation}
	\begin{tikzcd}
		X \ar[r, dashed, "f_\tau"] \ar[d] & X' \ar[d] \\ \spec K \ar[r, "\tau"] & \spec K
	\end{tikzcd}
\end{equation}
commute.
A twisted morphism is a twisted rational map where the map $f_\tau : X \dashrightarrow X'$ is regular.
The composition of twisted rational maps and twisted morphisms is well defined.
\begin{rmq}
	If $K$ is algebraically closed, then necessarily $\tau$ is an automorphism.
\end{rmq}

Let $\tau : \spec K \rightarrow \spec K$ be a finite map and $q : X \rightarrow \spec K$ a $K$-variety, we write $X^{(\tau)}$ for the $K$-variety with underlying scheme $X$ and structural morphism $\tau \circ q$.
If $f_\tau : X
	\dashrightarrow Y$ is a twisted rational map then it defines a rational map of $K$-varieties
\begin{equation}
	f : X^{(\tau)} \dashrightarrow Y.
\end{equation}
And conversely any such rational map defines a twisted rational map.
\subsection{Main example}\label{subsec:main-example}
Let $q : X \rightarrow Y$ be a dominant morphism of irreducible $K$-varieties and
let $f: X \dashrightarrow X$, $g: Y \dashrightarrow Y$ be rational maps of $K$-varieties such that the following diagram
\begin{equation}
	\begin{tikzcd}
		X \ar[r, dashed, "f"] \ar[d, "q"] & X \ar[d,"q"]\\ Y \ar[r, dashed, "g"] & Y
	\end{tikzcd}
	\label{eq:diagram-fibration}
\end{equation}
commutes.
Let $\eta \in Y$ be the generic point of $Y$ and let $K(Y)$ be the function field of $Y$.
Taking the fiber
	over $\eta$ in \eqref{eq:diagram-fibration}, we get a commutative diagram
\begin{equation}
	\begin{tikzcd}
		X_\eta \ar[r,dashed,"f_\eta"] \ar[d, "q_\eta"] & X_\eta \ar[d, "q_\eta"]\\ \spec K(Y) \ar[r, "g_\eta"] & \spec K(Y)
	\end{tikzcd}
	.
\end{equation}
Since $g: Y \dashrightarrow Y$ is a dominant rational self-map of the integral variety $Y$, it induces a finite extension of function fields.
	Equivalently, $g_\eta : \spec K(Y) \rightarrow \spec K(Y)$ is finite.
Thus $f_\eta : X_\eta \dashrightarrow X_\eta$ is a twisted rational map of $K(Y)$-varieties.

\begin{lemme}
	\label{lemme:change-of-base}
	Let $\tau : \spec K \rightarrow \spec K$ be a finite map and $X$ be a $K$-variety.
	Suppose there is a morphism of $K$-varieties $f: Y \rightarrow X^{(\tau)}$, then there exists $q' : Y \rightarrow \spec K$ such that the structural morphism $q_Y : Y \rightarrow \spec K$ satisfies $q_Y = \tau \circ q' $.
	If $Z$ is the $K$-variety with underlying scheme $Y$ and structural morphism $q'$ then $Y = Z^{(\tau)}$ and $Z \rightarrow X$ is a morphism of $K$-varieties.
\end{lemme}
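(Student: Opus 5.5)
The argument is essentially formal. Let $p_X : X \rightarrow \spec K$ be the structural morphism of $X$, so that $X^{(\tau)}$ has structural morphism $\tau \circ p_X$. Saying that $f : Y \rightarrow X^{(\tau)}$ is a morphism of $K$-varieties means that $f$ is a morphism of schemes with $q_Y = \tau \circ p_X \circ f$. The natural candidate is therefore $q' := p_X \circ f : Y \rightarrow \spec K$, which immediately gives $q_Y = \tau \circ q'$.

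Next I would define $Z$ as the scheme $Y$ with structural morphism $q'$. By the definition of the twist, $Z^{(\tau)}$ has underlying scheme $Y$ and structural morphism $\tau \circ q' = q_Y$, so $Z^{(\tau)} = Y$ as $K$-varieties. The same scheme map $f : Z \rightarrow X$ satisfies $p_X \circ f = q'$ by construction, so it is a morphism of $K$-varieties $Z \rightarrow X$.

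The only step needing care is checking that $Z$ is a genuine $K$-variety, that is, that $q'$ is of finite type; integrality is automatic since it depends only on the scheme. I would deduce this from finiteness of $\tau$. The morphism $\tau$ is finite, hence separated. The composite $\tau \circ q' = q_Y$ is of finite type. By the standard cancellation property, if $g \circ h$ is locally of finite type then so is $h$, and if $g \circ h$ is quasi-compact and $g$ is separated then $h$ is quasi-compact. Hence $q'$ is of finite type.

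Alternatively, one can argue on affine charts: finite generation of an affine coordinate ring over $\tau^*(K)$ implies finite generation over the larger field $K$. I expect this finite-type verification to be the only point that is not pure bookkeeping, and it is still routine.
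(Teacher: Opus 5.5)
Your proposal is correct and is the same argument as the paper's. The paper also sets $q' := q_X \circ f$, reads off $q_Y = \tau \circ q'$ from $f$ being a $K$-morphism to $X^{(\tau)}$, and notes that the same underlying map $Z \to X$ is compatible with structural morphisms. Your extra check that $q'$ is of finite type is also correct; the paper leaves this point implicit.
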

\begin{proof}
	Let $q_X:X\to\spec K$ and $q_Y:Y\to\spec K$ be the structural morphisms.
	Since $f:Y\to X^{(\tau)}$ is a morphism of $K$-varieties, its structural morphisms satisfy \[ q_Y=(\tau\circ q_X)\circ f .
	\]
	Set
	\[
		q' := q_X\circ f : Y\to \spec K .
	\]
	Then $q_Y=\tau\circ q'$.
	Let $Z$ be the scheme $Y$ endowed with the structural morphism $q'$.
	By definition, the twist $Z^{(\tau)}$ has underlying scheme $Y$ and structural morphism $\tau\circ q'=q_Y$, hence $Z^{(\tau)}=Y$ as $K$-varieties.
	Finally, the same underlying morphism $f:Y\to X$ gives a morphism $Z\to X$ of $K$-varieties, because its compatibility with the structural morphisms is precisely the identity $q'=q_X\circ f$.
\end{proof}

\begin{rmq}
	\label{rmq:not-base-change}
	In general we do not have that $X = X^{(\tau)} \times_K \spec K$.
	Indeed, this is not even true if $X = \spec K$.
	Here the first map is the structural morphism of $X^{(\tau)}$, and the right-hand copy of $\spec K$ maps to the base via $\tau$.
	
	example take $K = \Q (t)$ and let $\tau$ be the $\Q$-morphism that sends $t$ to $t^2$. Then, $\Q(t) = \Q(t^2) [x] /
		(x^2 - t^2)$ and thus
	\begin{equation}
		\Q(t) \otimes_{\Q(t^2)} \Q(t) = \Q(t) [x] / (x -t)(x+t)
	\end{equation}
	which is not isomorphic to $\Q(t)$ as it is not even a domain.
\end{rmq}

\subsection{Affine and projective space}\label{subsec:affine-and-projective-space}
Let $X = \A^n_K$.
Twisted morphisms to $X$ have a particularly simple form. If $\tau : \spec K \rightarrow \spec K$ is a
finite map, we write $\hat \tau : X \rightarrow X$ for the twisted morphism whose pullback on the coordinate ring is defined by
\begin{equation}
	\hat \tau^* \left(\sum_I a_I x^I\right) = \sum_I \tau^*(a_I) x^I.
\end{equation}
The same semilinear rule on homogeneous coordinates defines a twisted morphism $\hat \tau : \P^n_K \rightarrow \P^n_K$.

\begin{prop}
	\label{prop:twisted-morphism-affine-space}
	Write $X = \A^n_K$ or $X = \P^n_K$, let $Y$ be a $K$-variety.
	Every twisted morphism $\phi_\tau : Y \rightarrow X$ is of the form
	\begin{equation}
		\phi_\tau = \hat \tau \circ \phi
	\end{equation}
	where $\phi : Y \rightarrow X$ is a morphism of $K$-varieties.

	Furthermore, if $Y$ is a projective variety and $\phi_\tau : Y \dashrightarrow \P^n_K$ is a twisted rational map, then there exists a rational map of $K$-varieties $\phi : Y \dashrightarrow \P^n_K$ such that $\phi_\tau = \hat \tau \circ \phi$.
\end{prop}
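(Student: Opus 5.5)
The idea is to use the fact that $X = \A^n_K$ and $\P^n_K$ are base changes from the prime field, so that $\hat\tau$ is essentially the base change of $\tau$. The key observation is that $\hat\tau : X \to X$ identifies $X$, as a scheme over $X^{(\tau)}$-type data, with the fiber product $X \times_{\spec K,\tau} \spec K$. Precisely, let $X_{\mathbf Z}$ denote $\A^n_{\mathbf Z}$ or $\P^n_{\mathbf Z}$, so $X = X_{\mathbf Z}\times \spec K$. Then $\hat\tau = \id_{X_{\mathbf Z}} \times \tau$, and the square
\[
X \xrightarrow{\hat\tau} X, \qquad X \to \spec K \xrightarrow{\tau} \spec K
\]
is cartesian, since $(X_{\mathbf Z}\times_{\mathbf Z} \spec K)\times_{\spec K,\tau}\spec K \cong X_{\mathbf Z}\times_{\mathbf Z}\spec K$. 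Note that this does not contradict Remark \ref{rmq:not-base-change}: there the twist is on the source, here the fiber product is taken with the given structure morphism of the target.

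Now let $\phi_\tau : Y \to X$ be a twisted morphism, and write $q_Y$, $q_X$ for the structural morphisms. By definition $q_X\circ\phi_\tau = \tau\circ q_Y$. The universal property of the cartesian square then produces a unique morphism $\phi : Y \to X$ with $\hat\tau\circ\phi = \phi_\tau$ and with structure map $q_Y$, i.e.\ $\phi$ is a morphism of $K$-varieties.

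For the affine case I would also write this down concretely, as a sanity check. The map $\phi_\tau^*$ sends $x_i \mapsto g_i \in \OO(Y)$ and restricts to $\tau^*$ on $K$, and $\phi^*$ is the $K$-algebra map $x_i\mapsto g_i$. Then $(\hat\tau\circ\phi)^*(a x^I) = \phi^*(\tau^*(a)x^I) = \tau^*(a) g^I = \phi_\tau^*(a x^I)$. For $\P^n$ one either glues this over the standard affine charts $U_i=\{x_i\neq 0\}$ (noting that $\hat\tau$ preserves each $U_i$ and $\hat\tau^{-1}(U_i)=U_i$) or invokes the cartesian square directly.

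For the rational statement, represent $\phi_\tau$ by a twisted morphism $U \to \P^n_K$ on a dense open $U\subset Y$. Applying the first part to $U$ gives a $K$-morphism $\phi_U : U\to\P^n_K$ with $\hat\tau\circ\phi_U=\phi_\tau|_U$, and $\phi_U$ defines a rational map $\phi : Y\dashrightarrow \P^n_K$. Uniqueness in the universal property makes the $\phi_U$ compatible on overlaps, so $\phi$ is independent of the choice of $U$. Projectivity of $Y$ is not actually needed.

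The main point to verify carefully is the cartesian claim, specifically that the fiber product is formed using the composite of $\phi_\tau$ with $q_X$. This is the step I expect to be most delicate, and it amounts to the ring identity $(\mathbf Z[x]\otimes K)\otimes_{K,\tau^*}K \cong \mathbf Z[x]\otimes K$. Once this is checked, the rest is formal.
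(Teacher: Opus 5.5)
Your proposal is correct, and it takes a genuinely different route from the paper.

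The paper argues by hand in each case:
\begin{itemize}
\item for $\A^n_K$ it defines $\phi$ by $\phi^*(x_i):=\phi_\tau^*(x_i)$ and checks $\phi_\tau=\hat\tau\circ\phi$ on polynomials;
\item for $\P^n_K$ it takes $L=\phi_\tau^*\OO(1)$ with the sections $s_i=\phi_\tau^*X_i$, and uses the universal property of $\P^n_K$ among $K$-schemes to produce $\phi$;
\item for the rational statement it resolves $\phi_\tau$ on a blowup $Z\to Y$ and applies the morphism case on $Z$.
\end{itemize}

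You instead package everything into one observation. Because $X$ is the base change of $X_{\Z}$, the square formed by $\hat\tau$ and $\tau$ is cartesian, i.e.\ $X\cong X\times_{q_X,\spec K,\tau}\spec K$ with first projection $\hat\tau$. The universal property then gives existence and uniqueness of the $K$-morphism $\phi$ at once, uniformly for $\A^n_K$ and $\P^n_K$. In substance both arguments rest on $\A^n$ and $\P^n$ being defined over $\Z$; yours makes this explicit.

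Your version buys more generality:
\begin{itemize}
\item It applies verbatim to any $K$-variety obtained by base change from $\Z$, or from a subfield fixed by $\tau^*$. For instance, Corollary \ref{cor:twisted-endo-alg-torus} for $\G_m^d$ follows directly.
\item The uniqueness of $\phi$ lets you work on a single dense open $U$ and get independence of the choice of $U$. As you note, this removes both the blowup and the projectivity hypothesis on $Y$.
\end{itemize}

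The paper's concrete version has the advantage of exhibiting $\phi$ explicitly in coordinates, and it makes $\hat\tau^*\OO(1)=\OO(1)$ and $\hat\tau^*X_i=X_i$ visible. That is what gets used later, e.g.\ in Proposition \ref{prop:dynamical-degree-projective-space}. In your framework this is equally immediate, since $\OO(1)$ and the $X_i$ are pulled back from $\P^n_{\Z}$.
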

\begin{proof}
	We first do the proof when $X = \A^n_K$.
	A scheme morphism $f: Y \rightarrow \A^n_K$ is equivalent to a ring homomorphism $f^* : K [x_1, \dots, x_n] \rightarrow \OO(Y)$.
	We compute $\phi_\tau^* (P)$ for any polynomial $P$.
	We get
	\begin{equation}
		\phi_\tau^* \left( \sum_I a_I x_1^{i_1} \cdots x_n^{i_n} \right) = \sum_I \tau^* (a_I) \phi_\tau^*
		(x_1)^{i_1} \cdots \phi_\tau^* (x_n)^{i_n}.
	\end{equation}
	Write $y_i = \phi_\tau^* (x_i) \in \OO (Y)$ and let $\phi : Y \rightarrow \A^n_K$ be the morphism of $K$-varieties defined by $\phi^* (x_i) = y_i$.
	We have that $\phi_\tau = \hat \tau \circ \phi$.

	For $X = \P^n_K$, the proof is similar.
	Let $L$ be the line bundle over $Y$ defined by $L = \phi_\tau^* \OO (1)$.
	Let $s_i \in H^0 (Y, L)$ be the global sections such that $\phi_\tau^* X_i = s_i$.
	Then, there is a unique morphism of $K$-varieties $\phi : Y \rightarrow \P^n_K$ such that $\phi^* \OO (1) = L$ and $s_i = \phi^* X_i$ and we have that $\phi_\tau = \hat \tau \circ \phi$.

	The last case follows because if $\phi_\tau : Y \dashrightarrow \P^N_K$ is a twisted rational map, then there exists a blowup $Z \rightarrow Y$ such that the lift of $\phi_\tau$ is a twisted morphism $\phi_\tau : Z \rightarrow \P^N_K$.
\end{proof}

\begin{cor}
	\label{cor:twisted-endo-alg-torus}
	Every twisted endomorphism $\phi_\tau : \G_m^K \rightarrow \G_m^K$ is of the form $\phi_\tau = \hat \tau \circ \phi$ where $\phi$ is an endomorphism of $\G_m^K$.
\end{cor}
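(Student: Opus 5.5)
The argument mirrors the affine case of Proposition~\ref{prop:twisted-morphism-affine-space}, with the polynomial ring replaced by the Laurent polynomial ring. Write $\G_m^K = \spec K[x_1^{\pm 1}, \dots, x_n^{\pm 1}]$ for the torus of dimension $n$. Define $\hat\tau : \G_m^K \to \G_m^K$ by the same semilinear rule as before, now applied to Laurent polynomials:
\[
\hat\tau^*\Bigl(\sum_I a_I x^I\Bigr) = \sum_I \tau^*(a_I)\, x^I, \qquad I \in \Z^n .
\]
This is a twisted endomorphism, and it is compatible with the inclusion $\G_m^K \subset \A^n_K$.

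Let $\phi_\tau : \G_m^K \to \G_m^K$ be a twisted endomorphism and set $y_i := \phi_\tau^*(x_i) \in \OO(\G_m^K)$. Since $x_i$ is a unit, $y_i$ is a unit, with inverse $\phi_\tau^*(x_i^{-1})$. We want a $K$-algebra homomorphism $\phi^* : K[x^{\pm1}] \to K[x^{\pm1}]$ with $\phi^*(x_i) = y_i$. It exists and is unique by the universal property of the Laurent polynomial ring: a $K$-algebra map out of $K[x_1^{\pm1},\dots,x_n^{\pm1}]$ is the same as a choice of $n$ units in the target. Equivalently, the composite $\G_m^K \to \G_m^K \hookrightarrow \A^n_K$ is a twisted morphism. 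Proposition~\ref{prop:twisted-morphism-affine-space} writes it as $\hat\tau\circ\phi$ with $\phi : \G_m^K \to \A^n_K$ a $K$-morphism. Since the $y_i$ are units, $\phi$ factors through $\G_m^K$.

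It remains to check that $\phi_\tau = \hat\tau \circ \phi$. Both $\phi_\tau^*$ and $\phi^*\circ\hat\tau^*$ are ring homomorphisms that act on scalars through $\tau^*$. For $\phi^*\circ\hat\tau^*$ this holds because $\hat\tau^*$ twists the coefficients and $\phi^*$ is $K$-linear; for $\phi_\tau^*$ it is the twisted-morphism condition. Both send $x_i \mapsto y_i$ and hence $x_i^{-1} \mapsto y_i^{-1}$, so they agree on every Laurent monomial $a x^I$. By additivity they coincide.

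There is no real obstacle. The only point that needs care is that invertibility of the $y_i$ is automatic, so $\phi$ lands in the torus and not merely in $\A^n_K$.
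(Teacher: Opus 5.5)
Your proof is correct and takes the approach the paper intends. The paper gives no separate argument and treats this as an immediate consequence of Proposition \ref{prop:twisted-morphism-affine-space}: one repeats the coordinate-ring argument with $K[x_1^{\pm1},\dots,x_n^{\pm1}]$ in place of the polynomial ring, or composes with $\G_m^n\subset\A^n_K$ and uses that the $y_i$ are units. Your direct check that $\phi_\tau^*$ and $\phi^*\circ\hat\tau^*$ agree on Laurent monomials makes the detour through $\A^n_K$ unnecessary, but both routes are valid.
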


\subsection{Toric varieties}\label{subsec:toric-varieties}
Let $N\simeq \Z^d$ be a lattice, let $M=\operatorname{Hom}(N,\Z)$, and let $\Sigma$ be a fan in $N_\R$.
For a cone $\sigma\in\Sigma$, write
\[
	U_\sigma=\spec K[\sigma^\vee\cap M].
\]
If $\tau:\spec K\to\spec K$ is finite, then $U_\sigma^{(\tau)}$ is the same scheme endowed with the structural morphism twisted by $\tau$.
Equivalently, on coordinate rings the canonical twisted morphism
\[
	\hat\tau_\sigma:U_\sigma\longrightarrow U_\sigma
\]
is defined by the semilinear rule
\[
	\hat\tau_\sigma^*\left(\sum_m a_m\chi^m\right)=\sum_m \tau^*(a_m)\chi^m .
\]
This rule leaves the monomials $\chi^m$ unchanged and only applies $\tau^*$ to the coefficients.
Since the inclusions of affine toric charts are induced by localizing at monomials, these semilinear maps are compatible on overlaps and glue to a twisted morphism
\[
	\hat\tau:X_\Sigma\longrightarrow X_\Sigma .
\]
In particular, the twist does not change the lattice, the fan, the rays, or the corresponding torus-invariant prime divisors; only the $K$-structure is changed.

\begin{prop}
	\label{prop:toric-varieties}
	Let $K$ be a field, let $\tau:\spec K\to\spec K$ be a finite map, and let $X_\Sigma$ be a split toric variety over $K$.
	Then the twisted endomorphism $\hat\tau:\G_m^d\to\G_m^d$ extends canonically to a twisted endomorphism
	\[
		\hat\tau:X_\Sigma\longrightarrow X_\Sigma .
	\]
	Moreover $X_\Sigma$ and $X_\Sigma^{(\tau)}$ are canonically described by the same fan $\Sigma$, and the induced identification sends each torus-invariant prime divisor corresponding to a ray $\rho\in\Sigma(1)$ to the torus-invariant prime divisor corresponding to the same ray.
\end{prop}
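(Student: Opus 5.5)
I would prove Proposition~\ref{prop:toric-varieties} chart by chart, working directly with the semilinear maps $\hat\tau_\sigma$ defined just above the statement.

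\emph{Step 1: each $\hat\tau_\sigma$ is a twisted endomorphism.} For every cone $\sigma\in\Sigma$, the rule $\sum_m a_m\chi^m\mapsto\sum_m\tau^*(a_m)\chi^m$ is a ring endomorphism of $K[\sigma^\vee\cap M]$, since multiplication of monomials is $\chi^m\chi^{m'}=\chi^{m+m'}$ and $\tau^*$ is a ring map. Its restriction to $K$ is $\tau^*$, so $\hat\tau_\sigma$ covers $\tau$.

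\emph{Step 2: finiteness and gluing.} The morphism $\hat\tau_\sigma$ is of finite type, in fact finite. Choose a basis $(e_j)$ of $K$ over $\tau^*(K)$; then the elements $e_j$ generate $K[\sigma^\vee\cap M]$ as a module over $\hat\tau_\sigma^*(K[\sigma^\vee\cap M])$. For the gluing, suppose $\sigma'$ is a face of $\sigma$. Then $K[\sigma'^\vee\cap M]=K[\sigma^\vee\cap M]_{\chi^u}$ for a suitable $u\in M$. Since $\hat\tau_\sigma^*(\chi^u)=\chi^u$, the map $\hat\tau_\sigma^*$ extends uniquely to this localization, and the extension is exactly $\hat\tau_{\sigma'}^*$. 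The overlaps $U_\sigma\cap U_{\sigma'}=U_{\sigma\cap\sigma'}$ are again of this form, so the maps $\hat\tau_\sigma$ glue to a twisted morphism $\hat\tau:X_\Sigma\to X_\Sigma$. On the torus chart $\sigma=\{0\}$, $\hat\tau$ restricts to the map $\hat\tau$ of Corollary~\ref{cor:twisted-endo-alg-torus}, which gives the extension. \emph{Canonicity.} Because $\G_m^d$ is dense in the separated scheme $X_\Sigma$, any extension of this map is unique.

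\emph{Step 3: $X_\Sigma^{(\tau)}$ has the same fan.} The idea is to view $\hat\tau$ as a $K$-morphism $X_\Sigma\to X_\Sigma^{(\tau)}$ and show it is a base-change-type morphism that preserves charts. On each chart, $U_\sigma^{(\tau)}$ is the same scheme $\spec K[\sigma^\vee\cap M]$ with $K$-algebra structure through $\tau^*$. The torus $\G_m^d$ acts on $X_\Sigma$; twisting the structure map turns this into an action of $(\G_m^d)^{(\tau)}$ on $X_\Sigma^{(\tau)}$, with the same open orbit and the same affine invariant cover $\{U_\sigma\}$. The orbit--cone correspondence is described by the monomial semigroups $\sigma^\vee\cap M$, and these are untouched by the twist, so the fan is again $\Sigma$.

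\emph{Step 4: torus-invariant divisors.} The torus-invariant prime divisor $D_\rho$ is the closed subscheme cut out on $U_\rho$ by the ideal generated by the $\chi^m$ with $\langle m,v_\rho\rangle>0$. This ideal is stable under $\hat\tau_\rho^*$, which fixes every monomial. Hence the identity map of underlying schemes sends $D_\rho$ to $D_\rho$, and $\hat\tau^{-1}(D_\rho)=D_\rho$ set-theoretically.

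\emph{Expected main obstacle.} The delicate point is making precise what it means for $X_\Sigma^{(\tau)}$ to be a toric variety ``described by $\Sigma$''. It need not be isomorphic as a $K$-variety to $X_\Sigma$; Remark~\ref{rmq:not-base-change} warns against identifying a twist with a base change. I would therefore phrase the identification intrinsically, as the same scheme with the same monomial charts and the same ray--divisor correspondence, rather than as an isomorphism of $K$-varieties. I would check only the compatibility of the monomial data, since that is all that is used later for divisor classes.
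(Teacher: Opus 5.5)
Steps 1, 2 and 4 follow the paper's proof. The coefficientwise semilinear maps on $K[\sigma^\vee\cap M]$ fix every monomial, so they commute with the localizations at $\chi^u$ that define the face inclusions, and they glue. You also add two correct points the paper leaves implicit. First, $\hat\tau_\sigma$ is finite, since a $\tau^*(K)$-basis of $K$ generates $K[\sigma^\vee\cap M]$ over $\tau^*(K)[\sigma^\vee\cap M]$; this matters because twisted morphisms are required to be of finite type. Second, the extension is unique by density of the torus in the integral, separated $X_\Sigma$, which is what justifies the word ``canonically''.

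Step 3, however, makes two claims that are false as written and should be deleted.

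\emph{Direction of the $K$-morphism.} The $K$-morphism attached to $\hat\tau$ is $X_\Sigma^{(\tau)}\to X_\Sigma$, not $X_\Sigma\to X_\Sigma^{(\tau)}$. The relation $q\circ\hat\tau=\tau\circ q$ says that the source carries the structure map $\tau\circ q$.

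\emph{The twisted torus action.} If $\deg\tau>1$, the torus action does not twist into an action of $(\G_m^d)^{(\tau)}$ on $X_\Sigma^{(\tau)}$. Twisting does not commute with $\times_K$: $(\spec K\times_K\spec K)^{(\tau)}=(\spec K)^{(\tau)}$, whereas $(\spec K)^{(\tau)}\times_K(\spec K)^{(\tau)}=\spec(K\otimes_{\tau^*K}K)$, cf.\ Remark~\ref{rmq:not-base-change}. Moreover $(\G_m^d)^{(\tau)}$ has no $K$-point at all. A $K$-point of any $Y^{(\tau)}$ gives a field map $\phi^*:K\to K$ with $\phi^*\circ\tau^*=\id_K$, which forces $\tau^*$ to be onto. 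So $(\G_m^d)^{(\tau)}$ is not a $K$-group scheme, $X_\Sigma^{(\tau)}$ is not even geometrically integral, and there is no orbit--cone correspondence on it to appeal to.

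\emph{What to keep instead.} The only available meaning of ``described by the same fan'' is the intrinsic one in your final paragraph: same underlying scheme, same charts $U_\sigma$, same monomial ideals. That is exactly the paper's one-line justification, and with it Step 4 is immediate. The identification is the identity of the underlying scheme, and $D_\rho$ is cut out by the same monomial ideal on both sides. So your ``Hence'' there does not need the $\hat\tau_\rho^*$-stability. That stability is a separate true fact, used in Proposition~\ref{prop:trivial-action-toric-varieties}. To get $\hat\tau^{-1}(D_\rho)=D_\rho$ globally, check it on every $U_\sigma$ with $\rho\preceq\sigma$, not only on $U_\rho$; on each such chart the ideal is again monomial.
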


\begin{proof}
	For every cone $\sigma\in\Sigma$, the formula
	\[
		\sum_m a_m\chi^m\longmapsto \sum_m \tau^*(a_m)\chi^m
	\]
	defines a homomorphism
	$K[\sigma^\vee\cap M]\to K[\sigma^\vee\cap M]$ which is semilinear with respect to $\tau^*:K\to K$.
	Hence it defines a twisted morphism $\hat\tau_\sigma:U_\sigma\to U_\sigma$.
	If $\sigma'\preceq\sigma$, then the open immersion $U_{\sigma'}\subset U_\sigma$ is obtained by inverting the monomials corresponding to $\sigma^\vee\cap M$ which are units on $U_{\sigma'}$.
	The above formula fixes every monomial $\chi^m$, so it commutes with these localizations.
	Thus the maps $\hat\tau_\sigma$ glue to a twisted morphism $\hat\tau:X_\Sigma\to X_\Sigma$.
	On the dense torus this is precisely the coefficientwise twisted endomorphism of $\G_m^d$.
	The same affine charts, cones, and face relations describe $X_\Sigma^{(\tau)}$; therefore the fan and its rays are canonically identified before and after twisting, and the corresponding torus-invariant prime divisors are identified as claimed.
\end{proof}

\subsection{Intersection theory and BPF classes}\label{subsec:intersection-theory}\label{subsec:numerical-classes}\label{subsec:ample-divisors-bpf-classes}
We first recall the intersection-theoretic conventions used below, independently of twisted morphisms.
Following \cite{fultonIntersectionTheory1998}, for a normal projective $K$-variety $X$, we write $Z_m(X)$ for the group of formal combinations of irreducible subvarieties of dimension $m$, and $A_m(X)$ for the quotient modulo rational equivalence.
If $X$ is equidimensional of dimension $d$, we also use the codimension notation $Z^k(X):=Z_{d-k}(X)$ and $A^k(X):=A_{d-k}(X)$.

If $W\subset X$ is an irreducible subvariety of dimension $m+1$ and $r\in K(W)^\times$, the principal divisor $\div(r)$ is an $m$-cycle on $X$.
These principal divisors generate rational equivalence, and the Chow group $A_m(X)$ is the corresponding quotient of $Z_m(X)$.
A Cartier divisor $D$ defines a first Chern class
\begin{equation}
	c_1 : \Pic(X) \rightarrow A^1 (X)
\end{equation}
and an intersection homomorphism
\begin{equation}
	c_1 (D) \cdot : A^k (X) \rightarrow A^{k+1} (X)
\end{equation}
in codimension notation. Finally, the degree map on $0$-cycles is
\begin{equation}
	\deg \left( \sum_i a_i [p_i] \right) = \sum_i a_i [K(p_i) : K]
\end{equation}
for closed points $p_i$. If $D_1,\ldots,D_k$ are Cartier divisors and $[\alpha]\in A_k(X)$, then
\begin{equation}
	D_1\cdots D_k\cdot[\alpha]:=\deg(c_1(D_1)\cdots c_1(D_k)\cdot[\alpha]).
\end{equation}

\medskip
\noindent\emph{Numerical cycle classes.}
We will also use flat pullbacks of cycles.
If $q:X\rightarrow Y$ is flat of relative dimension $e$, then
\begin{equation}
	q^*:A_m(Y)\rightarrow A_{m+e}(X),\qquad q^*[V]=[q^{-1}(V)].
	\label{eq:flat-pullback-cycles}
\end{equation}

Two $m$-cycles $\alpha,\beta$ on a normal projective variety $X$ are numerically equivalent, in the sense of
\cite{dangDegreesIteratesRational2020}, if for every flat morphism $q:X_1\rightarrow X$ of relative dimension $e$ and
all Cartier divisors $D_1,\ldots,D_{e+m}$ on $X_1$ we have
\begin{equation}
	D_1\cdots D_{e+m}\cdot q^*\alpha=D_1\cdots D_{e+m}\cdot q^*\beta.
	\label{eq:numerical-equivalence-dimension}
\end{equation}
We write $N_m(X)$ for the group of numerical classes of $m$-cycles and set \[ N_m(X)_{\mathbb R}:=N_m(X)\otimes_{\mathbb Z}\mathbb R.
\]
Unless otherwise specified, all cones of cycle classes below are cones in these real vector spaces.

\medskip
\noindent\emph{Dual numerical classes.}
We also use the dual numerical groups of \cite{dangDegreesIteratesRational2020}.
The group $N^k(X)$ consists of dual numerical classes of codimension $k$, and we put $N^k(X)_{\mathbb R}:=N^k(X)\otimes_{\mathbb Z}\mathbb R$.
The
direct sum $N^\bullet(X)_{\mathbb R}$ is a graded ring, and it acts on dimension classes by
\begin{equation}
	N^k(X)_{\mathbb R}\times N_m(X)_{\mathbb R}\longrightarrow N_{m-k}(X)_{\mathbb R},\qquad (\alpha,z)\longmapsto \alpha\cdot z.
	\label{eq:dual-numerical-action}
\end{equation}
Thus, on a normal projective variety, products below are products in the dual numerical ring and its induced action on $N_\bullet(X)$; they are not arbitrary intersections of ordinary cycles.
When $X$ is smooth of dimension $d$, the canonical map $N^k(X)_{\mathbb Q}\rightarrow N_{d-k}(X)_{\mathbb Q}$ is an isomorphism, recovering the usual notation for numerical cycle classes.

\medskip
\noindent\emph{Divisorial positivity.}
Recall first that a line bundle $L$ on a projective $K$-variety $X$ is \emph{very ample} if it induces a closed
immersion
\begin{equation}
	\phi:X\hookrightarrow \P^N_K,\qquad L\simeq \phi^*\OO(1).
	\label{eq:very-ample-line-bundle}
\end{equation}
It is \emph{ample} if $L^{\otimes m}$ is very ample for some $m>0$.
A Cartier divisor $D$ is very ample, respectively ample, if the line bundle $\OO_X(D)$ is very ample, respectively ample.
The ample cone $\operatorname{Amp}^1(X)\subset N^1(X)_{\mathbb R}$ is the cone generated by classes of ample Cartier divisors; its elements are called ample real divisor classes.

For real divisor classes in $N^1(X)_{\mathbb R}$, we use the usual cones $\Nef^1(X)$, $\operatorname{Psef}^1(X)$, and $\operatorname{Big}^1(X)$.
A real divisor class is \emph{nef} if it has non-negative intersection with every integral curve on $X$.
It is \emph{pseudoeffective} if it lies in the closure of the cone generated by effective divisor classes.
We say that it is \emph{big} if it belongs to the interior of the pseudoeffective cone; equivalently, by the Kodaira lemma, it can be written as the sum of an ample real class and an effective real class.
This is the form of bigness that we shall use below.

\medskip
\noindent\emph{BPF classes.}
We recall the definition of basepoint free classes in the normal projective setting from \cite[\S 3.3]{dangDegreesIteratesRational2020}.
Let $X$ be a normal projective variety.
A class in
$N^i(X)_{\mathbb R}$ is \emph{basepoint free} if it belongs to the closure of the convex cone generated by products
\begin{equation}[\gamma_1]\cdots[\gamma_r],
	\label{eq:normal-bpf-generators}
\end{equation}
where $i_1+\cdots+i_r=i$ and each factor $[\gamma_j]\in N^{i_j}(X)_{\mathbb R}$ is induced by a complete intersection
\begin{equation}
	\gamma_j=A_{j,1}\cdots A_{j,e_j+i_j}
	\label{eq:bpf-complete-intersection-generator}
\end{equation}
of ample Cartier divisors on an equidimensional projective scheme $p_j:X_j\rightarrow X$ that is flat of relative
dimension $e_j$.
We denote this cone by $\BPF^i(X)\subset N^i(X)_{\mathbb R}$.

By \cite[Theorem 3.3.3]{dangDegreesIteratesRational2020}, products of BPF classes are BPF, and finite products of BPF classes can be evaluated on dimension-cycle classes through the action \eqref{eq:dual-numerical-action}.
In the smooth case this agrees with the Fulger--Lehmann strongly basepoint free cone.
Moreover, in codimension one, Dang proves that \[ \BPF^1(X)=\Nef^1(X), \] see \cite[Theorem 3.3.3(vi)]{dangDegreesIteratesRational2020}.
Thus BPF classes are a higher-codimensional analogue of nef divisor classes.

\subsection{Twisted intersection-theoretic operations}\label{subsec:twisted-intersection-operations}
We now record how the preceding constructions behave after changing the structural morphism and under twisted morphisms.

\begin{lemme}
	\label{lemme:changing-structure-morphism}
	Let $X$ be a scheme with two morphisms $\tau_i : X \rightarrow \spec K, i=1,2$ that define two $K$-variety structure on $X$.
	We write $X_1, X_2$ for the two $K$-varieties, we have canonical isomorphisms
	\begin{equation}
		Z_m(X_1) \simeq Z_m (X_2) \text{ and } A_m(X_1) \simeq A_m (X_2).
	\end{equation}
	We also have a canonical isomorphism of the Picard groups
	\begin{equation}
		\Pic (X_1) \simeq \Pic(X_2)
	\end{equation}
	which is compatible with the first Chern class
	\begin{equation}
		c_1 (\Pic (X_i)) \rightarrow A^1 (X_i).
	\end{equation}
	Furthermore, if $D$ is a Cartier divisor over $X$, then the induced map
	\begin{equation}
		c_1 (D) \cdot : A^k (X_i) \rightarrow A^{k+1} (X_i)
	\end{equation}
	is compatible with this isomorphism.
	In particular, if $X$ is a $K$-variety and $\tau:\spec K\rightarrow \spec K$ is finite, then the two $K$-structures on the underlying scheme of $X$ and $X^{(\tau)}$ give canonical isomorphisms $A_m(X)\simeq A_m(X^{(\tau)})$ and $\Pic(X)\simeq \Pic(X^{(\tau)})$.

\end{lemme}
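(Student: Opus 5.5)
The approach is to check that every object in the statement is defined purely in terms of the underlying scheme $X$, and never through the structural morphism to $\spec K$. All the identifications will then be the identity on the underlying data.

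\emph{Cycles.} I start with $Z_m$. An irreducible closed subset $V\subset X$ is the same for $X_1$ and $X_2$, so the only issue is its dimension. Since $X$ is of finite type over $K$ for either structure, $\dim V$ equals the transcendence degree of $K(V)$ over $K$ via $\tau_1$, and also via $\tau_2$. I would rather use the intrinsic description: for a scheme of finite type over a field, $\dim V$ is the Krull dimension of the topological space $V$, and Krull dimension is a topological invariant. Hence $Z_m(X_1)=Z_m(X_2)$ as free abelian groups on the same set of generators. For rational equivalence, the generators are $\div(r)$ for $W$ irreducible of dimension $m+1$ and $r\in K(W)^\times$. Both $W$ and $K(W)$ (the residue field at the generic point) are intrinsic to the scheme. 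The cycle $\div(r)=\sum_{V}\ord_V(r)[V]$ involves the orders of vanishing $\ord_V(r)$, computed from lengths over the local rings $\OO_{W,V}$, which are again intrinsic. So the subgroups of relations coincide and $A_m(X_1)=A_m(X_2)$.

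\emph{Picard groups and $c_1$.} $\Pic(X)$ is the group of isomorphism classes of invertible $\OO_X$-modules, or of Cartier divisors modulo principal ones. Both descriptions depend only on the ringed space $(X,\OO_X)$, so $\Pic(X_1)=\Pic(X_2)$. The first Chern class sends $D$ to its associated Weil divisor $[D]=\sum_V \ord_V(D)[V]$, with $\ord_V$ as above, so it is compatible with the identifications. The operation $c_1(D)\cdot[V]$ is defined by restricting $\OO(D)$ to $V$, taking a Cartier divisor $D|_V$ representing it, and forming $[D|_V]\in A_{\dim V-1}(V)$, which is then pushed forward by the closed immersion. Closed immersions and pushforward of cycles along them do not involve the base, so this is intrinsic as well.

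The main point needing care is that Fulton's construction of $c_1(D)\cdot$ on $A_*$ involves a well-definedness argument, namely that it respects rational equivalence. I do not need to redo that argument. The construction at the level of cycles is intrinsic, so the induced maps on the quotients agree under the identifications.

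The final assertion is the special case $\tau_1=q$ and $\tau_2=\tau\circ q$. I would also note the one place where the structure does matter, which is the degree map $\deg[p]=[K(p):K]$. This changes by the factor $[K:\tau^*K]$, which is why the statement stops short of intersection numbers.
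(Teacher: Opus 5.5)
Your proposal is correct and takes the same approach as the paper: dimension and codimension (as topological invariants), function fields, orders of vanishing, and Cartier divisors are all intrinsic to the underlying scheme. Hence cycles, rational equivalence, $\Pic$, $c_1$ and $c_1(D)\cdot$ coincide for $X_1$ and $X_2$. Your extra details are consistent with the paper's proof and slightly more explicit than it: describing $c_1(D)\cdot[V]$ through restriction and pushforward along a closed immersion, noting that the cycle-level construction is what descends to the identified quotients, and observing that only the degree map on $0$-cycles depends on the structure morphism.
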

\begin{proof}
	Let $Y$ be an irreducible closed subset of $X$ and let $\eta_Y$ be its generic point.
	Then viewing $Y$ as a subvariety of $X_1$ and $X_2$, we have two $K$-structure $Y_1,Y_2$ over $Y$.
	The dimension and codimension of $Y_i$ in $X_i$ do not depend on $i$ since they are intrinsic to the underlying topological space.
	More precisely, $\dim Y$ is the Krull dimension of the irreducible closed subset $\overline{\{\eta_Y\}}$, and $\codim_XY$ is its codimension in the underlying topological space of $X$.

	We thus have an isomorphism
	\begin{equation}
		\sum_Y a_Y Y_1 \in Z_m (X_1) \mapsto \sum_Y a_Y Y_2 \in Z_m (X_2).
	\end{equation}
	We have to show that this isomorphism carries through when we take classes of cycles.
	Let $W$ be a closed irreducible subset of $X$ of dimension $m+1$, if $V \subset W$ is a closed subvariety of dimension $m$, then again the function $\ord_V$ does not depend on the structural morphism to $K$ nor does the field of rational functions of $W$.
	If $r$ is a
	rational function over $W$, then
	\begin{equation}
		\div_{W_1} (r) = \sum_{V \subset W} \ord_V (r) V_1 \mapsto \sum_{V \subset W} \ord_V (r) V_2
	\end{equation}
	so \emph{rationally equivalent to zero} classes in $X_1$ map to rationally equivalent to zero classes in $X_2$ and we
	thus have the isomorphism
	\begin{equation}
		A_m (X_1) \simeq A_m (X_2).
	\end{equation}
	Similarly, the notion of Cartier divisor is independent of the structural morphism so that $\Pic (X_1)\simeq \Pic(X_2)$ canonically and the compatibility with the first Chern class and the intersection homomorphism is also proven.
\end{proof}
However, intersection numbers may depend on the structural morphism.

\begin{lemme}
	\label{lemme:twist-degree-map}
	Let $X$ be a $K$ variety and $\tau : \spec K \rightarrow \spec K$ a finite map.
	The degree map $\deg : A_0 (X) \rightarrow \R$ and $\deg^\tau : A_0 (X^{(\tau)}) \rightarrow \R$ are different.
	We have the following relation
	\begin{equation}
		\deg^\tau = \deg (\tau) \deg.
	\end{equation}
\end{lemme}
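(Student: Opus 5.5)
Since $A_0$ is generated by classes of closed points, and by Lemma \ref{lemme:changing-structure-morphism} we have $A_0(X)\simeq A_0(X^{(\tau)})$ compatibly with the identification of cycles (a closed point of the scheme $X$ is the same closed point in both structures), it suffices to compare the two degrees on a single closed point $p$. In $X$ the degree is $[\kappa(p):K]$, where $K\to\kappa(p)$ is induced by the structural morphism $q$. In $X^{(\tau)}$ the structural morphism is $\tau\circ q$, so the relevant field map is the composite $K\xrightarrow{\tau^*}K\xrightarrow{q^*}\kappa(p)$. Degrees of finite field extensions are multiplicative in towers, hence
\[
	\deg^\tau[p]=[\kappa(p):K]_{\tau\circ q}=[\kappa(p):K]_{q}\cdot[K:\tau^*K]=\deg(\tau)\deg[p],
\]
where $\deg(\tau):=[K:\tau^*(K)]$ is finite by assumption. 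Extending by linearity to $Z_0$ and passing to $A_0$ gives $\deg^\tau=\deg(\tau)\deg$.

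Two points need a brief check. First, closed points of $X$ remain closed points of $X^{(\tau)}$; this is immediate since closedness is topological. Moreover $\kappa(p)$ is finite over $K$ via $q$ (Nullstellensatz), and hence also finite via $\tau\circ q$, because $\tau$ is finite. Second, $X^{(\tau)}$ is indeed of finite type over $K$, since a composite of finite type and finite morphisms is of finite type; so $\deg^\tau$ is well defined and the Chow group identification applies.

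To show that the two maps are genuinely different, note that they differ whenever $\deg(\tau)>1$, for instance in the example of Remark \ref{rmq:not-base-change} with $t\mapsto t^2$, where $\deg(\tau)=2$. There is no real obstacle here. The only care needed is bookkeeping of which $K$-algebra structure is used on $\kappa(p)$, and that the tower formula is applied in the order $K\subset\tau^*$-image.
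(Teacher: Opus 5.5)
Your proof is correct and follows the paper's argument exactly. The paper's one-line proof is the tower law $[\kappa(p):\tau^*(K)]=[\kappa(p):K]\,[K:\tau^*(K)]$ with $[K:\tau^*(K)]=\deg(\tau)$, applied to closed points and extended linearly, which you have written out in full; the only slip is verbal, since the tower is $\tau^*(K)\subset K\subset\kappa(p)$ (not ``$K\subset\tau^*$-image''), which is the order your displayed computation actually uses.
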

\begin{proof}
	The lemma follows directly from the fact that $[K : \tau(K)] = \deg (\tau)$.
\end{proof}
When defining dynamical degrees this will have to be taken into account, however we will see in \S \ref{subsec:numerical-classes} that the groups of numerical classes of $X$ and $X^{(\tau)}$ are canonically isomorphic.

\begin{lemme}
	\label{lemme:canonical-divisor-unchanged}
	Let $X$ be a smooth variety over a field $K$ and let $\tau : \spec K \rightarrow \spec K$ be a finite separable map, then after identifying the underlying schemes of $X$ and $X^{(\tau)}$, the sheaf of differential forms $\Omega_{X/K}$ and $\Omega_{X^{(\tau)} / K}$ are canonically isomorphic as sheaves of $\OO_X$-modules.
	In particular, under the canonical isomorphism $\Pic (X) \simeq \Pic (X^{(\tau)})$, we have equality of the canonical
	sheaves
	\begin{equation}
		\omega_X = \omega_{X^{(\tau)}}.
	\end{equation}
\end{lemme}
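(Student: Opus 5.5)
The plan is to compare the two sheaves of relative differentials through the first fundamental exact sequence for the tower $X \to \spec K \xrightarrow{\tau} \spec K$. Write $q : X \to \spec K$ for the structural morphism of $X$, so that the structural morphism of $X^{(\tau)}$ is $\tau \circ q$. In ring-theoretic terms, $\Omega_{X^{(\tau)}/K}$ is the sheaf of differentials of $\OO_X$ relative to the subfield $\tau^*(K) \subset K$, and $\Omega_{X/K}$ is the sheaf of differentials relative to $K$ itself. The exact sequence attached to $\tau^*(K) \subset K \to \OO_X$ reads
\begin{equation}
	q^*\Omega_{K/\tau^*(K)} \longrightarrow \Omega_{X/\tau^*(K)} \longrightarrow \Omega_{X/K} \longrightarrow 0 ,
\end{equation}
that is, $q^*\Omega_{\spec K/\spec K,\tau} \to \Omega_{X^{(\tau)}/K} \to \Omega_{X/K} \to 0$.

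The key step is that $\Omega_{K/\tau^*(K)} = 0$. Since $\tau$ is finite and separable, $K/\tau^*(K)$ is a finite separable field extension, so it is generated by a single element $\alpha$ whose minimal polynomial $P$ satisfies $P'(\alpha) \neq 0$. From $P(\alpha) = 0$ we get $P'(\alpha)\,d\alpha = 0$, hence $d\alpha = 0$ and the module of differentials vanishes. Consequently the left term of the sequence is zero, and the natural surjection $\Omega_{X^{(\tau)}/K} \to \Omega_{X/K}$ is an isomorphism provided it is also injective.

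For injectivity I would not rely on a general left-exactness statement. Instead I would use the fact that formally étale (here: finite separable) base extensions satisfy $\Omega_{A/k} \simeq \Omega_{A/k'}$ when $k \to k'$ is formally unramified and formally smooth. Alternatively, I would check it directly. The derivations of $\OO_X$ into any $\OO_X$-module $M$ that vanish on $\tau^*(K)$ automatically vanish on $K$, because a derivation restricted to $K$ is a $\tau^*(K)$-derivation $K \to M$, and such a derivation is zero since $\operatorname{Der}_{\tau^*(K)}(K, M) = \operatorname{Hom}_K(\Omega_{K/\tau^*(K)}, M) = 0$. Hence
\begin{equation}
	\operatorname{Der}_{\tau^*(K)}(\OO_X, M) = \operatorname{Der}_K(\OO_X, M)
\end{equation}
for all $M$. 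By the universal property, the two sheaves of differentials represent the same functor, so the map above is an isomorphism. This argument is local on affine opens and compatible with localization, so the isomorphisms glue to a canonical isomorphism of $\OO_X$-modules.

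For the canonical sheaves, note first that $X$ smooth over $K$ and $\tau$ étale imply that $X^{(\tau)}$ is smooth over $K$, as a composition of smooth maps, of the same dimension $d$. Taking $\det = \bigwedge^d$ of the isomorphism $\Omega_{X^{(\tau)}/K} \simeq \Omega_{X/K}$ of locally free sheaves of rank $d$ then gives $\omega_{X^{(\tau)}} \simeq \omega_X$ as line bundles on the common underlying scheme. Their classes coincide under the identification $\Pic(X) \simeq \Pic(X^{(\tau)})$ of Lemma \ref{lemme:changing-structure-morphism}, since that identification is the identity on invertible sheaves.

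The only delicate point, and the main obstacle, is the vanishing $\Omega_{K/\tau^*(K)} = 0$, which is precisely where separability of $\tau$ is used. For inseparable $\tau$ the statement fails, for instance with the Frobenius of $\mathbf{F}_p(t)$.
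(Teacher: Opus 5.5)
Your argument is correct and is essentially the paper's: the first fundamental exact sequence for $X \to \spec K \xrightarrow{\tau} \spec K$, together with $\Omega_{K/\tau^*(K)}=0$ by separability. Your extra injectivity argument via derivations is valid but unnecessary: that sequence is exact at the middle term, so the vanishing of $q^*\Omega_{K/\tau^*(K)}$ already makes $\Omega_{X^{(\tau)}/K}\to\Omega_{X/K}$ injective.
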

\begin{proof}
	For a scheme morphism $X \rightarrow Y$, we write $\Omega_{X / Y}$ for the relative sheaf of differentials.
	In particular, $\omega_X = \bigwedge^{\dim X} \Omega_{X/K}$.
	Consider the composition of scheme morphism $X \xrightarrow{q} \spec K \xrightarrow{\tau} \spec K^\tau$.
	Then, by
	\cite{hartshorneAlgebraicGeometry1977} II Proposition 8.3 we have an exact sequence
	\begin{equation}
		q^* \Omega_{K / K^\tau} \rightarrow \Omega_{X/K^\tau} \rightarrow \Omega_{X/K} \rightarrow 0.
	\end{equation}
	Since $\tau : K \rightarrow K$ is finite and separable, we have $\Omega_{K / K^\tau} = 0$.
	Thus, $\Omega_{X^{(\tau)}} \simeq \Omega_X$ and the result follows.
\end{proof}

Recall that by Hodge theory we have that if $X$ is a complex projective variety, then
\begin{equation}
	H^k (X, \C) = \oplus_{p+q = k} H^{p,q}(X)
\end{equation}
where
\begin{equation}
	H^{p,q} (X) = H^p (X, \Omega_X^{q}).
\end{equation}

\begin{cor}
	\label{cor:}
	Let $X$ be a smooth complex projective variety and let $\tau : \C \rightarrow \C$ be a field automorphism, then
	the preceding identification of differential forms induces canonical $\tau$-semilinear isomorphisms
	\begin{equation}
		H^{p,q} (X^{(\tau)}) \simeq H^{p,q} (X).
	\end{equation}
\end{cor}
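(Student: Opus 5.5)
The corollary should follow from Lemma \ref{lemme:canonical-divisor-unchanged} combined with a careful look at how sheaf cohomology depends on the $K$-structure. The statement concerns the identification $H^{p,q}(X^{(\tau)}) = H^p(X^{(\tau)}, \Omega^q_{X^{(\tau)}/\C}) \simeq H^p(X, \Omega^q_{X/\C})$. Since $\tau$ is an automorphism of $\C$, it is finite of degree one, and hence separable. The lemma therefore applies directly.

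\textbf{Step 1: identify the sheaves of differentials.} By Lemma \ref{lemme:canonical-divisor-unchanged}, there is a canonical isomorphism $\Omega_{X^{(\tau)}/\C} \simeq \Omega_{X/\C}$ of $\OO_X$-modules on the common underlying scheme. Taking exterior powers over $\OO_X$ gives $\Omega^q_{X^{(\tau)}/\C}\simeq \Omega^q_{X/\C}$ for every $q$. Here I would check the point implicit in the lemma. When $\tau$ is an automorphism, the structural morphisms $X\to\spec\C$ and $\tau\circ(X\to\spec\C)$ have the same kernel of derivations. Indeed, a derivation killing $\tau^*(\C)$ kills $\C$, because $\tau^*(\C)=\C$ as a subring of $\OO_X$. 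In fact this shows directly that the two sheaves of Kähler differentials are \emph{equal} as quotients of the same sheaf of $\OO_X$-modules.

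\textbf{Step 2: pass to cohomology and track the linear structure.} Sheaf cohomology $H^p(X,\mathcal F)$ of an $\OO_X$-module is defined on the underlying topological space and ringed space. It is therefore literally the same abelian group for both structures, and it is a module over $\Gamma(X,\OO_X)\supset \C$. The only difference is how $\C$ acts. On $X$, a scalar $a\in\C$ acts through its image in $\OO_X$ via the structural map $q$. On $X^{(\tau)}$, it acts through the image of $\tau^*(a)$. Hence the identity map of abelian groups $H^p(X^{(\tau)},\Omega^q)\to H^p(X,\Omega^q)$ satisfies $\lambda\cdot_{\tau} v = \tau^*(\lambda)\cdot v$. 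This is exactly $\tau$-semilinearity, possibly up to replacing $\tau$ by $\tau^{-1}$ depending on the direction convention. One can compute with Čech cohomology on a common affine cover to make this concrete.

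\textbf{Step 3: compatibility with Hodge decomposition, and the main obstacle.} Step 2 gives a canonical, bijective, additive, semilinear isomorphism for each $(p,q)$. Note that we do \emph{not} claim any compatibility with $H^k(X,\C)$ computed analytically. That claim would be false in general, since $X^{(\tau)}$ is the conjugate variety, whose analytic topology differs. The only delicate point, and the one I expect to need care, is the correct bookkeeping of the scalar action in Step 2, so that the semilinearity is with respect to $\tau$ rather than $\tau^{-1}$. I would settle this by testing on $X=\spec\C$, $p=q=0$, where $H^0=\C$ and the identification is $\tau^*$ itself.
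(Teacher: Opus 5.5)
Your proposal is correct and follows the paper's implicit argument: the corollary comes from Lemma \ref{lemme:canonical-divisor-unchanged}. The sheaves $\Omega^q$ for $X$ and $X^{(\tau)}$ are the same sheaf on the same ringed space, so their cohomology groups coincide as abelian groups, and only the $\C$-action changes, by $\tau^*$. Your Step 2 computation, $\lambda\cdot_\tau v=\tau^*(\lambda)\cdot v$, already settles the direction of semilinearity, so the hedge about $\tau$ versus $\tau^{-1}$ and the test case are not needed.
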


\begin{dfn}
	If $X,X'$ are projective varieties over a field $K$ and $f_\tau : X \rightarrow X'$ is a twisted morphism, then
	we define for every $m \geq 0$
	\begin{equation}
		(f_\tau)_* : Z_m(X) \rightarrow Z_m(X')
	\end{equation}
	by using the induced morphism of $K$-varieties $f : X^{(\tau)} \rightarrow X'$ and the canonical isomorphism
	$Z_m(X^{(\tau)}) \simeq Z_m(X)$.
	This is well defined by Lemma \ref{lemme:changing-structure-morphism}.
	Equivalently, if $X$ and $X'$ are equidimensional of dimensions $d$ and $d'$, this map sends codimension-$k$ cycles on $X$ to codimension $k+d'-d$ cycles on $X'$.
	We also have a pullback operator on Cartier divisors defined by
	\begin{equation}
		\Pic (X') \xrightarrow{f^*} \Pic\left(X^{(\tau)}\right) \xrightarrow{\sim} \Pic(X).
	\end{equation}

	If $K= \C$, then we also have $f_\tau^* : H^{p,q} (X') \rightarrow H^{p,q} (X)$ defined by
	\begin{equation}
		f_\tau^* : \Omega_{X'} \xrightarrow{f^*} \Omega_{X^{(\tau)}} \simeq \Omega_X.
	\end{equation}
\end{dfn}

\begin{lemme}
	\label{lemme:numerical-classes-changing-structure}
	Let $X$ be a $K$-variety and $\tau : \spec K \rightarrow \spec K$ a finite map, then the canonical isomorphisms
	$A_k (X) \simeq A_k (X^{(\tau)})$ yield canonical isomorphisms
	\begin{equation}
		N_k (X) \simeq N_k (X^{(\tau)})
	\end{equation}
	even though the degree function differs.
\end{lemme}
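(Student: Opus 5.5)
We need to show that numerical equivalence of $k$-cycles is the same relation for $X$ and $X^{(\tau)}$ under the canonical identification $Z_k(X)\simeq Z_k(X^{(\tau)})$ of Lemma \ref{lemme:changing-structure-morphism}. Then the induced map $A_k(X)\simeq A_k(X^{(\tau)})$ descends to an isomorphism on the quotients $N_k$.

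Numerical equivalence is defined by \eqref{eq:numerical-equivalence-dimension}, using flat morphisms $q:X_1\to X$ and intersection numbers on $X_1$. We compare the two test families.

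\emph{Step 1: the test families correspond.} Let $q:X_1\to X$ be a flat morphism of $K$-schemes of relative dimension $e$. Then $X_1^{(\tau)}\to X^{(\tau)}$ is the same scheme morphism. It is still flat of relative dimension $e$, since flatness and fiber dimension are properties of the underlying schemes. Conversely, let $q:X_1\to X^{(\tau)}$ be a flat morphism of $K$-varieties. By Lemma \ref{lemme:change-of-base} we can write $X_1=Z^{(\tau)}$ with $Z\to X$ a morphism of $K$-varieties. This $Z\to X$ is the same underlying scheme morphism, hence flat of the same relative dimension. So the flat test morphisms for $X$ and for $X^{(\tau)}$ are in bijection, with the same underlying schemes. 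The same argument works if the definition allows projective schemes rather than varieties. Lemma \ref{lemme:change-of-base} only uses the structural morphism, and projectivity is preserved because $\tau$ is finite.

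\emph{Step 2: the pairings differ by a constant.} Fix such a $q$. The flat pullback $q^*[V]=[q^{-1}(V)]$ is defined scheme-theoretically, so it commutes with the identification of cycle groups. Lemma \ref{lemme:changing-structure-morphism} identifies the Cartier divisors $D_i$ on $X_1$ and $X_1^{(\tau)}$, compatibly with $c_1(D_i)\cdot$ on Chow groups. Hence the $0$-cycle class $c_1(D_1)\cdots c_1(D_{e+k})\cdot q^*\alpha$ corresponds on both sides. By Lemma \ref{lemme:twist-degree-map} the two degrees differ only by the factor $\deg(\tau)$:
\[
D_1\cdots D_{e+k}\cdot q^*\alpha\big|_{X_1^{(\tau)}}=\deg(\tau)\,D_1\cdots D_{e+k}\cdot q^*\alpha\big|_{X_1}.
\]
Since $\deg(\tau)\neq 0$, the equality \eqref{eq:numerical-equivalence-dimension} for $\alpha,\beta$ holds for one structure exactly when it holds for the other. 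Therefore the subgroups of numerically trivial cycles coincide, and we get $N_k(X)\simeq N_k(X^{(\tau)})$. Tensoring with $\R$ gives the same statement for $N_k(X)_\R$.

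The main obstacle is Step 1: we must make sure the class of flat test morphisms, and any side conditions on them, is unchanged by twisting. The key input is Lemma \ref{lemme:change-of-base}, which says every $K$-morphism into $X^{(\tau)}$ comes from a twist of a $K$-morphism into $X$. If the definition of dual classes $N^k$ is also needed, the same argument applies verbatim. Those classes are defined through the same flat pullbacks and intersection pairings, with degrees rescaled by the fixed factor $\deg(\tau)$.
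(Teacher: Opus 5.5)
Your proposal is correct and is essentially the paper's proof. Both match the flat test morphisms for $X$ and $X^{(\tau)}$ using Lemma \ref{lemme:change-of-base} in one direction and twisting in the other. Both then use Lemma \ref{lemme:twist-degree-map} to see that the test intersection numbers differ by the nonzero factor $\deg(\tau)$, so the numerically trivial cycles are the same on both sides.
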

\begin{proof}
	Recall that by Lemma \ref{lemme:twist-degree-map}, $\deg^{(\tau)} = \deg(\tau) \cdot \deg$.
	Let $\alpha \in Z_k (X)$ be a $k$-cycle numerically equivalent to zero.
	We show that $\alpha^{(\tau)}$ is also numerically equivalent to zero.
	Let $Y$ be a $K$-variety with a flat morphism $q : Y \rightarrow X^{(\tau)}$ of relative dimension $e$ and $D_1, \dots, D_{e+k}$ Cartier divisors over $Y$.
	By Lemma \ref{lemme:change-of-base}, we have that $Y = Z^{(\tau)}$ and $q = g^{(\tau)}$ where $g : Z \rightarrow X$ is also flat and $D_i = E_i^{(\tau)}$ where $E_i$ are Cartier divisors over $Z$ by Lemma \ref{lemme:changing-structure-morphism}.
	Now, by Lemma \ref{lemme:twist-degree-map}
	\begin{equation}
		D_1 \dots D_{e+k} \cdot q^* \alpha^{(\tau)} = E_1^{(\tau)} \cdots E_{e+k}^{(\tau)} \cdot (g^{(\tau)})^* \alpha^{(\tau)} = (\deg \tau) \cdot E_1 \cdots E_{e + k} \cdot g^* \alpha = 0.
	\end{equation}

	Conversely if $\alpha^{(\tau)} \in Z_k(X^{(\tau)})$ is a $k$-cycle numerically equivalent to zero, then if $q : Y \rightarrow X$ is a flat
	morphism and $D_1, \dots, D_{e+k}$ are Cartier divisors over $Y$, then the induced morphism $q^{(\tau)} : Y^{(\tau)}
		\rightarrow X^{(\tau)}$ is also flat and again by Lemma \ref{lemme:twist-degree-map}
	\begin{equation}
		D_1 \dots D_{e+k} \cdot q^* \alpha = \frac{1}{\deg \tau} D_1^{(\tau)} \dots D_{e+k}^{(\tau)} \cdot (q^{(\tau)})^* \alpha^{(\tau)} = 0.
	\end{equation}
\end{proof}

\begin{dfn}[\cite{dangDegreesIteratesRational2020}]\label{dfn:pullback}
	If $\alpha \in N^k (Y)_{\mathbb R}$ and $f_\tau : X \rightarrow Y$ is a dominant twisted map, we define the pullback $f_\tau^*
		\alpha$ by duality:
	\begin{equation}
		\forall \beta \in N_k(X)_{\mathbb R}, \quad f_\tau^* \alpha \cdot \beta = \alpha \cdot (f_\tau)_* \beta.
	\end{equation}
\end{dfn}
\begin{lemme}
	\label{lemme:ampleness}
	Let $X$ be a projective variety over a field $K$ and let $\tau : \spec K \rightarrow \spec K$ be a finite map.
	If $L$ is a Cartier divisor, then $L$ is very ample (respectively ample) over $X$ if and only if $L^{(\tau)}$ is very ample (respectively ample) over $X^{(\tau)}$.
\end{lemme}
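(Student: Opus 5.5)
The plan is to prove the very ample case first and then deduce the ample case, since $L$ ample means $L^{\otimes m}$ very ample for some $m>0$ and twisting commutes with tensor powers under the canonical isomorphism $\Pic(X)\simeq\Pic(X^{(\tau)})$ of Lemma \ref{lemme:changing-structure-morphism}. For the very ample case the natural tool is Proposition \ref{prop:twisted-morphism-affine-space}, which lets us pass between closed immersions into $\P^N_K$ over the two $K$-structures by composing with $\hat\tau$.

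Suppose first that $L$ is very ample on $X$, so there is a closed immersion of $K$-varieties $\phi : X\hookrightarrow \P^N_K$ with $L\simeq \phi^*\OO(1)$. I would consider the twisted morphism $\psi_\tau := \hat\tau\circ\phi : X\to \P^N_K$, viewed as a morphism of $K$-varieties $\psi : X^{(\tau)}\to\P^N_K$. I then need two facts. The first is that $\psi$ is a closed immersion. The second is that $\psi^*\OO(1)$ corresponds to $L$ under the identification of Picard groups.

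For the second fact, note that $\hat\tau^*\OO(1)\simeq\OO(1)$ as line bundles on the underlying scheme, because $\hat\tau^*$ sends the coordinate $X_i$ to $X_i$. Hence $\psi^*\OO(1)=\phi^*\hat\tau^*\OO(1)\simeq L$.

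The first fact, that $\psi$ is a closed immersion, is the main obstacle. The map $\hat\tau:\P^N_K\to\P^N_K$ is finite, being the base change of $\tau$, but it is not an immersion in general. So the composite $\hat\tau\circ\phi$ is only finite, not a closed immersion as a scheme map. I would therefore not use $\hat\tau\circ\phi$ directly. Instead I would work with the sections themselves: choose a $K$-basis $s_0,\dots,s_N$ of the sections defining $\phi$, extend it to a $\tau^*(K)$-basis of the $\tau^*(K)$-span of $K\cdot s_i$, and map $X^{(\tau)}$ into $\P^{N'}_K$ using these finitely many sections. Here the twisted $K$-action on $H^0(X,L)$ is through $\tau^*$, and $[K:\tau^*K]=\deg\tau<\infty$ makes the span finite dimensional. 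These sections still generate the same graded subalgebra as a ring, so surjectivity of $\bigoplus_m \operatorname{Sym}^m\to\bigoplus_m H^0(X,L^m)$ in large degrees is preserved. That gives the closed immersion.

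A cleaner route, which I would prefer, is the cohomological criterion. On a projective scheme, ampleness of $L$ is equivalent to: for every coherent sheaf $\mathcal F$, the sheaf $\mathcal F\otimes L^m$ is globally generated for $m\gg0$. This condition is intrinsic to the underlying scheme and does not involve $K$. One point needs care: $X^{(\tau)}$ must again be projective, that is, proper over $K$ via $\tau\circ q$. This holds because $\tau$ is finite, so the composite of proper morphisms is proper. Given this, the ample case follows immediately in both directions.

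For very ampleness, I would combine this argument with the section-span construction above, using symmetry. For the converse direction, the relevant fact is $X=(X^{(\tau)})$ twisted back in the sense of Lemma \ref{lemme:change-of-base}, which reduces that direction to the same argument. The converse needs genuine care and is where I expect the difficulty.
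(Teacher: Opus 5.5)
Your ampleness argument is correct. The criterion that $\mathcal F\otimes L^m$ is globally generated for $m\gg0$, for every coherent $\mathcal F$, applies to $X^{(\tau)}$ because $X^{(\tau)}$ is still of finite type and proper over $K$ when $\tau$ is finite. Your forward very-ample implication is also correct: you replace the sections spanning $V=\sum_i Ks_i$ by a $\tau^*(K)$-basis $\{a_js_i\}$, with $(a_j)$ a $\tau^*(K)$-basis of $K$, and this generates the same ring in positive degrees. The genuine gap is the converse, from $L^{(\tau)}$ very ample on $X^{(\tau)}$ to $L$ very ample on $X$. Your reduction by ``symmetry'' does not exist when $\deg\tau>1$. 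Twisting $X^{(\tau)}$ by any finite $\tau'$ gives structure morphism $\tau'\circ\tau\circ q$, and this equals $q$ only if $\tau^*\circ(\tau')^*=\id_K$, which forces $\tau^*$ to be surjective. Lemma \ref{lemme:change-of-base} only recovers $X$ from the factorization $\tau\circ q$. It does not exhibit $X$ as a twist of $X^{(\tau)}$, which is what rerunning your forward argument would require. The two directions really are asymmetric: one restricts scalars along $\tau^*(K)\subset K$, the other extends them.

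The missing step is actually the easy one, and the device you discarded settles it. Let $\psi:X^{(\tau)}\hookrightarrow\P^M_K$ be a closed immersion with $\psi^*\OO(1)\simeq L$. Viewing $\psi$ as a twisted morphism $X\to\P^M_K$, Proposition \ref{prop:twisted-morphism-affine-space} writes it as $\hat\tau\circ\phi$ with $\phi:X\to\P^M_K$ a $K$-morphism. Since $\hat\tau$ is finite, hence separated, and $\hat\tau\circ\phi$ is a closed immersion of schemes, $\phi$ is a closed immersion. Your own computation then gives $\phi^*\OO(1)\simeq\phi^*\hat\tau^*\OO(1)\simeq L$. Concretely, the same sections $t_0,\dots,t_M$ define $\phi$. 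On each $X_{t_j}$, the $K$-algebra generated by the $t_m/t_j$ contains the $\tau^*(K)$-algebra they generate, which is already $\OO(X_{t_j})$. So $\hat\tau\circ\phi$ is the right tool for the converse, while the enlarged linear system is what the forward direction needs.

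The paper argues differently. It cites a separation criterion for length-two subschemes and asserts that it depends only on the underlying scheme. That criterion is really a statement over algebraically closed fields. With intrinsic length, two distinct degree-two points of $\P^1_\R$ form a length-two $Z$ with $\dim_\R H^0(Z,\OO(1)_{|Z})=4>2=\dim_\R H^0(\P^1_\R,\OO(1))$, although $\OO(1)$ is very ample. Over algebraically closed $K$, $\tau$ is an automorphism, so symmetry is available there anyway. Your section-level argument, completed as above, is the one that covers a general $K$.
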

\begin{proof}
	We use the following intrinsic criterion for very ampleness: a line bundle $L$ on a projective variety is very ample if and only if, for every zero-dimensional closed subscheme $Z\subset X$ of length two, the restriction map \[ H^0(X,L)\longrightarrow H^0(Z,L_{|Z}) \] is surjective.
	This criterion depends only on the underlying scheme, the line bundle, and the restriction maps.
	These data are unchanged when we replace $X$ by $X^{(\tau)}$.
	Hence $L$ is very ample over $X$ if and only if $L^{(\tau)}$ is very ample over $X^{(\tau)}$.

	The assertion for ampleness follows immediately from the very ample case, since $(mL)^{(\tau)}=mL^{(\tau)}$ for every $m>0$.
\end{proof}

\begin{lemme}
	\label{lemme:basepoint-free-classes}
	If $X$ is a normal projective variety over a field $K$ and $\tau : \spec K \rightarrow \spec K$ is a finite map,
	then under the canonical isomorphism of real dual numerical spaces $N^k (X)_{\mathbb R} \simeq N^k (X^{(\tau)})_{\mathbb R}$ we have
	\begin{equation}
		\BPF^k (X) \simeq \BPF^k (X^{(\tau)}).
	\end{equation}
\end{lemme}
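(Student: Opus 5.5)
The plan is to show that the canonical identification $N^k(X)_{\mathbb R}\simeq N^k(X^{(\tau)})_{\mathbb R}$ maps each generator of the BPF cone to a positive multiple of a generator, and that the converse also holds. Since both cones are closures of convex cones, this gives equality.

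First, I would make the identification of dual numerical classes explicit. In \cite{dangDegreesIteratesRational2020}, a class in $N^k(X)$ is an equivalence class of operations on $N_\bullet$ of flat pullbacks. By Lemma \ref{lemme:change-of-base}, flat morphisms $p : X_1 \to X^{(\tau)}$ correspond bijectively to flat morphisms $p' : Z_1 \to X$ with $X_1 = Z_1^{(\tau)}$. Lemmas \ref{lemme:changing-structure-morphism} and \ref{lemme:numerical-classes-changing-structure} identify cycles, Chow groups and numerical groups on $Z_1$ and $Z_1^{(\tau)}$. Hence a dual class on $X$ transports to one on $X^{(\tau)}$. The point to check is that the transport is well defined: numerically trivial dual classes must go to numerically trivial ones. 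Every test pairing is multiplied by $\deg\tau\neq 0$ (Lemma \ref{lemme:twist-degree-map}), so vanishing is preserved, exactly as in the proof of Lemma \ref{lemme:numerical-classes-changing-structure}. The identification is also compatible with the ring structure and with the action \eqref{eq:dual-numerical-action}, because it comes from the identity on cycles and on Chern class operations.

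Next comes the key step, transporting generators. Let $\gamma_j = A_{j,1}\cdots A_{j,e_j+i_j}$ be a complete intersection of ample Cartier divisors on $p_j : X_j \to X$, flat of relative dimension $e_j$. Then $X_j^{(\tau)} \to X^{(\tau)}$ is flat of the same relative dimension, since flatness and fiber dimension are properties of the underlying scheme morphism. By Lemma \ref{lemme:ampleness}, each $A_{j,l}^{(\tau)}$ is ample on $X_j^{(\tau)}$, and $X_j^{(\tau)}$ is still an equidimensional projective scheme. The induced class $[\gamma_j]\in N^{i_j}(X)$ is the operation $z\mapsto (p_j)_*(\gamma_j\cdot p_j^*z)$. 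Its image is the analogous operation built from $\gamma_j^{(\tau)}$, because pushforward, flat pullback and $c_1$ all commute with the identification of Lemma \ref{lemme:changing-structure-morphism}. Taking products shows that generators \eqref{eq:normal-bpf-generators} go to generators, so the image of $\BPF^k(X)$ lies in $\BPF^k(X^{(\tau)})$.

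For the reverse inclusion I would use Lemma \ref{lemme:change-of-base}. Any flat $X_j\to X^{(\tau)}$ has the form $Z_j^{(\tau)}\to X^{(\tau)}$, and ampleness descends by Lemma \ref{lemme:ampleness}, so every generator for $X^{(\tau)}$ comes from a generator for $X$. The identification is a linear homeomorphism of finite-dimensional real spaces, so the closed convex cones correspond. I expect the main obstacle to be the first step: verifying carefully that the dual numerical identification is canonical and compatible with pushforward along the twisted structure. That compatibility is where the factor $\deg\tau$ could appear. I would handle it by noting that proper pushforward of cycles depends only on degrees of residue field extensions between points of the schemes. Those degrees are independent of the structure morphism, so $\deg\tau$ enters only through the final degree map and cancels in the numerical-equivalence conditions.
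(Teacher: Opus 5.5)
Your proposal is correct and follows the paper's proof: reduce to generators, note that twisting preserves flatness, relative dimension and equidimensionality, use Lemma \ref{lemme:ampleness} for ampleness, and use that the identification is compatible with the product. You go slightly further than the paper by proving the reverse inclusion explicitly via Lemma \ref{lemme:change-of-base}, which is needed since $\tau$ need not be invertible, and by observing that any factor $\deg\tau$ coming from the degree map is positive and so does not affect the cones.
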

\begin{proof}
	It is enough to check the generators in the above definition.
	Twisting preserves flat morphisms and equidimensionality, and by Lemma \ref{lemme:ampleness} it preserves ample Cartier divisors.
	Hence a complete intersection class on a flat cover of $X$ is carried to the corresponding complete intersection class on the twisted flat cover of $X^{(\tau)}$.
	Since the canonical isomorphism $N^\bullet(X)_{\mathbb R}\simeq N^\bullet(X^{(\tau)})_{\mathbb R}$ is compatible with the product in the dual numerical ring, it identifies the two BPF cones.
\end{proof}

\begin{prop}
	\label{prop:positivity-twist}
	Let $X$ be a normal projective variety over a field $K$ and let $\tau:\spec K\to\spec K$ be a finite map.
	Under the canonical identifications \[ N_m(X)_{\mathbb R}\simeq N_m(X^{(\tau)})_{\mathbb R}\qquad\text{and}\qquad N^k(X)_{\mathbb R}\simeq N^k(X^{(\tau)})_{\mathbb R}, \] the following positivity cones are identified: \[ \operatorname{Eff}_m(X)\simeq \operatorname{Eff}_m(X^{(\tau)}),\qquad \operatorname{Psef}_m(X)\simeq \operatorname{Psef}_m(X^{(\tau)}), \] and, in codimension one, \[ \operatorname{Psef}^1(X)\simeq \operatorname{Psef}^1(X^{(\tau)}),\qquad \Nef^1(X)\simeq \Nef^1(X^{(\tau)}),\qquad \operatorname{Big}^1(X)\simeq \operatorname{Big}^1(X^{(\tau)}).
	\]
	Moreover, the ample cones $\operatorname{Amp}^1$ and the BPF cones $\BPF^k$ are preserved.
\end{prop}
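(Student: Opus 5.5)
The plan is to reduce each cone to its generators and check, using the lemmas already proved, that twisting carries generators to generators. The canonical identifications of numerical groups come from Lemma \ref{lemme:numerical-classes-changing-structure}, and I would use its dual counterpart for $N^k$. All cones in question are closures, interiors, or convex hulls of explicit generators, and the identification is a linear isomorphism of finite-dimensional real vector spaces, hence a homeomorphism. So it suffices to show that the generating sets correspond, and then pass to convex hulls, closures, and interiors.

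The steps would be as follows.

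First, $\operatorname{Eff}_m$ is generated by classes of irreducible $m$-dimensional subvarieties. By Lemma \ref{lemme:changing-structure-morphism}, irreducible closed subsets and their dimensions are intrinsic to the underlying scheme, so the isomorphism $Z_m(X)\simeq Z_m(X^{(\tau)})$ sends effective cycles to effective cycles, in both directions. Passing to numerical classes and then to closures gives the statement for $\operatorname{Eff}_m$ and $\operatorname{Psef}_m$. The case $\operatorname{Psef}^1$ is identical: effective Cartier divisors are intrinsic, and the identification $\Pic(X)\simeq \Pic(X^{(\tau)})$ is compatible with the maps to $N^1$.

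Second, for $\Nef^1$, a class $D$ is nef on $X$ if $D\cdot C\ge 0$ for every integral curve $C$. Curves correspond under the twist. By Lemma \ref{lemme:twist-degree-map}, the intersection numbers computed on $X^{(\tau)}$ equal $\deg(\tau)$ times those computed on $X$, and $\deg(\tau)>0$, so signs are preserved. To justify that intersection numbers scale exactly this way, I would note that $c_1(D)\cdot[C]$ is the same class in $A_0$ by Lemma \ref{lemme:changing-structure-morphism}, and only the degree map changes.

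Third, the ample cone follows from Lemma \ref{lemme:ampleness}, which handles generators, followed by taking convex cones. The BPF cones follow from Lemma \ref{lemme:basepoint-free-classes}.

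Finally, $\operatorname{Big}^1$ is the interior of $\operatorname{Psef}^1$, and a homeomorphism preserves interiors. Alternatively one can use the Kodaira form, ample plus effective, both of which are preserved.

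The only point needing care is that the identification $N^1(X)_{\R}\simeq N^1(X^{(\tau)})_{\R}$ used here is the one induced from $\Pic$ and is compatible with the identification of $N_m$ through the intersection pairing up to the factor $\deg\tau$. Once that compatibility is checked from Lemma \ref{lemme:twist-degree-map}, everything else is formal. I expect this compatibility check to be the main, though mild, obstacle.
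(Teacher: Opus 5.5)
Your proposal is correct and follows essentially the same route as the paper: effective cycles and integral curves are intrinsic to the underlying scheme, nefness is preserved because intersection numbers scale by $\deg(\tau)>0$ (Lemma \ref{lemme:twist-degree-map}), and the ample and BPF cones are handled by Lemmas \ref{lemme:ampleness} and \ref{lemme:basepoint-free-classes}. The only minor difference is for $\operatorname{Big}^1$: you take the interior of $\operatorname{Psef}^1$ under a linear homeomorphism, while the paper uses the Kodaira-lemma description (ample plus effective), and both arguments work.
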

\begin{proof}
	Effective cycles are unchanged as formal sums of integral closed subschemes when we change the structural morphism from $X$ to $X^{(\tau)}$.
	Hence the effective cones in $N_m(\cdot)_{\mathbb R}$ are identified, and the same holds for their closures, namely the pseudoeffective cones.

	For Cartier divisors, Lemma \ref{lemme:ampleness} gives preservation of ample classes.
	If $D$ is a Cartier divisor and $C$ is an integral curve, then Lemma \ref{lemme:twist-degree-map} applied to the zero-cycle $D\cdot C$ gives \[ D^{(\tau)}\cdot C^{(\tau)}=\deg(\tau)\,(D\cdot C).
	\]
	Since $\deg(\tau)>0$, non-negativity of intersections with curves is preserved.
	Thus the nef cones are identified.

	Finally, bigness is preserved by the Kodaira lemma: a real Cartier divisor class is big if and only if it can be written as the sum of an ample class and an effective class.
	Both summands are preserved by the preceding paragraphs, so the big cones are identified.
	The assertion for BPF cones is Lemma \ref{lemme:basepoint-free-classes}.
\end{proof}

\begin{prop}
	\label{prop:action-on-cycles}
	Let $f_\tau : X \rightarrow Y$ be a dominant twisted morphism between normal projective varieties.
	Then $f_\tau^*:N^\bullet(Y)_{\mathbb R}\rightarrow N^\bullet(X)_{\mathbb R}$ is a graded ring homomorphism.
	Moreover, for every
	$\alpha\in N^i(Y)_{\mathbb R}$ and every $z\in N_m(X)_{\mathbb R}$, we have the projection formula
	\begin{equation}
		(f_\tau)_*\bigl(f_\tau^*\alpha\cdot z\bigr)=\alpha\cdot (f_\tau)_*z.
		\label{eq:twisted-projection-formula}
	\end{equation}
	In particular, $f_\tau^*$ preserves BPF classes.
	If $X$ and $Y$ have the same dimension and $f_\tau$ is
	generically finite, then for any $\gamma\in N^{\dim Y}(Y)_{\mathbb R}$ one has
	\begin{equation}
		\deg\bigl(f_\tau^*\gamma\cdot [X]\bigr)=\deg(f_\tau)\deg\bigl(\gamma\cdot [Y]\bigr),\qquad
		\deg (f_\tau) := \frac{\left[ K(X) : f_\tau^* K (Y) \right]}{\left[ K : \tau^*(K) \right]}.
	\end{equation}
\end{prop}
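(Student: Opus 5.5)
The plan is to reduce everything to the untwisted statements of \cite{dangDegreesIteratesRational2020} for the $K$-morphism $f : X^{(\tau)} \rightarrow Y$, and then transport them back to $X$. The transport uses the canonical identifications of Lemmas \ref{lemme:changing-structure-morphism} and \ref{lemme:numerical-classes-changing-structure} and the BPF identification of Lemma \ref{lemme:basepoint-free-classes}. By definition, $(f_\tau)_*$ is the composite of $f_*$ with $N_m(X^{(\tau)})_{\mathbb R} \simeq N_m(X)_{\mathbb R}$. Since $f_\tau^*$ is defined by duality in Definition \ref{dfn:pullback}, $f_\tau^*$ is the composite of Dang's pullback $f^*:N^\bullet(Y)_{\mathbb R}\to N^\bullet(X^{(\tau)})_{\mathbb R}$ with the canonical isomorphism $N^\bullet(X^{(\tau)})_{\mathbb R}\simeq N^\bullet(X)_{\mathbb R}$.

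One subtlety needs checking first: the pairings $N^k\times N_k\to\R$ on $X$ and on $X^{(\tau)}$ differ by the factor $\deg\tau$ (Lemma \ref{lemme:twist-degree-map}). I would instead use the action $N^k\times N_m\to N_{m-k}$ of \eqref{eq:dual-numerical-action}, which takes values in cycle classes and is canonically identified by Lemma \ref{lemme:changing-structure-morphism}, because $c_1(D)\cdot$ is compatible. Duality with respect to the action is insensitive to the scalar. Even for numerical pairings the factor $\deg\tau$ appears on both sides when defining $f_\tau^*$, so the two candidate pullbacks agree.

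With this in place, the graded ring homomorphism property and the projection formula \eqref{eq:twisted-projection-formula} are exactly Dang's statements for $f$, conjugated by ring isomorphisms compatible with the action. BPF preservation follows from the fact that $f^*$ preserves BPF classes (Dang), combined with Lemma \ref{lemme:basepoint-free-classes}.

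For the degree formula, I apply Dang's version to $f$, a generically finite $K$-morphism between varieties of the same dimension. This gives $\deg^{\tau}(f^*\gamma\cdot[X^{(\tau)}]) = [K(X):f^*K(Y)]\,\deg(\gamma\cdot[Y])$. Here $\deg^\tau$ is the degree on $X^{(\tau)}$, and the field degree is intrinsic to the schemes. Lemma \ref{lemme:twist-degree-map} gives $\deg^\tau = [K:\tau^*K]\deg$, and dividing by this factor yields the claimed formula with $\deg(f_\tau)$ as defined.

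The main obstacle is the bookkeeping in the second paragraph: verifying that the canonical isomorphisms of Lemma \ref{lemme:numerical-classes-changing-structure} respect the dual numerical ring structure and its action, not merely the groups. I would check this on the generators of $N^k$ from \cite{dangDegreesIteratesRational2020}, namely classes built from Chern classes on flat covers. These are twisted compatibly by Lemma \ref{lemme:change-of-base} together with Lemma \ref{lemme:changing-structure-morphism}.
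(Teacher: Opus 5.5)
Your route is the paper's. You pass to the $K$-morphism $f:X^{(\tau)}\to Y$ and invoke Dang's functoriality, projection formula and BPF preservation for $f$. You transport back through the canonical identifications, using Lemma \ref{lemme:basepoint-free-classes} for BPF. You get the degree formula by dividing Dang's formula for $f$ by $[K:\tau^*K]$ via Lemma \ref{lemme:twist-degree-map}. Your last paragraph does that computation correctly.

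\textbf{The wrong step.} What fails is your resolution of the subtlety you yourself flagged. The factor $\deg\tau$ does \emph{not} appear on both sides of the identity in Definition \ref{dfn:pullback}.
\begin{itemize}
\item The left side $f_\tau^*\alpha\cdot\beta$ is a number computed on $X$, and it is multiplied by $\deg\tau$ when computed on $X^{(\tau)}$.
\item The right side $\alpha\cdot(f_\tau)_*\beta$ is computed on $Y$, whose $K$-structure is untouched, and $(f_\tau)_*$ is just $f_*$ with no rescaling.
\end{itemize}
So if that definition is read with the degree map of $X$ itself, it yields $\deg(\tau)$ times the transport of $f^*$ in every codimension. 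Your two candidate pullbacks therefore differ.

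\textbf{An explicit case.} Take $\widehat\tau$ on $\P^1_K$ with $[K:\tau^*K]=2$. Then $(\widehat\tau)_*[\P^1_K]=2[\P^1_K]$, since $[K(x):\tau^*(K)(x)]=2$. Also $(\widehat\tau)_*[p]=2[p]$ for $p=[0:1]$. The literal reading then gives $\widehat\tau^*H=2H$ and $\widehat\tau^*1=2$. This is not a ring homomorphism. It conflicts with $\widehat\tau^*H=H$ in Proposition \ref{prop:dynamical-degree-projective-space}. It would also give the degree formula without the denominator $[K:\tau^*K]$.

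\textbf{The fix.} Fix the convention instead of asserting agreement. Define $f_\tau^*$ as the transport of $f^*$. Equivalently, read the duality as $\deg^\tau(f_\tau^*\alpha\cdot\beta)=\deg(\alpha\cdot(f_\tau)_*\beta)$, with the pairing on $X$ computed in $X^{(\tau)}$. This follows from the class-level projection formula together with $\deg\circ(f_\tau)_*=\deg^\tau$ on zero-cycles. It is the convention your degree computation silently uses, and the paper's proof tacitly adopts it by transporting. It is the only convention under which the statement holds. With this change, and without the claim that the literal duality gives the transport, your argument is complete at the paper's level of detail.
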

\begin{proof}
	Let $f:X^{(\tau)}\rightarrow Y$ be the morphism of $K$-varieties induced by $f_\tau$.
	The statements for $f$ are exactly the functoriality and projection formula for dual numerical classes in \cite[Proposition 2.3.1 and Theorem 3.3.3]{dangDegreesIteratesRational2020}.
	Transporting them through the canonical identifications $N^\bullet(X^{(\tau)})\simeq N^\bullet(X)$ and $N_\bullet(X^{(\tau)})\simeq N_\bullet(X)$ gives the asserted formulas for $f_\tau$.
	The final equality is the projection formula applied to the top-degree class $\gamma$ and the fundamental class of $X$, together with the definition of the twisted degree.
\end{proof}

\begin{prop}[Toric-invariant cycles under twisting]
	\label{prop:trivial-action-toric-varieties}
	Let $X_\Sigma$ be a projective split toric variety over a field $K$, let $\tau:\spec K\to\spec K$ be a finite map, and let $\hat\tau:X_\Sigma\to X_\Sigma$ be the twisted endomorphism from Proposition \ref{prop:toric-varieties}.
	For every cone $\sigma\in\Sigma$, the canonical identification between $X_\Sigma$ and $X_\Sigma^{(\tau)}$ sends the torus-invariant orbit closure $V(\sigma)\subset X_\Sigma$ to the orbit closure corresponding to the same cone $\sigma$ in $X_\Sigma^{(\tau)}$.
	Consequently, for every $k$, the induced isomorphism
	\[
		Z^k(X_\Sigma)\simeq Z^k(X_\Sigma^{(\tau)})
	\]
	preserves the subgroup generated by torus-invariant codimension $k$ cycles, and the same is true after passing to Chow groups and numerical cycle groups.
	In particular, the action of $\hat\tau^*$ on $N^1(X_\Sigma)_\R$ is trivial:
	\[
		\hat\tau^*=\id_{N^1(X_\Sigma)_\R}.
	\]
\end{prop}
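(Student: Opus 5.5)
The plan has three steps: track orbit closures under the canonical identification, pass to classes, and then use toric generation of $N^1$ to get triviality of $\hat\tau^*$.

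\textbf{Orbit closures.} By Proposition \ref{prop:toric-varieties}, $X_\Sigma$ and $X_\Sigma^{(\tau)}$ have the same affine charts $U_\sigma=\spec K[\sigma^\vee\cap M]$ as schemes, with the same gluing. For a cone $\sigma$, the orbit closure $V(\sigma)$ is cut out in each chart $U_{\sigma'}$ with $\sigma\preceq\sigma'$ by the ideal generated by the monomials $\chi^m$ with $m\in(\sigma'^\vee\setminus\sigma^\perp)\cap M$. This description involves only monomials and no coefficients from $K$. Hence the same closed subscheme is the orbit closure of $\sigma$ for both $K$-structures.

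The identification $Z^k(X_\Sigma)\simeq Z^k(X_\Sigma^{(\tau)})$ of Lemma \ref{lemme:changing-structure-morphism} is the identity on integral closed subsets, and codimension is topological. It therefore sends $V(\sigma)$, with $\dim\sigma=k$, to the orbit closure of the same cone. So it preserves the subgroup generated by torus-invariant cycles. Since the isomorphisms on $A_\bullet$ and $N_\bullet$ are induced from the one on cycles (Lemmas \ref{lemme:changing-structure-morphism} and \ref{lemme:numerical-classes-changing-structure}), the images of these subgroups are preserved as well.

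\textbf{Computing $\hat\tau^*$ on $N^1$.} Unwinding the definition, $\hat\tau$ corresponds to a morphism of $K$-varieties $\hat\tau:X_\Sigma^{(\tau)}\to X_\Sigma$. As a map of underlying schemes it is given chartwise by $\sum a_m\chi^m\mapsto\sum\tau^*(a_m)\chi^m$. Since it fixes every monomial, it pulls back the Cartier data of a torus-invariant divisor, namely the local equations $\chi^{m_\sigma}$, to the same local equations. Thus for every ray $\rho$ we get $\hat\tau^*D_\rho=D_\rho$ as Cartier divisors on the underlying scheme, and more generally $\hat\tau^*D=D$ for every torus-invariant Cartier divisor $D=\sum a_\rho D_\rho$.

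The pullback on $\Pic$ is transported by the canonical isomorphism $\Pic(X_\Sigma^{(\tau)})\simeq\Pic(X_\Sigma)$, which is the identity on Cartier data. It follows that $\hat\tau^*[D]=[D]$ for every torus-invariant Cartier divisor $D$.

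\textbf{Conclusion.} It remains to note that $\Pic(X_\Sigma)$ is generated by torus-invariant Cartier divisors: every line bundle on a toric variety has a torus-invariant representative, since the torus is open with trivial Picard group. So $N^1(X_\Sigma)_\R$ is spanned by their classes, and $\hat\tau^*$ is the identity on a spanning set, hence $\hat\tau^*=\id$.

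If one insists on the duality definition (Definition \ref{dfn:pullback}), one checks it directly. For a torus-invariant curve $C=V(\sigma)$ with $\dim\sigma=d-1$, the pushforward $(\hat\tau)_*C=C$ by the orbit-closure identification. The definition then gives $\hat\tau^*D\cdot C=D\cdot C$, intersections being taken on $X_\Sigma$ with the identified structure. This is consistent, because the torus-invariant curves span $N_1(X_\Sigma)_\R$ for complete toric varieties.

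\textbf{Main obstacle.} The delicate step is bookkeeping the degree factor. By Lemma \ref{lemme:twist-degree-map}, intersection numbers computed on $X_\Sigma^{(\tau)}$ carry a factor $\deg\tau$. One must check that the identification of $N^1$ used in defining $\hat\tau^*$ is the one that is the identity on Cartier divisors, and not one rescaled by $\deg\tau$. I would handle this by working with Cartier divisors and $\Pic$, which are degree-free, rather than with intersection numbers.
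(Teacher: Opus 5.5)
Your proposal is correct and follows essentially the paper's route: the same fan gives the same orbit closures before and after twisting, and $N^1(X_\Sigma)_\R$ is generated by torus-invariant classes; your explicit check that $\hat\tau$ fixes the monomial local equations of each $D_\rho$ also supplies the step the paper only asserts, by treating the identification of divisors as if it were the pullback. One correction to the optional duality paragraph: $(\hat\tau)_*C=\deg(\tau)\,C$ rather than $C$, because $[K(C):\hat\tau^*K(C)]=[K(t):\tau^*(K)(t)]=\deg\tau$, so that check balances only when $\hat\tau^*D\cdot C$ is evaluated with $\deg^\tau$ on $X_\Sigma^{(\tau)}$ --- which is exactly why your $\Pic$-based argument is the one to rely on.
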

\begin{proof}
	By Proposition \ref{prop:toric-varieties}, the twist $X_\Sigma^{(\tau)}$ is described by the same lattice, the same fan, and the same face relations.
	Hence each cone $\sigma\in\Sigma$ defines the same torus orbit closure before and after twisting.
	This proves the assertion for torus-invariant cycles in $Z^k$ for every $k$, and the assertions for Chow and numerical cycle groups follow by taking the images of these invariant cycle subgroups under the natural quotient maps.

	For $k=1$, the torus-invariant prime divisors are exactly the divisors $D_\rho=V(\rho)$ corresponding to the rays $\rho\in\Sigma(1)$, and each of them is identified with the divisor corresponding to the same ray after twisting.
	Since the numerical divisor group of a projective toric variety is generated by torus-invariant divisor classes, $\hat\tau^*$ acts trivially on $N^1(X_\Sigma)_\R$.
\end{proof}
\subsection{Graphs and degrees}\label{subsec:graphs-and-degrees}

\begin{dfn}
	let $f_\tau : X \rightarrow Y$ be twisted morphism of $K$-varieties, we define the graph $\Gamma_{f_\tau}$ of $f_\tau$
	by the following procedure.
	Consider the $K$-morphism $f : X^{(\tau)} \rightarrow Y$ and its graph $\Gamma_f$.
	It is a $K$-variety and $\pi_1 : \Gamma_f \rightarrow X^{(\tau)}$ is a morphism of $K$-varieties.
	By Lemma \ref{lemme:change-of-base}, we define $\Gamma_{f_\tau}$ to be the $K$-variety
	with underlying scheme $\Gamma_f$ such that
	\begin{equation}
		\Gamma_f = \Gamma_{f_\tau}^{(\tau)}.
	\end{equation}
\end{dfn}
It is also equipped with two projections $\pi_1 : \Gamma_{f_\tau} \rightarrow X$ and $\pi_{2} = \pi_{2, \tau} : \Gamma_{f_\tau} \rightarrow Y$.
Notice that $\pi_1$ is a morphism of $K$-varieties but $\pi_2$ is a twisted morphism.

As in \cite{dangDegreesIteratesRational2020} we define the following quantities.
If $f_\tau : X \rightarrow Y$ is a
dominant twisted morphism of varieties of same dimension and $H_X, H_Y$ are big and nef divisors over $X$ and $Y$, then
\begin{equation}
	\deg_{H_X, H_Y, k} (f_\tau) := \pi_1^* H_X^{d-k} \cdot \pi_2^* H_Y^k
\end{equation}
where $\pi_1, \pi_2$ are the two projections from the graph of $f_\tau$. Notice that if $f: X^{(\tau)} \rightarrow Y$ is
the associated morphism then we have
\begin{equation}
	\deg_{H_X^{(\tau)}, H_Y, k} (f) = \frac{1}{\deg \tau} \deg_{H_X, H_Y, k} (f_\tau).
	\label{eq:relation-degrees}
\end{equation}

\begin{prop}
	\label{prop:inegalite-composition-degrees}
	Let $X,Y,Z$ be projective varieties of same dimension with dominant twisted morphisms $X \xrightarrow{f_\tau} Y \xrightarrow{g_\omega} Z$, let $H_X, H_Y, H_Z$ be big and nef divisors over $X,Y,Z$ respectively which we use to compute the different degrees.
	There exists a constant $C > 0$ depending only on $H_Y$ and the dimension $d$
	such that
	\begin{equation}
		\deg_k (g_\omega \circ f_\tau) \leq C \deg_k (f_\tau) \deg_k (g_\omega)
	\end{equation}
	Furthermore, the constant $C = \frac{(d-k+1)^k }{H_Y^d}$ works.
\end{prop}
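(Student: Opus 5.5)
The plan is to reduce to the untwisted submultiplicativity estimate of Dang \cite{dangDegreesIteratesRational2020} by passing to the associated morphisms of $K$-varieties, and to track the factors $\deg\tau$, $\deg\omega$ through \eqref{eq:relation-degrees}.

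\emph{Step 1: untwisting.} Write $f : X^{(\tau)} \to Y$ and $g : Y^{(\omega)} \to Z$ for the associated $K$-morphisms. The composite $g_\omega\circ f_\tau$ is a twisted morphism with twist $\omega\circ\tau$, so its associated $K$-morphism is $X^{(\omega\tau)} \to Z$. To express it as a composite of $K$-morphisms, twist $f$ itself by $\omega$. Since the underlying scheme maps are unchanged, $f$ is also a $K$-morphism $f^{(\omega)} : (X^{(\tau)})^{(\omega)} = X^{(\omega\tau)} \to Y^{(\omega)}$. Then $g\circ f^{(\omega)}$ is the $K$-morphism attached to $g_\omega\circ f_\tau$.

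\emph{Step 2: apply the classical inequality.} For a composite of dominant $K$-morphisms of projective varieties of dimension $d$, with big and nef $H_1,H_2,H_3$, Dang's argument gives
\[
\deg_{H_1,H_3,k}(g\circ f')\le \frac{(d-k+1)^k}{H_2^d}\,\deg_{H_1,H_2,k}(f')\,\deg_{H_2,H_3,k}(g).
\]
That argument runs through the graph, Siu-type/BPF positivity, and the inequality $\alpha\le \frac{(d-k+1)^k(\alpha\cdot H^{d-k})}{H^d}H^k$ for BPF classes $\alpha$. If that exact constant is not available as a citation, I would reprove it in this form. I apply it with $H_1=H_X^{(\omega\tau)}$, $H_2=H_Y^{(\omega)}$, $H_3=H_Z$. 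These are still big and nef by Proposition~\ref{prop:positivity-twist}.

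\emph{Step 3: bookkeeping of degrees.} By \eqref{eq:relation-degrees} and Lemma~\ref{lemme:twist-degree-map}:
\begin{itemize}
\item the left side equals $\frac{1}{\deg\omega\deg\tau}\deg_k(g_\omega\circ f_\tau)$, since $\deg(\omega\tau)=\deg\omega\deg\tau$;
\item $\deg_{H_Y^{(\omega)},H_Z,k}(g)=\frac{1}{\deg\omega}\deg_k(g_\omega)$;
\item $\deg_{H_X^{(\omega\tau)},H_Y^{(\omega)},k}(f^{(\omega)})=\deg\omega\cdot\deg_{H_X^{(\tau)},H_Y,k}(f)=\frac{\deg\omega}{\deg\tau}\deg_k(f_\tau)$, because the twisted graph is the twist of the graph and all intersection numbers scale by $\deg\omega$;
\item $(H_Y^{(\omega)})^d=\deg\omega\cdot H_Y^d$.
\end{itemize}
Multiplying the inequality through by $\deg\omega\deg\tau$, the factors of $\deg\omega$ and $\deg\tau$ cancel on both sides. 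This leaves
\[
\deg_k(g_\omega\circ f_\tau)\le \frac{(d-k+1)^k}{H_Y^d}\deg_k(f_\tau)\deg_k(g_\omega).
\]

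The main obstacle is Step 3. One must check that every degree is computed with respect to a consistent structure morphism, and that the graphs of $f^{(\omega)}$ and $f$ coincide as schemes, with intersection numbers differing exactly by $\deg\omega$. This requires care with which projection is twisted. A secondary issue is whether Dang states the constant exactly as $(d-k+1)^k/H_Y^d$. If not, I would rederive it from the BPF comparison $\pi_2^*H_Y^k$-type inequality on the triple graph $\Gamma\subset X\times Y\times Z$.
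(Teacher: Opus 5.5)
Your proposal follows the paper's proof step for step. You pass to the ordinary composite $g\circ f^{(\omega)}:X^{(\omega\circ\tau)}\to Z$, apply Dang's composition inequality with polarizations $H_X^{(\omega\circ\tau)}$, $H_Y^{(\omega)}$, $H_Z$, and cancel the twisting factors using \eqref{eq:relation-degrees} and $(H_Y^{(\omega)})^d=\deg(\omega)H_Y^d$.

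One caution on the step you flagged as delicate. Since $\Gamma_f=\Gamma_{f_\tau}^{(\tau)}$, Lemma~\ref{lemme:twist-degree-map} actually gives $\deg_k(f)=\deg(\tau)\deg_k(f_\tau)$; the paper itself uses this form in the proof of Proposition~\ref{prop:action-banach-space}. So \eqref{eq:relation-degrees} has the factor inverted, and so do your first two bullets and the second equality of your third bullet. The correct factors are $\deg\omega\deg\tau$, $\deg\omega$ and $\deg\omega\deg\tau$ respectively. The inequality is homogeneous in these factors, so dividing by $\deg\omega\deg\tau$ still gives the same conclusion with the same constant $(d-k+1)^k/H_Y^d$.
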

\begin{proof}
	Let $f:X^{(\tau)}\rightarrow Y$ and $g:Y^{(\omega)}\rightarrow Z$ be the morphisms of $K$-varieties induced by $f_\tau$ and $g_\omega$.
	Twisting $f$ by $\omega$ gives a morphism of $K$-varieties
	\begin{equation}
		f^{(\omega)}:X^{(\omega\circ\tau)}\rightarrow Y^{(\omega)}.
		\label{eq:twisted-composition-ordinary-f}
	\end{equation}
	The morphism of $K$-varieties induced by the twisted composition $g_\omega\circ f_\tau$ is therefore
	\begin{equation}
		g\circ f^{(\omega)}:X^{(\omega\circ\tau)}\rightarrow Z.
		\label{eq:ordinary-composite-for-twisted-composite}
	\end{equation}

	We apply the composition inequality of \cite{dangDegreesIteratesRational2020} to \[ X^{(\omega\circ\tau)} \xrightarrow{f^{(\omega)}} Y^{(\omega)} \xrightarrow{g} Z \] with polarizations $H_X^{(\omega\circ\tau)}$, $H_Y^{(\omega)}$, and $H_Z$.
	This gives
	\begin{equation}
		\deg_k(g\circ f^{(\omega)})\leq
		\frac{(d-k+1)^k}{(H_Y^{(\omega)})^d}\,
		\deg_k(f^{(\omega)})\,\deg_k(g).
		\label{eq:ordinary-composition-inequality-twisted-proof}
	\end{equation}
	By Lemma \ref{lemme:twist-degree-map} and \eqref{eq:relation-degrees}, we have
	\begin{align}
		\deg_k(g\circ f^{(\omega)})
		 & = \frac{1}{\deg(\tau)\deg(\omega)}\deg_k(g_\omega\circ f_\tau), \\
		\deg_k(f^{(\omega)})
		 & = \deg(\omega)\deg_k(f)
		= \frac{\deg(\omega)}{\deg(\tau)}\deg_k(f_\tau),                   \\
		\deg_k(g)
		 & = \frac{1}{\deg(\omega)}\deg_k(g_\omega),
		\label{eq:degree-comparisons-composition-proof}
	\end{align}
	and also
	\begin{equation}
		(H_Y^{(\omega)})^d=\deg(\omega)H_Y^d.
		\label{eq:middle-polarization-degree-scale}
	\end{equation}
	Substituting these four identities into \eqref{eq:ordinary-composition-inequality-twisted-proof} and multiplying by $\deg(\tau)\deg(\omega)$ gives \[ \deg_k(g_\omega\circ f_\tau) \leq \frac{(d-k+1)^k}{H_Y^d}\deg_k(f_\tau)\deg_k(g_\omega), \] which is the desired estimate.
\end{proof}

\subsection{Base change}\label{subsec:base-change}
The notion of base change for twisted rational map is a bit tricky to define for the following reason.
Let $\tau : K \rightarrow K$ be a finite extension and let $\phi: K \hookrightarrow L$ be a field extension.
Then, there exists a
finite extension $\omega : L \hookrightarrow L'$ and a field homomorphism $\phi ': K \rightarrow L'$ such that the following
diagram
\begin{equation}
	\begin{tikzcd}
		K \ar[r, "\phi'"] & L' \\ K \ar[u, "\tau"] \ar[r, "\phi"] & L \ar[u, "\omega"]
	\end{tikzcd}
\end{equation}
commutes.
However, it is not true in general that we can take $L' = L$.
We give a counterexample.
\begin{ex}
	\label{ex:base-change-problem}
	Let $K = \Q(t)$ and let $L = \Q(t, \sqrt{t +1})$ with $\phi : K \rightarrow L$ the inclusion and let $\tau : K \rightarrow K$ be given by $\tau (t) = t^2$.
	Suppose we could have that $L' = L$, then since $\phi \circ \tau = \omega \circ \phi$, we must have $\omega
		(t) = t^2$, but then
	\begin{equation}
		\omega \left( (\sqrt{t +1})^2 \right) = \omega (t + 1) = t^2 +1.
	\end{equation}
	So $\omega \left(\sqrt{t+1} \right)$ must be a square root of $t^2 + 1$ in $L$ but such an element does not exist in $L$.
	The issue here is that the field extension $K \xrightarrow{\tau} K \hookrightarrow L$ is not normal.
\end{ex}

Let $f_\tau:X\dashrightarrow Y$ be a twisted rational map of $K$-varieties, and let $f:X^{(\tau)}\dashrightarrow Y$ be the corresponding rational map of $K$-varieties.
Let \[ \kappa:\spec L\longrightarrow \spec K \] be a morphism of schemes, where $L$ is algebraically closed.
We write \[ \kappa^*:K\hookrightarrow L \] for the corresponding field embedding.
Similarly, we write $\tau^*:K\hookrightarrow K$ for the field embedding corresponding to the finite morphism $\tau:\spec K\to\spec K$.

Choose an automorphism  $\sigma:\spec L\to\spec L$ such that, on functions,
\begin{equation}
	\sigma^*\circ\kappa^*=\kappa^*\circ\tau^*.
	\label{eq:field-automorphism-lifting-tau}
\end{equation}
Equivalently, the following diagram of schemes is commutative:
\begin{equation}
	\begin{tikzcd}
		\spec L \ar[r, "\sigma"] \ar[d, "\kappa"'] & \spec L \ar[d, "\kappa"] \\
		\spec K \ar[r, "\tau"'] & \spec K .
	\end{tikzcd}
	\label{eq:base-change-twist-square}
\end{equation}
Set $X_L:=X\times_{\spec K,\kappa}\spec L$ and $Y_L:=Y\times_{\spec K,\kappa}\spec L$.
The square
\eqref{eq:base-change-twist-square} gives a canonical identification
\begin{equation}
	(X_L)^{(\sigma)}\simeq X^{(\tau)}\times_{\spec K,\kappa}\spec L.
	\label{eq:twisted-base-change-domain-identification}
\end{equation}
After this identification, the ordinary base change of
$f:X^{(\tau)}\dashrightarrow Y$ gives a rational map of $L$-varieties
\begin{equation}
	f_{L,\sigma}:(X_L)^{(\sigma)}\dashrightarrow Y_L.
	\label{eq:base-changed-twisted-map}
\end{equation}
Equivalently, this is a twisted rational map \[ (f_\tau)_{L,\sigma}:X_L\dashrightarrow Y_L \] with twisting morphism $\sigma:\spec L\to\spec L$.
In diagrammatic form, this means that the following diagram commutes as a diagram of rational maps:
\begin{equation}
	\begin{tikzcd}
		X_L \ar[dd] \ar[r, "\overline q_X"] \ar[rrd, dashed, "{(f_\tau)_{L,\sigma}}"'] & \spec L \ar[rr, "\sigma"] \ar[dd, "\kappa"] & & \spec L \ar[dd, "\kappa"] \\ & & Y_L \ar[dd] \ar[ru, "\overline q_Y"'] & \\ X \ar[r, "q_X"] \ar[rrd, dashed, "f_\tau"'] & \spec K \ar[rr, "\tau"] & & \spec K \\ & & Y \ar[ru, "q_Y"'] &
	\end{tikzcd}
	\label{eq:base-changed-twisted-map-diagram}
\end{equation}
Here $\overline q_X$ and $\overline q_Y$ are the structural morphisms of $X_L$ and $Y_L$ over $\spec L$.
The top dashed arrow is the underlying rational map of the twisted map $(f_\tau)_{L,\sigma}$; equivalently, it is the ordinary rational map $f_{L,\sigma}:(X_L)^{(\sigma)}\dashrightarrow Y_L$ in \eqref{eq:base-changed-twisted-map}.

The construction depends on the choice of $\sigma$.
If $\sigma'$ is another choice satisfying $(\sigma')^*\circ\kappa^*=\kappa^*\circ\tau^*$, then $(\sigma')^*\circ(\sigma^*)^{-1}$ fixes the subfield $\kappa^*(\tau^*(K))\subset L$, and the two resulting base changes are conjugate by the corresponding automorphism of the scalar extension.
Thus the base change is not canonical, but it is well defined up to this natural conjugacy.

\begin{rmq}
	\label{rmq:base-change-extension-condition}
	Let $f_\tau:X\dashrightarrow Y$ be a twisted rational map over $K$, and let $\kappa:\spec L\to\spec K$ be a field extension.
	Put $E=\kappa^*(K)\subset L$.
	The twist $\tau$ induces an embedding
	\[
		\theta:E\longrightarrow L,\qquad
		\theta(\kappa^*(a))=\kappa^*(\tau^*a).
		\label{eq:base-change-extension-condition}
	\]
	A base change of $f_\tau$ over $L$ exists precisely when $\theta$ extends to a finite endomorphism $\sigma^*:L\to L$, or equivalently when
	\[
		\sigma^*\circ\kappa^*=\kappa^*\circ\tau^*.
	\]
	In that case the base-changed map is a twisted rational map
	\[
		(f_\tau)_{L,\sigma}:X_L\dashrightarrow Y_L
		\]
			with twisting morphism $\sigma:\spec L\to\spec L$.
			If $L$ is algebraically closed, such a finite endomorphism is an automorphism.
		\end{rmq}

\section{Dynamical degrees}
Let $f_\tau : X \dashrightarrow X$ be a dominant twisted rational selfmap of a normal projective variety $X$ of dimension $d$ over $K$,
then for any big and nef divisor $H$ over $X$, define for every $k = 0, \dots, \dim X$
\begin{equation}
	\deg_{H,k} (f_\tau) := \left( \pi_1^* H^{d-k} \cdot \pi_2^* H^k \right)
\end{equation}
where $\pi_1, \pi_2$ are the two projections from the normalisation $\tilde \Gamma_{f_\tau}$ of the graph
$\Gamma_{f_\tau}$. Here we use the canonical isomorphism $N^1(X)_{\mathbb R}\simeq
		N^1(X^{(\tau)})_{\mathbb R}$ and Proposition \ref{prop:positivity-twist} to consider $H$ also as a big and nef class
	over $X^{(\tau)}$.

\begin{rmq}
	\label{rmq:pullback-to-higher-model}
	Notice here that if $\pi : W \rightarrow \Gamma_{f_\tau}$ is a birational morphism then we can also pull back to $W$ and compute the intersection number there.
\end{rmq}

\begin{prop}
	The sequence
	\begin{equation}
		\deg_{H,k} (f_\tau^n)^{1/n}
	\end{equation}
	converges when $n \rightarrow +\infty$ towards a real number $\lambda_k (f_\tau) \geq 1$ that does not depend on $H$.
	We call it the $k$-th dynamical degree of $f_\tau$.
\end{prop}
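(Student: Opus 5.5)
The plan is to reduce everything to the submultiplicativity estimate of Proposition \ref{prop:inegalite-composition-degrees} and to a comparison of degrees computed with two different big and nef classes, and then to apply Fekete's lemma.

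First I extend Proposition \ref{prop:inegalite-composition-degrees} from twisted morphisms to dominant twisted rational maps. Given $X \overset{f_\tau}{\dashrightarrow} X \overset{g_\omega}{\dashrightarrow} X$, I pass to the normalised graphs $\tilde\Gamma_{f_\tau}$, $\tilde\Gamma_{g_\omega}$ and to a model $W$ dominating both $\tilde\Gamma_{f_\tau}$ and the graph of $g_\omega\circ f_\tau$, which is allowed by Remark \ref{rmq:pullback-to-higher-model}. I then translate to the untwisted rational maps $f:X^{(\tau)}\dashrightarrow X$ and $g:X^{(\omega)}\dashrightarrow X$, exactly as in the proof of Proposition \ref{prop:inegalite-composition-degrees}. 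The composition $g_\omega\circ f_\tau$ corresponds to $g\circ f^{(\omega)}$, so the composition inequality of \cite{dangDegreesIteratesRational2020} for dominant rational maps of normal projective varieties gives
\[
\deg_{H,k}(g\circ f^{(\omega)})\le C\,\deg_{H,k}(f^{(\omega)})\,\deg_{H,k}(g).
\]
The rescalings \eqref{eq:relation-degrees}, Lemma \ref{lemme:twist-degree-map} and $(H^{(\omega)})^d=\deg(\omega)H^d$ then yield
\[
\deg_{H,k}(g_\omega\circ f_\tau)\le C\,\deg_{H,k}(f_\tau)\,\deg_{H,k}(g_\omega)
\]
with $C=(d-k+1)^k/H^d$. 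Here Proposition \ref{prop:positivity-twist} guarantees that $H^{(\tau)}$ and $H^{(\omega)}$ remain big and nef, so the cited inequality applies. This transport of the rational-map inequality through the twist, on the correct birational models and with the normalisation of graphs, is the main obstacle. To handle it I would reproduce the Dang argument, which uses Siu-type inequalities for BPF classes. The needed BPF facts are preserved under twisting by Lemma \ref{lemme:basepoint-free-classes} and Proposition \ref{prop:action-on-cycles}.

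Applying this to $f_\tau^{n}$ and $f_\tau^{m}$, the sequence $a_n=\log\bigl(C\deg_{H,k}(f_\tau^n)\bigr)$ is subadditive. Fekete's lemma then shows that $a_n/n$ converges to $\inf_n a_n/n$, so $\deg_{H,k}(f_\tau^n)^{1/n}$ converges. The limit is finite because $\deg_{H,k}(f_\tau)<\infty$.

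For the lower bound $\lambda_k\ge1$, note that $\pi_1^*H^{d-k}\cdot\pi_2^*H^k$ is an intersection of big and nef classes pulled back along generically finite dominant maps, so it is a positive real number. I still need it bounded below uniformly, and for this I use integrality. When $H$ is an integral Cartier divisor, each $\deg_{H,k}(f_\tau^n)$ is an intersection number on $\tilde\Gamma^{(\tau^n)}$ divided by $\deg(\tau)^n$; it is positive and lies in $\deg(\tau)^{-n}\Z$. This only gives $\lambda_k\ge 1/\deg\tau$, which is too weak. A better route is to bound below by the case $k=0$ or $k=d$ and use log-concavity. Instead I would argue that $\deg_{H,k}(f_\tau^n)\ge (H^d)$-type lower bounds hold: by Siu's inequality, $\pi_2^*H^k\cdot \pi_1^*H^{d-k}\ge c\,(\pi_2^*H^d)/(\pi_1^*H^d)$ up to constants, and $\pi_2^*H^d=\deg(f_\tau^n)H^d\ge H^d$ for an appropriately normalised degree. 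This step needs care with the $\deg\tau$ normalisation, and I would double-check it.

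Independence of $H$ also follows from Siu-type comparison. For two big and nef classes $H,H'$ there is $c>0$ with $cH'-H$ pseudoeffective, and the BPF/Siu inequalities give
\[
\deg_{H,k}(f_\tau^n)\le C'\deg_{H',k}(f_\tau^n)
\]
with $C'$ independent of $n$. Taking $n$-th roots shows the two limits coincide.
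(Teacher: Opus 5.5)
Your overall structure is the paper's proof: submultiplicativity transported through the twist via Proposition~\ref{prop:inegalite-composition-degrees}, Fekete's lemma, and the comparison of two big and nef classes as in \cite[Theorem 1(ii)]{dangDegreesIteratesRational2020}. Those parts are fine. What does not go through as written is the lower bound $\lambda_k(f_\tau)\geq 1$. It is part of the statement, and you leave it unresolved.

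First, your integrality attempt stops one step too early. By definition, $\deg_{H,k}(f_\tau^n)$ is an intersection number of the Cartier divisors $\pi_1^*H$ and $\pi_2^*H$ on the normalised graph $\tilde\Gamma_{f_\tau^n}$. This graph is itself a projective $K$-variety: its $K$-structure is the one making $\pi_1$ a $K$-morphism, and its $\tau^n$-twist is the ordinary graph of the associated map $X^{(\tau^n)}\dashrightarrow X$. Degrees of $0$-cycles on a $K$-variety are integers. So for an integral Cartier divisor $H$, the number $\deg_{H,k}(f_\tau^n)$ lies in $\Z$, not merely in $\deg(\tau)^{-n}\Z$. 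Lemma~\ref{lemme:twist-degree-map} only says that the corresponding number on the twist $\tilde\Gamma_{f_\tau^n}^{(\tau^n)}$ is divisible by $\deg(\tau)^n$.

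This integer is positive. The class $\pi_1^*H$ is big and nef. The class $\pi_2^*H$ is big and nef on the twist, being a pullback by a generically finite surjective $K$-morphism, hence big and nef on $\tilde\Gamma_{f_\tau^n}$ by Proposition~\ref{prop:positivity-twist}. A top product of big and nef classes is positive. Hence $\deg_{H,k}(f_\tau^n)\geq 1$ for all $n$, so $\lambda_k\geq 1$ for integral ample $H$. It then holds for every big and nef $H$ by the independence of $H$, which you do establish.

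Second, the fallback you sketch is not valid. Siu's inequality gives upper bounds, not lower bounds. Moreover, $\pi_2^*H^k\cdot\pi_1^*H^{d-k}\geq c\,(\pi_2^*H^d)/(\pi_1^*H^d)$ is not homogeneous: replacing $\pi_2^*H$ by $t\,\pi_2^*H$ multiplies the left side by $t^k$ and the right side by $t^d$, so it fails for $k<d$ and large $t$. The correct tool is Khovanskii--Teissier, namely $\pi_2^*H^k\cdot\pi_1^*H^{d-k}\geq (\pi_2^*H^d)^{k/d}(\pi_1^*H^d)^{(d-k)/d}$. But this still needs a uniform lower bound on $\pi_2^*H^d$. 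The bound ``$\deg(f_\tau^n)\geq 1$'' is not automatic, because by Proposition~\ref{prop:action-on-cycles} the twisted degree is only the positive rational number $[K(X):f_\tau^*K(X)]/[K:\tau^*(K)]$. The integrality observation above is what supplies the missing bound.
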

\begin{proof}
	The proof is the same as the proof of \cite[Theorem 1]{dangDegreesIteratesRational2020}.
	In the present twisted setting, Proposition \ref{prop:positivity-twist} allows us to compare the big and nef class $H$ with its twists, and Proposition \ref{prop:inegalite-composition-degrees} gives the analogue of the submultiplicativity estimate in \cite[Theorem 1(i)]{dangDegreesIteratesRational2020}.
	Fekete's lemma then gives the existence of the limit.
	Finally, the comparison of degrees for different big and nef classes follows as in \cite[Theorem 1(ii)]{dangDegreesIteratesRational2020}.
\end{proof}

In particular, using the Khovanskii-Teissier inequalities we also get the log-concavity of the sequence $(\lambda_k
	(f_\tau))$:
\begin{equation}
	\forall k = 1, \dots, d-1, \quad \lambda_k (f_\tau)^2 \geq \lambda_{k-1} (f_\tau) \lambda_{k+1} (f_\tau).
\end{equation}

	\begin{prop}
		\label{prop:dynamical-degree-projective-space}
		If $\phi_\tau : \P^n_K \rightarrow \P^n_K$ is a dominant twisted endomorphism, then
		\begin{equation}
			\lambda_k (\phi_\tau) = \lambda_k (\phi)
		\end{equation}
		where $\phi : \P^n_K \rightarrow \P^n_K$ is the morphism of $K$-varieties such that $\phi_\tau = \hat \tau \circ
			\phi$.
		\end{prop}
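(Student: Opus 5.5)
The plan is to compare the iterates of $\phi_\tau$ with those of $\phi$ by showing that $\hat\tau$ acts trivially on all degree data. The key observation is that twisting by $\hat\tau$ commutes with $K$-morphisms of projective space, up to twisting their coefficients. For any morphism $\psi:\P^n_K\to\P^n_K$ given by homogeneous polynomials $(P_0:\dots:P_n)$ of degree $e$, the formula $\hat\tau^*(\sum a_I x^I)=\sum\tau^*(a_I)x^I$ gives
\[
\psi\circ\hat\tau=\hat\tau\circ\psi^{\tau},
\]
where $\psi^\tau$ is the $K$-morphism $(P_0^\tau:\dots:P_n^\tau)$, with $P_i^\tau$ obtained by applying $\tau^*$ to the coefficients. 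Since $\tau^*$ is an injective field homomorphism, $\psi^\tau$ is again a well-defined endomorphism of the same algebraic degree $e$. The resultant of the $P_i^\tau$ is the image under $\tau^*$ of the resultant of the $P_i$, hence nonzero, so there are no base points; dominance is preserved for the same reason.

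Iterating this commutation, I would write
\[
\phi_\tau^m=\hat\tau^m\circ\phi^{\tau^{m-1}}\circ\cdots\circ\phi^{\tau}\circ\phi,
\]
so that $\phi_\tau^m=\widehat{\tau^m}\circ\Phi_m$ with $\Phi_m$ a $K$-endomorphism. Since each factor $\phi^{\tau^j}$ has algebraic degree $e=\deg\phi$, $\Phi_m$ has algebraic degree $e^m$ and $\Phi_m^*\OO(1)=\OO(e^m)$. Next I would compute the degree of $\phi_\tau^m$ with $H$ a hyperplane. By Proposition \ref{prop:trivial-action-toric-varieties}, $\hat\tau^*$ acts trivially on $N^1(\P^n_K)_\R$; alternatively, Proposition \ref{prop:twisted-morphism-affine-space} directly gives $(\phi_\tau^m)^*\OO(1)=\Phi_m^*\OO(1)=\OO(e^m)$ on the underlying scheme. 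Since $\phi_\tau^m$ is a morphism, the graph is isomorphic to the source via $\pi_1$, and $\deg_{H,k}(\phi_\tau^m)=H^{n-k}\cdot(e^mH)^k=e^{mk}$, computed on the source $\P^n_K$ with its own structure. Here I have to be careful with the normalization in \eqref{eq:relation-degrees}: the intersection is taken on $\Gamma_{f_\tau}$, whose $K$-structure is that of $X$ and not of $X^{(\tau)}$, so no factor $\deg\tau$ appears. Even if a factor $\deg(\tau^m)$ did appear, I would expect it to be absent after the $1/n$-th root, or to be handled by that normalization; I would double-check this against the definition.

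Thus $\lambda_k(\phi_\tau)=\lim (e^{mk})^{1/m}=e^k$, and the same computation with $\tau=\id$ gives $\lambda_k(\phi)=e^k$, which proves the claim. The main obstacle is justifying the commutation $\psi\circ\hat\tau=\hat\tau\circ\psi^\tau$ rigorously at the level of schemes: pulling back a homogeneous polynomial $Q$ through both sides and checking that $\tau^*$ is applied exactly once to each coefficient. This comes down to the identity $\hat\tau^*(Q(P_0,\dots,P_n))=Q^\tau(P_0^\tau,\dots,P_n^\tau)$, which is immediate from $\tau^*$ being a ring homomorphism.
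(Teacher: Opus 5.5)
Your proof is correct and follows essentially the paper's route: you use $\hat\tau^*H=H$ to get $(\phi_\tau^m)^*H=e^mH$, then read off $\deg_{H,k}(\phi_\tau^m)=e^{km}$ on the source with its own $K$-structure; your normal form $\phi_\tau^m=\widehat{\tau^m}\circ\Phi_m$, obtained from $\psi\circ\hat\tau=\hat\tau\circ\psi^\tau$, is a more explicit version of the paper's induction by functoriality of pullback. One caution about your fallback remark: a factor $\deg(\tau^m)=\deg(\tau)^m$ would not disappear after taking $m$-th roots, since it would contribute $\deg(\tau)$, so your first observation (the intersection on the graph uses the $K$-structure of the source, hence no such factor appears) is genuinely needed rather than optional.
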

	
		\begin{proof}
		Let $H=\OO_{\P^n_K}(1)$, and let $d$ be the algebraic degree of the morphism $\phi$, so that
		\[
			\phi^*H=dH .
		\]
		The twisted automorphism $\hat\tau$ preserves the hyperplane class, because it only twists the coefficients of homogeneous equations; hence
		\[
			\hat\tau^*H=H.
		\]
		Therefore
		\[
			\phi_\tau^*H=\phi^*\hat\tau^*H=dH.
		\]
		Now consider the twisted iterates of $\phi_\tau$.
		Since $\phi_\tau$ is a regular twisted endomorphism, pullback is compatible with composition, and each factor in the twisted composition pulls back $H$ to $dH$.
		By induction, for every $m\geq 1$ we get
		\[
			(\phi_\tau^m)^*H=d^mH.
		\]
		It follows that, for every $k=0,\dots,n$,
		\[
			\deg_{H,k}(\phi_\tau^m)=d^{km}.
		\]
		Thus
		\[
			\lambda_k(\phi_\tau)=d^k=\lambda_k(\phi).
		\]
	\end{proof}

\begin{prop}
	\label{prop:dyn-degrees-base-change}
	Let $X$ be a normal projective variety over $K$ and let $f_\tau : X \dashrightarrow X$ be a dominant twisted self
	rational map, then
	\begin{enumerate}
		\item If $K \hookrightarrow K '$ is a field extension where $K'$ is algebraically closed and $f_{\tau} ' : X_{K '}
			      \dashrightarrow X_{K '}$ is a base change, then $\lambda_k (f_\tau ') = \lambda_k (f_\tau)$.
		\item If $\phi_\omega : X \dashrightarrow Y$ is a birational twisted map of $K$-varieties, then
		      \begin{equation}
			      \lambda_k (f_\tau) = \lambda_k (\phi_{\omega} \circ f_\tau \circ \phi_\omega^{-1})
		      \end{equation}
	\end{enumerate}
	\end{prop}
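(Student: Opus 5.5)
For (1), I plan to compare degrees directly on the graphs. Let $\kappa:\spec K'\to\spec K$ and $\sigma$ be as in \eqref{eq:base-change-twist-square}, and let $f:X^{(\tau)}\dashrightarrow X$ be the underlying $K$-rational map. Under the identification \eqref{eq:twisted-base-change-domain-identification}, the base change of the graph $\Gamma_f\subset X^{(\tau)}\times_K X$ is the graph of $f_{K',\sigma}$. The same holds for every iterate: the $n$-th iterate of $f'_\tau$ is the base change of the $K$-map underlying $f_\tau^n$, with twist $\sigma^n$ lifting $\tau^n$.

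The first step is to pull $H$ back to $H_{K'}$, which is still big and nef by flat base change. The second step is to use invariance of intersection numbers under field extension (Fulton, Example 6.2.9): for Cartier divisors on a projective $K$-scheme $W$, $D_1\cdots D_m\cdot[W]=(D_1)_{K'}\cdots(D_m)_{K'}\cdot[W_{K'}]$, and base change commutes with pulling back along the projections of the graph. I would apply this to the $K$-map $f^n$ and the $K$-structure on the source $X^{(\tau^n)}$. Combined with \eqref{eq:relation-degrees} on both sides, this gives
\[
\deg_{H_{K'},k}(f_\tau'^{\,n})=\frac{\deg(\sigma^n)}{\deg(\tau^n)}\,\deg_{H,k}(f_\tau^n).
\]
Since $K'$ is algebraically closed, $\sigma$ is an automorphism, so $\deg\sigma=1$. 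The factor $\deg(\tau)^{-n}$ would alter $n$-th roots, so I will need to check the normalization carefully. The proposed resolution is that the twisted degree $\deg_{H,k}(f_\tau)$ is computed with the $K$-structure of the source graph $\Gamma_{f_\tau}$, not $\Gamma_f$. Both $\Gamma_{f_\tau}$ and the base-changed graph then carry untwisted structures over $K$ and $K'$ respectively, so the intersection numbers agree exactly with no factor.

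Another difficulty is that $X_{K'}\times$-type base change of the graph may fail to be integral or normal, since $K'/K$ need not be separable and $X$ need not be geometrically integral. I plan to handle this by replacing the normalization with any model dominating the graph, as allowed by Remark \ref{rmq:pullback-to-higher-model}, and by working with fundamental cycles, where multiplicities cancel consistently. This careful bookkeeping of the $K$- versus $K'$-structures on the graph is the step I expect to be the main obstacle.

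For (2), I plan to follow \cite[Theorem 1]{dangDegreesIteratesRational2020}. Put $g_{\omega\tau\omega^{-1}}=\phi_\omega\circ f_\tau\circ\phi_\omega^{-1}$, so that $g^n=\phi_\omega\circ f_\tau^n\circ\phi_\omega^{-1}$. Applying Proposition \ref{prop:inegalite-composition-degrees} twice gives $\deg_k(g^n)\le C\,\deg_k(\phi_\omega)\deg_k(\phi_\omega^{-1})\deg_k(f_\tau^n)$, where the degrees are computed with big and nef classes $H_X,H_Y$. That proposition is stated for morphisms; I would extend it to dominant rational maps by the usual argument of passing to a resolution of the graphs. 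Taking $n$-th roots, and using that $\lambda_k$ is independent of the polarization, gives $\lambda_k(g)\le\lambda_k(f_\tau)$. The symmetric argument gives the reverse inequality.

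Here $\phi_\omega^{-1}$ is twisted by $\omega^{-1}$. When $\omega$ is not invertible this only makes sense after birationality forces $\omega$ to be an isomorphism, which it does, since a birational twisted map induces $K(X)\simeq K(Y)$ compatible with $\tau$ and $\omega$.
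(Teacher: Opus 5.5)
Your final argument for (1) is the paper's proof. You base-change $\Gamma_{f_\tau}$ itself, with its source $K$-structure, along $\kappa$, using $\pi_1\times\id$ and $\pi_2\times\sigma$; this is well defined because $\kappa\circ\sigma=\tau\circ\kappa$. Then $(\pi_2\times\sigma)^*H_{K'}=(\pi_2^*H)_{K'}$, and Fulton's invariance gives $\deg_{H_{K'},k}(f_\tau'^{\,n})=\deg_{H,k}(f_\tau^n)$ with no correction factor. Your (2) is the paper's appeal to Dang's birational invariance, written out through Proposition~\ref{prop:inegalite-composition-degrees}. The passage from morphisms to rational maps by resolving graphs does work, because that inequality allows any big and nef class on the middle model, for instance the pullback of $H_Y$ to the graph of the second map.

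However, you should delete the displayed formula with the factor $\deg(\sigma^n)/\deg(\tau^n)$ rather than ``resolve'' it by a choice of normalization. Both computations concern the same number, and that formula is false whenever $\deg\tau>1$. The faulty step is your opening claim that $\Gamma_f\times_{K,\kappa}\spec K'$ is the graph of $f_{K',\sigma}$, i.e.\ the identification \eqref{eq:twisted-base-change-domain-identification}. That identification holds only when $\deg\tau=1$; compare Remark~\ref{rmq:not-base-change} with $X=\spec K$.

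Indeed, $X^{(\tau)}\times_{K,\kappa}\spec K'=X\times_K\spec\bigl(K\otimes_{\tau^*K}K'\bigr)$, and $K\otimes_{\tau^*K}K'$ has dimension $\deg\tau$ over $K'$. So the base change of $\Gamma_f$ consists of $\deg\tau$ pieces counted with multiplicity. They are indexed by the extensions to $K$ of $\kappa^*\circ(\tau^*)^{-1}:\tau^*(K)\to K'$, and each is conjugate under an automorphism of $K'$ to the graph of $f_{K',\sigma}$. On the other side, $\Gamma_f=\Gamma_{f_\tau}^{(\tau)}$, so Lemma~\ref{lemme:twist-degree-map} shows that intersection numbers on $\Gamma_f$ are $\deg(\tau)$ times those on $\Gamma_{f_\tau}$. (Thus \eqref{eq:relation-degrees} should carry $\deg\tau$ in the numerator.) The two factors of $\deg\tau$ cancel, which is why your corrected computation is right.

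Your side remark that birationality forces $\omega$ to be an isomorphism is false in general. View $\A^1_{\Q(s)}$ as a $\Q(t)$-variety via $t\mapsto s^2$. The map $\A^1_{\Q(t)}\to\A^1_{\Q(s)}$ given on coordinate rings by $s\mapsto t$, $x\mapsto x$ is an isomorphism of schemes twisted by $\omega^*(t)=t^2$, and its inverse is not a twisted map. Your claim holds when $K$ is algebraically closed in the function fields. Otherwise, as in the paper, the twistedness of $\phi_\omega^{-1}$ should be taken as part of the hypothesis.
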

				\begin{proof}
		For the first assertion, choose a base change of $f_\tau$ over $K'$ using an automorphism
		$\sigma:\spec K'\to\spec K'$ as in \eqref{eq:base-change-twist-square}.
		If $H$ is a big and nef divisor class on $X$ and $H_{K'}$ is its pullback to $X_{K'}$, then base change preserves big and nef classes and preserves the intersection numbers appearing in the definition of degrees.
	Therefore, for every $m\geq 1$ and every $k$,
	\[
		\deg_{H_{K'},k}\bigl((f_\tau)_{K',\sigma}^m\bigr)
		=
		\deg_{H,k}(f_\tau^m).
	\]
	Taking $m$-th roots and passing to the limit gives the desired equality.
	A different choice of $\sigma$ gives a conjugate base change, as explained above, and hence gives the same dynamical degrees.

	For the second assertion, assume that $\phi_\omega:X\dashrightarrow Y$ is a birational twisted map and that $\phi_\omega^{-1}$ denotes its twisted inverse, so that the twists in
	\[
		g:=\phi_\omega\circ f_\tau\circ\phi_\omega^{-1}
	\]
	are compatible.
	Then, for every $m\geq 1$,
	\[
		g^m=\phi_\omega\circ f_\tau^m\circ\phi_\omega^{-1}
		\]
		as twisted rational maps.
		After passing to the associated ordinary rational maps between the corresponding twists of $X$ and $Y$, this is the usual birational conjugacy situation.
		Thus the equality of dynamical degrees follows from the birational invariance theorem \cite[Theorem 1]{dangDegreesIteratesRational2020}.
		\end{proof}

	 We now turn to computing dynamical degrees for algebraic tori.

		\begin{prop}
			\label{prop:absolute-dyn-degrees-algebraic-tori}
			Let $d$ be an integer and $K$ be a field and let $\phi_\tau : \G_m^d \rightarrow \G_m^d$, then for any $k =1, \dots, d$,
			\begin{equation}
				\lambda_k (\phi_\tau) = \lambda_k(\phi)
			\end{equation}
			where $\phi_\tau = \hat \tau \circ \phi$ and $\phi : \G_m^d \rightarrow \G_m^d$ is the regular endomorphism coming from Corollary \ref{cor:twisted-endo-alg-torus}.
			{In particular, $\lambda_1(\phi_\tau)$ is an algebraic integer of degree $\leq d$.}
		\end{prop}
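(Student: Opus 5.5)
The plan is to compactify $\G_m^d$ in a projective split toric variety and compare the pullback actions of $\phi_\tau$ and $\phi$ on numerical classes there. By Corollary \ref{cor:twisted-endo-alg-torus} we write $\phi_\tau=\hat\tau\circ\phi$, where $\phi$ is an endomorphism of $\G_m^d$ over $K$, namely a monomial map $x\mapsto c\cdot x^A$ with $A\in M_d(\Z)$ and $c\in(K^\times)^d$; dominance forces $\det A\neq 0$. Fix a smooth projective split toric variety $X_\Sigma$, say $(\P^1)^d$ or $\P^d$, and an ample torus-invariant divisor $H$. By Proposition \ref{prop:toric-varieties}, $\hat\tau$ extends to a twisted endomorphism of $X_\Sigma$.

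\textbf{Step 1.} Since $\hat\tau$ is regular, a twisted automorphism-like map on the scheme level, and its underlying ordinary map $X_\Sigma^{(\tau)}\to X_\Sigma$ is an isomorphism of schemes, it does not change the graph geometry. We therefore compare iterates directly. Because $\hat\tau$ fixes every monomial, we have
\[
	\hat\tau\circ\phi=\phi^{\tau}\circ\hat\tau,
\]
where $\phi^{\tau}$ is the monomial map with the same matrix $A$ and twisted coefficients $\tau^*(c)$. Iterating gives
\[
	\phi_\tau^n=\hat\tau^n\circ\psi_n,
\]
where $\psi_n$ is an ordinary monomial map with matrix $A^n$ and some coefficient vector. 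Translation by the torus acts trivially on $N^\bullet(X_\Sigma)$ and does not change degrees, so $\deg_{H,k}(\psi_n)=\deg_{H,k}(\phi^n)$.

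\textbf{Step 2.} Compute $\deg_{H,k}(\hat\tau^n\circ\psi_n)$. Let $\Gamma$ be the graph of $\psi_n$, resolved to a model $W$. The graph of $\hat\tau^n\circ\psi_n$ has the same underlying scheme, with the second projection replaced by $\hat\tau^n\circ\pi_2$. By Proposition \ref{prop:trivial-action-toric-varieties}, $(\hat\tau^n)^*H=H$ in $N^1$, so $\pi_2^*(\hat\tau^n)^*H=\pi_2^*H$ as classes. The only change is the $K$-structure on the source graph, and the degree factor must be tracked through Lemma \ref{lemme:twist-degree-map} and \eqref{eq:relation-degrees}. Because the first projection is a $K$-morphism for $\Gamma_{f_\tau}$ under the paper's convention, I expect the intersection number $\pi_1^*H^{d-k}\cdot\pi_2^*H^k$ computed on $\Gamma_{\phi_\tau^n}$ to equal the one for $\psi_n$, possibly up to a factor $\deg(\tau)^{\pm n}$. \textbf{This bookkeeping is the main obstacle.} I would handle it by noting that $\phi_\tau^n$ and $\psi_n$ share the same first-projection $K$-structure, and that $\hat\tau^n$ is a twisted isomorphism acting as the identity on $N^\bullet(X_\Sigma)$ via Proposition \ref{prop:action-on-cycles}. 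Any residual factor is at most $C^n$ with $C=\deg\tau$. Even if it does not cancel exactly, I would check it against the normalization in Proposition \ref{prop:dynamical-degree-projective-space}, where the same bookkeeping yields equality; following that model should show the factor is absent.

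\textbf{Step 3.} Conclude from $\deg_{H,k}(\phi_\tau^n)=\deg_{H,k}(\phi^n)$ by taking $n$-th roots that $\lambda_k(\phi_\tau)=\lambda_k(\phi)$. For the algebraicity statement, use the classical fact, for example by Lin or Favre–Wulcan, that for a monomial map with matrix $A$ one has $\lambda_1(\phi)=\rho(A)$, the spectral radius. The coefficients $c$ play no role, as in Step 1. Since $\rho(A)$ is the modulus of a root of the integer characteristic polynomial of $A$, which is monic of degree $d$, we obtain an algebraic number of degree $\leq d$. To obtain an algebraic integer, note that $\lambda_1(\phi)\geq1$ is a limit of integer sequences and equals $\rho(A)$. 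If $\rho(A)$ is attained by a non-real eigenvalue, then $\rho(A)=|\mu|$ is still an algebraic integer, being the square root of $\mu\bar\mu$. In that case only the degree bound needs care; I would argue, as in Favre–Wulcan, that in the degree-growth sense $\lambda_1$ is an eigenvalue of a nonnegative integral operator on the fan. If this becomes delicate, I would restrict to citing that result directly.
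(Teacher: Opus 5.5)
Your route is the paper's: write $\phi_\tau^n=\hat\tau^n\circ\psi_n$ with $\psi_n$ a torus translate of $\phi_{A^n}$, use that translations and $\hat\tau$ act trivially on the toric polarization, then cite Lin/Favre--Wulcan. But Step 2 is left open at the one place where something must be proved.

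\textbf{Step 2.} A discrepancy of $\deg(\tau)^{\pm n}$ is not negligible. It survives the $n$-th root and would give $\lambda_k(\phi_\tau)=\deg(\tau)^{\pm1}\lambda_k(\phi)$. So ``any residual factor is at most $C^n$'' settles nothing, and pointing to Proposition \ref{prop:dynamical-degree-projective-space} is not an argument.

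Two smaller errors in Step 1 come from the same source. Your claim that the ordinary map $X_\Sigma^{(\tau)}\to X_\Sigma$ underlying $\hat\tau$ is an isomorphism of schemes is false when $\deg\tau>1$: it is the base change of $\tau$, finite of degree $\deg\tau$. Also, the commutation you need is $\phi\circ\hat\tau=\hat\tau\circ\phi^{\tau}$; the identity you wrote would require $(\tau^*)^{-1}$ on the coefficients.

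The paper disposes of all this at the outset. By Proposition \ref{prop:dyn-degrees-base-change}(1) it assumes $K$ algebraically closed, so $\tau$ is an automorphism and $\deg(\tau^m)=1$. Then $\deg_{H,k}(\phi_\tau^m)=\deg_{H,k}(t_{b_m}\circ\phi_{A^m})=\deg_{H,k}(\phi_{A^m})$ exactly.

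You can also close the gap without base change. By definition $\Gamma_{f_\tau}$ carries the $K$-structure for which $\pi_1$ is a $K$-morphism. So if $W\to X_\Sigma$ is a $K$-model resolving $\psi_n$, then $W$ together with $\hat\tau^n\circ\tilde\psi_n$ computes $\deg_{H,k}(\phi_\tau^n)$ by Remark \ref{rmq:pullback-to-higher-model}. All intersection numbers are then taken on the same $K$-variety $W$, and $(\hat\tau^n)^*H=H$, so no factor appears.

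\textbf{Step 3.} Your worry is justified, but no operator on the fan can rescue the bound. It is false when $\rho(A)$ is attained only by non-real eigenvalues, while $\lambda_1(\phi_A)=\rho(A)$ always holds.

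For example, let $A$ be the companion matrix of $x^3+4x-1$. Then $\det A=1$, and the real root $r\approx 0.246$ is dominated by a conjugate pair of modulus $r^{-1/2}\approx 2.02$. So $\rho(A)=r^{-1/2}$, which is a root of $t^6-4t^4-1$. This polynomial is irreducible over $\Q$: it equals $Q(t^2)$ with $Q(s)=s^3-4s^2-1$ irreducible, and comparing coefficients shows it is not of the form $-g(t)g(-t)$ with $g\in\Z[t]$ a monic cubic. Hence $\lambda_1(\phi_A)$ has degree $6>3$.

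The paper's proof simply reads the bound off from the integrality of $A$ and has the same defect. What is true is the following:
\begin{itemize}
\item $\rho(A)$ is an algebraic integer.
\item It has degree $\le d$ as soon as it is $\pm$ an eigenvalue.
\item This holds under $\lambda_1^2>\lambda_2$. Order the eigenvalues $\mu_1,\dots,\mu_d$ of $A$ by decreasing modulus. Then $\lambda_2(\phi)=\rho(\wedge^2A)=|\mu_1\mu_2|$ forces $|\mu_1|>|\mu_2|$, so $\mu_1=\bar\mu_1$ is real.
\end{itemize}
That hypothesis is exactly what is available in the torus case of the proof of Theorem \ref{thm:first-dyn-degree-algebraic}, via Lemma \ref{lemme:induction-fibratino}. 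So you should add it rather than try to prove the bound in general.
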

	\begin{proof}
		By Proposition \ref{prop:dyn-degrees-base-change}, we may assume that $K$ is algebraically closed.
	Then $\tau$ is an automorphism of $K$.
	Write
	\[
		\phi=t_b\circ \phi_A,
	\]
	where $b\in (K^*)^d$, $t_b$ is translation by $b$, and $A=(a_{ij})\in M_d(\Z)$ is the exponent matrix, so that
	\[
		\phi_A(x_1,\dots,x_d)=
		\left(\prod_j x_j^{a_{1j}},\dots,\prod_j x_j^{a_{dj}}\right).
	\]
	The twisted map $\phi_\tau=\hat\tau\circ\phi$ has the same exponent matrix $A$; the twist only affects the coefficient part under iteration.
	More precisely, for every $m\geq 1$, there exists a point $b_m\in (K^*)^d$ such that the $m$-th twisted iterate has the form
	\[
		\phi_\tau^m=\widehat{\tau^m}\circ t_{b_m}\circ \phi_{A^m}
	\]
	as a twisted rational map.
	Indeed, each composition only changes the coefficient vector, by multiplying it with translates of its images under powers of $\tau$, while the exponent matrix is multiplied in the usual way.
		We now compare degrees before taking the limit.
		Let $X=(\P^1)^d$ and let
		\[
			H=\sum_{i=1}^d \pr_i^*\OO_{\P^1}(1).
		\]
			The twisted map $\phi_\tau^m$ corresponds to the ordinary rational map
			\[
				F_m:=t_{b_m}\circ \phi_{A^m}:X^{(\tau^m)}\dashrightarrow X.
			\]
			Since $K$ is algebraically closed, $\tau^m$ is an automorphism and $\deg(\tau^m)=1$.
			Moreover, under the canonical identification $N^1(X^{(\tau^m)})_{\mathbb R}\simeq N^1(X)_{\mathbb R}$, the class $H^{(\tau^m)}$ is identified with $H$.
			Thus the definition of the degree of a twisted rational map gives
			\[
				\deg_{H,k}(\phi_\tau^m)
				=
				\deg_{H^{(\tau^m)},H,k}(F_m)
				=
				\deg_{H,k}(t_{b_m}\circ \phi_{A^m}).
			\]
			The translation $t_{b_m}$ extends to an automorphism of $X$ and preserves the numerical class of $H$.
			Hence, for every $m\geq 1$,
			\[
				\deg_{H,k}(\phi_\tau^m)
				=
				\deg_{H,k}(\phi_{A^m}).
			\]
		By the standard formula for the degree growth of monomial maps \cite{favreDegreeGrowthMonomial2012,linPullingBackCohomology2012},
		\[
			\lim_{m\to\infty}\deg_{H,k}(\phi_{A^m})^{1/m}
			=
			\rho\left(\wedge^k A\right).
		\]
		Thus
		\[
			\lambda_k(\phi_\tau)=\rho\left(\wedge^k A\right).
		\]
		The same degree comparison, with the coefficient vector $b_m$ replaced by the coefficient vector appearing in the ordinary iterate $\phi^m$, gives
		\[
			\lambda_k(\phi)=\rho\left(\wedge^k A\right).
		\]
		Therefore $\lambda_k(\phi_\tau)=\lambda_k(\phi)$, as claimed.
		For $k=1$, this gives $\lambda_1(\phi_\tau)=\rho(A)$.
		Since $A$ is a $d\times d$ matrix with integral coefficients, this is an algebraic integer of degree at most $d$.
	\end{proof}

\subsection{Relative dynamical degrees}\label{subsec:relative-dynamical-degrees}
Following \cite{truongRelativeDynamicalDegrees2015}, let $X$ and $Y$ be normal projective $K$-varieties, let
$q:X\rightarrow Y$ be a dominant morphism, and let
$f:X\dashrightarrow X$ and $g:Y\dashrightarrow Y$ be dominant rational selfmaps such that
\[
q\circ f=g\circ q.
\]
Set $d_X=\dim X$, $d_Y=\dim Y$, and $e=d_X-d_Y$.
For any $0\leq k\leq e$, the \emph{relative $k$-th dynamical degree of $f$} is defined as the limit
\begin{equation}
	\lambda_k (f_{|q}) = \lim_n \reldeg_k (f^n)^{1/n}
\end{equation}
where the definition of $\reldeg_k (f)$ is as follows.
Let $H_X, H_Y$ be big and nef divisors over $X$ and $Y$ respectively.
Let $\tilde \Gamma_{f^n}$ be the normalisation of the graph of $f^n$ with its two projections $\pi_1, \pi_2$ to $X$.
Then
\begin{equation}
	\reldeg_{k, H_X, H_Y} (f^n) = \left( (\pi_2^* H_X)^k \cdot (\pi_1^* H_X)^{{d_X}- d_Y - k} \cdot (\pi_1 \circ q)^*
	H_Y^{d_Y}  \right).
\end{equation}
Again the limit exists and does not depend on the choice of $H_X$ and $H_Y$ and it is a birational invariant.

\begin{thm}
	\label{thm:relative-dyn-degree-is-twisted-dynamical-degree}
	{Under the assumptions above, let $\eta$ be the generic point of $Y$.
	The map $g$ induces a finite morphism $g_\eta:\spec K(Y)\to\spec K(Y)$, and the restriction of $f$ to the generic fiber defines a twisted rational map
	$f_\eta:X_\eta\dashrightarrow X_\eta$ with twisting $g_\eta$.
	Then for any $0\leq k\leq e$,}
	\begin{equation}
		\lambda_k (f_{|q}) = \lambda_k (f_{\eta} : X_\eta \dashrightarrow X_\eta)
	\end{equation}
\end{thm}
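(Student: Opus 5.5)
The plan is to show that, for each $n$, the relative degree $\reldeg_{k,H_X,H_Y}(f^n)$ equals a fixed positive constant times the twisted degree $\deg_{H_\eta,k}(f_\eta^n)$ computed on the generic fiber. Here $H_\eta$ is the restriction of $H_X$ to $X_\eta$; it is again big and nef over $K(Y)$, since ampleness and effectivity restrict to the generic fiber. Taking $n$-th roots then gives the equality of limits. After a harmless reduction (normalisation, and possibly replacing $H_Y$ by a multiple of a very ample divisor, which changes $\reldeg$ only by a constant), we may assume $H_Y$ is very ample. Then $H_Y^{d_Y}$ is represented by $\deg(H_Y^{d_Y})$ general points $y\in Y$, and $(\pi_1\circ q)^*H_Y^{d_Y}$ is represented by the union of the fibers of $q\circ\pi_1:\tilde\Gamma_{f^n}\to Y$ over these points. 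Hence
\[
\reldeg_{k}(f^n)=\sum_{y}\bigl(\pi_2^*H_X\bigr)^k\cdot\bigl(\pi_1^*H_X\bigr)^{e-k}\cdot[\tilde\Gamma_{f^n,y}] ,
\]
where $\tilde\Gamma_{f^n,y}$ is the fiber over $y$.

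The core step is to compare each fiber intersection number with the corresponding intersection on the generic fiber $\tilde\Gamma_{f^n,\eta}$. The generic fiber of $q\circ\pi_1$ is a model of the graph of $f_\eta^n$ over $K(Y)$. More precisely, it is the graph of the ordinary map $X_\eta^{(g_\eta^n)}\dashrightarrow X_\eta$, and by the graph definition of \S\ref{subsec:graphs-and-degrees} it is $\Gamma_{f^n_\eta}$ with the structure over $K(Y)$ given by $\pi_1$. By Remark \ref{rmq:pullback-to-higher-model}, normalisation and higher models do not matter. Intersection numbers of Cartier divisors on the fibers of a flat family are constant on an open set (conservation of number, \cite{fultonIntersectionTheory1998}). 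By generic flatness, $q\circ\pi_1$ is flat over a dense open $U\subset Y$, and we choose the points $y$ in $U$. The degree over the geometric fiber $y$ then equals the degree over $K(Y)$ on the generic fiber, where $\deg$ is taken with respect to the structure morphism $\pi_1$, i.e.\ the untwisted $K(Y)$-structure of $X_\eta$. This is exactly the convention used to define $\deg_{H_\eta,k}(f_\eta^n)=\pi_1^*H_\eta^{e-k}\cdot\pi_2^*H_\eta^k$, with $\pi_1$ a morphism of $K(Y)$-varieties. We therefore expect
\[
\reldeg_k(f^n)=(H_Y^{d_Y})\cdot\deg_{H_\eta,k}(f_\eta^n),
\]
with no $\deg(g_\eta)$ factors, because we never use the twisted structure via $\pi_2$. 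To double-check this normalisation, I would compare with \eqref{eq:relation-degrees} on a product example $X=Y\times F$.

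The main obstacle is uniformity in $n$: the open set $U$ on which flatness and conservation of number hold depends on $n$. This is harmless, since for each fixed $n$ we may choose the representing points of $H_Y^{d_Y}$ generically, and $\reldeg$ is independent of the representative. A cleaner alternative avoids points altogether: use the flat pullback/specialisation formalism, writing $H_Y^{d_Y}=\deg(H_Y^{d_Y})[y]$ in $N_0(Y)$ and invoking the compatibility of intersection with refined Gysin pullback to the generic point. I would present the pointwise argument and remark that the equality holds for every $n$ on the nose. The final step is to take $n$-th roots. The constant $H_Y^{d_Y}>0$ disappears in the limit, and the independence of $H_\eta$ is supplied by the earlier proposition on existence of $\lambda_k(f_\tau)$, giving $\lambda_k(f_{|q})=\lambda_k(f_\eta)$.
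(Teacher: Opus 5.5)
Your argument is correct and shares the paper's skeleton. For each fixed $n$ you prove the exact identity $\reldeg_{k,H_X,H_Y}(f^n)=(H_Y^{d_Y})\,\deg_k(f_\eta^n)$ and then take $n$-th roots. The identity rests on the fact that the generic fiber over $Y$ of the normalized graph of $f^n$ is a model of the graph of $f_\eta^n$, with its $K(Y)$-structure through $\pi_1$, so no $\deg(g_\eta)$ factor appears.

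You differ in how this identity, isolated in the paper as Proposition~\ref{prop:intersection-number-fibration}, is proved. The paper never specializes to closed fibers. With $\alpha=D_1\cdots D_e\cap[X]$, the pushforward $q_*\alpha$ is a top-dimensional cycle on $Y$ whose coefficient on $[Y]$ is the generic-fiber number $D_{1,\eta}\cdots D_{e,\eta}$. The projection formula then gives $D_1\cdots D_e\cdot q^*L_1\cdots q^*L_{d_Y}=L_1\cdots L_{d_Y}\cdot q_*\alpha$ at once, for arbitrary $\R$-Cartier $L_i$. That route needs no very ample $H_Y$, no Bertini-type choice of points (which over a finite field would also force a base extension), no generic flatness and no specialization theorem. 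The $n$-dependence of the flat locus that you flag as the main obstacle never arises there.

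Your route is more geometric, since it exhibits $H_Y^{d_Y}$ as a count of general fibers, but two inputs need care.

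\begin{itemize}
\item The usual conservation of number compares closed fibers with each other, while you need a closed fiber against the generic fiber over $K(Y)$. Constancy of $\chi\bigl(\tilde\Gamma_y,\mathcal L_1^{\otimes m_1}\otimes\cdots\otimes\mathcal L_e^{\otimes m_e}\bigr)$ in the flat projective family over the irreducible open set $U\ni\eta$ supplies this. So does the observation that both numbers equal the coefficient of $[Y]$ in the pushforward to $Y$, which is exactly the paper's argument.
\item The cleanest proof that changing $H_Y$ alters $\reldeg$ only by a constant is that same projection-formula computation. Rather than arguing it separately, use that $\lambda_k(f_{|q})$ does not depend on $H_Y$ and fix $H_Y$ very ample from the start.
\end{itemize}
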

This follows from the following result.

	\begin{prop}
		\label{prop:intersection-number-fibration}
		Let $q :X \rightarrow Y$ be a {dominant} morphism of normal projective $K$-varieties.
		Write $e = d_X - d_Y$ where $d_Z = \dim Z$ for $Z = X,Y$.
		Let {$D_1, \dots, D_e$ be Cartier $\R$-divisor classes on $X$ and $L_1, \dots, L_{d_Y}$ be Cartier $\R$-divisor classes on $Y$}, we have
		\begin{equation}
			\deg\left( L_1 \cdots L_{d_Y} \right) D_{1,\eta} \cdots D_{e, \eta} = D_1 \cdots D_e \cdot q^* L_1 \dots q^* L_{d_Y}.
		\end{equation}
	\end{prop}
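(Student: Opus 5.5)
By multilinearity in the $L_j$ and the $D_i$, we may assume all divisors are integral Cartier. We may also assume the $L_j$ are very ample, since every Cartier class on the projective variety $Y$ is a difference of very ample classes and both sides are multilinear in the $L_j$. Further, $D_1 \cdots D_e \cdot q^*L_1 \cdots q^*L_{d_Y}$ and the left-hand side are both multilinear in the $D_i$, so no positivity is needed there.

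The plan is to replace $L_1 \cdots L_{d_Y}$ by a zero-cycle on $Y$ supported where $q$ is nice. By Bertini (after passing to $\bar K$ via the base-change invariance of intersection numbers, if $K$ is finite or not algebraically closed), general members $H_j \in |L_j|$ intersect in a reduced zero-dimensional scheme $Z = H_1 \cap \cdots \cap H_{d_Y}$. We can choose $Z$ inside a dense open $U \subset Y$ over which $q$ is flat. Its degree is $\deg(L_1 \cdots L_{d_Y}) = \sum_{y \in Z} [K(y):K]$. The same genericity makes $q^*H_1, \dots, q^*H_{d_Y}$ intersect properly on $X$, giving
\[
q^*L_1 \cdots q^*L_{d_Y} \cdot [X] = [q^{-1}(Z)] = \sum_{y\in Z} [X_y]
\]
in $A_e(X)$. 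This uses the flat pullback formula \eqref{eq:flat-pullback-cycles}, or simply the fact that a pullback of a Cartier divisor intersected with a flat family commutes with restriction over $U$.

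Hence the right-hand side equals $\sum_{y \in Z} D_1 \cdots D_e \cdot [X_y]$, where each term is computed as a degree over $K$. We have $D_1 \cdots D_e \cdot [X_y] = [K(y):K]\,(D_{1,y} \cdots D_{e,y})_{X_y}$, with the latter intersection number computed over the residue field $K(y)$; this is Lemma \ref{lemme:twist-degree-map} in spirit. It then remains to show that the fiber intersection number $(D_{1,y}\cdots D_{e,y})$ equals the generic one $(D_{1,\eta}\cdots D_{e,\eta})$ for $y$ in a dense open subset. This is the constancy of intersection numbers in flat proper families: $\chi(X_y, \OO(\sum n_i D_i)|_{X_y})$ is locally constant in $y$ on the flat locus, by semicontinuity and flatness, and the intersection number is the coefficient of $n_1\cdots n_e$ in this polynomial. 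Shrinking $U$ further so that it avoids the locus where $X_y$ fails to be of pure dimension $e$, and choosing $Z \subset U$, we obtain
\[
\text{RHS} = \sum_{y} [K(y):K]\,(D_{1,\eta}\cdots D_{e,\eta}) = \deg(L_1\cdots L_{d_Y})\, D_{1,\eta}\cdots D_{e,\eta}.
\]

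The main obstacle is the genericity step: ensuring that the Bertini-chosen complete intersection $Z$ lands in the open set where $q$ is flat and fiber intersection numbers are constant, while simultaneously keeping $q^*H_j$ in proper intersection on $X$. I would first pass to an algebraically closed (infinite) base field and use Bertini for basepoint-free systems, avoiding the proper closed subset $Y\setminus U$ one divisor at a time. I would also check that the scalars $[K(y):K]$ match on both sides, so that no stray factor appears under base change.
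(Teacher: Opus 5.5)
Your argument is correct, but it goes the opposite way from the paper, which never looks at closed fibres. The paper sets $\alpha=D_1\cdots D_e\cap[X]\in A_{d_Y}(X)$ and observes that $q_*\alpha$ lies in $A_{d_Y}(Y)=\Z\,[Y]$. It reads off the coefficient of $[Y]$ as the generic-fibre number $D_{1,\eta}\cdots D_{e,\eta}$: the components $V$ of a representative of $\alpha$ that dominate $Y$ are exactly closed points of $X_\eta$, each contributing $n_V[K(V):K(Y)]$. It then finishes with the projection formula $\deg(q^*L_1\cdots q^*L_{d_Y}\cap\alpha)=\deg(L_1\cdots L_{d_Y}\cap q_*\alpha)$. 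That route needs no positivity of the $L_j$, no flat locus, no Bertini and no Euler characteristics, and it works over any field as is.

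You instead represent $L_1\cdots L_{d_Y}\cap[Y]$ by a zero-cycle inside the flat locus $U$ and pull it back flatly to a sum of closed fibres. You then need the extra input that fibre intersection numbers are constant on $U$, via Snapper polynomials and local constancy of $\chi$ in flat proper families. This is heavier, but it gives the stronger by-product that every closed fibre over $U$ carries the generic intersection number.

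Two comments on details. First, the step you call the main obstacle is mild. Once $Z\subset U$, proper intersection of the $q^*H_j$ is automatic by flatness. Compatibility of flat pullback with intersecting by Cartier divisors then gives $q^*L_1\cdots q^*L_{d_Y}\cap[X]=\sum_y m_y[X_y]$ for whatever multiplicities $m_y$ occur in $Z$, so reducedness of $Z$ is not needed. You only need sections avoiding finitely many generic points, which is possible over any infinite field.

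Second, passing to $\bar K$ is not harmless as written. $Y_{\bar K}$ need not be integral, and then there is no single generic fibre to compare with $X_\eta$. It is cleaner to stay over $K$, keeping the factors $[K(y):K]$ you already track. If $K$ is finite, extend scalars to $K(t)$ instead: this keeps $X$ and $Y$ integral and only replaces $X_\eta$ by $X_\eta\otimes_{K(Y)}K(Y)(t)$, which leaves the generic intersection number unchanged.
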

	\begin{proof}
				By multilinearity, it is enough to treat Cartier divisors.
		Set
		\[
			\alpha:=D_1\cdots D_e\cap [X]\in A_{d_Y}(X).
		\]
		By definition of the intersection product on the generic fiber, the coefficient of $[Y]$ in $q_*\alpha$ is
		\[
			D_{1,\eta}\cdots D_{e,\eta}.
		\]
		Hence
		\[
			q_*\alpha=(D_{1,\eta}\cdots D_{e,\eta})[Y]
		\]
		as a top-dimensional cycle on the integral variety $Y$.
		Applying the projection formula for intersections with Cartier divisors \cite[Proposition 2.3]{fultonIntersectionTheory1998}, we get
		\[
		\begin{aligned}
			D_1\cdots D_e\cdot q^*L_1\cdots q^*L_{d_Y}
			&=
			L_1\cdots L_{d_Y}\cdot q_*\alpha \\
			&=
			(D_{1,\eta}\cdots D_{e,\eta})\deg(L_1\cdots L_{d_Y}),
		\end{aligned}
		\]
		which is the desired identity.
		The statement for $\R$-classes follows again by multilinearity.
	\end{proof}

	\begin{proof}[Proof of Theorem \ref{thm:relative-dyn-degree-is-twisted-dynamical-degree}]
	Consider the following diagram.
	\begin{equation}
		\begin{tikzcd}
			& \Gamma_{f^n}  \ar[ld, "\pi_1"] \ar[rd, "\pi_2"] \\
			X \ar[rr, dashed, "{f^n}"] \ar[d, "q"] & & X \ar[d, "q"] \\
			Y \ar[rr, dashed, "{g^n}"] & & Y \\
		\end{tikzcd}
	\end{equation}
	{Restricting this graph to the generic point $\eta$ of $Y$, i.e. tensoring with $\spec K(\eta)$, gives the graph of the twisted rational map $f_\eta^n$.
	Equivalently, any birational model of the graph obtained in this way computes the same intersection numbers.}
	\begin{equation}
		\begin{tikzcd}
			& \Gamma_{f_\eta^n}  \ar[ld, "\pi_1"] \ar[rd, "\pi_2"] \\
			X_\eta \ar[rr, dashed, "{f_\eta^n}"] \ar[d, "q_\eta"] & & X_\eta \ar[d, "q_\eta"] \\
			\spec K(Y) \ar[rr, "{(g_\eta)^n}"] & & \spec K(Y) \\
		\end{tikzcd}
	\end{equation}
	So by Proposition \ref{prop:intersection-number-fibration} we have
	\begin{align}
		\reldeg_{k, H_X, H_Y} (f^n) & = \left( (\pi_2^* H_X)^k \cdot (\pi_1^* H_X)^{{d_X}- d_Y - k} \cdot (\pi_1 \circ q)^*
		H_Y^{d_Y}  \right)                                                                                               \\
		                            & = H_Y^{d_Y}  \left( (\pi_2^* H_{X,\eta})^k \cdot (\pi_1^* H_{X,\eta})^{{d_X}- d_Y - k}
	\right)                                                                                                          \\
		                            & = H_Y^{d_Y} \deg_k (f_\eta^n).
	\end{align}
	Taking the $n$-th roots and letting $n \rightarrow + \infty$ yields the result.
\end{proof}

\section{Relative degree formula}
\subsection{Cartier-$b$-classes and Siu's inequality}\label{subsec:siu-inequality}
Let $X$ be a projective variety over a field $K$.
We define the space $\cNk (X)$ of numerical Cartier $b$-classes of codimension
$k$ as the direct limit
\begin{equation}
	\cNk(X) = \varinjlim_{Y \rightarrow X}
	N^k (Y)_{\R},
\end{equation}
where $Y$ runs over the projective birational models of $X$ and the transition maps are pullbacks.
Since every projective birational model is dominated by a normal projective one, one may equivalently take the direct limit over normal projective models.
If $K$ has characteristic zero, one may also take the direct limit over smooth projective models.
Concretely, an element in $\cNk (X)$ is an equivalence class of a tuple $(Y, \alpha)$ where $q : Y \rightarrow X $ is a projective birational morphism and $\alpha \in N^k(Y)_{\R}$ is a numerical class of cycles of codimension $k$ and $(Y, \alpha) \sim (Z, \beta)$ if there exists $W$ with two birational morphisms $\pi_Y : W \rightarrow Y, \pi_Z: W \rightarrow Z$ such that $\pi_Y^* \alpha = \pi_Z^* \beta$.
We say that $\alpha$ is \emph{determined} in $Y$.
We say that $\alpha \in \cNk(X)$ is \emph{pseudoeffective} or psef if for any $(Z, \beta) \sim \alpha$ the class $\beta$ is in the closure of the cone of effective cycles of codimension $k$, we write $\alpha \geq 0$ when this is the case and $\alpha \geq \beta$ whenever $\alpha - \beta \geq 0$.
If $k = 1$ then $\alpha \geq 0$ if and only if $\alpha$ belongs to the pseudoeffective cone of divisors of $Y$ where $\alpha$ is determined in $Y$ because the pullback of an effective divisor is still an effective divisor.

Finally we say that $\alpha \in \cNone (X)$ is \emph{nef} if there exists $\beta \sim \alpha$ determined in some $Y$ such that $\beta \in \Nef(Y)$.
We now state the generalised version of Siu's inequality and another statement on intersection of nef classes.
We say that $\alpha \in \cNone(X)$ is \emph{big} if $\alpha \in \text{Big}(Y)$ the cone of big divisors where $\alpha$ is determined in $Y$.

For $D_1, \dots, D_k \in \cNone(X)$ there is a well defined intersection product $D_1 \cdots D_k \in \cNk (X)$ defined by taking the intersection in any $Y$ where all the $D_i$'s are defined.
This does not depend on the choice of $Y$.
If
$q : X \dashrightarrow X'$ is a dominant rational map of $K$-varieties then we have a well defined pullback
homomorphism
\begin{equation}
	q^* : \cNk (X') \rightarrow \cNk (X)
\end{equation}
given by $q^* \alpha = (Y, q^* \alpha)$ where $\pi ' : Y' \rightarrow X'$ and
$\pi : Y \rightarrow X$ are birational morphisms such that the induced map $q : Y \rightarrow Y'$ is regular and
$\alpha$ is defined over $Y'$, it does not depend on the choice of $Y', X'$.

By Lemma \ref{lemme:numerical-classes-changing-structure}, if $\tau : \spec K \rightarrow \spec K$ is a finite map and $X$ is a $K$-variety then we have canonical isomorphisms $\cNk (X^{(\tau)}) \simeq \cNk (X)$.
Therefore if $q_\tau : X
	\dashrightarrow Y$ is a twisted dominant rational map then it induces a pullback homomorphism
\begin{equation}
	q_\tau^* : \cNk(Y) \rightarrow \cNk (X)
\end{equation}
given by $\cNk (Y) \xrightarrow{q^*} \cNk (X^{(\tau)}) \simeq \cNk (X)$.

In particular, the definition of the dynamical degrees of $f_\tau : X \dashrightarrow X$ can be written as follows.
If
$L \in \cNone(X)$ is big and nef then
\begin{equation}
	\deg_{k,L} (f_\tau^n) = \left((f_\tau^n)^* L\right)^k \cdot L^{d-k} \quad \text{and} \quad \lambda_k (f_\tau) = \lim_n
	\deg_{k,L} (f_\tau^n)^{1/n}.
\end{equation}

We now state two very important results for the theory of Cartier $b$-classes.

\begin{thm}[Siu's inequality, \cite{dangDegreesIteratesRational2020}]\label{thm:siu}
	Let $\alpha_1, \dots, \alpha_i \in \cNone (X)$ be nef classes and let $\beta \in \cNone (X)$ be a big and nef class,
	then there exists a constant $C = C(i,d)$ depending only on $d =\dim X$ and $i$ such that
	\begin{equation}
		\alpha_1 \cdots \alpha_i \leq C(i,d) \frac{\alpha_1 \cdots \alpha_i \cdot \beta^{d-i}}{\beta^d} \beta^i.
	\end{equation}
\end{thm}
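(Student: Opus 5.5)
The idea is to reduce the statement to the corresponding inequality on a single normal projective model, where it is \cite[Theorem 3.4.5 / Siu-type inequality]{dangDegreesIteratesRational2020}, and then to check that the inequality holds \emph{uniformly on every higher model with the same constant}. This uniformity is exactly what pseudoeffectivity of a Cartier $b$-class requires.

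First I choose a normal projective birational model $Y \to X$ on which $\alpha_1, \dots, \alpha_i, \beta$ are all determined. Nefness and bigness are properties of the determination, so on $Y$ the classes $\alpha_j$ are nef and $\beta$ is big and nef. Now let $\pi : Z \to Y$ be any further projective birational morphism with $Z$ normal. Since pullback of nef divisors is nef, each $\pi^*\alpha_j$ is nef. Since $\pi$ is birational, $(\pi^*\beta)^d = \beta^d > 0$, and a nef divisor with positive top self-intersection is big, so $\pi^*\beta$ is big and nef on $Z$. By the projection formula (Proposition \ref{prop:action-on-cycles}, here for an ordinary birational morphism), the numbers $\alpha_1\cdots\alpha_i\cdot\beta^{d-i}$ and $\beta^d$ are the same whether computed on $Y$ or on $Z$. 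Hence it is enough to prove, on an arbitrary normal projective $d$-dimensional variety $Z$, with nef $a_1,\dots,a_i$ and big nef $b$, that
\[
 C(i,d)\,\frac{a_1\cdots a_i\cdot b^{d-i}}{b^d}\, b^i - a_1\cdots a_i \in \operatorname{Psef}^i(Z),
\]
with a constant independent of $Z$. Every representative of the $b$-class difference is then psef, which is the definition of $\geq$ in $\cNk(X)$. The models used in the definition of the direct limit are only projective, not necessarily normal, but every model is dominated by a normal one. Pseudoeffectivity on the dominating model then pushes down, since pushforward of effective cycles is effective and the pullback-then-pushforward of a class recovers it.

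For the single-model statement I would follow Dang's argument. The first step is to reduce to the case where all $a_j$ and $b$ are ample, by replacing them with $a_j+\varepsilon A$ and $b+\varepsilon A$ for an ample $A$. Both sides depend continuously on $\varepsilon$, and $\operatorname{Psef}^i$ is closed, so one can let $\varepsilon\to0$. Here bigness of $b$ guarantees $b^d>0$, so the ratio stays bounded. The second step, for ample integral classes after scaling, is to represent $a_1\cdots a_i$ by an effective complete-intersection cycle $V$. Then one applies the divisorial Siu inequality $h^0(mA - mB)\ge \frac{A^d}{d!}m^d - \frac{d\, A^{d-1}\!\cdot B}{d!}m^d + o(m^d)$ inductively on the subvarieties cut out. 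This shows that $C\,(V\cdot b^{d-i}/b^d)\, b - a$ is big restricted to $V$, and inducting on $i$ realizes $C\,(\cdots)\,b^i - a_1\cdots a_i$ as a limit of effective cycles. The constant comes out as a product of binomial-type factors, such as $\binom{d}{i}$, depending only on $i$ and $d$.

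The main obstacle is the single-model Siu inequality in higher codimension on a merely normal (possibly singular) $Z$ over an arbitrary field. There, $N^i$ means dual numerical classes rather than cycles, so one must make sure that the constructed effective cycles define the correct dual classes. I would handle this by quoting Dang's theorem directly, since the present setting involves no twisting and the statement is purely about Cartier $b$-classes on $X$. The genuinely new content is then just the uniformity over higher models explained in the first paragraph.
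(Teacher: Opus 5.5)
Your proposal is correct and matches the paper, which gives no argument of its own and imports the inequality from Dang. Your reduction to a single normal model is exactly what makes that citation apply to Cartier $b$-classes: on a model dominating all determinations you apply the single-model inequality, and representatives on other models are handled by pushforward. This works because the constant is universal. One small correction: that constant is $(d-i+1)^i$, the one behind Proposition~\ref{prop:inegalite-composition-degrees}, not a binomial factor such as $\binom{d}{i}$. It comes from replacing one $\alpha_j$ by $\beta$ at a time via the divisorial Siu inequality on complete intersections of dimension $d-i+1$.
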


\begin{prop}[\cite{boucksomDifferentiabilityVolumesDivisors2009}]\label{prop:inequality-intersection-nef-classes}
	Let $\alpha_j, \beta_j \in \cNone(X)$ be nef classes such that $\alpha_j \leq \beta_j$ for $j=1, \dots, i$. Then
	\begin{equation}
		\alpha_1 \cdot \alpha_2 \cdots \alpha_i \leq \beta_1 \cdot \beta_2 \cdots \beta_i.
		\label{}
	\end{equation}
\end{prop}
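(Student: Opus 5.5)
The plan is the usual telescoping argument, which reduces the statement to one positivity fact: a pseudoeffective divisor class times nef divisor classes is a pseudoeffective cycle class. Since all classes are Cartier $b$-classes, I fix a normal projective model $Y\to X$ on which $\alpha_1,\dots,\alpha_i,\beta_1,\dots,\beta_i$ are all determined. By definition, a class in $\cNk(X)$ is psef if its incarnation on \emph{every} model is psef. Pulling back along a further birational morphism $Z\to Y$ preserves nefness and pseudoeffectivity of divisor classes, and it is compatible with products. So it suffices to prove the inequality on an arbitrary normal projective model $Z$ on which all the classes are determined.

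On such a $Z$ I write the telescoping identity
\[
\beta_1\cdots\beta_i-\alpha_1\cdots\alpha_i=\sum_{j=1}^{i}\alpha_1\cdots\alpha_{j-1}\cdot(\beta_j-\alpha_j)\cdot\beta_{j+1}\cdots\beta_i .
\]
Each summand has the form $E\cdot N_1\cdots N_{i-1}$, where $E=\beta_j-\alpha_j$ is a psef divisor class and the $N_l$ are nef divisor classes. The psef cone of codimension-$i$ classes is a closed convex cone, so it is enough to show that every such summand lies in it.

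To prove this, I approximate. Fix an ample class $A$ on $Z$. Then $N_l+\varepsilon A$ is ample, and $E$ is a limit of effective $\R$-divisor classes. Intersection products are multilinear and continuous on the finite-dimensional spaces $N^\bullet(Z)_\R$. Hence, by density and rescaling, it suffices to treat an effective integral Cartier divisor $E$ and very ample integral Cartier divisors $N_l$.

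The key step is then to show the following: if $V\subset Z$ is an integral subvariety and $D$ is very ample, then $c_1(D)\cap[V]$ is represented by an effective cycle. I would argue this with \cite{fultonIntersectionTheory1998}. Choose a section $s\in H^0(Z,\OO(mD))$ whose zero locus does not contain $V$, and write $\div(s|_V)$ for the divisor of its restriction to $V$. Passing to a multiple $m$ handles finite base fields, where a general hyperplane may not exist. Then $m\,D\cdot[V]=[\div(s|_V)]$ is effective. Iterating, we cut $E$ successively by $N_1,\dots,N_{i-1}$, and each step keeps the cycle effective. The effective cycle we start from is $[E]$, which is a genuine effective divisor; for Cartier $E$ its class agrees with $c_1(E)\cap[Z]$.

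I expect the main obstacle to be bookkeeping between dual numerical classes in $N^k(Z)_\R$ and cycle classes in $N_{d-k}(Z)$ on a normal but possibly singular model, where products of Cartier divisor classes act through \eqref{eq:dual-numerical-action}. Pseudoeffectivity is defined via effective cycles, so I will read ``$\geq$'' in $\cNk$ after applying the classes to $[Z]$. The statement then concerns products of Cartier divisors capped with $[Z]$, and the Fulton argument above applies directly. In characteristic zero one could alternatively pass to a smooth model and avoid this issue entirely.
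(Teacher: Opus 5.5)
Your proposal is correct and follows the standard argument behind the cited result; the paper gives no proof of its own, only the reference to Boucksom--Favre--Jonsson. As there, telescoping reduces monotonicity to the fact that a pseudoeffective divisor class times nef divisor classes is pseudoeffective, which you prove by approximation and by cutting effective cycles with sections via Fulton's $c_1(L)\cap[V]$. Two minor points: passing to a multiple $m$ is unnecessary, since over any field a very ample line bundle has a section not vanishing at the generic point of $V$; and models on which the product class is determined but the individual factors are not are handled by pushing forward from a common higher model, which preserves effectivity.
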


\subsection{A twisted mixed degree formula}\label{subsec:twisted-mixed-degree-formula}

We prove in this subsection the twisted mixed degree formula stated in Theorem \ref{bigthm:relative-dyn-degrees-formula}.
Let $f_\tau : X \dashrightarrow X$ and $g_\omega : Y \dashrightarrow Y$ be twisted rational maps such that there exists a dominant twisted rational map $q_\sigma : X \dashrightarrow Y$ of relative dimension $e \geq 0$ satisfying $q_\sigma \circ f_\tau = g_\omega \circ q_\sigma$.
By replacing $X$ with the normalization of the graph of $q_\sigma$ and using birational invariance of dynamical degrees, we may assume in the proof that $q_\sigma$ is a twisted morphism.
Fix $L, M$ big and nef divisors over $X$ and $Y$ respectively.
We write $L_n :=
	(f_\tau^n)^* L$ and $M_n = (g_\omega^n)^* M$ and we write
\begin{equation}
	\deg_i (f_\tau^n) := L_n^i \cdot L^{d-i}, \quad \deg_i (f_\eta^n) = L_{n,\eta}^i \cdot L_\eta^{d-d_Y - i}, \quad \deg_j
	(g_\omega^n) := M_n^j \cdot M^{d_Y - j}.
\end{equation}
We make an abuse of notation and still write $M$ for its pullback $q_\sigma^* M$ in $X$ this does not yield confusion.

\begin{lemme}
	\label{lemme:estimate-iteration}
  For any $n \geq 1$ and any $k \geq 0$, if $i \leq \dim X - \dim Y$, there exist a constant $C$  depending only on
    $\dim X, \dim Y, L, M$ and such that for any $a > 0$ large enough
	\begin{equation}
    L_{(k+1)m}^i \leq C \deg_i (f_\eta^m) (L_{km}+aM_{km})^i.
	\end{equation}
	If $i > \dim X - \dim Y$ then for any $0 < \rho < 1$ and any $n \geq 1$ we have for any $a > 0$ large enough
	\begin{equation}
    L_{(k+1)m}^i \leq \rho (L_{km}+aM_{km})^i.
	\end{equation}
\end{lemme}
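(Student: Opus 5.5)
The plan is to apply Siu's inequality (Theorem \ref{thm:siu}) on $X$ at level $m$, comparing $L_m^i$ with powers of the big and nef class $\beta_a := L + aM$. We then pull the resulting inequality back by the twisted iterate $f_\tau^{km}$. Here $M$ stands for $q_\sigma^*M$ on $X$, so $\beta_a$ is nef, and it is big because $L$ is big.

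Siu's inequality applied to the nef classes $\alpha_1=\dots=\alpha_i=L_m$ gives
\[
L_m^i \leq C(i,d)\,\frac{L_m^i\cdot \beta_a^{d-i}}{\beta_a^d}\,\beta_a^i
\]
in $\cNk(X)$ with $k=i$.

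We expand numerator and denominator as polynomials in $a$. Since $M$ is pulled back from $Y$, we have $M^j=0$ for $j>d_Y$. The denominator therefore has degree $d_Y$ in $a$, with leading coefficient $\binom{d}{d_Y}L^{e}\cdot M^{d_Y}$, where $e=d-d_Y$. By Proposition \ref{prop:intersection-number-fibration} this coefficient equals $\binom{d}{d_Y}(M^{d_Y})\,L_\eta^{e}>0$, with the twist $\sigma$ contributing only the harmless factor of Lemma \ref{lemme:twist-degree-map}.

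The numerator is $\sum_j \binom{d-i}{j}a^j\,L_m^i\cdot L^{d-i-j}\cdot M^j$, and only $j\leq \min(d-i,d_Y)$ contributes.

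\emph{Case $i\leq e$.} The top term $j=d_Y$ is present. By Proposition \ref{prop:intersection-number-fibration}, together with the identification of $L_{m}$ restricted to the generic fiber with $(f_\eta^m)^*L_\eta$ (as in the proof of Theorem \ref{thm:relative-dyn-degree-is-twisted-dynamical-degree}), this term equals $\binom{d-i}{d_Y}a^{d_Y}(M^{d_Y})\deg_i(f_\eta^m)$. Hence the ratio tends, as $a\to\infty$, to $\binom{d-i}{d_Y}\binom{d}{d_Y}^{-1}\deg_i(f_\eta^m)/L_\eta^e$. For $a$ large it is therefore at most $C\deg_i(f_\eta^m)$, with $C$ depending only on the dimensions and on $L,M$.

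\emph{Case $i>e$.} Every contributing $j$ satisfies $j\leq d-i<d_Y$, so the numerator has degree $<d_Y$ in $a$. The ratio then tends to $0$, and for $a$ large the right-hand side is at most $\rho\,\beta_a^i$ for any prescribed $0<\rho<1$.

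Next we pull back by $f_\tau^{km}$. On the left, $(f_\tau^{km})^*L_m^i=L_{(k+1)m}^i$, using that $(f_\tau^{km})^*$ is a ring homomorphism on Cartier $b$-classes (Proposition \ref{prop:action-on-cycles} on a model where everything is determined) and that $(f_\tau^{km})^*(f_\tau^m)^*L = (f_\tau^{(k+1)m})^*L$ at the level of $b$-classes. On the right, the semiconjugacy $q_\sigma\circ f_\tau^{km}=g_\omega^{km}\circ q_\sigma$ gives $(f_\tau^{km})^*q_\sigma^*M=q_\sigma^*M_{km}$. Hence $(f_\tau^{km})^*\beta_a^i=(L_{km}+aM_{km})^i$, which is exactly the claimed inequality. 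The constant and the threshold for $a$ are independent of $k$, since they come from the level-$m$ computation only.

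The main obstacle is justifying that pulling back by the dominant twisted rational map $f_\tau^{km}$ preserves the order $\leq$ on $\cNk(X)$ in codimension $i>1$. This order is defined via pseudoeffectivity, and pullback of pseudoeffective higher-codimension classes is delicate. I would first handle this as in Dang's proof of \cite[Theorem 1]{dangDegreesIteratesRational2020}. Resolve $f_\tau^{km}$ on a model $\Gamma$ where $\pi_1$ is birational. The difference $C'\beta_a^i-L_m^i$ produced by Siu's inequality is controlled by Dang's positivity results for pullbacks by the morphism $\pi_2$, which is generically finite and dominant, so pullback preserves pseudoeffective classes. We then push forward by $\pi_1$, which preserves pseudoeffectivity. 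All of this is transported to the twisted setting through the identifications of Proposition \ref{prop:positivity-twist} and Lemma \ref{lemme:numerical-classes-changing-structure}.

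If that route is not directly available, the fallback is to keep the comparison inside nef $b$-divisor products. In that case we use Proposition \ref{prop:inequality-intersection-nef-classes}, so that only pullbacks of nef divisor classes are needed.
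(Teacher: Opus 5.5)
Your level-$m$ computation is correct, and it tracks the binomial factors more carefully than the paper does. By this I mean the expansion of $L_m^i\cdot(L+aM)^{d-i}$ and $(L+aM)^d$ in $a$, the vanishing $M^j=0$ for $j>d_Y$, and the use of Proposition \ref{prop:intersection-number-fibration} for the leading coefficients.

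\textbf{The gap is the transport step.} You obtain $\delta:=C\,r(a)\,(L+aM)^i-L_m^i\geq 0$ and then need $(f_\tau^{km})^*\delta\geq 0$. Your justification is that pullback by a generically finite dominant morphism preserves pseudoeffective classes. This is false in codimension $\geq 2$, even for blowups. For instance, let $C$ be a $(-1,-1)$-curve in a smooth projective threefold that admits a projective flop, and let $\pi$ be the blowup of $C$. The excess intersection formula gives $\pi^*[C]=\ell_2-\ell_1$, where $\ell_1$ is a fibre of the exceptional divisor and $\ell_2$ is a section. This class is negative against the pullback of an ample class from the flopped side, so it is not pseudoeffective.

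Your fallback does not close the gap either. Proposition \ref{prop:inequality-intersection-nef-classes} needs divisor-level inequalities $L_m\leq c\,(L+aM)$. Intersecting with $(L+aM)^{d-1}$ shows that any admissible $c$ is at least $L_m\cdot(L+aM)^{d-1}/(L+aM)^d$. For $i\leq e$ you would therefore end up with a factor of order $\deg_1(f_\eta^m)^i$ instead of $\deg_i(f_\eta^m)$, which is useless for the mixed degree formula.

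\textbf{The fix, which is exactly how the paper argues, is to apply Siu's inequality after pulling back rather than before.} In Theorem \ref{thm:siu}, take $\alpha_1=\dots=\alpha_i=L_{(k+1)m}$ and $\beta=L_{km}+aM_{km}$. These are nef Cartier $b$-classes, and $\beta$ is big because $\beta^d=\deg(f_\tau^{km})\,(L+aM)^d>0$. This yields the required inequality directly in the pseudoeffective order.

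The only thing left to transport along $f_\tau^{km}$ is then a ratio of top-degree intersection numbers. Pullback $(f_\tau^{km})^*$ is a ring homomorphism, with $(f_\tau^{km})^*L_m=L_{(k+1)m}$ and $(f_\tau^{km})^*(L+aM)=L_{km}+aM_{km}$ (the latter by the semiconjugacy, as you note). Hence both $L_{(k+1)m}^i\cdot\beta^{d-i}$ and $\beta^d$ are multiplied by the same factor $\deg(f_\tau^{km})$. This follows from Proposition \ref{prop:action-on-cycles}, applied on a model where $f_\tau^{km}$ is regular.

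So the ratio equals the level-$m$ ratio you analysed, and the rest of your argument goes through unchanged. This includes the independence of $C$ and of the threshold for $a$ from $k$.
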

The crucial point here is that the constant $C$ is independent of $n$  and $k$.
\begin{proof}
	We use Siu's inequality.
	Fix $a > 1$, we have
	\begin{align}
    L_{(k+1)m}^i & \leq C(i,d) \frac{L_{(k+1)m}^i \cdot (L_{km}+aM_{km})^{d-i}}{(L_{km}+aM_{km})^d} (L_{km}+aM_{km})^i \\
    &= C(i,d) \frac{L^i_m \cdot (L + aM)^{d-i}}{(L+aM)^{d}} (L_{km}+aM_{km})^i.
	\end{align}
	Notice that both numerators and denominators are polynomials in $a$ with nonnegative coefficients of degree $\leq d_Y$ which we call $N(a)$ and $D(a)$.
	Suppose first that $i \leq d - d_Y$, then $d-i \geq d_Y$ and both $N(a)$ and $D(a)$ are of degree $d_Y$.
	The leading coefficient of $N(a)$ is by Proposition \ref{prop:intersection-number-fibration} equal to
	\begin{equation}
		L_m^i \cdot L^{d-d_Y - i} \cdot M^{d_Y} = \deg_i (f_\eta^m) M^{d_Y}.
	\end{equation}
	And the leading coefficient of $D(a)$ is $L_\eta^{d-d_Y} M^{d_Y}$.
	Thus we see that
	\begin{equation}
		\frac{N(a)}{D(a)} \xrightarrow[a \rightarrow +\infty]{} \frac{\deg_i (f_\eta^m)}{L_\eta^{d-d_Y}}
	\end{equation}
	So that there exists $a_0 = a_0 (n) > 0$ such that for $a > a_0, \frac{N(a)}{D(a)} \leq \frac{2}{L_\eta^{d-d_Y}} \deg_i (f_\eta^m)$.

	If $i > d-d_Y$ then $d_0 := d-i < d_Y$ and $N(a)$ has degree $d_0$ whereas $D(a)$ still has degree $d_Y$ so that
	\begin{equation}
		\frac{N(a)}{D(a)} \xrightarrow[a \rightarrow +\infty]{} 0.
	\end{equation}
	For any $0 < \rho < 1$ there exists $a_0 = a_0 (n) > 0$ such that for any $a > a_0$,
	\begin{equation}
		C(i,d) \frac{N(a)}{D(a)} \leq \rho.
	\end{equation}
\end{proof}

\begin{cor}
	\label{cor:inequality-for-iteration-proof}
  For any $0 \leq i \leq d-d_Y$ and any $\epsilon > 0$, there exists $m_0 > 0$ such that for all $m \geq m_0$
  and any $k \geq 0$ there are constants $C_1(m), \dots, C_{\min(i,d_Y)} (m)> 0$ such that
	\begin{equation}
    L_{(k+1)m}^i \leq (\lambda_i (f_\eta) + \epsilon)^m L_{km}^i + \sum_{s = 1}^{\min (i, d_Y)} C_s(m)
    L_{km}^{i-s} M_{km}^s.
	\end{equation}
  For any $d-d_Y < i \leq d$, any $0 < \rho < 1$, any $m \geq 1$ and any $k \geq 0$ there exist constants
	$C_1 (m), \dots, C_{\min(i,d_Y)} (m)>0$ such that
	\begin{equation}
    L_{(k+1)m}^i \leq \rho L_{km}^i +\sum_{s = 1}^{\min (i, d_Y)} C_s(m) L_{km}^{i-s} M_{km}^s.
	\end{equation}
  For any $0\leq j \leq d_Y$ and any $\epsilon > 0$ there exists $m_1 > 0$ such that for $m \geq m_1$ and any
  $k \geq 0$
	\begin{equation}
    M_{(k+1)m}^j \leq (\lambda_j (g_\omega) + \epsilon)^m M_{km}^j.
	\end{equation}
\end{cor}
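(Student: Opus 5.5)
The plan is to start from Lemma \ref{lemme:estimate-iteration}, which bounds $L_{(k+1)m}^i$ by a multiple of $(L_{km}+aM_{km})^i$. We then expand this power binomially, using that intersection products of nef $b$-classes are multilinear and commutative in $\cNk(X)$:
\[
(L_{km}+aM_{km})^i=\sum_{s=0}^{i}\binom{i}{s}a^s L_{km}^{i-s}M_{km}^s .
\]
Since $M_{km}$ is the pullback of a class on $Y$ of dimension $d_Y$, every term with $s>d_Y$ vanishes. Hence the sum stops at $s=\min(i,d_Y)$. For $s\ge1$ we set $C_s(m):=C\binom{i}{s}a^s\deg_i(f_\eta^m)$ in the first case, and $C_s(m):=\rho\binom{i}{s}a^s$ in the second. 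Here $a=a(m)$ is the threshold furnished by the lemma. It depends on $m$ but not on $k$, because the ratio $N(a)/D(a)$ in the proof of the lemma only involves $L_m$, $L$ and $M$. This uniformity in $k$ is the point to check carefully.

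For the leading $s=0$ coefficient in the case $i\le d-d_Y$, the lemma gives the factor $C\deg_i(f_\eta^m)$. Here the constant $C$ must be absorbed into the exponential rate. Since $\deg_i(f_\eta^m)^{1/m}\to\lambda_i(f_\eta)$, we have $C\deg_i(f_\eta^m)\le(\lambda_i(f_\eta)+\epsilon)^m$ for all $m\ge m_0(\epsilon)$: choose $m_0$ so that $\deg_i(f_\eta^m)\le(\lambda_i(f_\eta)+\epsilon/2)^m$, and also $C\le\bigl((\lambda_i+\epsilon)/(\lambda_i+\epsilon/2)\bigr)^m$. This yields the first inequality. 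The second case is immediate, since the lemma gives the factor $\rho$ directly for every $m$.

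For the bound on $M$, we work on $Y$ alone. The key step is again Siu's inequality (Theorem \ref{thm:siu}), this time with $\beta=M_{km}$:
\[
M_{(k+1)m}^j\le C(j,d_Y)\,\frac{M_{(k+1)m}^j\cdot M_{km}^{d_Y-j}}{M_{km}^{d_Y}}\,M_{km}^j .
\]
Write $\delta=\deg(\omega)$. The ratio equals $M_m^j\cdot M^{d_Y-j}/M^{d_Y}=\deg_j(g_\omega^m)/M^{d_Y}$: since $M_{km}=(g_\omega^{km})^*M$, we apply the twisted projection formula of Proposition \ref{prop:action-on-cycles}, and the factor $\deg(g_\omega^{km})$ cancels between numerator and denominator. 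Relatedly, $M_{(k+1)m}=(g_\omega^{km})^*M_m$ requires compatibility of pullbacks with composition. On a fixed model this fails in general, but it holds for $b$-classes in the sense needed, or at least as an inequality $\le$ by Proposition \ref{prop:inequality-intersection-nef-classes}, which suffices here. Then $C(j,d_Y)\deg_j(g_\omega^m)/M^{d_Y}\le(\lambda_j(g_\omega)+\epsilon)^m$ for $m\ge m_1$, by the same absorption argument as above. The same pullback compatibility also underlies the lemma's identity for $N(a)/D(a)$.

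The main obstacle is the legitimacy of the identity $(f_\tau^{km})^*L_m=L_{(k+1)m}$ for rational maps. In general one only has $(f^{km})^*(f^m)^*L\ge (f^{(k+1)m})^*L$ as $b$-divisor classes. This inequality goes in the right direction: it bounds the left side of the estimate from above after applying Proposition \ref{prop:inequality-intersection-nef-classes}. We would verify this direction first.
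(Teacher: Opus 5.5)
Your proposal is correct and follows the paper's proof essentially step by step: you apply Lemma \ref{lemme:estimate-iteration} with $a=a(m)$ independent of $k$, expand $(L_{km}+aM_{km})^i$ (the terms with $s>d_Y$ vanish), absorb $C\deg_i(f_\eta^m)$ into $(\lambda_i(f_\eta)+\epsilon)^m$ for large $m$, and treat $M$ with Siu's inequality on $Y$ with $\beta=M_{km}$. Your ``main obstacle'' is not actually one here: $L_n$ and $M_n$ are Cartier $b$-classes, and their pullback under dominant (twisted) rational maps is computed on models where the maps are regular, so it is functorial and $(f_\tau^{km})^*L_m=L_{(k+1)m}$ holds exactly (the inequality you mention only arises for incarnations on a fixed model).
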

\begin{proof}
  We first prove the two estimates. 
	Fix $i \leq d - d_Y$ and let $\epsilon > 0$.
	Let $C$ be the constant appearing in Lemma \ref{lemme:estimate-iteration}, we reiterate that the constant does not depend on $m$ nor $k$.
	For $m \geq 1$ large enough we have
	\begin{equation}
		\deg_i (f_\eta^m) \leq (\lambda_i (f_\eta) + \epsilon / 2)^m, \quad \text{and} \quad C (\lambda_i (f_\eta) +
		\epsilon / 2)^m \leq (\lambda_i (f_\eta) + \epsilon)^m.
	\end{equation}
	For such an $m$, Lemma \ref{lemme:estimate-iteration} gives, after choosing $a=a(m)>0$ large enough,
	\[
    L_{(k+1)m}^i\leq C\deg_i(f_\eta^m)(L_{km}+aM_{km})^i.
	\]
  Expanding $(L_{km}+aM_{km})^i$ and absorbing all terms containing at least one factor of $M$ into the constants $C_s(m)$ gives the first estimate.

	If $i>d-d_Y$, Lemma \ref{lemme:estimate-iteration} gives, for the prescribed $\rho$ and for $a=a(m)>0$ large enough,
	\[
    L_{(k+1)m}^i\leq \rho (L_{km}+aM_{km})^i.
	\]
	Expanding again and absorbing the terms containing $M$ into the constants $C_k(m)$ gives the second estimate.

	It remains to prove the estimate for $M_m^j$.
	The case $j=0$ is trivial.
	For $1 \leq j \leq d_Y$, Siu's inequality gives
	\begin{equation}
    M_{(k+1)m}^j \leq C(j,d_Y) \frac{\deg_j (g_\omega^m)}{M^{d_Y}} M_{km}^j.
	\end{equation}
	For $m \geq 1$ large enough we have
	\begin{equation}
		\deg_j (g_\omega^m) \leq (\lambda_j (g_\omega) + \epsilon / 2)^m \quad \text{and} \quad C(j,d_Y) \frac{(\lambda_j
			(g_\omega) + \epsilon /2)^m}{M^{d_Y}} \leq (\lambda_j (g_\omega) + \epsilon)^m.
	\end{equation}
  This proves the desired inequality for $M_{(k+1)m}^j$.
\end{proof}

We now prove the main result of this subsection.
The following theorem is the precise form of the mixed degree formula stated in Theorem \ref{bigthm:relative-dyn-degrees-formula}.
\begin{thm}
	\label{thm:relative-degree-formula}
	Let $X,Y$ be projective varieties with $q_\sigma : X \dashrightarrow Y$ a twisted dominant rational map and $f_\tau : X
		\dashrightarrow X, g_\omega : Y \dashrightarrow Y$ twisted rational maps such that we have the commutative diagram
	\begin{equation}
		\begin{tikzcd}
			X \ar[r,dashed, "f_\tau"] \ar[d,dashed, "q_\sigma"] & X \ar[d, dashed, "q_\sigma"] \\
			Y \ar[r, dashed, "g_\omega"] & Y
		\end{tikzcd}
	\end{equation}
	Then, for every $i = 1, \dots, \dim X$,
	\begin{equation}
		\lambda_i (f_\tau) = \max_{\substack{a+b = i \\ a \leq \dim X - \dim Y \\ b \leq \dim Y}} \left( \lambda_a (f_\eta) \lambda_b (g_\omega) \right).
	\end{equation}
\end{thm}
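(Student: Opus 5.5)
We prove the two inequalities separately, after reducing (as explained before Lemma \ref{lemme:estimate-iteration}) to the case where $q_\sigma$ is a twisted morphism between normal projective varieties; this is allowed by birational invariance (Proposition \ref{prop:dyn-degrees-base-change}).

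\emph{Lower bound.} Fix $a+b=i$ with $a\leq e:=d-d_Y$ and $b\leq d_Y$. Since $L$ is big and nef, after replacing $L$ by a multiple we may assume $L\geq M$ in $\cNone(X)$; here $M$ stands for $q_\sigma^*M$, and this uses the Kodaira lemma. Pullback preserves this, so $L_n\geq M_n$. Proposition \ref{prop:inequality-intersection-nef-classes} then gives $L_n^i\geq L_n^a\cdot M_n^b$. Intersecting with $L^{d-i}\geq L^{e-a}\cdot M^{d_Y-b}$, we get
\[
\deg_i(f_\tau^n)\geq L_n^a\cdot M_n^b\cdot L^{e-a}\cdot M^{d_Y-b}.
\]
The right-hand side is an intersection on $X$ of $e$ classes from $X$ with $d_Y$ classes pulled back from $Y$. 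We compute it with Proposition \ref{prop:intersection-number-fibration}, applied on a model where everything is defined (the graph of $f_\tau^n$ over the graph of $g_\omega^n$), and taking the twist degrees into account as in \eqref{eq:relation-degrees}. This should give, up to a positive constant independent of $n$, the product $\deg_a(f_\eta^n)\,\deg_b(g_\omega^n)$. Taking $n$-th roots yields $\lambda_i(f_\tau)\geq\lambda_a(f_\eta)\lambda_b(g_\omega)$.

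\emph{Upper bound.} Let $\Lambda_i$ denote the right-hand side of the formula, and set $u_k^{(i,s)}:=L_{km}^{i-s}\cdot M_{km}^s\cdot L^{d-i}$ for $0\leq s\leq\min(i,d_Y)$. Intersect the inequalities of Corollary \ref{cor:inequality-for-iteration-proof} with $L^{d-i}$. Where needed, also intersect them with $M_{km}^s$, using Proposition \ref{prop:inequality-intersection-nef-classes} and the analogue of Lemma \ref{lemme:estimate-iteration} on the fibres, with $M^s$ fixed. This gives a triangular recursion
\[
u_{k+1}^{(i,s)}\leq(\lambda_{i-s}(f_\eta)+\epsilon)^m\lambda_s(g_\omega)^m\,u_k^{(i,s)}+\sum_{s'>s}C(m)\,u_k^{(i,s')}.
\]
In this recursion the mixed term is controlled by splitting $M_{(k+1)m}^s$ via the third estimate of the corollary, with a factor $\leq(\lambda_s(g_\omega)+\epsilon)^m$. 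When $i-s>e$, the diagonal coefficient is replaced by $\rho$, since the top term vanishes against the fibre. The recursion matrix is upper triangular, and its diagonal entries are at most $\max_{s}(\lambda_{i-s}(f_\eta)+\epsilon)^m(\lambda_s(g_\omega)+\epsilon)^m$. Hence $u_k^{(i,0)}\leq C'(m)\,k^{d}\,((\Lambda_i+\epsilon')^m)^k$. Letting $k\to\infty$ and then $\epsilon\to0$ gives
\[
\lambda_i(f_\tau)=\lim_k\deg_i(f_\tau^{km})^{1/(km)}\leq\Lambda_i .
\]
Passing from the subsequence $km$ to all $n$ is legitimate because the limit defining $\lambda_i$ exists.

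\emph{Main obstacle.} The delicate step is making the mixed terms $L_{km}^{i-s}M_{km}^s$ fit into the recursion. Corollary \ref{cor:inequality-for-iteration-proof} is stated only for pure powers, so I would first prove a mixed version: apply Siu's inequality in the fibres of $M$-complete intersections, i.e.\ to the restriction over a generic complete intersection of $s$ members of $|M|$ on $Y$, which again fibres over a $(d_Y-s)$-dimensional base. I would check that its constants are independent of $k$, exactly as in Lemma \ref{lemme:estimate-iteration}. A second, smaller worry is keeping track of the twist factors $\deg\tau$ and $\deg\omega$ in Proposition \ref{prop:intersection-number-fibration}. These factors are constant per step and contribute only geometrically, so they should cancel, or at worst contribute a factor $(\deg\tau/\deg\omega)^n$. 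I would verify this cancellation via \eqref{eq:relation-degrees} before finalizing.
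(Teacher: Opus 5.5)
Your proposal is correct and follows the paper's proof. The lower bound compares $\deg_i(f_\tau^n)$ with the mixed terms $L_n^aM_n^bL^{e-a}M^{d_Y-b}$ and evaluates them by Proposition \ref{prop:intersection-number-fibration}; the paper expands the $(L+M)$-degrees instead of rescaling so that $L\geq M$, which amounts to the same thing. The upper bound is the same upper-triangular recursion on the mixed intersections $L_{km}^{i-s}M_{km}^{s}$, with diagonal entries $u_{i-s}(m)(\lambda_s(g_\omega)+\epsilon)^m$.

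Both of your worries go away. The paper gets the mixed estimate simply by intersecting the separate pure-power estimates of Corollary \ref{cor:inequality-for-iteration-proof} for $L^{i-s}$ and for $M^s$; this is valid because a psef difference stays psef after intersecting with products of nef classes, so no Siu inequality on complete intersections of $|M|$ is needed. No factor $(\deg\tau/\deg\omega)^n$ can occur either: $q_\sigma\circ f_\tau=g_\omega\circ q_\sigma$ forces $\sigma\circ\tau=\omega\circ\sigma$ on $\spec K$, hence $\deg\tau=\deg\omega$, and the only remaining factor is the $n$-independent one coming from $\deg\sigma$.
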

\begin{proof}
	We first show the inequality $\leq$.
	Let $\epsilon > 0$ and fix $0 < \rho < 1$ such that for any $j = 1, \dots, d_Y$ we have $ \rho \cdot (\lambda_j (g_\omega) + \epsilon) < 1$.
	By Corollary \ref{cor:inequality-for-iteration-proof} there exists $m > 0$ such that for any $1 \leq i \leq d$
  and any $k \geq 0$
	\begin{equation}
  L_{(k+1) m}^i \leq u_i (m) L_{km}^i + \sum_{s=1}^{\min (i, d_Y)} C_s L_{km}^{i - s} \cdot M_{km}^s
	\end{equation}
	where $u_i (m) = (\lambda_i (f_\eta) + \epsilon)^m$ if $i \leq d - d_Y$ or $u_i (m) = \rho$ if $i > d-d_Y$.
	and for any $j = 1, \dots, d_Y$
	\begin{equation}
    M_{(k+1)m}^j \leq (\lambda_j (g_\omega) + \epsilon)^m M_{km}^j.
	\end{equation}
	Intersecting the two inequalities we get that for any $0 \leq i \leq d$ and $0 \leq j \leq d_Y$ we have
	\begin{equation}
    L_{(k+1)m}^i \cdot M_{(k+1)m}^j \leq u_i (m) \left( \lambda_j (g_\omega) + \epsilon \right)^m L_{km}^i \cdot M_{km}^j +
    \sum_{s = 1}^{\min (i, d_Y)} C_s L_{km}^{i-s} \cdot M_{km}^{s+j}.
		\label{eq:main-inequality-to-iterate}
	\end{equation}
	We can write these inequalities in the following form: for any $i \leq d$, let $i_0 = \min (i,d_Y)$ then
	\begin{equation}
			\begin{pmatrix}
				L_{(k+1)m}^i                               \\
				L_{(k+1)m}^{i-1} M_{(k+1)m}                       \\
				\vdots                              \\
				L_{(k+1)m}^{i - i_0 + 1} \cdot M_{(k+1)m}^{i_0-1} \\
				L_{(k+1)m}^{i-i_0} \cdot M_{(k+1)m}^{i_0}
			\end{pmatrix}
		\leq A
		\begin{pmatrix}
      L_{km}^i                             \\
      L_{km}^{i-1} M_{km}                       \\
			\vdots                          \\
      L_{km}^{i - i_0 + 1} \cdot M_{km}^{i_0-1} \\
      L_{km}^{i-i_0} \cdot M_{km}^{i_0}
		\end{pmatrix}
		\end{equation}
		with $A$ an upper triangular matrix with nonnegative coefficients and with diagonal coefficients
	\begin{equation}
		u_{i-k} (m) \left( \lambda_{k} (g_\omega) + \epsilon \right)^m
	\end{equation}
	with $k =0 ,\dots, \min (i, d_Y)$.
	We get by induction that for any $n \geq 1$
	\begin{equation}
		\begin{pmatrix}
			L_{mn}^i                                  \\
			L_{mn}^{i-1} M_{mn}                       \\
			\vdots                                    \\
			L_{mn}^{i - i_0 + 1} \cdot M_{mn}^{i_0-1} \\
			L_{mn}^{i-i_0} \cdot M_{mn}^{i_0}
		\end{pmatrix}
		\leq A^n
		\begin{pmatrix}
			L^i                             \\
			L^{i-1} M                       \\
			\vdots                          \\
			L^{i - i_0 + 1} \cdot M^{i_0-1} \\
			L^{i-i_0} \cdot M^{i_0}
		\end{pmatrix}
		\label{eq:iteration-matrix}
	\end{equation}
	The norm of the matrix $A^n$ grows like $T^n$ where
	\begin{equation}
		T : = \max_{k = 0, \dots, \min(i,d_Y)} u_{i-k}(m) (\lambda_{k} (g_\omega) + \epsilon)^{m} .
	\end{equation}
	Notice that when $i - k > d - d_Y$ we have that the term inside the maximum is $< 1$ so that
	\begin{equation}
		T = \max_{\substack{a+b = i \\ a \leq d - d_Y \\ b \leq d_Y}} (\lambda_a (f_\eta) + \epsilon)^m (\lambda_b (g_\omega) + \epsilon)^m .
	\end{equation}
	Looking at the first coordinate in \eqref{eq:iteration-matrix} we get that
	\begin{equation}
		L_{mn}^i \leq 2 T^n \left( L^i + L^{i-1} M + \cdots M^i \right)
	\end{equation}
	for $n \geq 1$ large enough.
	Intersecting with $L^{d-i}$ and taking $n$-th roots we get
		\begin{equation}
			\lambda_i (f_\tau^m) = \lambda_i (f_\tau)^m \leq
			\max_{\substack{a+b = i \\
					a \leq d - d_Y \\
					b \leq d_Y}}  (\lambda_a (f_\eta) + \epsilon)^m (\lambda_b (g_\omega) + \epsilon)^m
		\end{equation}
	which yields
		\begin{equation}
			\lambda_i (f_\tau) \leq \max_{\substack{a+b = i \\
					a \leq d - d_Y \\
					b \leq d_Y}}  (\lambda_a (f_\eta) + \epsilon) (\lambda_b (g_\omega) + \epsilon).
		\end{equation}
		Letting $\epsilon \rightarrow 0$ yields the desired inequality.

	The other inequality is easier.
	Compute $\lambda_i (f_\tau)$ using the big and nef divisor $(L + M)$, we have
	\begin{equation}
		\lambda_i (f_\tau) = \lim_n \left( (L_n + M_n)^i \cdot (L + M)^{d - i} \right)^{1/n}.
	\end{equation}
	Let $A_n$ be the intersection product inside the $n$-th root.
	There are positive constants $C_k$ depending only on $i$ and $d$ such that
	\begin{equation}
		(L_n + M_n)^i = \sum_{k = 0}^{\min (i, \dim Y)} C_k M_n^k \cdot L_n^{i-k}
	\end{equation}
	and
	\begin{equation}
		(L+M)^{d-i} = \sum_{l = 0}^{\min(\dim Y, d-i)}
		C_l M^l L^{d - i - l}.
	\end{equation}
	Intersecting these two formulas we get that there are constants $C_{a,b}$ not depending on $n$ such that
	\begin{equation}
		A_n \geq \sum_{\substack{ 0 \leq a \leq d- d_Y \\ 0 \leq b \leq d_Y \\ a+b = i}} C_{a,b} \left( L_n^{a} \cdot L^{d - d_Y - a} \right) \cdot (M_n^b \cdot M^{d_Y -b}).
	\end{equation}
		Using the fibration $q_\sigma : X \dashrightarrow Y $ and Proposition \ref{prop:intersection-number-fibration} we have
		\begin{equation}
			\left( L_n^{a} \cdot L^{d - d_Y - a} \right) \cdot (M_n^b \cdot M^{d_Y - b}) = \deg_a (f_\eta^n) \cdot \deg_b
			(g_\omega^n).
		\end{equation}
		Taking the $n$-th root and letting $n \rightarrow + \infty$, we get the reversed inequality.
\end{proof}
In the last use of Proposition \ref{prop:intersection-number-fibration}, one applies the proposition to the ordinary morphism $q:X^{(\sigma)}\to Y$ associated to the twisted morphism $q_\sigma$ and then identifies the numerical classes of $X^{(\sigma)}$ with those of $X$.
The factor $\deg(\sigma)$ appearing in intersection numbers after changing the structural morphism appears both in the total-space intersection and in the intersection number on the generic fiber, so the equality is unchanged when written in the twisted notation.
This proves the mixed degree formula in Theorem \ref{bigthm:relative-dyn-degrees-formula}.

\begin{rmq}
	\label{rmq:}
	It is equivalent to show the theorem when $q : X \dashrightarrow Y$ is a dominant rational map of $K$-varieties and $\tau = \omega$.
	Indeed, from the commutative diagram
	\begin{equation}
		\begin{tikzcd}
			X \ar[r, dashed, "f_\tau"] \ar[d, dashed, "q_\sigma"] & X \ar[d, dashed, "q_\sigma"] \\
			Y \ar[r, dashed, "g_\omega"'] & Y
		\end{tikzcd}
	\end{equation}
	we have the induced commutative diagram
	\begin{equation}
		\begin{tikzcd}
			X^{(\sigma)} \ar[r, dashed, "f_\omega"] \ar[d, dashed, "q"] & X^{(\sigma)} \ar[d, dashed, "q"] \\ Y \ar[r, dashed, "g_\omega"'] & Y.
		\end{tikzcd}
	\end{equation}
	Notice that $f_\tau$ and $f_\omega$ induce the same twisted rational map on the generic fiber.
	And we have
	\begin{equation}
		\lambda_i (f_\tau) = \lambda_i (f_\omega)
	\end{equation}
	since
	\begin{equation}
		\deg_i (f_\tau^n) = \deg(\sigma) \deg_i (f_\omega^n).
	\end{equation}
\end{rmq}

\section{Banach spaces and dynamical degrees}\label{sec:banach-spaces}
Let $X$ be a smooth projective variety over a field of characteristic zero.
In \cite{dangSpectralInterpretationsDynamical2021}, Dang and Favre defined several Banach spaces over which dominant rational selfmaps of $X$ acts naturally.
We briefly recall their construction and show how it extends naturally to twisted rational maps.
A \emph{birational model} of $X$ is a smooth projective variety $Y$ equipped with a birational morphism $\pi : Y \rightarrow X$.
Define for any $1 \leq k \leq \dim X$ the space of numerical Cartier and Weil classes
\begin{equation}
	\cNk (X) = \varinjlim_{Y \rightarrow X}
	N^k (Y)_\R, \quad \wNk (X) = \varprojlim_{Y \rightarrow X} N^k(Y)_\R.
\end{equation}
We have already discussed numerical Cartier classes in \S \ref{subsec:siu-inequality}.
Since the ground field has characteristic zero, the direct and inverse limits may be taken over smooth projective birational models.
An element of $\wNk (X)$ is the data of a family $\alpha = \left( Y, \alpha_Y \right)_Y$ where $Y$ runs through the birational models of $X$ such that if there is a birational morphism $\pi : Y \rightarrow Y'$, then $\pi_* \alpha_Y = \alpha_{Y'}$.
There is a natural injection $\cNk (X) \hookrightarrow \wNk (X)$ and we endow $\wNk (X)$ with the inverse limit topology, which we call the \emph{weak topology}.
There is a well defined intersection product
\begin{equation}
	\cNk (X) \times \wNN^{d - k} (X) \rightarrow \R
\end{equation}
given by
\begin{equation}
	\alpha \cdot \beta := \alpha_Y \cdot \beta_Y
\end{equation}
where $Y$ is any birational model where $\alpha$ is defined.

\subsection{The BPF norm}\label{subsec:BPF-norm}
Let $\cBPF^k (X)$ be the convex cone generated by Cartier classes $\alpha$ such that $\alpha \in \BPF^k (Y)$ for some birational model $Y$ of $X$ and define $\wBPF^k(X)$ to be the weak closure of $\cBPF^k(X)$ inside $\wNk(X)$.
Pick
$\omega \in N^1 (X)$ a big and nef class and define the following norm on $\Vect(\wBPF^k (X))$
\begin{equation}
	\| \alpha \|_{\BPF} = \inf_{\substack{\alpha = \alpha^+ - \alpha^- \\ \alpha^\pm \in \wBPF^k(X)}} \left( \alpha^+ \cdot
	\omega^{d-k} \right) + \left( \alpha^- \cdot \omega^{d-k} \right).
\end{equation}
From \cite{dangSpectralInterpretationsDynamical2021}, we have that $\Vect (\wBPF^k (X))$ is a Banach space equipped with this norm and we define $N^k_{\BPF}(X)$ to be the completion of $\cNk (X)$ in $\Vect (\wBPF^k (X))$ with respect to this norm.
In \cite{dangSpectralInterpretationsDynamical2021}, there is also another Banach space defined with a dual norm using BPF classes and the intersection form.
Namely, for any $\alpha \in \cNk(X)$ define
\begin{equation}
	\| \alpha \|_{\BPF}^{\vee} := \sup_{\substack{\gamma \in \cBPF^{d-k}}(X) \\ \gamma \neq 0} \frac{\left| \alpha
		\cdot \gamma
		\right|}{\omega^k \cdot \gamma}.
\end{equation}
And we write $N^{k, \vee}_{\BPF}$ for the completion of $\cNk (X)$ with respect to $\| \cdot \|^\vee_{\BPF}$.

We define another Banach space for numerical classes in codimension 1.
It is defined using the following norm: let $\omega \in N^1 (X)$ be a big and nef class, then
\begin{equation}
	\forall \alpha \in \cNone (X), \quad \| \alpha \|_{\omega} = \sup_{\substack{\gamma \in
			\cBPF^{d-2} (X) \\ \gamma \cdot \omega^2 = 1}} \left( 2(\alpha \cdot \omega \cdot \gamma)^2 - (\alpha^2 \cdot \gamma)
	\right)^{1/2}.
\end{equation}
We denote by $N^1_\Sigma (X)$ the closure of $\cNone(X)$ in $\wNone (X)$ with respect to this norm.

\begin{prop}
	\label{prop:banach-space}
	Let $X$ be a smooth projective variety over a field $K$ of characteristic zero and let $\tau : \spec K \rightarrow \spec K$ be a finite map.
	The canonical isomorphisms $N^k (Y) \simeq N^k (Y^{(\tau)})$ for any birational model $Y$ of $X$ extend to
	canonical isomorphisms
	\begin{equation}
		\begin{gathered}
		\cNk (X^{(\tau)}) \simeq \cNk (X), \qquad \wNk (X^{(\tau)}) \simeq \wNk (X),\\
		\cBPF^k (X^{(\tau)}) \simeq \cBPF^k(X), \qquad
		\wBPF^k (X^{(\tau)}) \simeq \wBPF^k(X), \qquad
		N^1_\Sigma (X^{(\tau)}) \simeq N^1_\Sigma (X).
		\end{gathered}
	\end{equation}
	with the following relations on the norms:
	\begin{equation}
		\deg(\tau) \cdot \| \cdot \|_{\BPF} = \| \cdot \|_{\BPF^{(\tau)}} \text{ and } \| \cdot \|_{\BPF}^{\vee} = \|
		\cdot \|_{\BPF^{(\tau)}}^\vee, \quad \| \cdot \|_\omega = \| \cdot \|^{(\tau)}_\omega.
	\end{equation}
\end{prop}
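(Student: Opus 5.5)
The plan is to reduce everything to the model-by-model identifications already established, and then track how the degree map rescales. First I will check that the birational models of $X$ and of $X^{(\tau)}$ correspond bijectively and compatibly. If $\pi : Y \rightarrow X$ is a birational morphism of $K$-varieties with $Y$ smooth projective, then $\pi^{(\tau)} : Y^{(\tau)} \rightarrow X^{(\tau)}$ is one as well. Conversely, any model $Y' \rightarrow X^{(\tau)}$ is of the form $Z^{(\tau)}$ for a model $Z \rightarrow X$, by Lemma \ref{lemme:change-of-base}. Smoothness is preserved because $K$ has characteristic zero, so $\tau$ is separable and Lemma \ref{lemme:canonical-divisor-unchanged} identifies the sheaves of differentials. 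Projectivity is preserved by Lemma \ref{lemme:ampleness}.

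Since pullbacks and pushforwards of cycles only depend on the underlying schemes (Lemma \ref{lemme:changing-structure-morphism}), the isomorphisms $N^k(Y) \simeq N^k(Y^{(\tau)})$ of Lemma \ref{lemme:numerical-classes-changing-structure} commute with the transition maps $\pi^*$ and $\pi_*$. Passing to the direct and inverse limits then gives $\cNk(X^{(\tau)}) \simeq \cNk(X)$ and $\wNk(X^{(\tau)}) \simeq \wNk(X)$. The second of these is a homeomorphism for the weak topologies, since on each model it is a linear isomorphism of finite-dimensional spaces. Lemma \ref{lemme:basepoint-free-classes} applied on each model identifies the cones $\cBPF^k$, and taking weak closures identifies the cones $\wBPF^k$.

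The key numerical input is that, for $\alpha \in \cNk$ and $\beta \in \wNN^{d-k}$, the pairing satisfies
\[
\alpha^{(\tau)} \cdot \beta^{(\tau)} = \deg(\tau)\,(\alpha \cdot \beta).
\]
This is Lemma \ref{lemme:twist-degree-map} applied on a model where $\alpha$ is determined. The norm relations then follow directly from the definitions.
\begin{itemize}
\item In $\|\cdot\|_{\BPF}$ the decompositions $\alpha = \alpha^+ - \alpha^-$ correspond bijectively, and each term $\alpha^\pm \cdot \omega^{d-k}$ is multiplied by $\deg(\tau)$. This gives $\|\cdot\|_{\BPF^{(\tau)}} = \deg(\tau)\|\cdot\|_{\BPF}$.
\item In $\|\cdot\|^\vee_{\BPF}$ the factor $\deg(\tau)$ appears in both the numerator $\alpha\cdot\gamma$ and the denominator $\omega^k\cdot\gamma$, so it cancels.
\item In $\|\cdot\|_\omega$, the normalisation $\gamma\cdot\omega^2 = 1$ becomes $\gamma^{(\tau)}\cdot\omega^2 = \deg(\tau)$. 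I will therefore substitute $\gamma' = \gamma/\deg(\tau)$, which is still BPF. The expression $2(\alpha\cdot\omega\cdot\gamma')^2 - \alpha^2\cdot\gamma'$ is computed after twisting, where each intersection number carries a factor $\deg(\tau)$. The quadratic term then gives $\deg(\tau)^2/\deg(\tau)^2$ and the linear term gives $\deg(\tau)/\deg(\tau)$, so the expression is unchanged and the two norms agree.
\end{itemize}

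Since the norms agree up to a positive constant, the completions correspond. This yields the isomorphisms of $N^k_{\BPF}$ and $N^{k,\vee}_{\BPF}$, and likewise of $N^1_\Sigma$, because that space is the closure of $\cNone$ in $\wNone$ with respect to $\|\cdot\|_\omega$, and both the ambient space and the norm are preserved.

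The main obstacle I expect is the first step: making sure that the system of smooth models of $X^{(\tau)}$ is cofinally identified with that of $X$, so that the limits really are over the same index category. The subtle point is the converse direction through Lemma \ref{lemme:change-of-base}. There I need to check that the morphism $Z \rightarrow X$ it produces is birational and projective, and that $Z$ is a variety over $K$ of finite type. Finite type holds because $\tau$ is finite.
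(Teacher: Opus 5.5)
Your proposal is correct and follows the same route as the paper's proof. Both identify the spaces model by model via Lemma \ref{lemme:numerical-classes-changing-structure} and Lemma \ref{lemme:basepoint-free-classes}, take weak closures for $\wBPF^k$, and track the factor $\deg(\tau)$ from Lemma \ref{lemme:twist-degree-map}: it scales the BPF norm, cancels in the dual norm, and is absorbed in the $\Sigma$-norm by rescaling $\gamma$ by $\deg(\tau)^{-1}$. Your extra check that smooth projective models of $X$ and $X^{(\tau)}$ correspond, which the paper takes for granted, is a welcome addition; one small correction there is that $Z$ being of finite type follows from $q'=q_X\circ f$, not from $\tau$ being finite, which is only needed in the opposite direction.
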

\begin{proof}
	The isomorphisms on Cartier and Weil classes follow from Lemma \ref{lemme:numerical-classes-changing-structure}, applied on every smooth birational model.
	The identification of $\cBPF^k$ follows from Lemma \ref{lemme:basepoint-free-classes}, and the identification of $\wBPF^k$ follows by taking weak closures.
	The relation for the BPF norm follows from Lemma \ref{lemme:twist-degree-map}: every top intersection number is multiplied by $\deg(\tau)$.
	For the dual BPF norm, the numerator and denominator are multiplied by the same factor, so the quotient is unchanged.
	Finally, for the $\Sigma$-norm, the condition $\gamma \cdot \omega^2 =1$ is transformed into the same normalized condition after rescaling $\gamma$ by $\deg(\tau)^{-1}$; the expression inside the supremum is therefore unchanged.
\end{proof}

\subsection{Actions of twisted rational maps}\label{subsec:actions-twisted-rational-maps}
Let $X,Y$ be smooth projective varieties of dimension $d$ over a field $K$ of characteristic zero.
Let $f_\tau : X \dashrightarrow Y$ be a twisted rational map, we define a pullback and pushforward operator
\begin{equation}
	f_\tau^* : \cNk (Y) \rightarrow \cNk (X), \quad (f_\tau)_* : \wNk (X) \rightarrow \wNk (Y)
\end{equation}
as follows.
Let $\alpha$ be a Cartier class defined in a birational model $Y'$ and let $X '$ be a birational model of $X$ such that the lift $f_\tau : X' \rightarrow Y'$ is well defined.
We define $f_\tau^* \alpha$ as the Cartier class defined by $(X', f_\tau^* \alpha)$.
If $\beta \in \wNk (X)$, then for any birational model $Y'$ of $Y$ and birational
model $X'$ of $X$ such that $f_\tau : X ' \rightarrow Y'$ is regular, we define
\begin{equation}
	((f_\tau)_* \beta)_{Y'} := ( (f_\tau)_* \beta_{X'}).
	\label{}
\end{equation}
These two linear maps are well defined.
They satisfy the following.
Let $f : X^{(\tau)} \dashrightarrow Y$ be the induced rational map, then by the construction above (which is the same as in \cite{dangSpectralInterpretationsDynamical2021}) we have well defined operators $f^* : \cNk (Y) \rightarrow \cNk (X^{(\tau)})$ and $f_* : \wNk (X^{(\tau)}) \rightarrow \wNk (Y)$, then the operators $f_\tau^*$ and $(f_\tau)_*$ are the compositions of the operators $f^*$ and $f_*$ with the isomorphisms from Proposition \ref{prop:banach-space}.

\begin{prop}
	\label{prop:actions-on-cycles}
		Let $X,Y$ be smooth projective varieties over $K$ and let $f_\tau : X \dashrightarrow Y$ be a twisted rational map,
		then
		\begin{enumerate}
			\item $f_\tau^*$ extends to a continuous linear map $\wNk (Y) \rightarrow \wNk (X)$.
			\item $(f_\tau)_* (\cNk (X)) \subset \cNk (Y)$.
			\item $(f_\tau)_* \left( \cBPF^k (X) \right) \subset \cBPF^k (Y)$.
			\item $(f_\tau)_* \left( \wBPF^k (X) \right) \subset \wBPF^k (Y)$.
		\end{enumerate}
	\end{prop}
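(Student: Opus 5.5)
The plan is to reduce every assertion to the corresponding statement for the ordinary rational map $f : X^{(\tau)} \dashrightarrow Y$ of $K$-varieties. For such maps, Dang and Favre \cite{dangSpectralInterpretationsDynamical2021} proved the analogous four properties. We then transport their conclusions through the canonical identifications of Proposition \ref{prop:banach-space}. As noted just before the statement, $f_\tau^* = \iota \circ f^*$ and $(f_\tau)_* = f_* \circ \iota^{-1}$, where $\iota : \cNk(X^{(\tau)}) \simeq \cNk(X)$ (respectively $\wNk$) is the canonical isomorphism. So it suffices to check that $\iota$ has three properties: it is a homeomorphism for the weak topologies, it preserves Cartier classes, and it preserves the cones $\cBPF^k$ and $\wBPF^k$.

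First, the map $\iota$ on $\wNk$ is the inverse limit of the isomorphisms $N^k(Y) \simeq N^k(Y^{(\tau)})$ over all smooth birational models $Y \to X$. The key point is that birational models of $X$ and of $X^{(\tau)}$ correspond bijectively, with the same underlying schemes. This follows from Lemma \ref{lemme:change-of-base} applied to a model $Y \to X^{(\tau)}$, which writes it as $Z^{(\tau)}$ with $Z \to X$. The isomorphisms at each level commute with pushforwards along birational morphisms, since these are defined on cycles and Lemma \ref{lemme:changing-structure-morphism} identifies cycles canonically. Hence $\iota$ is compatible with the inverse system and is a homeomorphism for the inverse limit topologies. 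It also restricts to the direct limit isomorphism on Cartier classes, because pullbacks of divisor classes are identified via $\Pic$.

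With this in hand, (1) follows from the continuity of $f^*$ on Weil classes in \cite{dangSpectralInterpretationsDynamical2021}, composed with the homeomorphism $\iota$. Concretely, $f_\tau^*$ extends by continuity: for $\beta \in \wNk(Y)$, set $(f_\tau^*\beta)_{X'} := \pi_{1*}\pi_2^*\beta_{Y'}$ on a graph resolution. Assertion (2) follows from the fact that $f_*$ maps Cartier classes to Cartier classes, together with the fact that $\iota$ preserves them. Assertion (3) uses Lemma \ref{lemme:basepoint-free-classes}, which gives $\iota(\cBPF^k(X^{(\tau)})) = \cBPF^k(X)$, together with the stability of BPF classes under pushforward by proper morphisms. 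For the latter I will take Dang's result \cite[Theorem 3.3.3]{dangDegreesIteratesRational2020}, or its restatement in Proposition \ref{prop:action-on-cycles}. Note that a BPF class on a model $X'$ pushes forward under the regular lift $X' \to Y'$ to a BPF class on $Y'$. Assertion (4) then follows from (3): $\wBPF^k$ is the weak closure of $\cBPF^k$, and $(f_\tau)_*$ is weakly continuous, since each component $((f_\tau)_*\beta)_{Y'}$ depends only on $\beta_{X'}$ linearly on a finite-dimensional space.

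I expect the main obstacle to be the model-by-model bookkeeping. One must check that the correspondence of birational models between $X$ and $X^{(\tau)}$ is cofinal and respects the morphisms used in both limits. One must also check that the pushforward of cycles under a twisted morphism commutes with the identifications, even though degrees (and hence the $\BPF$ norm) scale by $\deg(\tau)$. Here it matters that the four statements are purely about linear maps, cones and the weak topology, never about norms. So the factor $\deg(\tau)$ never enters, and I will emphasise this point in order to avoid it.
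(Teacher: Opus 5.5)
Your reduction is the same as the paper's. The paper's proof simply invokes the Dang--Favre statements for the ordinary map $f : X^{(\tau)} \dashrightarrow Y$ and transports them through the identifications of Proposition~\ref{prop:banach-space}. Your checks that these identifications are compatible with the limit systems, preserve the cones, and are weak homeomorphisms spell out what the paper leaves implicit. Your deduction of (4) from (3) via weak continuity of $(f_\tau)_*$ is fine.

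The justification you give for (3), however, rests on a false claim. BPF classes are not stable under pushforward by arbitrary proper morphisms. Nor does a BPF class on $X'$ push forward, under an arbitrary regular lift $X' \to Y'$, to a BPF class on $Y'$. This already fails in codimension one, where $\BPF^1 = \Nef^1$, for birational morphisms of smooth projective threefolds. Take the common resolution of a flop $X \dashrightarrow X^+$, i.e. the blowup of the flopping curve $Z \subset X$. There the pullback of an ample class from $X^+$ is nef, but its pushforward to $X$ is the strict transform of that ample class, which is negative on $Z$. In $b$-class terms, take $f=\id$: a Cartier class determined by a nef class on $X'$ lies in $\cBPF^1$, yet its incarnation on a lower model need not be nef. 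So an incarnation on an arbitrary $Y'$ is the wrong object to test. Proposition~\ref{prop:action-on-cycles} only concerns pullback, so it does not supply the claim either.

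The missing ingredient is flattening. After passing to higher models, one may take the lift $g : X'' \to Y'$ finite and flat. Then $(f_\tau)_*\gamma$ is the Cartier class determined on $Y'$ by $g_*(\pi^*\gamma_{X'})$. That class is BPF because pullback preserves BPF classes, and flat pushforward does too, since the BPF cone is generated via flat covers. The simplest fix is to do what your first paragraph announces and cite the Dang--Favre statement for $f$ directly.

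For the same reason, your explicit formula $(f_\tau^*\beta)_{X'} = \pi_{1*}\pi_2^*\beta_{Y'}$ in (1) is only valid once $Y'$ is taken high enough relative to $X'$. For $f=\id$ and $Y'$ below $X'$, it returns $\pi_2^*\beta_{Y'}$ rather than $\beta_{X'}$.
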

\begin{proof}
	Let $f : X^{(\tau)} \dashrightarrow Y$ be the induced rational map.
	By \cite{dangSpectralInterpretationsDynamical2021}, all these statements are true for $f$, so they are true for $f_\tau$ using Proposition \ref{prop:banach-space}.
\end{proof}

\begin{prop}
	\label{prop:action-banach-space}
	Let $X,Y$ be smooth projective varieties of the same dimension $d$ over a field $K$ of characteristic zero and let $f_\tau : X \dashrightarrow
		Y$ be a twisted dominant rational map, then we have linear bounded operators
	\begin{equation}
		f_\tau^* : \sF(Y) \rightarrow \sF(X), \quad (f_\tau)_* : \sF(X) \rightarrow \sF (Y)
	\end{equation}
	where
	\begin{equation}
		\sF =  N^k_{\BPF} (\cdot), N^{k, \vee}_{\BPF}(\cdot), N^1_\Sigma(\cdot).
	\end{equation}
	Fix big and nef classes $\omega_X\in N^1(X)$ and $\omega_Y\in N^1(Y)$, and set
	\[
		\deg_k(f_\tau):=f_\tau^*\omega_Y^k\cdot \omega_X^{d-k}.
	\]
	There exists a constant $C>0$, depending only on $d,k,\omega_X,\omega_Y$, such that
	the following estimates hold on the operators' norms.
	\begin{eqnarray}
		\frac{\deg_k (f_\tau)}{\omega_Y^d} &\leq \| f_\tau^* \|_{\BPF} &\leq C \frac{\deg_k (f_\tau)}{\omega_Y^d} \\
		\frac{\deg_{d-k} (f_\tau)}{\omega_X^d} &\leq \| (f_\tau)_* \|_{\BPF} &\leq C
		\frac{\deg_{d-k}(f_\tau)}{(\omega_X^d)} \\
		\frac{\deg_k (f_\tau)}{\omega_Y^d} &\leq \| f_\tau^* \|_{\BPF}^\vee &\leq C \frac{\deg_k (f_\tau)}{\omega_Y^d} \\
		\frac{\deg_{d-k} (f_\tau)}{\omega_X^d} &\leq \| (f_\tau)_* \|_{\BPF}^\vee &\leq C
		\frac{\deg_{d-k}(f_\tau)}{(\omega_X^d)} \\
		\frac{\deg_1 (f_\tau)}{\omega_X^d} &\leq \| f_\tau^* \|_{\Sigma, \omega} &\leq C \frac{\deg_1 (f_\tau)}{\omega_X^d}.
	\end{eqnarray}
\end{prop}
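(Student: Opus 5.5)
The plan is to reduce everything to the untwisted statement of Dang and Favre, as was done in Propositions \ref{prop:inegalite-composition-degrees} and \ref{prop:actions-on-cycles}. Let $f : X^{(\tau)} \dashrightarrow Y$ be the rational map of $K$-varieties associated to $f_\tau$. By construction, $f_\tau^*$ is the composite of $f^*$ with the identification $\cNk(X^{(\tau)}) \simeq \cNk(X)$ of Proposition \ref{prop:banach-space}. Likewise, $(f_\tau)_*$ is $f_*$ precomposed with the inverse identification on $\wNk$. The argument then has three steps.

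\textbf{Step 1: boundedness.} The identifications of Proposition \ref{prop:banach-space} are bounded and bicontinuous for each of the three norms. They are isometries for $\|\cdot\|_{\BPF}^\vee$ and $\|\cdot\|_\omega$, and they multiply $\|\cdot\|_{\BPF}$ by $\deg(\tau)$. Dang and Favre show that $f^*$ and $f_*$ extend to bounded operators between the corresponding Banach spaces for $X^{(\tau)}$ and $Y$. Here $X^{(\tau)}$ is polarized by $\omega_X^{(\tau)}$, which is still big and nef by Proposition \ref{prop:positivity-twist}. Composing these operators with the identifications gives bounded operators $f_\tau^*$ and $(f_\tau)_*$ on $\sF = N^k_{\BPF}, N^{k,\vee}_{\BPF}, N^1_\Sigma$.

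\textbf{Step 2: transporting the norm estimates.} Dang and Favre's estimates for $f$ read, for instance,
\[
\frac{\deg_k(f)}{\omega_Y^d}\le \|f^*\|\le C\frac{\deg_k(f)}{\omega_Y^d},
\]
with $\deg_k(f)= f^*\omega_Y^k\cdot(\omega_X^{(\tau)})^{d-k}$ computed on $X^{(\tau)}$. By \eqref{eq:relation-degrees}, $\deg_k(f)=\deg(\tau)^{-1}\deg_k(f_\tau)$. I will then track the factor $\deg(\tau)$ case by case.

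For $f_\tau^*$ on $N^k_{\BPF}$, the source norm is unchanged and the target norm is multiplied by $\deg(\tau)$. Hence $\|f_\tau^*\|_{\BPF}=\deg(\tau)\|f^*\|_{\BPF}$, and the factors cancel, giving exactly the stated bounds.

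For $(f_\tau)_*$, the source norm is multiplied by $\deg(\tau)$, so $\|(f_\tau)_*\|=\deg(\tau)^{-1}\|f_*\|$. The bound for $f_*$ is in terms of $\deg_{d-k}(f)/(\omega_X^{(\tau)})^d$. Both the numerator and the denominator acquire a factor $\deg(\tau)$ by Lemma \ref{lemme:twist-degree-map}. So one factor $\deg(\tau)^{-1}$ may survive. If it does, I will absorb it by rechecking the normalization directly, as in the next paragraph.

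For the dual norm and the $\Sigma$-norm, the identifications are isometries. The ratios $\deg_k(f)/\omega_Y^d$ and $\deg_1(f)/(\omega_X^{(\tau)})^d$ again need to be rewritten in twisted terms.

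\textbf{Main obstacle: bookkeeping of $\deg(\tau)$.} The hardest point is making sure these factors cancel exactly, so that the constant $C$ does not depend on $\tau$. The lower bounds will not be imported; I will prove them directly by testing on explicit classes, since this avoids the bookkeeping altogether:
\begin{itemize}
\item for $\|f_\tau^*\|_{\BPF}$, test on $\omega_Y^k$;
\item for $\|(f_\tau)_*\|_{\BPF}$, test on $\omega_X^{k}$, using $(f_\tau)_*\omega_X^k\cdot\omega_Y^{d-k}=\omega_X^k\cdot f_\tau^*\omega_Y^{d-k}$ from Proposition \ref{prop:action-on-cycles};
\item for the $\Sigma$-norm, test with $\gamma=\omega_X^{d-2}/\omega_X^d$.
\end{itemize}
In each case the intersection numbers are computed on $X$ itself, so no twist factor appears.

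For the upper bounds, I will rerun the Siu-inequality argument of Dang and Favre directly on the twisted side. Write a BPF class as $\alpha\le(\alpha\cdot\omega_Y^{d-k})\,C\,\omega_Y^k/\omega_Y^d$ using Theorem \ref{thm:siu}. Pull this back by $f_\tau^*$, which preserves BPF classes by Proposition \ref{prop:action-on-cycles}. Then intersect with $\omega_X^{d-k}$.

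All quantities in this argument are intersection numbers on $X$ or on $Y$ with their own $K$-structures. Consequently the constant $C=C(k,d)$ from Siu's inequality appears, and it is independent of $\tau$. The pushforward and $\Sigma$ cases are handled identically, with $f_*$ in place of $f^*$ and with the $\Sigma$-norm estimate of Dang and Favre.
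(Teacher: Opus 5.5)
\textbf{Pullback estimates.} This part of your proposal follows the paper's argument: you transport Dang--Favre for $f:X^{(\tau)}\dashrightarrow Y$ through Proposition~\ref{prop:banach-space} and let the factors cancel. Your direct variant (test on $\omega_Y^k$, apply Siu on $Y$, then pull back) also works. However, both identities in your Step~2 are inverted. By Lemma~\ref{lemme:twist-degree-map}, intersection numbers on $X^{(\tau)}$ are $\deg(\tau)$ times those on $X$. Hence $\deg_k(f)=\deg(\tau)\deg_k(f_\tau)$, which is what the paper's proof uses; \eqref{eq:relation-degrees} has the factor upside down. Likewise $\|f^*\|_{\BPF}=\deg(\tau)\|f_\tau^*\|_{\BPF}$, not the reverse. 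The two slips cancel, which is why your first estimate comes out right, but they mislead you on the pushforward.

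\textbf{Pushforward estimates: a genuine gap.} Your test treats $(f_\tau)_*\omega_X^k\cdot\omega_Y^{d-k}=\omega_X^k\cdot f_\tau^*\omega_Y^{d-k}$ as an identity of numbers and claims everything is computed on $X$. But the left side is a degree on $Y$. Moreover, $(f_\tau)_*$ is proper pushforward along the $K$-morphism $f:X^{(\tau)}\to Y$, which preserves degrees of $0$-cycles measured on $X^{(\tau)}$. So the class identity of Proposition~\ref{prop:action-on-cycles} only yields $\deg_Y\bigl((f_\tau)_*z\cdot\alpha\bigr)=\deg(\tau)\,\deg_X\bigl(z\cdot f_\tau^*\alpha\bigr)$.

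Done correctly, your test gives $\|(f_\tau)_*\|_{\BPF}\geq\deg(\tau)\deg_{d-k}(f_\tau)/\omega_X^d$, and your Siu argument gives the matching upper bound $C\deg(\tau)\deg_{d-k}(f_\tau)/\omega_X^d$. The same factor appears for $\|\cdot\|_{\BPF}^\vee$. In Step~2 terms, $\|(f_\tau)_*\|=\deg(\tau)\|f_*\|$, while $\deg_{d-k}(f)/(\omega_X^{(\tau)})^d$ is twist-invariant, so $\deg(\tau)$ survives, not $\deg(\tau)^{-1}$.

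This factor cannot be normalized away. For $k=d$, $N^d$ is spanned by the class of a point, and $(f_\tau)_*$ multiplies degrees of $0$-cycles by exactly $\deg(\tau)$. Hence $\|(f_\tau)_*\|_{\BPF}=\deg(\tau)$, while $\deg_0(f_\tau)/\omega_X^d=1$. Taking $f_\tau=\widehat\tau$ on $\P^1_{\Q(t)}$ with $\tau^*(t)=t^n$ gives $\deg(\tau)=n$, arbitrarily large.

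So the pushforward upper bounds, in both BPF norms, are false as stated with $C$ independent of $\tau$; only the upper bounds are affected, since $\deg(\tau)\geq1$. What one can prove is the version with $\deg(\tau)$ inserted. It agrees with the statement when $\deg(\tau)=1$, e.g.\ over an algebraically closed field, to which dynamical questions reduce by Proposition~\ref{prop:dyn-degrees-base-change}. The paper's proof checks only the first estimate and asserts the others follow, so it has the same blind spot. Your suspicion that a factor survives was right; keep it in the statement rather than trying to absorb it.
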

\begin{proof}
	By \cite{dangSpectralInterpretationsDynamical2021}, all these statements hold for the induced rational map $f : X^{(\tau)} \dashrightarrow Y$ and they follow for $f_\tau$ using Proposition \ref{prop:banach-space}.
	We show the first estimate.
	We have
	\begin{equation}
		\| f^* \|_{\BPF} = \sup_{\alpha \in \cNk (Y), \alpha \neq 0} \frac{\| f^* \alpha \|_{\BPF^{(\tau)}}}{ \|
		\alpha\|_{\BPF}} = \deg (\tau) \| f_\tau^* \|_{\BPF}.
	\end{equation}
	And the result follows from the same estimate with $f$ since
	\begin{equation}
		\deg_k(f) = f^* \omega_Y^k \cdot \omega_{X^{(\tau)}}^{d-k} = \deg (\tau) f_\tau^* \omega_Y^k \cdot
		\omega_X^{d-k}.
	\end{equation}
\end{proof}

\begin{thm}
	\label{thm:dynamical-degrees}
	Let $f_\tau : X \dashrightarrow X$ be a dominant twisted rational map, then for $k = 1, \dots, \dim X$, we have
	\begin{align}
		\lambda_k (f_\tau) & = \rho \left(f_\tau^* | N^k_{\BPF}\right) = \rho \left(f_\tau^* | \Vect (\BPF^k)\right) =
		\rho\left( f_\tau^* | N^{k, \vee}_{\BPF} \right)                                                                           \\
		                   & = \rho \left((f_\tau)_* | N^{d-k}_{\BPF}\right) = \rho \left((f_\tau)_* | \Vect (\BPF^{d-k})\right) =
		\rho\left( (f_\tau)_* | N^{d-k, \vee}_{\BPF} \right)
	\end{align}
	and
	\begin{equation}
		\lambda_1 (f_\tau) = \rho \left( f_\tau^* | N^1_\Sigma \right).
	\end{equation}
	Furthermore, there exists $\theta_k^* \in \wBPF^k (X)$ and $\theta_{k,*} \in \wBPF^{d-k} (X)$ such that
	\begin{equation}
		f_\tau^* \theta^*_k = \lambda_k (f_\tau) \theta^*_k, \quad (f_\tau)_* \theta_{k,*} = \lambda_k (f_\tau)
		\theta_{k,*}.
	\end{equation}

	Finally, if $\lambda_1 (f_\tau)^2 > \lambda_2 (f_\tau)$, then $\theta^*_1 \in N^1_\Sigma (X)$ is unique up to
	multiplication by a positive constant and there exists a continuous linear form $L : N^1_\Sigma (X) \rightarrow
		\R$ such that for any $\alpha \in N^1_\Sigma (X)$
	\begin{equation}
		\frac{1}{\lambda_1 (f_\tau)^n} (f_\tau^n)^* \alpha \xrightarrow[n \rightarrow + \infty]{}
		L(\alpha) \theta^*_1.
		\label{eq:convergence-iterates-divisor}
	\end{equation}
	Furthermore, the linear form $L$ satisfies that $L (\alpha) \geq 0$ whenever $\alpha \geq 0$ and for any big and nef Cartier class $\omega \in \cNone (X)$ we have $\theta^*_1 \cdot \omega^{d-1} > 0$.
\end{thm}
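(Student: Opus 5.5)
The plan is to reduce every assertion to the corresponding statement of Dang--Favre \cite{dangSpectralInterpretationsDynamical2021} for ordinary rational maps, via Propositions \ref{prop:banach-space} and \ref{prop:action-banach-space}. The obstruction to a direct reduction is that the iterates $f_\tau^n$ correspond to ordinary maps $X^{(\tau^n)}\dashrightarrow X$ with a \emph{varying} source. So I will not apply Dang--Favre's theorem as a black box; instead I will rerun their proof, whose only inputs are (a) boundedness of the operators with norm comparable to degrees, (b) submultiplicativity $f_\tau^{n+m\,*}=f_\tau^{n*}f_\tau^{m*}$ on Cartier classes, and (c) invariance of the relevant cones. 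Step (b) follows because pullback on Cartier $b$-classes is functorial for ordinary dominant rational maps, and the canonical identifications of Proposition \ref{prop:banach-space} commute with pullbacks. Step (c) is Proposition \ref{prop:actions-on-cycles} together with the fact that pullbacks of BPF classes are BPF (Proposition \ref{prop:action-on-cycles}).

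For the spectral radius formulas, Proposition \ref{prop:action-banach-space} applied to $f_\tau^n$ gives
\[
\frac{\deg_k(f_\tau^n)}{\omega^d}\le \|(f_\tau^n)^*\|\le C\frac{\deg_k(f_\tau^n)}{\omega^d}
\]
with $C$ independent of $n$ (it depends only on $d,k,\omega$). Taking $n$-th roots and using Gelfand's formula $\rho(T)=\lim\|T^n\|^{1/n}$ together with $(f_\tau^n)^*=(f_\tau^*)^n$ yields $\rho=\lambda_k(f_\tau)$ on $N^k_{\BPF}$, $N^{k,\vee}_{\BPF}$, and $N^1_\Sigma$. The pushforward statements follow in the same way using the $\deg_{d-k}$ estimates, with $\lambda_{d-k}$ of the push operator identified with $\lambda_k$ by duality; I will check this duality by intersecting. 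For $\Vect(\BPF^k)$ I will use the same norm bounds, since the cone is preserved. For the eigenclasses, I will apply a Krein--Rutman type argument exactly as in Dang--Favre: take a weak limit of normalized Cesàro-type sums $\sum_n t^n (f_\tau^n)^*\omega^k$ as $t\uparrow 1/\lambda_k$, which stays in the weakly compact base of $\wBPF^k$, and check that it is an eigenvector using weak continuity from Proposition \ref{prop:actions-on-cycles}(1).

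For the final part, I will follow the spectral gap argument of Dang--Favre on $N^1_\Sigma$. The inequality $\lambda_1^2>\lambda_2$ is used, via the Hodge index type quadratic form $2(\alpha\cdot\omega\cdot\gamma)^2-\alpha^2\cdot\gamma$ defining $\|\cdot\|_\omega$, to show that $f_\tau^*$ acts on the hyperplane complement of $\theta_1^*$ with spectral radius at most $\sqrt{\lambda_2}<\lambda_1$. This gives uniqueness of $\theta^*_1$ and the convergence \eqref{eq:convergence-iterates-divisor}, with $L$ defined by pairing against the pushforward eigenclass $\theta_{1,*}$ suitably normalized. Positivity of $L$ comes from the cone invariance. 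The inequality $\theta_1^*\cdot\omega^{d-1}>0$ follows from $\theta_1^*\neq0$ being nef in the weak sense and $\omega$ being big and nef.

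The main obstacle is the step where Dang--Favre bound $\|(f^n)^*\|$ on the complementary subspace by $\lambda_2^{n/2}$. That bound uses $\deg_2(f^n)$ and mixed intersections across iterates, and the twist multiplies top intersections by $\deg(\tau^n)$. I expect these factors to cancel because the $\Sigma$-norm is twist-invariant (Proposition \ref{prop:banach-space}), but this bookkeeping must be verified carefully at each iterate.
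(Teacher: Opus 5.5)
Your proposal matches the paper's proof. The paper also reruns Dang--Favre's arguments (their Theorems 4.11, 4.12, 5.2 and Corollary 5.7) for $f_\tau$, using the twisted norm estimates of Proposition~\ref{prop:action-banach-space} and the same construction of eigenclasses as weak limits of $\Theta_k^*(t)=\sum_n t^n (f_\tau^n)^*\omega^k$ normalized by $\Theta_k^*(t)\cdot\omega^{d-k}$. The one place the paper is simpler is the last step. You deduce $\theta_1^*\cdot\omega^{d-1}>0$ from nefness, which for Weil classes needs a negativity-lemma argument; the paper instead builds $\theta_1^*$ from the given big and nef $\omega$, so that $\theta_1^*\cdot\omega^{d-1}=1$, and then invokes uniqueness up to a positive scalar. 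Likewise $L$ can be taken directly from the Dang--Favre spectral gap, so you do not need to identify it with pairing against $\theta_{1,*}$.
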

\begin{proof}
	The proof is the same as in \cite{dangSpectralInterpretationsDynamical2021} Theorem 4.11, Theorem 4.12, Theorem 5.2 and Corollary 5.7 using Proposition \ref{prop:action-banach-space}.
	The class $\theta^*_k$ is constructed as follows.
	Let $\omega \in \cNone (X)$ be a big and nef class and define
	\begin{equation}
		\Theta_k^*(t) = \sum_{n \geq 0} t^n (f_\tau^n)^* \omega^k.
	\end{equation}
	Define also $T_k (t) = \Theta_k^* (t) \cdot \omega^{d-k}$.
	The radius of convergence of this power series is $\lambda_k(f_\tau)^{-1}$ and as $t > 0$ increases to $\lambda_k
		(f_\tau)^{-1}$ we can extract a subsequence such that
	\begin{equation}
		\frac{\Theta_k^* (t_n)}{ T_k (t_n)}
	\end{equation}
	converges to an element $\theta_k^* \in \wBPF^k(X)$ that satisfies
	$f_\tau^* \theta^*_k = \lambda_k (f_\tau) \theta^*_k$.
	In particular, we have that $\omega^{d-k} \cdot \theta^*_k = 1$.

	Now, if $\lambda_1(f_\tau)^2 > \lambda_2(f_\tau)$, then the uniqueness of $\theta^*_1$ comes from the same proof as in \cite{dangSpectralInterpretationsDynamical2021}.
	Since we can use any big and nef class $\omega$ for the construction of $\theta^*_1$ from the previous construction we have that $\theta^*_1 \cdot \omega^{d-1} > 0$.
	Therefore taking $\alpha = \omega $ in \eqref{eq:convergence-iterates-divisor} and intersecting in $\omega^{d-1}$ we
	have
	\begin{equation}
		L(\omega) \cdot \theta_1^* \cdot \omega^{d-1} = \lim_n \frac{1}{\lambda_1(f_\tau)^n} \deg_{1, \omega} (f_\tau^n).
	\end{equation}
\end{proof}

\section{Quasi-albanese and differentials}\label{sec:quasi-albanese}
\subsection{Quasi-abelian varieties}\label{subsec:quasi-abelian-varieties}
Let $\overline K$ be an algebraically closed field.
A \emph{quasi-abelian} variety $Q$ is an algebraic group such that there
exists an exact sequence of algebraic groups
\begin{equation}
	0 \rightarrow T \rightarrow Q \rightarrow A \rightarrow 0
\end{equation}
where $T$ is an algebraic torus and $A$ is an abelian variety.
If $U$ is a quasiprojective variety over $\overline K$, then there exists a unique quasi-abelian variety $\QAlb (U)$ with a morphism $\alpha : U \rightarrow \QAlb (U)$ up to isomorphism such that any morphism $f : U \rightarrow Q$ where $Q$ is a quasi-abelian variety factors through $\alpha$.
Notice that if $Q$ is a quasi-abelian variety over $\overline K$ and $\tau : \spec \overline K \rightarrow \spec \overline K$ is an automorphism, then $Q^{(\tau)}$ is also a quasi-abelian variety over $\overline K$.

\begin{lemme}
	\label{lemme:changing-structure-morphism-quasi-albanese}
	For any quasiprojective variety $U$ over $\oK$ and any $\tau : \spec \oK \rightarrow \spec \oK$ a field
	automorphism, we have
	\begin{equation}
		\QAlb(U^{(\tau)}) = \QAlb(U)^{(\tau)}.
	\end{equation}
	In particular, if $f_\tau : U \rightarrow U$ is a twisted morphism, then there exists a twisted morphism $g_\tau : \QAlb(U) \rightarrow \QAlb(U)$ such that $\alpha \circ f_\tau = g_\tau \circ \alpha$.
\end{lemme}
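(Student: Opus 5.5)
The plan is to show that $\alpha^{(\tau)} : U^{(\tau)} \rightarrow \QAlb(U)^{(\tau)}$ satisfies the universal property defining the quasi-Albanese of $U^{(\tau)}$. Uniqueness up to isomorphism then gives the identification. Since $\tau$ is an automorphism of $\spec \oK$, twisting by $\tau$ and by $\tau^{-1}$ are mutually inverse operations on the category of $\oK$-varieties, that is $(V^{(\tau)})^{(\tau^{-1})} = V$ canonically. Twisting is therefore an equivalence of categories, and an equivalence preserves universal objects.

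\textbf{Step 1: twisting preserves quasi-abelian varieties.} This was already noted before the lemma. The group law $m : Q \times_{\oK} Q \rightarrow Q$ becomes a group law on $Q^{(\tau)}$. The reason is that $Q^{(\tau)} \times_{\oK} Q^{(\tau)} = (Q \times_{\oK} Q)^{(\tau)}$ when $\tau$ is an isomorphism: the fibre product over $\spec \oK$ via $\tau \circ q$ coincides with the one via $q$, because $\tau$ is an isomorphism of the base. The exact sequence $0 \rightarrow T \rightarrow Q \rightarrow A \rightarrow 0$ twists to $0 \rightarrow T^{(\tau)} \rightarrow Q^{(\tau)} \rightarrow A^{(\tau)} \rightarrow 0$. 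Here $T^{(\tau)}$ is still a split torus, by Corollary \ref{cor:twisted-endo-alg-torus} or directly since $\hat\tau$ gives $\G_m^{d} \simeq (\G_m^d)^{(\tau)}$. Also $A^{(\tau)}$ is still an abelian variety: properness, smoothness and connectedness are properties of the underlying morphism of schemes over an isomorphic base.

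\textbf{Step 2: universal property.} Let $h : U^{(\tau)} \rightarrow Q'$ be a morphism to a quasi-abelian variety. Twisting by $\tau^{-1}$ gives $h^{(\tau^{-1})} : U \rightarrow Q'^{(\tau^{-1})}$, and by Step 1 the target is quasi-abelian. By the universal property of $\alpha$, this factors uniquely as $\phi \circ \alpha$. Twisting back by $\tau$ gives the factorisation $h = \phi^{(\tau)} \circ \alpha^{(\tau)}$. Uniqueness transfers in the same way. One subtlety is that the universal property should be taken with a base point, or up to translation. We will fix a point $u_0 \in U(\oK)$, note that it is also a $\oK$-point of $U^{(\tau)}$, and require morphisms to send it to the identity; everything above is compatible with this normalisation. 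Hence $\QAlb(U^{(\tau)}) = \QAlb(U)^{(\tau)}$.

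\textbf{Step 3: the twisted morphism.} A twisted morphism $f_\tau$ is an ordinary morphism $f : U^{(\tau)} \rightarrow U$. Composing with $\alpha$ gives a morphism $\alpha \circ f : U^{(\tau)} \rightarrow \QAlb(U)$ into a quasi-abelian variety. By Step 2 it factors as $g \circ \alpha^{(\tau)}$ with $g : \QAlb(U)^{(\tau)} \rightarrow \QAlb(U)$; if we use pointed maps, $g$ is a homomorphism composed with a translation. Reading $g$ as a twisted morphism $g_\tau : \QAlb(U) \rightarrow \QAlb(U)$, and $\alpha^{(\tau)}$ as the same underlying map as $\alpha$, gives $\alpha \circ f_\tau = g_\tau \circ \alpha$.

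The main obstacle is the bookkeeping in Steps 1 and 2. One must check carefully that fibre products and the group structure are compatible with twisting, which uses that $\tau$ is invertible and would fail for a non-surjective $\tau$ (compare Remark \ref{rmq:not-base-change}). One must also handle the base point so that the universal property genuinely transfers.
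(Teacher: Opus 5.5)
Your proposal is correct and follows the same route as the paper. The paper's proof is the one line ``universal property of the quasi-Albanese plus Lemma \ref{lemme:change-of-base} applied with $\tau$ and $\tau^{-1}$'', which is exactly your transfer of the universal property by twisting back and forth; your Steps 1--3 spell out details the paper leaves implicit, namely that twisting preserves quasi-abelian structures and fibre products when $\tau$ is invertible, and the base-point normalisation.
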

\begin{proof}
	All this follows from the universal property of the Quasi-Albanese variety and Lemma \ref{lemme:change-of-base} (applied with $\tau$ and $\tau^{-1}$).
\end{proof}

\begin{prop}
	\label{prop:absolute-dyn-degree-quasi-abelian}
	Let $Q$ be a quasi-abelian variety of dimension $d$ over a field $K$ of characteristic zero and let $f_\tau: Q \rightarrow Q$ be a twisted endomorphism, then the first dynamical degree $\lambda_1(f_\tau)$ is an algebraic integer of degree $\leq d^2$.
\end{prop}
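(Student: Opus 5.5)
We plan to reduce to the case of an algebraically closed base field, then to the exact sequence $0\to T\to Q\to A\to 0$, and finally to use the mixed degree formula of Theorem \ref{thm:relative-degree-formula}. First, by Proposition \ref{prop:dyn-degrees-base-change}(1), we may base change to an algebraic closure $\overline K$. Then $\tau$ becomes an automorphism and $\deg(\tau)=1$. The twist $Q^{(\tau)}$ is again quasi-abelian. So $f:Q^{(\tau)}\to Q$ is a morphism of varieties between quasi-abelian varieties, hence the composition of a translation with a homomorphism of algebraic groups. The translation is harmless for degree growth, exactly as in Proposition \ref{prop:absolute-dyn-degrees-algebraic-tori}. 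Since the torus part $T$ is the maximal subtorus, a homomorphism maps the maximal torus of $Q^{(\tau)}$, which is $T^{(\tau)}$, into $T$. Therefore $f_\tau$ preserves the fibration $p:Q\to A$ and induces twisted endomorphisms $g_\tau:A\to A$ and, on the generic fiber, a twisted map $f_\eta$ of a $T$-torsor over $K(A)$.

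Next we apply Theorem \ref{thm:relative-degree-formula} to the semiconjugacy given by $p$, after choosing an equivariant compactification of $Q$ as a toric bundle over $A$ (fiberwise a toric variety $X_\Sigma$), so that we are in the projective setting. This gives
\[
\lambda_1(f_\tau)=\max\bigl(\lambda_1(f_\eta),\lambda_1(g_\tau)\bigr).
\]
After base change of $K(A)$ to its algebraic closure, the torsor becomes trivial, i.e.\ $\G_m^t$, and $f_\eta$ becomes a twisted endomorphism of $\G_m^t$, with $t=\dim T$. By Proposition \ref{prop:absolute-dyn-degrees-algebraic-tori} together with Proposition \ref{prop:dyn-degrees-base-change}, $\lambda_1(f_\eta)=\rho(M)$, where $M\in M_t(\Z)$ is the matrix induced on characters. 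This is an algebraic integer of degree $\le t$.

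For the abelian part $A$ of dimension $a$, we use the Néron–Severi action. A twisted endomorphism of $A$ is $\hat\tau$-twisted composed with a homomorphism and a translation. Its pullback acts linearly on $N^1(A)_\R$ and preserves the lattice $NS(A)$, and it commutes with iteration, since for regular maps pullback is functorial. Hence $\lambda_1(g_\tau)$ is the spectral radius of an integer matrix acting on $NS(A)$, which has rank $\le a^2$. It is therefore an algebraic integer of degree $\le a^2$. Concretely, this is $\rho$ of the action on $\mathrm{End}$ or on $H^1$, which gives the square of the moduli of the eigenvalues.

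Combining the two parts, $\lambda_1(f_\tau)$ is an algebraic integer of degree $\le\max(t,a^2)\le d^2$. The main obstacle is the twisted abelian step: $\hat\tau$ is not canonically defined on $A$. We must check that $g_\tau^*$ genuinely preserves the integral lattice $NS(A)$ under the identification of Lemma \ref{lemme:numerical-classes-changing-structure}, and that $(g_\tau^n)^*=(g_\tau^*)^n$ on it. We plan to argue via the identification $N^1(A^{(\tau)})\simeq N^1(A)$, which maps integral classes to integral classes, and regularity of $g_\tau$. A secondary point is to justify that the mixed degree formula applies to the chosen equivariant compactification of $Q$.
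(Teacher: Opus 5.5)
Your argument follows the paper's proof step for step. You base change to an algebraically closed field, write the associated morphism as a translate of a homomorphism so that $f_\tau$ descends along $p\colon Q\to A$, apply the mixed degree formula, and handle the generic fiber as a trivialized torus and the base through the N\'eron--Severi action. Your two stated worries are harmless. Twisting leaves $\Pic$ literally unchanged, so integral classes go to integral classes, and pullback by regular maps is functorial. Any projective compactification works, by birational invariance. There is, however, one false intermediate claim, which the paper's proof also makes through the last sentence of Proposition \ref{prop:absolute-dyn-degrees-algebraic-tori}.

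\textbf{The torus degree bound.} The spectral radius $\rho(M)$ of $M\in M_t(\Z)$ need not be an eigenvalue, so it need not have degree $\le t$. Take $t=3$ and let $M$ be the companion matrix of $x^3+x+1$, so $\det M=-1$ and $\phi_M$ is an automorphism of $\mathbb{G}_m^3$.
\begin{itemize}
\item The real root $r\approx -0.68$ is dominated by the complex pair $\mu,\bar\mu$.
\item Their modulus satisfies $|\mu|^2=-1/r=:s$, where $s^3-s^2-1=0$.
\item Hence $\lambda_1(\phi_M)=\rho(M)=\sqrt{s}\approx 1.21$ is a root of $y^6-y^4-1$.
\item This polynomial is irreducible over $\Q$: a splitting into $g(y)\cdot(-g(-y))$ with $g$ a monic integral cubic would force $b^2=\pm 2a$ with $a$ odd, which is impossible.
\end{itemize}
So the degree is $6>3$. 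This example has $\lambda_2=\lambda_1^2$; under $\lambda_1^2>\lambda_2$ the dominant eigenvalue is forced to be real, but the proposition carries no such hypothesis.

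The correct bound is as follows. If the dominant eigenvalue $\mu$ is real, the degree is $\le t$. Otherwise $\bar\mu$ is a root of the same irreducible factor, so $\mu\bar\mu$ has at most $\binom{t}{2}$ Galois conjugates, each a product of two distinct roots. Hence $\rho(M)=\sqrt{\mu\bar\mu}$ has degree $\le t(t-1)$. The statement still holds, but your conclusion should read $\max(t^2,a^2)\le d^2$ rather than $\max(t,a^2)$.

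\textbf{The abelian step.} The inference ``spectral radius of an integral matrix, hence degree at most its size'' is valid there only because $g_\tau^*$ preserves the nef cone of $A$. That cone is closed, salient and has nonempty interior, so by Perron--Frobenius $\rho$ is itself a real eigenvalue of the integral matrix on $NS(A)$. You should state this explicitly. The character lattice of $T$ carries no such invariant cone, and the example above shows that the inference genuinely fails there.
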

\begin{proof}
	By base change invariance of dynamical degrees, we may assume that $K$ is algebraically closed.
	Let
	\[
		0\rightarrow T\rightarrow Q\xrightarrow{p} A\rightarrow 0
	\]
	be the extension of an abelian variety by a torus.
	The ordinary morphism $Q^{(\tau)}\to Q$ associated to $f_\tau$ is the composition of a group homomorphism with a translation; write it as $t_q\circ g$, where $g:Q^{(\tau)}\to Q$ is a homomorphism of algebraic groups and $t_q$ is translation by a point of $Q$.
	It follows that $f_\tau$ descends to a twisted endomorphism $\overline f_\tau:A\to A$ and that $p\circ f_\tau=\overline f_\tau\circ p$.
	By the mixed degree formula,
	\[
		\lambda_1(f_\tau)=\max\{\lambda_1(f_\eta),\lambda_1(\overline f_\tau)\},
	\]
	where $f_\eta$ is the induced twisted rational map on the generic fiber of $p$.
	This generic fiber is a torsor under the torus $T_{K(A)}$.
	After a finite extension of $K(A)$ the torsor becomes trivial; by base change invariance, $\lambda_1(f_\eta)$ is unchanged by this extension.
	After such a trivialization, the induced map is a translation composed with a twisted torus endomorphism, so Proposition \ref{prop:absolute-dyn-degrees-algebraic-tori} shows that $\lambda_1(f_\eta)$ is an algebraic integer of degree at most $\dim T$.

	It remains to treat the abelian quotient.
	Translations act trivially on numerical divisor classes, so $\lambda_1(\overline f_\tau)$ is the spectral radius of the action induced by the homomorphism part on $N^1(A)_\R$.
	After spreading out over a finitely generated subfield of $K$ and embedding it into $\C$, we may work over $\C$.
	For an abelian variety of dimension $a=\dim A$, the Neron-Severi group has rank at most $a^2$, and the induced action on it is given by an integral matrix.
	Thus $\lambda_1(\overline f_\tau)$ is an algebraic integer of degree at most $a^2$.
	Since $\dim T\leq d$ and $a^2\leq d^2$, the same bound holds for $\lambda_1(f_\tau)$.
\end{proof}

\subsection{Differentials and logarithmic Kodaira dimension}\label{subsec:log-kodaira-dimension}
We suppose here that $\car \oK = 0$ and that $\oK$ is algebraically closed, if $V$ is a smooth quasiprojective variety over
$K$, the \emph{logarithmic Kodaira dimension} of $V$ is defined as
\begin{equation}
	\overline \kappa (V) = \kappa (\overline V,K_{\oV}+ D_{\oV})
\end{equation}
where $\overline V$ is a smooth projective variety with an open embedding $V \hookrightarrow \overline V$ such that
$\overline V \setminus V = D_{\oV}$ is a simple normal crossing divisor, $K_{\oV}$ is the canonical divisor of
$\overline V$. From
Lemma \ref{lemme:canonical-divisor-unchanged}, we have that for any automorphism $\tau : \spec \oK \rightarrow \spec \oK$,
\begin{equation}
	K_{\oV^{(\tau)}} + D_{\oV^{(\tau)}} \simeq K_{\oV} + D_{\oV}.
\end{equation}

From Proposition 1 of \cite{iitakaLogarithmicKodairaDimension1977}, we get
\begin{prop}
	\label{prop:log-kodaira-iitaka-fibration}
	Let $V$ be a quasiprojective variety over an algebraically closed field $\oK$ of characteristic zero and $f_\tau : V
		\rightarrow V$ a dominant
	twisted endomorphism, then, for every $m \geq 1$, $f_\tau$ induces a linear isomorphism
	\begin{equation}
		f_\tau^*: H^0 \left(\overline V, m(K_{\oV}+D_{\oV})\right) \rightarrow H^0 \left(\overline V^{(\tau)},
		m(K_{\oV^{(\tau)}}+D_{\oV^{(\tau)}})\right).
	\end{equation}
	In particular, let $\kappa = \overline \kappa (V)$. Then there exists an integer $N > 0$, a variety $W \subset
		\P^N$ of dimension $\kappa$, a dominant rational map $q : V \dashrightarrow W$, and a twisted automorphism
	$g_\tau$ of $\P^N$ preserving $W$ such that
	\begin{equation}
		q \circ f_\tau = g_\tau \circ q.
		\end{equation}
	\end{prop}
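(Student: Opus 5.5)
The plan is to reduce everything to the ordinary (untwisted) log-canonical ring on the underlying scheme, using that twisting does not change the sheaves involved.

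\textbf{Step 1: the pullback on pluri-log-canonical forms.} First choose a smooth compactification $\overline V$ with simple normal crossing boundary $D_{\overline V}$. Resolve $f_\tau$ to a twisted morphism $\tilde f_\tau : \tilde V \to \overline V$, where $\tilde V \to \overline V$ is a log resolution that is an isomorphism over $V$. This gives a morphism of $\oK$-varieties $f : \tilde V^{(\tau)} \to \overline V$, and the boundary of $\tilde V$ maps into $D_{\overline V}$ because $f_\tau(V) \subset V$. The usual log ramification formula then gives an injection
\[
f^*\bigl(m(K_{\overline V}+D_{\overline V})\bigr) \hookrightarrow m(K_{\tilde V^{(\tau)}}+D_{\tilde V^{(\tau)}}),
\]
since $f$ is dominant and log forms pull back to log forms. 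Invariance of $H^0(m(K+D))$ under log resolutions identifies the target sections with $H^0(\overline V^{(\tau)}, m(K+D))$. Lemma \ref{lemme:canonical-divisor-unchanged} identifies this line bundle with $m(K_{\overline V}+D_{\overline V})$ on the same scheme, because $\tau$ is an automorphism and hence separable. The map is $\tau$-semilinear when regarded on $H^0(\overline V, m(K+D))$ and linear as a map into the $\overline V^{(\tau)}$ version. Injectivity follows from dominance. Both spaces have the same $\oK$-dimension, since $\tau$ is an automorphism of $\oK$ and twisting only changes the scalar structure by $\tau$. Injectivity together with equal dimension gives an isomorphism. This is Iitaka's Proposition 1 applied to $f$.

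\textbf{Step 2: the map to projective space and the twisted automorphism.} Fix $m$ with $\dim H^0 = N+1$ large enough that the log-pluricanonical map $q = \Phi_m : V \dashrightarrow \P^N = \P(H^0(\overline V, m(K+D))^\vee)$ realises the Iitaka fibration. Its image $W$ then has dimension $\kappa$. Choose a basis $s_0,\dots,s_N$. Under the identification in Step 1, $f_\tau^* s_i = \sum_j a_{ij} s_j$ for an invertible matrix $(a_{ij})$. This is because $f_\tau^*$ is a $\tau$-semilinear bijection of a single vector space once $\overline V^{(\tau)}$ and $\overline V$ are identified.

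Define $g_\tau = \hat\tau \circ \ell$ on $\P^N$, where $\ell$ is the projective linear automorphism given by $(a_{ij})$, possibly after inverting or transposing as bookkeeping requires. By Proposition \ref{prop:twisted-morphism-affine-space}, twisted maps to $\P^N$ have exactly this form. Then the identity $q \circ f_\tau = g_\tau \circ q$ follows by comparing pullbacks of the homogeneous coordinates $X_i$ as in that proposition: both sides pull back $X_i$ to $f_\tau^* s_i$. The map $g_\tau$ is a twisted automorphism because $\ell$ and $\hat\tau$ are bijective. It preserves $W$ since $W = \overline{q(V)}$ and $f_\tau$ is dominant.

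\textbf{Main obstacle.} The delicate step is the semilinearity and dimension bookkeeping in Step 1. We must check that the identification of $H^0(\overline V^{(\tau)}, \cdot)$ with $H^0(\overline V, \cdot)$ as abelian groups is compatible with the $\tau$-twisted scalar action, so that a $\oK$-basis maps to a $\oK$-basis. We must also check that the boundary condition on $f$ (that it preserves $V$) is what guarantees log forms pull back without poles.
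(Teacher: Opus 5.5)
Your proposal is correct and is essentially the paper's argument: the paper simply invokes Iitaka's Proposition 1 for the untwisted morphism $f$ together with Lemma \ref{lemme:canonical-divisor-unchanged} (using that $\tau$ is an automorphism of $\overline K$). You unpack that citation and make the construction of $g_\tau$ explicit via Proposition \ref{prop:twisted-morphism-affine-space}, so your route matches the paper's. One slip needs fixing: what $f_\tau(V)\subset V$ gives, and what the log pullback actually needs, is $f^{-1}(D_{\overline V})\subset D_{\tilde V}$, not that the boundary of $\tilde V$ maps into $D_{\overline V}$; the latter fails for non-proper maps such as $(x,y)\mapsto(x,xy)$ on $\A^2$.
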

\noindent\emph{Alternative formulation of Proposition \ref{prop:log-kodaira-iitaka-fibration}.}
Let $V$ be a smooth quasiprojective variety over the algebraically closed field $\oK$ of characteristic zero, and let
$f_\tau:V\to V$ be a dominant twisted endomorphism.
Choose a smooth projective compactification $\overline V$ such that $D=\overline V\setminus V$ is a simple normal crossing divisor.
Then, for every $m\geq 1$, pullback by $f_\tau$ induces a linear isomorphism
\[
	f_\tau^*:H^0\bigl(\overline V,m(K_{\overline V}+D)\bigr)
	\longrightarrow
	H^0\bigl(\overline V^{(\tau)},m(K_{\overline V^{(\tau)}}+D^{(\tau)})\bigr).
\]
In particular, if $\kappa=\overline\kappa(V)\geq 0$ and $m$ is sufficiently divisible, the logarithmic Iitaka map
\[
	q_m:V\dashrightarrow W_m\subset \mathbb P\left(H^0\bigl(\overline V,m(K_{\overline V}+D)\bigr)^\vee\right)
\]
is semiconjugated to a twisted projective linear automorphism $g_\tau$ preserving $W_m$:
\[
	q_m\circ f_\tau=g_\tau\circ q_m.
\]

The proposition also holds in positive characteristic if we assume that $\tau$ and $f_\tau$ are separable.

\subsection{A base change reduction for the Iitaka fibration}\label{subsec:base-change-reduction-iitaka}
\begin{lemme}\label{lemme:semilinear-eigenvector-after-base-change}
	Let $K$ be an algebraically closed field and let $\tau:\spec K\to\spec K$ be a finite map.
	Let $E$ be a finite-dimensional $K$-vector space and let $F_\tau:E\to E$ be a twisted linear automorphism, i.e.
	\[
		F_\tau(av)=\tau^*(a)F_\tau(v)
	\]
	for all $a\in K$ and $v\in E$.
	Then there exist an algebraically closed field extension $\kappa:\spec L\to\spec K$ and an automorphism $\sigma:\spec L\to\spec L$ satisfying
	\[
		\sigma^*\circ \kappa^*=\kappa^*\circ \tau^*
	\]
	such that the base-changed twisted linear automorphism
	\[
		(F_\tau)_{L,\sigma}:E\otimes_K L\longrightarrow E\otimes_K L
	\]
	has a nonzero fixed vector.
\end{lemme}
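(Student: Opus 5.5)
The plan is to construct $L$ and $\sigma$ by hand. We adjoin generic coordinates of a would-be fixed vector and define $\sigma$ so that this generic vector is fixed by construction.

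Since $K$ is algebraically closed, $\tau$ is an automorphism of $K$, and $F_\tau$ is bijective and $\tau^*$-semilinear. Fix a basis $e_1,\dots,e_n$ of $E$ and write
\[
F_\tau(e_i)=\sum_k A_{ki}e_k .
\]
Because $F_\tau$ is bijective and semilinear, the images $F_\tau(e_i)$ form a basis, so the matrix $A=(A_{ki})\in \GL_n(K)$. For $v=\sum_i a_ie_i$ we then have $F_\tau(v)=\sum_{i,k}A_{ki}\tau^*(a_i)e_k$. The base change $(F_\tau)_{L,\sigma}$ is the $\sigma^*$-semilinear map determined by
\[
v\otimes b\mapsto F_\tau(v)\otimes\sigma^*(b),
\]
which is well defined because $\sigma^*\circ\kappa^*=\kappa^*\circ\tau^*$. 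In coordinates it reads $x\mapsto A\,\sigma^*(x)$, where $\sigma^*$ acts entrywise. A fixed vector is therefore an $x\in L^n\setminus\{0\}$ with $\sigma^*(x)=A^{-1}x$.

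\textbf{Construction.} Let $x_1,\dots,x_n$ be algebraically independent over $K$, and let $B=A^{-1}=(B_{ij})$. Define a ring endomorphism $\sigma_0$ of $K[x_1,\dots,x_n]$ by
\[
\sigma_0\Big(\sum_I c_Ix^I\Big)=\sum_I\tau^*(c_I)\prod_i\Big(\sum_j B_{ij}x_j\Big)^{i_i}.
\]
This is a composition of two ring automorphisms: the coefficientwise $\tau^*$, and the $K$-linear change of variables $x\mapsto Bx$, whose inverse is $x\mapsto Ax$. Hence $\sigma_0$ is a ring automorphism extending $\tau^*$. It extends uniquely to an automorphism of the fraction field $K(x_1,\dots,x_n)$.

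Let $L$ be an algebraic closure of $K(x_1,\dots,x_n)$, with $\kappa^*$ the inclusion $K\subset L$. By the standard extension theorem, $\sigma_0$ extends to an embedding $\sigma^*:L\to L$. Its image is an algebraically closed field containing $K(x_1,\dots,x_n)$ and algebraic over it, so it equals $L$; thus $\sigma^*$ is an automorphism. By construction $\sigma^*\circ\kappa^*=\kappa^*\circ\tau^*$.

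\textbf{Verification.} Put $x=\sum_i e_i\otimes x_i\in E\otimes_K L$, which is nonzero since the $x_i$ are nonzero. Then
\[
(F_\tau)_{L,\sigma}(x)=\sum_i F_\tau(e_i)\otimes\sigma^*(x_i)=\sum_k e_k\otimes\Big(\sum_{i}A_{ki}\sum_j B_{ij}x_j\Big)=\sum_k e_k\otimes x_k=x,
\]
using $AB=\mathrm{id}$.

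\textbf{Main obstacle.} The only delicate point is checking that $\sigma_0$, and hence $\sigma^*$, is an automorphism rather than just an endomorphism. For $\sigma_0$ this follows from the invertibility of $A$ together with the bijectivity of $\tau^*$, which is where the algebraic closedness of $K$ is used. For $\sigma^*$ it follows from the argument above that the image of an algebraic closure under an embedding extending an automorphism of the base is again the whole algebraic closure.
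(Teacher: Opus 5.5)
Your proposal is correct and is essentially the paper's proof: adjoin indeterminates $x_1,\dots,x_n$, let $\sigma^*$ act on $K(x_1,\dots,x_n)$ by $\tau^*$ on $K$ and by $x\mapsto A^{-1}x$ on the variables, extend it to an algebraic closure $L$, and check that $A\sigma^*(x)=x$. The differences are cosmetic. You prove invertibility by factoring $\sigma_0$ as the coefficientwise $\tau^*$ followed by the substitution $x\mapsto A^{-1}x$, whereas the paper writes down the explicit inverse $\rho^*$. You also give the (correct) argument that the extension to $L$ is surjective, which the paper leaves implicit.
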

\begin{proof}
	Since $K$ is algebraically closed, $\tau^*:K\to K$ is an automorphism.
	Choose a basis of $E$ and write
	\[
		F_\tau(v)=A\tau^*(v),\qquad A\in \GL_N(K),
	\]
	where $\tau^*$ is applied coordinatewise.
	Let $L_0=K(x_1,\dots,x_N)$, viewed as an extension of $K$, and write $x=(x_1,\dots,x_N)^t$.
	Define an automorphism $\sigma^*$ of $L_0$ by setting $\sigma^*_{|K}=\tau^*$ and
	\[
		\sigma^*(x)=A^{-1}x.
	\]
	Indeed, the inverse is the homomorphism $\rho^*$ given by $\rho^*_{|K}=(\tau^*)^{-1}$ and
	\[
		\rho^*(x)=(\tau^*)^{-1}(A)x.
	\]
	Then
	\[
		\rho^*(\sigma^*(x))=(\tau^*)^{-1}(A^{-1})(\tau^*)^{-1}(A)x=x
	\]
	and
	\[
		\sigma^*(\rho^*(x))=A A^{-1}x=x,
	\]
	and the same identities are clear on $K$.
	Now choose an algebraic closure $L$ of $L_0$ and extend $\sigma^*$ to an automorphism of $L$.
	We denote by $\kappa: \spec L\to\spec K$ the induced extension of fields, and identify $K$ with its image in $L$.
	For the vector still denoted by $x=\sum_i x_i e_i\in E\otimes_K L$, we have
	\[
		(F_\tau)_{L,\sigma}(x)=A\sigma^*(x)=x.
	\]
	Thus $x$ is a nonzero fixed vector.
\end{proof}

\begin{rmq}
	The automorphism $\sigma$ of the extension field is part of the base change.
	Thus the fixed vector obtained in Lemma \ref{lemme:semilinear-eigenvector-after-base-change} should not be confused with a fixed vector of the original linear map over $K$.
	For instance, suppose that $\tau=\id$ and that $F(v)=Av$ is an ordinary linear automorphism, where $A\in \GL_N(K)$ has no eigenvalue equal to $1$.
	Although $F$ has no nonzero fixed vector over $K$, we may take $L_0=K(x_1,\dots,x_N)$ and define an automorphism $\sigma^*$ of $L_0$ by
	\[
		\sigma^*(x)=A^{-1}x.
	\]
	Then after this twisted base change, the vector $x=\sum_i x_i e_i$ satisfies
	\[
		F_{L_0,\sigma}(x)=A\sigma^*(x)=x.
	\]
	So the fixed vector appears because the ground field has been enlarged together with a nontrivial automorphism $\sigma$.
\end{rmq}

\begin{lemme}\label{lemme:iitaka-generic-fiber-affine}
	Let $X_0$ be a smooth affine variety over an algebraically closed field $K$ of characteristic zero, and let $f_\tau:X_0\to X_0$ be a dominant twisted endomorphism.
	Let $q_m:X_0\dashrightarrow W_m$ be the logarithmic Iitaka map constructed above and let $d_0$ be the dimension of the
  generic fiber, write
	\[
		q_m\circ f_\tau=g_\tau\circ q_m.
	\]
  After an algebraically closed base change $\kappa:\spec L\to\spec K$, together with an automorphism $\sigma:\spec
  L\to\spec L$ satisfying $\sigma^*\circ \kappa^*=\kappa^*\circ\tau^*$, the twisted rational map induced by $f_\tau$ on
  the generic fiber of $q_m$ can be represented by a dominant twisted endomorphism of a smooth affine variety of
  dimension $d_0$ over the function field of the base-changed $W_m$.
\end{lemme}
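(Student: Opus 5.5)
The plan is to make the Iitaka map regular on $X_0$ and to turn the twisted projective automorphism $g_\tau$ into the identity on the base. Then $f_\tau$ maps fibers of $q_m$ to themselves. After that, the generic fiber is an honest open subvariety of $X_0$ base-changed to a function field, and its affineness comes from that of $X_0$.

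\textbf{Step 1: regularity of $q_m$ on $X_0$.} The map $q_m$ is given by sections of $m(K_{\overline V}+D)$ with $V=X_0$. Restricted to $X_0$, these are sections of $\omega_{X_0}^{\otimes m}$. Its base locus $B\subset X_0$ is closed, and $f_\tau^{-1}(B)\subset B$, because $f_\tau^*$ acts linearly and bijectively on the space of sections (Proposition \ref{prop:log-kodaira-iitaka-fibration}). The open set $X_0\setminus B$ need not be affine. However, the generic fiber of $q_m$ only sees the generic point of $W_m$, so we may shrink. I would cover $X_0\setminus B$ by principal affine opens $X_0\setminus\{s=0\}$, with $s$ one of the sections, and work with the generic fiber of $q_m$ restricted to $X_0$. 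Concretely, the generic fiber is $X_0\times_{W_m}\spec K(W_m)$, and it is open in $X_{0,K(W_m)}$.

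\textbf{Step 2: linearize the twist on the base.} Apply Lemma \ref{lemme:semilinear-eigenvector-after-base-change} to the semilinear automorphism $F_\tau=f_\tau^*$ of $E=H^0(\overline V,m(K_{\overline V}+D))$. After a base change $(\kappa,\sigma)$ there is a nonzero vector $s_0\in E\otimes_K L$ with $(F_\tau)_{L,\sigma}(s_0)=s_0$. Base change commutes with all the constructions above, up to the conjugacy discussed after \eqref{eq:base-change-twist-square}. So $f':=(f_\tau)_{L,\sigma}$ is a dominant twisted endomorphism of the smooth affine $L$-variety $X_0'=X_{0,L}$, semiconjugate via $q_m'$ to $g'=(g_\tau)_{L,\sigma}$.

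\textbf{Step 3: the affine model of the generic fiber.} Because $s_0$ is fixed, $(f')^*s_0=s_0$. Hence $U:=X_0'\setminus\{s_0=0\}$ is a principal, and therefore affine, open subset with $f'^{-1}(U)= U$. On $U$, every $s/s_0$ with $s\in E_L$ is a regular function, so $q_m'$ is a morphism $U\to W_m'\cap\{s_0\neq0\}$. The latter is an affine chart of $W_m'$, containing its generic point $\eta$. The generic fiber $U_\eta=U\times_{W'_m}\spec L(W'_m)$ is affine, being a base change of an affine scheme. It is smooth, by generic smoothness in characteristic zero. It has dimension $d_0$, and it is nonempty since $U$ is dense.

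\textbf{Step 4: the twisted map on the fiber.} The relation $q_m'\circ f'=g'\circ q_m'$ with $f'(U)\subset U$ shows that $f'$ restricts to $U_\eta\to U_\eta$. This restriction is twisted over $g'_\eta:\spec L(W'_m)\to\spec L(W'_m)$, exactly as in \S\ref{subsec:main-example}. It is regular and dominant, and it represents the twisted rational map induced on the generic fiber.

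\textbf{Expected obstacle.} I expect the delicate point to be Step 3. I need $f'^{-1}(\{s_0=0\})=\{s_0=0\}$ as sets. This uses that the pullback of a log-pluricanonical form by $f'$ vanishes exactly where the form vanishes, up to the ramification of $f'$. If ramification adds zeros, I would only get $f'(U)\subset U$ from the inclusion $f'^{-1}(\{s_0=0\})\supset\{s_0=0\}$ reversed. I would first check the direction needed. I need $f'(U)\subset U$, i.e. $f'^{-1}(\{s_0\ne0\})\supseteq U$. This holds because $(f')^*s_0=s_0$ is nonzero at every $x\in U$, so $s_0(f'(x))\ne0$. That is the right direction, so ramification is harmless.
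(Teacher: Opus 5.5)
Your proof is correct and is essentially the paper's argument. Lemma \ref{lemme:semilinear-eigenvector-after-base-change}, applied to $f_\tau^*$ on the Iitaka linear system, gives a fixed section $s_0$ after base change. The open set $U=\{s_0\neq 0\}$ is affine because it is the complement of an effective Cartier divisor in the affine $X_0$; it is not literally a principal open, since $s_0$ is a section of $\omega_{X_0}^{\otimes m}$ rather than a function. The map $q_m$ is regular on $U$ with values in an affine chart of $W_m$, and the generic fiber of $U$ is the required smooth affine model.

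Two small corrections. Step 1 is not needed. In Step 3 you should only claim $f'(U)\subset U$: the relation $f'^*s_0=s_0$ a priori gives just $f'^{-1}(\{s_0=0\})\subset\{s_0=0\}$, because ramification could add zeros. As your last paragraph correctly observes, this inclusion is all the argument needs; the paper also writes an equality at this point but only uses the inclusion.
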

\begin{proof}
	By Lemma \ref{lemme:semilinear-eigenvector-after-base-change}, applied to the twisted linear automorphism induced by $f_\tau^*$ on the Iitaka linear system, we may make such a base change and find a nonzero section $s$ of this linear system such that, after base change,
	\[
		f_\tau^*s=s.
	\]
	Dynamical degrees are unchanged by this algebraically closed base change, by Proposition \ref{prop:dyn-degrees-base-change}.

	Set
	\[
		U_s:=X_0\cap\{s\neq0\}.
	\]
	Since $X_0$ is affine and $s$ defines an effective Cartier divisor on $X_0$, the open set $U_s$ is affine.
	The rational map $q_m$ is regular on $U_s$ and maps it to the affine chart of $W_m$ where the coordinate corresponding to $s$ is nonzero.
	Moreover $f_\tau^*s=s$ implies
	\[
		f_\tau^{-1}(U_s)=U_s^{(\tau)},
	\]
	so $f_\tau$ restricts to a twisted endomorphism of $U_s$.

	Let $\eta$ be the generic point of this affine chart of $W_m$ and put
	\[
		V_\eta:=U_s\times_{W_m}\spec K(W_m).
	\]
	Then $V_\eta$ is affine.
	Since $U_s$ is smooth and the ground field has characteristic zero, generic smoothness gives that $V_\eta$ is smooth.
	The semiconjugacy $q_m\circ f_\tau=g_\tau\circ q_m$ then restricts to a dominant twisted endomorphism $\tilde f_\eta$ of $V_\eta$ over $K(W_m)$.
\end{proof}

\subsection{Log general type subvarieties of quasi-abelian varieties}\label{subsec:log-general-type-quasi-abelian}
We say that a smooth quasiprojective variety $U$ is of \emph{log general type} if $\overline \kappa(U) = \dim U$.
We state now a result on quasi-abelian varieties that will be useful for the proof of Theorem \ref{bigthm:dyn-degrees-affine-varieties}.

	\begin{thm}[\cite{abramovichSubvarietiesSemiabelianVarieties1994}]\label{thm:quotient-general-type-quasi-abelian}
		Let $Q$ be a quasi-abelian variety and let $V \subset Q$ be closed subvariety.
		Let $G_V = \left\{ x \in Q : x +V = V \right\}$ and let $G_V^0$ be the connected component of the identity of $G_V$; then $V / G_V^0$ is of log general type.
	\end{thm}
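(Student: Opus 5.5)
The plan is to reduce first to the case of finite stabilizer and then argue with the logarithmic Iitaka fibration together with Kawamata's characterization of subvarieties of quasi-abelian varieties with $\overline\kappa=0$.

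\emph{Reduction to finite stabilizer.} Since the statement is geometric, I would work over an algebraically closed field of characteristic zero, as in the rest of \S\ref{sec:quasi-albanese}. Set $B:=G_V^0$. It is a connected algebraic subgroup of $Q$, so $Q':=Q/B$ is again quasi-abelian, being an extension of an abelian variety by a torus. Because $V$ is $B$-stable, $V':=V/B$ is a closed subvariety of $Q'$, and $V\to V'$ is a $B$-torsor. The stabilizer $G_{V'}$ of $V'$ in $Q'$ is $G_V/B$, which is finite. It therefore suffices to prove the following claim: a closed subvariety $W$ of a quasi-abelian variety $Q'$ with finite stabilizer satisfies $\overline\kappa(W)=\dim W$. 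Here $\overline\kappa$ is computed on a log resolution of $W$; it is a birational invariant for proper birational modifications in the usual sense.

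\emph{Non-negativity of $\overline\kappa$.} Let $\omega_1,\dots,\omega_n$ be a basis of the invariant logarithmic $1$-forms on $Q'$. These trivialize $\Omega^1_{\overline{Q'}}(\log D)$ for a suitable equivariant compactification. Take a log resolution $\widetilde W\to W$ with boundary $D_W$. The pullbacks $\omega_I|_{\widetilde W}$, for $|I|=\dim W$, are sections of $K_{\overline W}+D_W$. At a smooth point of $W$ they do not all vanish, since $T_wW\subset \operatorname{Lie}Q'$. Hence $\overline\kappa(W)\geq 0$. The same sections define the logarithmic Gauss map $\gamma:W_{\mathrm{reg}}\to \operatorname{Gr}(\dim W,\operatorname{Lie}Q')$, whose Plücker pullback is $K_{\overline W}+D_W$. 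This already gives $\overline\kappa(W)\geq\dim\gamma(W)$.

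\emph{Excluding $\overline\kappa(W)<\dim W$ — the main obstacle.} Showing that $\gamma$ is generically finite directly from finiteness of the stabilizer is delicate, so I would argue instead through the logarithmic Iitaka fibration $W\dashrightarrow Z$. Its very general fiber $F$ has $\overline\kappa(F)=0$ and positive dimension when $\overline\kappa(W)<\dim W$. Next I would invoke Kawamata's theorem: a subvariety of a quasi-abelian variety with $\overline\kappa=0$ is a translate of a connected algebraic subgroup. Thus $\overline F=x_F+B_F$. Connected algebraic subgroups of $Q'$ form a countable set, so $B_F=B'$ for a single subgroup $B'$ and a very general $F$, and by density for all fibers. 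Hence $W$ is a union of $B'$-cosets, that is, $W+B'=W$, and $B'\subset G_W$ is positive dimensional. This contradicts finiteness of the stabilizer.

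The delicate points are the following. The fibers of the Iitaka map must be taken as closures inside $Q'$. The countability/rigidity argument must be made over an uncountable field; if needed, one first base changes, which does not affect $\overline\kappa$ or the stabilizer. Finally, Kawamata's $\overline\kappa=0$ theorem must be applied to the possibly singular fiber, which is why the argument passes to a log resolution. Applying the claim to $V'=V/G_V^0$ gives the statement.
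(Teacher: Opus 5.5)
The paper does not prove this statement. It quotes it from Abramovich and uses it as a black box in the case $\overline\kappa(X_0)=-\infty$ of the proof of Theorem~\ref{thm:first-dyn-degree-algebraic}.

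Your proposal is a correct proof along the classical Ueno--Kawamata lines:
\begin{enumerate}
\item pass to $W=V/G_V^0\subset Q/G_V^0$, whose stabilizer $G_V/G_V^0$ is finite;
\item get $\overline\kappa(W)\geq 0$ from invariant logarithmic forms;
\item exclude $\overline\kappa(W)<\dim W$: closures of very general fibres of the logarithmic Iitaka fibration would be cosets of positive-dimensional connected subgroups, one of which, $B'$, occurs on a Zariski-dense set of fibres, so the closed set $\{w\in W: w+B'\subset W\}$ is all of $W$.
\end{enumerate}
Compared with the bare citation, your route shows that the deep inputs are Iitaka's logarithmic fibration theorem and Kawamata's characterization of closed subvarieties with $\overline\kappa=0$. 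The latter even follows from your own bound $\overline\kappa\geq\dim\gamma(\cdot)$. Indeed, $\overline\kappa=0$ forces the logarithmic Gauss map to be constant. Then the invariant vector fields in the constant tangent space are tangent to the subvariety at its smooth points, so in characteristic zero they lie in the Lie algebra of its stabilizer. Hence the stabilizer has dimension at least that of the subvariety, which is therefore a coset.

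Two points you flag should be written out.

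First, the closure $\overline F\subset Q'$ of a fibre must itself have $\overline\kappa=0$. Closedness is essential: $\mathbb{G}_m^2$ minus a point has $\overline\kappa=0$ and is not a coset. You can get this in either of two ways:
\begin{enumerate}
\item Build the Iitaka fibration on a model proper over $W$. Then a very general fibre is proper over $W$ and is a resolution of its closed image.
\item Use $0\leq\overline\kappa(\overline F)\leq\overline\kappa(F)=0$. The lower bound is your non-negativity step applied to $\overline F$. The upper bound is Iitaka's inequality for the dominant, birational map from $F$ to $\overline F$, computed on resolutions.
\end{enumerate}

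Second, the countability and uncountable-field argument gives only a Zariski-dense, not a very general, set of fibres with the same $B'$. That is all you need, so phrase the conclusion that way. Countability of connected subgroups should be cited as a rigidity fact. Rigidity of connected subgroups in algebraic families would also let you skip the base change.
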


	\section{Completions of affine varieties}\label{sec:completions}
Let $X_0$ be a smooth affine variety over an algebraically closed field $K$.
Let $A$ be the ring of regular functions of $X_0$.
A \emph{completion} of $X_0$ is the data of a smooth projective variety $X$ with an open embedding $\iota: X_0 \hookrightarrow X$ such that $\iota(X_0)$ is an open dense subset of $X$.
The boundary $\BD$ is a possibly reducible closed subvariety of pure codimension one.
If $\dim X_0\geq 2$, then $\BD$ is connected: indeed, since $X_0$ is affine, $\BD$ is the support of an effective ample divisor by Goodman's theorem \cite{goodmanAffineOpenSubsets1969}, and the support of an ample divisor on a projective variety of dimension at least two is connected by Hartshorne's connectedness theorem \cite[Chapter III, Corollary 7.9]{hartshorneAlgebraicGeometry1977}.
We denote by $\Div (X)$ the group of divisors of $X$ and by $\Div_\infty (X)$ the subgroup of divisors of $X$ supported on $\partial_X X_0$.
For $\A = \Z, \Q,\R$, we set $\Div (X)_\A := \Div (X) \otimes \A$ and $\DivInf(X)_\A = \DivInf(X) \otimes \A$.
Let $E_1, \cdots, E_m$ be the irreducible components of $\partial_{X} X_0$ (we will call them the \emph{prime divisors at infinity}).
Any element of $\Div_\infty (X)_\A$ is of the form $D = \sum_i a_i(D) E_i$ with $a_i (D) \in \A$.
We will write $\ord_{E_i} (D)$ for $a_i (D)$ of $D$ at $E_i$.
We have also the group $N^1 (X)$ of divisors modulo numerical equivalence and a canonical map $\DivInf(X) \rightarrow N^1 (X)$.
\begin{prop}
	\label{prop:trivial-quasi-albanese-injective-map}
	Let $X$ be a completion of $X_0$. If $\QAlb (X_0) = 0$, then the group homomorphism
	\begin{equation}
		\DivInf (X) \rightarrow N^1 (X)
	\end{equation}
	is injective.
\end{prop}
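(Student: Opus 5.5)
Suppose $D = \sum_i a_i E_i \in \DivInf(X)$ is numerically trivial on $X$; we must show all $a_i = 0$. The plan is to pass from numerical triviality to triviality in $\Pic^0$, produce from this a map of $X_0$ to a quasi-abelian variety, and use $\QAlb(X_0)=0$ to force that map to be constant.

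\textbf{Step 1: from numerical triviality to $\Pic^0$.} Since numerically trivial classes are torsion modulo algebraic equivalence, there is $N \geq 1$ with $\OO_X(ND)$ algebraically equivalent to zero, i.e. $\OO_X(ND) \in \Pic^0(X)$. Replacing $D$ by $ND$, we may assume $L := \OO_X(D) \in \Pic^0(X)$.

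\textbf{Step 2: the restriction to $X_0$ gives a map to the Albanese.} Since $D$ is supported on the boundary, $L_{|X_0}$ is trivial, with a canonical nowhere vanishing section $s$ over $X_0$ having divisor $D$. Now $L$ corresponds to a point of $\Pic^0(X) = \Alb(X)^\vee$. The restriction map $\Alb(X_0) \to \Alb(X)$ induced by inclusion is surjective, and it factors through $\QAlb(X_0)$; indeed $\QAlb(X_0)$ is an extension of $\Alb(X)$ by a torus. Hence if $\QAlb(X_0) = 0$, then $\Alb(X) = 0$, so $\Pic^0(X) = 0$ and $L \simeq \OO_X$.

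\textbf{Step 3: $L$ trivial gives a unit.} Since $L \simeq \OO_X$, we get $D = \div(h)$ for some rational function $h$ with $\div(h)$ supported at infinity, i.e. $h \in \OO(X_0)^\times$.

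\textbf{Step 4: units are constant.} If $h$ were non-constant, it would define a non-constant morphism $h : X_0 \to \G_m$. Since $\G_m$ is quasi-abelian, this morphism factors through $\QAlb(X_0) = 0$, so $h$ is constant. Therefore $D = \div(h) = 0$, i.e. all $a_i = 0$. Tensoring with $\R$ or $\Q$ then gives injectivity on $\DivInf(X)_\Q$ as well, since the kernel of a map between lattice-defined spaces is defined over $\Q$.

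\textbf{Main obstacle.} The delicate point is Step 2, the precise relation between $\QAlb(X_0)$ and $\Alb(X)$. I would use the standard structure theory (Serre, Iitaka): $\QAlb(X_0)$ is an extension $0 \to T \to \QAlb(X_0) \to \Alb(X) \to 0$, where $\Alb(X)$ is the Albanese of the smooth completion. This makes triviality of $\QAlb(X_0)$ imply both $\Alb(X)=0$ (so $\Pic^0(X)=0$) and $\OO(X_0)^\times = K^\times$.

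One should also check that torsion in $N^1$ is handled correctly. The kernel of $\Pic \to N^1$ is $\Pic^\tau$, and $\Pic^\tau / \Pic^0$ is finite, so multiplying by $N$ suffices as in Step 1.
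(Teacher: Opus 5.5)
Your proof is correct and follows the paper's argument: $\QAlb(X_0)=0$ forces $\Alb(X)=0$, hence $\Pic^0(X)=0$. A numerically trivial boundary divisor is then principal, given by a unit on $X_0$, and that unit must be constant since $\G_m$ is quasi-abelian. Your Step 1 is slightly more careful than the paper: the kernel of $\Pic(X)\to N^1(X)$ is $\Pic^\tau(X)$, not $\Pic^0(X)$ as the paper writes, so passing to $ND$ is justified; you only need to add that $ND=0$ implies $D=0$ because $\DivInf(X)$ is torsion-free.
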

\begin{proof}
Let $D\in \Div_\infty(X)$ and assume that its numerical class in $N^1(X)$ is zero.
Then the class of $\OO_X(D)$ in $\Pic(X)$ belongs to the kernel of $\Pic(X)\to N^1(X)$, namely to $\Pic^0(X)$.
Since the Albanese variety $\Alb(X)$ is dual to $\Pic^0(X)$ and the Albanese map of $X$ restricts to a morphism
$X_0\to \Alb(X)$, the assumption $\QAlb(X_0)=0$ implies $\Alb(X)=0$.
Thus $\Pic^0(X)=0$, and $\OO_X(D)$ is trivial.
Hence $D=\div(\varphi)$ for some rational function $\varphi\in K(X)^\times$.

Because $D$ is supported on $X\setminus X_0$, the restriction $\varphi_{|X_0}$ has neither zeros nor poles.
Thus $\varphi_{|X_0}\in \OO(X_0)^\times$.
If $\varphi_{|X_0}$ were nonconstant, it would define a nonconstant morphism $X_0\to \G_m$, again contradicting
$\QAlb(X_0)=0$.
Therefore $\varphi_{|X_0}$ is constant, and so $D=\div(\varphi)=0$.
\end{proof}

For a family $(D_j)_{j \in J}$ of elements of $\DivInf (X)$ the coefficients $a_i (D)$ are integers; so, using the
natural order on $\Z$, we
define the supremum $\bigvee_{j \in J} D_j$ and the infimum $\bigwedge_{j \in J} D_j$ by

\begin{equation}
	\bigvee_j D_j = \sum_i \sup(\ord_{E_i}(D_j)) E_i \quad \text { and } \quad \bigwedge_j D_j = \sum_i
	\inf(\ord_{E_i}(D_j)) E_i
\end{equation}

It only exists if each $(\ord_{E_i}(D_j))_{j \in J}$ is bounded respectively from above or from below.
If $\bigwedge_j D_j$ (respectively $\bigvee_j D_j$) is well defined we say that the family $(D_j)$ is \emph{bounded from below (from above)}.
Notice that we only define supremum and infimum for family of divisors with coefficients in $\Z$.

\subsection{Morphisms between completions, Weil, Cartier divisors}\label{SectionCompletions}
Let $X_1, X_2$ be two completions of $X_0$.
We denote their embeddings by $\iota_1$ and $\iota_2$.
There exists a unique birational map $\pi: X_1 \dashrightarrow X_2$ such that the diagram
\begin{equation}
	\label{DiagMorphismCompletions}
	\begin{tikzcd}
		X_1 \ar[r, dashed, "{\pi}"] & X_2 \\ X_0 \ar[u, "{\iota_1}", hook] \ar[r, equal, "{\id}"] & X_0 \ar[u, "{\iota_2}", hook]
	\end{tikzcd}
\end{equation}
commutes.
If $\pi$ is a morphism, we call it a \emph{morphism of completions}.
In that case we say that $X_1$ is \emph{above} $X_2$.
Since $\pi$ is an isomorphism over $X_0$, it is the blowup of an ideal sheaf on $X_2$ supported on
$X_2\setminus X_0$.
Any two completions of $X_0$ are dominated by a third one.

If $\pi : X_1 \rightarrow X_2$ is a morphism of completions we have the push-forward and pull-back operators.
They define
group homomorphisms
\begin{equation}
	\pi_* : \Div_\infty (X_1)_\A \twoheadrightarrow \Div_\infty (X_2)_\A \quad \text{and} \quad \pi^* :
	\Div_\infty (X_2)_\A \hookrightarrow \Div_\infty (X_1)_\A.
	\label{EqHomomorphisAtInfinity}
\end{equation}

Let $X$ be a completion of $X_0$ and $P \in K[X_0]$ be a regular function, then $P$ induces a rational function over $X$ and we write $\div_X (P)$ for its associated Weil divisor.
In particular, if $\pi: Y \rightarrow X$ is a morphism of completions above $X_0$, then
\begin{equation}
	\div_Y (P) = \pi^* \div_X (P).
\end{equation}
We will write $\div_{\infty, X} (P) \in \DivInf (X)$ the divisor on $X$ supported at infinity such that \[ \div_X (P) = D + \div_{\infty, X} (P) \] where $D$ is an effective divisor and no components of its support is in $\BD$.

The system of completions of $X_0$ is a projective system.
Similarly as in \S\ref{sec:banach-spaces} we define the
space of Cartier and Weil divisors at infinity by
\begin{equation}
	\Cinf_\A = \varinjlim_{X} \DivInf(X)_\A, \text{ and } \Winf_\A = \varprojlim_X \DivInf(X)_\A
\end{equation}
where the limits are over all completions of $X_0$ (we do not allow blow up above $X_0$ here).
We have a natural inclusion
\begin{equation}
	\Cinf_\A \hookrightarrow \Winf_\A.
\end{equation}

An element $D$ of $\Winf_\A$ with $\A = \Z, \Q, \R$ is called \emph{effective} (denoted by $D \geq 0$) if its incarnation in every completion $X$ is effective; if $D$ belongs to $\Cinf_\R$ this is equivalent to $D_X \geq 0$ for one completion $X$ where $D$ is defined.
If $D_1, D_2 \in \Winf_\A$, we will write $W_1 \geq W_2$ for $W_1 - W_2 \geq 0$.
\subsection{Relations with Banach spaces}\label{subsec:relations-with-banach-spaces}
Let $X_0$ be an affine variety, we define the space of Cartier and Weil classes $\cNk(X_0), \wNk(X_0)$ of $X_0$ as $\cNk(X), \wNk(X)$ for any completion $X$ of $X_0$.
This does not depend on the choice of the completion $X$.
We define all the other Banach spaces similarly using any completion of $X_0$.
If $\QAlb(X_0) = 0$, then by Proposition \ref{prop:intersection-form-non-degenerate-at-infinity} we have a natural embedding
\begin{equation}
	\Cinf_\R \hookrightarrow \cNone (X_0).
\end{equation}
There is also a well defined embedding $\Winf_\R \hookrightarrow \wNone(X_0)$ but it is more tricky to define and not needed for our purposes.

\begin{prop}
	\label{prop:pullback-preserves-cinf}
	Let $f_\tau : X_0 \rightarrow X_0$ be a dominant endomorphism of an affine variety, then $f_\tau^*$ preserves $\Cinf$.
\end{prop}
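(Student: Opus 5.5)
The plan is to check directly that if $D \in \Cinf$ is determined on some completion $Y$ of $X_0$, then $f_\tau^* D$ is again a divisor supported at infinity on a suitable completion. The key input is that $f_\tau$ is a regular twisted endomorphism of $X_0$, so it maps $X_0$ into $X_0$.

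First, pick completions $X'$ and $Y$ of $X_0$ with $D \in \DivInf(Y)$, and regard $f_\tau$ as a rational map $f : X^{\prime(\tau)} \dashrightarrow Y$ of $K$-varieties. Since $X_0$ is open in $X'$, the set $X_0^{(\tau)}$ is a completion-type open subset of $X^{\prime(\tau)}$, with the same underlying scheme. Resolving the indeterminacy of $f$ by blowing up $X'$ along an ideal supported on the boundary $X'\setminus X_0$ gives a completion $X''$ of $X_0$ and a twisted morphism $f_\tau : X'' \to Y$. This works because $f_\tau$ is already regular on $X_0$, so its indeterminacy locus lies in $\partial_{X'}X_0$. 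In characteristic zero we may further resolve singularities above the boundary, so $X''$ is again a smooth completion of $X_0$ and hence lies in the projective system used to define $\Cinf$.

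Next, compute $f_\tau^* D$ on $X''$. This is the Cartier pullback $f^* D$ on $X''^{(\tau)}$, transported by Lemma \ref{lemme:changing-structure-morphism}, which leaves supports unchanged. Because $f_\tau(X_0) \subset X_0$, we have
\[
f_\tau^{-1}(\Supp D) \subset f_\tau^{-1}(Y \setminus X_0) \subset X'' \setminus X_0 .
\]
Hence the pullback of the Cartier divisor $D$ is supported on $\partial_{X''}X_0$, that is, $f_\tau^* D \in \DivInf(X'')$.

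Finally, check that the construction is compatible with the direct limit. If $D$ is replaced by $\pi^* D$ on a higher completion $Y' \to Y$, the pullback changes only by pullback along a morphism of completions, since $(f\circ\varpi)^* = \varpi^* f^*$ where $\varpi : X''' \to X''$ is a higher model on which the lift is regular. Both the pullbacks of Cartier divisors and the identification of $\Cinf$ are functorial under such pullbacks, so $f_\tau^*$ is a well-defined map $\Cinf \to \Cinf$, linear over $\R$ after tensoring.

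The main obstacle is the first step: ensuring that the model resolving the indeterminacy is a genuine completion of $X_0$, that is, that no blow-up centre meets $X_0$. This follows from the regularity of $f_\tau$ on $X_0$. The resolution is the closure of the graph, which is isomorphic to $X_0^{(\tau)}$ over $X_0$. One then takes a resolution of singularities that is an isomorphism over the smooth locus containing $X_0$.
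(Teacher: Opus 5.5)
Your proposal is correct and follows the paper's proof. Like the paper, you lift $f_\tau$ to a twisted morphism from a suitable completion and use $f_\tau(X_0)\subset X_0$ to see that the preimage of the boundary lies in the boundary, so $f_\tau^*D$ is supported at infinity; you also justify two points the paper leaves implicit, namely that the resolving model can be taken to be a genuine smooth completion of $X_0$ and that the construction is compatible with the direct limit.
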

\begin{proof}
	We already have that $f_\tau^* : \cNone (X_0) \rightarrow \cNone(X_0)$.
	Let $D \in \Cinf$ and let $X$ be a completion of $X_0$ where $D$ is defined.
	Let $Y$ be another completion of $X_0$ such that $f_\tau$ extends to a twisted regular map $f_\tau : Y \rightarrow X$, then we have that $f_\tau^{-1} (X \setminus X_0) \subset Y \setminus X_0$ because $f_\tau : X_0 \rightarrow X_0$ is a twisted endomorphism so that $f_\tau^* D \in \DivInf(Y) \subset \Cinf$.
\end{proof}
Notice here that this proposition holds without any properness assumption.

\subsection{Divisorial valuations}\label{subsec:divisorial-valuations}
Let $\sD_\infty (X_0)$ be the set of prime divisors supported at infinity over every completion $X$ of $X_0$ modulo the following equivalence relation: if $X, X'$ are two completions of $X_0$ with prime divisors $E, E'$, then $E$ is equivalent to $E'$ if and only if the birational map of completions $X \dashrightarrow X'$ induces a birational map from $E$ to $E'$.
For any $E \in \sD_\infty(X_0)$, we can define the map $\ord_E : \Winf_\R \rightarrow \R $ defined by
\begin{equation}
	\ord_E (W) := \ord_E (W_X)
\end{equation}
for any $X$ such that $E$ is a prime divisor at infinity in $X$.
It is clear that this does not depend on the choice of $X$ or $E$ and we have the following interpretation of $\ord_E (W_X)$ if $W_X$ is Cartier: the local ring at the generic point $\eta_E$ of $E$ is a discrete valuation ring with a valuation that we also write $\ord_E$ and if $h$ is a local equation of $W_X$ at $\eta_E$, then $\ord_E (W_X) = \ord_E (h)$.
This does not depend on the choice of the local equation $h$.

\subsection{Supremum and infimum of divisors}\label{SubSecSupremumAndInfimum}

Let $(D_i)_{i \in I}$ be a family of elements of $\Winf$ such that for all completions $X$, the
family $(D_{i,X})$ is bounded from below, we define $\bigwedge_{i \in I}
	D_i$ with its incarnation in $X$ being
\begin{equation}
	\left(\bigwedge D_i\right)_{X} = \bigwedge_i D_{i, X}.
\end{equation}
We have an analogous definition for $\bigvee_i D_i$ when each $(D_{i,X})$ is bounded from above.

\begin{lemme}
	\label{LemmeMinOfCartierIsCartier}
	If $D,D' \in \Cinf$, then $D \wedge D', D \vee D' \in \Cinf$.
\end{lemme}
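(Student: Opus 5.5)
The plan is to realise both $D \wedge D'$ and $D \vee D'$ on a single completion, by reducing to the case where $D$ and $D'$ are determined on the same completion and then blowing up the locus where their minimum fails to commute with pullback.

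First choose a completion $X$ on which both $D$ and $D'$ are determined. This is possible because any two completions are dominated by a third, and pullback along morphisms of completions is compatible with the direct limit. Since $D \vee D' = -\bigl((-D) \wedge (-D')\bigr)$, and the definitions of $\bigvee$ and $\bigwedge$ are coefficientwise, it suffices to treat $D \wedge D'$. Subtracting a common divisor at infinity changes nothing, so we may also reduce to comparing $D$ with $D'$ locally.

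The key claim is the following: there is a completion $\pi : Y \to X$ such that, for every completion $Z$ above $Y$, with $\varpi : Z \to Y$,
\[
\varpi^*\bigl(\pi^* D \wedge \pi^* D'\bigr) = (\pi\varpi)^* D \wedge (\pi\varpi)^* D'.
\]
Granting this, the Weil class $D \wedge D'$ has incarnation on any $Z$ above $Y$ equal to the pullback of the divisor $\pi^*D \wedge \pi^*D' \in \DivInf(Y)$. On completions below $Y$, the incarnations are pushforwards, since pushforward of a coefficientwise minimum agrees with the minimum of the pushforwards on the components that survive. Hence $D \wedge D'$ is Cartier, determined on $Y$.

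To construct $Y$, work locally. At a point $p \in \BD$ write $D = \div(h)$ and $D' = \div(h')$, where $h$ and $h'$ are local equations (rational functions near $p$). For any divisorial valuation $\ord_E$ centered over $p$ we have
\[
\ord_E(D \wedge D') = \min\bigl(\ord_E(h), \ord_E(h')\bigr) = \ord_E(\aa),
\]
where $\aa$ is the fractional ideal generated by $h$ and $h'$. Clearing denominators by a common local equation turns this into an honest ideal sheaf supported on $\BD$, because $D$ and $D'$ are both trivial on $X_0$. Let $\pi : Y \to X$ be the blowup of this ideal sheaf $\mathcal I$, followed by a resolution of singularities that is an isomorphism over $X_0$; this uses characteristic zero and keeps $Y$ a smooth completion. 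Then $\mathcal I \cdot \OO_Y$ is invertible, and its associated divisor is exactly $\pi^*D \wedge \pi^*D'$ after adding back the common part. Invertibility means that locally $\mathcal I \cdot \OO_Y$ is generated by one of $h$ or $h'$, which divides the other. This property is preserved under further pullback, and that gives the displayed identity for every $Z$ above $Y$.

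The main obstacle is making this local-to-global ideal construction rigorous. One must check that the ideal sheaf glues independently of the chosen local equations: changing $h$ and $h'$ by a common unit does not change $\mathcal I$, and since $D$ and $D'$ are divisors, the local equations change by units. One must also check that the blowup is a morphism of completions, which holds because the ideal sheaf is cosupported in $\BD$. Once that is settled, the valuative identity $\ord_E(\mathcal I) = \min(\ord_E h, \ord_E h')$ together with the principalization argument finishes the proof.
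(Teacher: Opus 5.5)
Your proposal is correct and is essentially the paper's proof: reduce to $D\wedge D'$, subtract the coefficientwise minimum $D_2$ on a common completion $X$ to make both divisors effective, and principalize the ideal $\OO_X(-(D-D_2))+\OO_X(-(D'-D_2))$ by a blowup followed by a resolution that is an isomorphism over $X_0$. The conclusion $\ord_E=\min(\ord_E D,\ord_E D')$ then follows because in a DVR an ideal generated by two elements is generated by the one of smaller valuation.

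The differences are cosmetic. You phrase stability under further pullback via Nakayama (locally one generator divides the other), whereas the paper checks it directly at each generic point $\eta_E$. The gluing ``obstacle'' you flag is moot, since the ideal is defined globally as above. Completions that are not below $Y$ are handled by pushing forward from a completion dominating both, exactly as your pushforward remark indicates.
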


\begin{proof}
	It suffices to show that $D \wedge D' \in \Cinf$ because $D \vee D' = - (-D \wedge -D')$.
	So take $D, D' \in \Cinf$, we have to show that $D \wedge D'$ belongs to $\Cinf$.

	Now, it suffices to show this for $D, D'$ effective, indeed let $X$ be a completion such that $D$ and $D'$ are defined over $X$.
	Then, there exists $D_2 \in \Div_\infty (X)$ such that $D - D_2$ and $D' - D_2$ are effective.
	Indeed, take $D_2$ as the Cartier class determined by $D \wedge D'$ in $X$, Then
	\begin{equation}
		D \wedge D' = (D - D_2) \wedge (D' - D_2) + D_2.
	\end{equation}

	Therefore, suppose $D, D'$ are effective.
	Then $\aa = \OO_{X} (-D) + \OO_{X} (- D')$ is a coherent sheaf of ideals such that $\aa_{|X_0} = \OO_{X_0}$.
	Let $X'\to X$ be the blowup of $X$ along $\aa$.
	Since $\aa_{|X_0}$ is trivial, this blowup is an isomorphism over $X_0$.
	Choose a smooth completion $Y$ of $X_0$ dominating $X'$; after replacing $X'$ by such a resolution, we obtain a morphism of completions $\pi:Y\to X$.
	Then, $\bb := \pi^* \aa \cdot \OO_{Y}$ is an invertible sheaf over $Y$ trivial over $X_0$, so there exists a divisor $D_{Y} \in \DivInf(Y)$ such that $\bb = \OO_{Y} (- D_{Y})$.

	\begin{claim}
		\label{ClaimPullBackOfIdealSheafIsMinimumCartierClass}
		The Cartier class in $\Cinf$ induced by $D_{Y}$ is $D \wedge D'$.
	\end{claim}
	\begin{proof}
		Let $E \in \sD (X_0)$ be a prime divisor at infinity.
		We can suppose that $E$ is contained in some completion $Z$ above $Y$.
		Write $\tau : Z \rightarrow Y$ for the morphism of completions and let $h, h '$ be local equations of $D$ and $D'$ respectively at $\pi (\tau (\eta_E))$ where $\eta_E$ is the generic point of $E$ in $Z$.
		If $g$ is a local equation of $D_Y$ at $\tau (\eta_E)$, then in $\OO_{Z, \eta_E}$ we have that the ideal generated by $\tau^* g$ is exactly the ideal generated by $\pi^* \tau^* h, \pi^* \tau^* h'$.
		Since the ideals of $\OO_{Z, \eta_E}$ are of the
		form $(z^k)$ where $\ord_E (z) = 1$ we have that
		$\ord_E (\tau^* g) = \min \left(\ord_E(\tau^* \pi^* h), \ord_E(\tau^* \pi^* h')\right)$
		and therefore
		\begin{equation}
			\ord_E (D_Y) = \min (\ord_E (D), \ord_E (D'))
		\end{equation}
		and this shows that $D_Y = D \wedge D'$.
		\end{proof}
	\end{proof}

The proof actually gives the following corollary.
If $X$ is a variety and $Y \subset X$ is a closed subvariety, by blowing up $Y$ in $X$ we mean blowing up the defining ideal sheaf of $Y$.

\begin{cor}[Corollary of the proof]\label{cor:blowup-intersection}
	Let $X$ be a completion of $X_0$ and $Y, Y'$ be two closed irreducible subvarieties of $X$.
	If $D, D'$ are the exceptional divisors obtained by blowing up respectively $Y$ and $Y'$, then the exceptional divisor $D_\cap$ obtained by blowing up $Y \cap Y'$ is exactly $D \wedge D'$.
\end{cor}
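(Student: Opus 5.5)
The plan is to reuse the proof of Lemma \ref{LemmeMinOfCartierIsCartier}, with the ideal sheaf $\aa=\OO_X(-D)+\OO_X(-D')$ replaced by the sum of the ideal sheaves of $Y$ and $Y'$. Write $\mathcal I_Y,\mathcal I_{Y'}\subset\OO_X$ for the defining ideal sheaves. As in the statement, we assume $Y,Y'$ are contained in the boundary $\partial_X X_0$, so that each blowup is a morphism of completions. Let $\pi_Y:X_Y\to X$ and $\pi_{Y'}:X_{Y'}\to X$ be the blowups, and replace them by smooth completions dominating them if needed. The exceptional divisors $D,D'$ are then the elements of $\Cinf$ defined by
\[
\mathcal I_Y\cdot\OO_{X_Y}=\OO_{X_Y}(-D),\qquad \mathcal I_{Y'}\cdot\OO_{X_{Y'}}=\OO_{X_{Y'}}(-D').
\]
These are effective Cartier classes at infinity. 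The scheme-theoretic intersection $Y\cap Y'$ has ideal sheaf $\mathcal I_Y+\mathcal I_{Y'}$, and blowing it up defines $D_\cap$ by $(\mathcal I_Y+\mathcal I_{Y'})\cdot\OO_{Z}=\OO_Z(-D_\cap)$ on a smooth completion $Z$ dominating that blowup.

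The key step is the valuative computation of the Claim. Fix $E\in\sD_\infty(X_0)$ and realize it as a prime divisor on a completion $W$ that dominates $X_Y$, $X_{Y'}$ and $Z$. Let $\mu:W\to X$ be the composite morphism, and let $\xi=\mu(\eta_E)$ be the image of the generic point of $E$. For any ideal sheaf $\mathcal J$ on $X$ whose pullback to $W$ is invertible, $\ord_E$ of the corresponding divisor equals the minimum of $\ord_E(\mu^*h)$ over generators $h$ of $\mathcal J_\xi$. This holds because ideals of the DVR $\OO_{W,\eta_E}$ are powers of the maximal ideal. Choose generators $h_1,\dots,h_r$ of $\mathcal I_{Y,\xi}$ and $h'_1,\dots,h'_s$ of $\mathcal I_{Y',\xi}$. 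Their union generates $(\mathcal I_Y+\mathcal I_{Y'})_\xi$, so
\[
\ord_E(D_\cap)=\min\bigl(\min_i\ord_E(\mu^*h_i),\ \min_j\ord_E(\mu^*h'_j)\bigr)=\min(\ord_E D,\ord_E D').
\]
If $\xi\notin Y$, the ideal $\mathcal I_{Y,\xi}$ is the unit ideal and the corresponding order is $0$, which is consistent with the formula. Since this holds for every $E$, we get $D_\cap=D\wedge D'$ in $\Winf$. Both sides lie in $\Cinf$, by construction and by Lemma \ref{LemmeMinOfCartierIsCartier}, so the equality holds there as well.

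The main obstacle is the first step. One must make sure the "exceptional divisor" is interpreted as the Cartier class $\OO(-D)=\mathcal I\cdot\OO$, with multiplicities, on some, and hence every, smooth completion dominating the blowup. This is what makes the class independent of the resolution chosen. Once the exceptional divisor is read this way, the statement reduces to the identity: the ideal generated by $\mathcal I_Y+\mathcal I_{Y'}$ in a DVR has order equal to the minimum of the two orders.
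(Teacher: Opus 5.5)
Your proof is correct and follows essentially the same route as the paper. Both take $\mathcal I_Y+\mathcal I_{Y'}$ as the ideal of $Y\cap Y'$ and pass to a completion containing $E$ that dominates all three blowups. Both then use that in the DVR $\OO_{W,\eta_E}$ the order of an ideal is the minimum of the orders of its generators, so the union of the two generating sets gives $\ord_E(D_\cap)=\min(\ord_E D,\ord_E D')$. Your extra remarks on centers at infinity, the case $\xi\notin Y$, and reading the exceptional divisor as the Cartier class $\mathcal I\cdot\OO$ spell out conventions the paper leaves implicit.
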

\begin{proof}
	Let $\cI$ and $\cI '$ be the ideal sheafs in $X$ defining $Y$ and $Y'$, blowing up $Y \cap Y'$ means that we blow up the ideal sheaf $\cI + \cI '$.
	Let $E \in \sD (X_0)$ and $\pi: Z \rightarrow X$ be a completion that contains $E$ and dominates the blowup of $X$ with respect to $\cI, \cI '$ and $\cI + \cI '$.
	Let $f_1 ,\dots, f_r$ be local generators of $\cI$ at $\pi (\eta_E)$ and $g_1 ,\dots, g_s$ be local generators of $\cI '$ at $\pi (\eta_E)$.
	Then, $(f_1, \dots, f_r, g_1, \dots, g_s)$ is a set of local generators of $\cI + \cI '$ at $\pi (\eta_E)$ and by a similar argument to the proof of the claim, we have $\ord_E (D) = \min_i \ord_E (f_i)$, $\ord_E (D') = \min_j \ord_E (g_j)$ and $\ord_E (D_\cap) = \min \left(\min_i\ord_E (f_i), \min_j\ord_E (g_j)\right) = \min (\ord_E (D), \ord_E (D'))$.
\end{proof}

\section{Valuations and linear forms}\label{sec:valuations-linear-forms}
Let $X_0$ be a an affine variety with ring of regular functions $A$.
A valuation over $A$ is a map $v : A \rightarrow \R
	\cup \left\{ + \infty \right\}$ such that
\begin{enumerate}
	\item $\forall P,Q \in A, \quad v(PQ) = v(P) + v (Q)$.
	\item $\forall P,Q \in A, \quad v(P+Q) \geq \min (v(P), v (Q))$.
	\item $v (0) = +\infty$.
	\item $v_{|k^*} = 0$.
\end{enumerate}

\begin{thm}[Abhyankar's inequality]\label{thm:abyankhar}
	Let $K$ be a field over a subfield $k$ and $v : K \rightarrow \R$ be a valuation over $K$.
	If $\Gamma_v$ is the value
	group of $v$, then
	\begin{equation}
		\dim_\Q \Gamma_v \otimes \Q \leq \trdeg K / k.
	\end{equation}
\end{thm}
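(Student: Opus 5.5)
The plan is to prove the inequality in its sharpest elementary form: any finite family of elements of $K$ whose values are $\Q$-linearly independent in $\Gamma_v\otimes\Q$ is algebraically independent over $k$. If $\trdeg K/k=\infty$ there is nothing to prove, so we assume it is finite. Since $\Gamma_v\otimes\Q$ is spanned over $\Q$ by the values $v(x)$ with $x\in K^\times$, it suffices to show the following. If $x_1,\dots,x_r\in K^\times$ are such that $v(x_1),\dots,v(x_r)$ are $\Q$-linearly independent, then $r\leq \trdeg K/k$.

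We take $x_1,\dots,x_r$ as above and suppose there is a nonzero polynomial $P=\sum_{I\in S} a_I T^I\in k[T_1,\dots,T_r]$ with $P(x_1,\dots,x_r)=0$. Here $S$ is a finite nonempty set of multi-indices and every $a_I\in k^\times$. The key computation uses the normalisation $v_{|k^*}=0$ (as in item (4) of the definition of valuation above) together with multiplicativity:
\[
v(a_Ix^I)=\sum_{j=1}^r i_j\,v(x_j).
\]
Because the $v(x_j)$ are $\Q$-linearly independent, hence $\Z$-linearly independent, distinct multi-indices $I\neq I'$ give distinct values $v(a_Ix^I)\neq v(a_{I'}x^{I'})$.

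We then invoke the standard strict ultrametric property. If finitely many elements have pairwise distinct values, the value of their sum equals the minimum of their values. This follows from $v(P+Q)\geq\min(v(P),v(Q))$ together with $v(-1)=0$, which gives equality whenever $v(P)\neq v(Q)$; one then inducts on the number of terms. Consequently
\[
v\Bigl(\sum_{I\in S}a_Ix^I\Bigr)=\min_{I\in S}\sum_j i_jv(x_j)<+\infty,
\]
which contradicts $v(0)=+\infty$. Hence $x_1,\dots,x_r$ are algebraically independent over $k$, and $r\leq\trdeg K/k$.

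The only point needing care is the reduction from real values to the $\Q$-dimension. A $\Q$-basis of $\Gamma_v\otimes\Q$ can be chosen among images of actual values $v(x)$ with $x\in K^\times$, because $\Gamma_v$ is generated by such values. Linear independence over $\Q$ also rules out any integer relation among the exponents, which is exactly the distinctness used above. No step is a serious obstacle; the mildest subtlety is making sure $v$ is trivial on $k^\times$, so that the coefficients $a_I$ do not perturb the values.
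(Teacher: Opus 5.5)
Your argument is correct. The paper gives no proof of this statement: it quotes Abhyankar's inequality as a classical result and uses it only in Corollary~\ref{cor:dynamical-degree-algebraic}, to bound $\dim_\Q \Gamma_\Q$ by $\dim X_0$.

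What you give is the standard self-contained proof of the rational-rank part. Elements whose values are $\Q$-independent are algebraically independent over $k$: distinct monomials in them have distinct values, so by the strict ultrametric inequality any nontrivial polynomial expression in them has finite value and cannot vanish.

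Citing the full theorem would give the sharper bound $\dim_\Q(\Gamma_v\otimes\Q)+\trdeg(k_v/k)\leq \trdeg K/k$, where $k_v$ is the residue field of $v$. The paper never needs the residue-field term, so your route proves exactly what is used, with no external input.

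You are right to insist on $v_{|k^*}=0$. The statement as written leaves this implicit; it comes from item (4) of the paper's definition of a valuation. It cannot be dropped: the $p$-adic valuation on $K=k=\Q$ has $\dim_\Q\Gamma_v\otimes\Q=1>0=\trdeg K/k$.
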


For every completion $X$ of $X_0$, a valuation $v$ admits a \emph{center} $c_X (v)$ which is a point of the scheme $X$.
We say that a valuation is \emph{centered at infinity} if there exists a completion $X$ such that $c_X (v) \not \in X_0$.
As in \cite{abboudDynamicsEndomorphismsAffine2023}, every valuation $v$ over $X_0$ centered at infinity induces a
linear form
\begin{equation}
	L_v : \Cinf \rightarrow \R
\end{equation}
such that
\begin{enumerate}[label=(L\arabic*)]
	\item \label{condition:L1}
	      If $D \geq 0, L_v (D) \geq 0$.
	\item \label{condition:L2} $\forall D, D' \in \Cinf, \quad L_v (D \wedge D') = \min \left( L_v (D), L_v (D') \right)$.
\end{enumerate}
The definition of $L_v$ is as follows.
Let $D \in \Cinf$ and let $X$ be a completion where $D$ is defined.
Let
$\phi$ be a local equation of $D$ at $c_X (v)$, then we set
\begin{equation}
	L_v (D) := v (\phi).
\end{equation}
In particular, $L_v = 0 \Leftrightarrow c_X (v) \in X_0$ for some completion $X$ of $X_0$.

Conversely, a linear form $L : \Cinf \rightarrow \R$ satisfying these two conditions defines a valuation $v_L$ centered
at infinity defined by
\begin{equation}
	\forall P \in A, \quad v_L (P) = \sup_X L (\div_{X, \infty} (P))
\end{equation}
where $\div_{X, \infty} (P)$ is the part at infinity of the divisor of the rational function of $P$ in $X$.
More precisely, let $\cS_\infty (X_0) \subset \Winf$ be the space of Weil divisors which can be written as a supremum of Cartier divisors at infinity, this defines a semigroup.
If $L$ is a linear form on $\Cinf$ satisfying Conditions \ref{condition:L1} and \ref{condition:L2}, then we can extend $L : \cS_\infty (X_0) \rightarrow \R \cup \left\{ + \infty \right\}$ by setting
\begin{equation}
\text{if } W = \sup_{i \in I} D_i, \quad L(W) := \sup_i L(D_i)
\end{equation}
This does not depend on the definition of $W$ as a supremum and $L : \cS_\infty (X_0) \rightarrow \R \cup \left\{ +\infty \right\}$ is a morphism of semigroups.
Furthermore, we have that if $W_1, W_2 \in \cS_\infty (X_0)$ then $W_1 \wedge W_2 \in \cS_\infty (X_0)$ as
\begin{equation}
  W_1 = \sup_i \alpha_i, \quad W_2 =\sup_j \beta_j, \quad W_1 \wedge W_2 = \sup_{i,j} \alpha_i \wedge \beta_j.
\end{equation}
And this yields 
\begin{equation}
  L (W_1 \wedge W_2) = \min (L(W_1), L(W_2)).
\end{equation}
Now, since for any morphism of completions $ \pi : Y \rightarrow X$ we have $\div_{\infty, Y} (P) \geq \div_{\infty, X} (P)$ and $\pi_* \div_{\infty, Y} (P) = \div_{\infty, X} (P)$. The Weil divisor $\div_\infty(P) := \left( \div_{\infty, X} (P) \right)_X$ belongs to $\cS_\infty (X_0)$ as 
\begin{equation}
  \div_{\infty} (P) = \sup_X \div_{\infty, X} (P).
\end{equation}
And we set 
\begin{equation}
  \forall P \in \OO(X_0), \quad v_L (P) = L (\div_\infty (P)).
\end{equation}
	Since $\div_{\infty} (PQ) = \div_{\infty} (P) + \div_{\infty} (Q)$ and $\div_{\infty} (P+Q) \geq \div_{\infty} (P) + \div_{\infty} (Q)$ we get that $v_L $ is a valuation over $\OO(X_0)$.
	
	  It is not clear a priori that $v_L$ is centered at infinity but we show in \S \ref{subsec:proof-bijection} that the two processes $v \mapsto L_v$ and $L \mapsto v_L$ are bijective.

\subsection{Dual curves associated to a divisorial valuation}\label{subsec:dual-curves-divisorial-valuations}
Suppose $\QAlb (X_0) = 0$.
Then we have seen that $\Cinf \hookrightarrow \cNone (\sX)$ for any completion $X$ of $X_0$.
Let $v$ be a divisorial valuation over $X_0$, it defines a linear form $L_v$ over $\Cinf$.
By duality this defines a class of curve $\gamma_v \in \wNone(X_0)$ which is unique up to $\Cinf^\perp$.

\begin{dfn}
	\label{dfn:dual-curve}
	If $v$ is a divisorial valuation over $X_0$, we denote by $\gamma_v$ any \emph{Cartier} class $\gamma_v \in \cNone(X_0)$ such that $\gamma_v \cdot$ induces $L_v$ over $\Cinf$.
\end{dfn}

Here is an example of how to construct $\gamma_v$.
Let $Y$ be a completion of $X_0$ such that $v = \lambda \ord_E$ and $E$ is a prime divisor at infinity in $Y$.
By the Hodge-Riemann formula, if $H$ is an ample Cartier divisor over $Y$,
then the symmetric bilinear form
\begin{equation}
	(\alpha, \beta) \in N^1 (Y)_\R^2 \mapsto H^{d-2} \cdot \alpha \cdot \beta
\end{equation}
is non-degenerate of signature $(1, \rho(Y) - 1)$.

\begin{prop}
	\label{prop:intersection-form-non-degenerate-at-infinity}
	Suppose $Y$ admits an ample divisor supported at infinity, then the restriction of the bilinear form
	\begin{equation}
		(\alpha, \beta) \in \DivInf(Y) \times \DivInf (Y) \mapsto H^{d-2} \cdot \alpha \cdot \beta
	\end{equation}
	is also non-degenerate of signature $(1, r-1)$ where $r$ is the number of irreducible components of $\partial_Y X_0$.
\end{prop}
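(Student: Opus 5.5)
The plan is to deduce the statement from the Hodge index theorem on $N^1(Y)_\R$ quoted just before it. We work, as in this subsection, under the standing assumption $\QAlb(X_0)=0$. Let
\[
V\subset N^1(Y)_\R
\]
denote the image of $\DivInf(Y)_\R$. There are two steps:
\begin{enumerate}
\item show that $V$ has dimension exactly $r$;
\item show that the Hodge form has signature $(1,\dim V-1)$ on any subspace containing a class of positive square.
\end{enumerate}

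\emph{Step 1: injectivity.} By Proposition \ref{prop:trivial-quasi-albanese-injective-map}, the map $\DivInf(Y)\to N^1(Y)$ is injective, and $\DivInf(Y)$ is free on $E_1,\dots,E_r$.
\begin{itemize}
\item The map is given by a matrix with integer entries in the basis $(E_i)$ and a basis of the lattice $N^1(Y)$ modulo torsion.
\item Its kernel over $\R$ is the real span of its kernel over $\Q$.
\item Injectivity over $\Z$ gives injectivity over $\Q$, since a rational kernel vector can be cleared of denominators.
\end{itemize}
Hence $\DivInf(Y)_\R\to N^1(Y)_\R$ is injective and $\dim V=r$. Consequently the bilinear form on $\DivInf(Y)_\R$ is the pullback of the Hodge form restricted to $V$, and it suffices to compute the signature of that restriction.

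\emph{Step 2: signature on $V$.} By hypothesis there is an ample divisor $A$ supported at infinity, and its class lies in $V$. Ampleness of $A$ and $H$ gives $H^{d-2}\cdot A^2>0$. Since the Hodge form $q(\alpha,\beta)=H^{d-2}\cdot\alpha\cdot\beta$ on $N^1(Y)_\R$ has signature $(1,\rho(Y)-1)$, it is negative definite on the $q$-orthogonal complement $A^{\perp}$ of $A$. Indeed:
\begin{itemize}
\item a vector $w\in A^\perp$ with $q(w,w)\geq 0$ and $w\neq 0$ would span, together with $A$, a $2$-dimensional subspace on which $q$ is positive semidefinite with a positive direction;
\item the form has only one positive direction, so this forces $q(w,w)=0$ and $w$ to lie in the radical of $q$ restricted to $\operatorname{span}(A,w)$, hence $q(w,\cdot)$ would vanish on $\operatorname{span}(A,w)$;
\item by the standard Hodge index argument such a $w$ must be zero, because the full form is non-degenerate and the negative part of $A^\perp$ is definite.
\end{itemize}
Now decompose
\[
V=\R A\oplus (A^\perp\cap V),
\]
which is valid because $q(A,A)\neq 0$. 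The form is positive on the first summand and negative definite on the second, and the two summands are orthogonal. Therefore $q|_V$ is non-degenerate of signature $(1,r-1)$.

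\emph{Main obstacle.} The substantive step is Step 1: without $\QAlb(X_0)=0$, distinct divisors at infinity can be numerically equivalent (for instance via units or $\Pic^0$). In that case the form on $\DivInf(Y)$ would acquire a kernel, and the statement would only hold for the image $V$. The other point to check is that the Hodge index theorem is available over an arbitrary algebraically closed field of characteristic zero. I would handle this by spreading out over a finitely generated subfield and embedding it into $\C$, as done in Proposition \ref{prop:absolute-dyn-degree-quasi-abelian}; alternatively, the statement quoted before the proposition can simply be taken as given.
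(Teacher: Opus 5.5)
Your proposal is correct and follows the paper's route: the injectivity of $\DivInf(Y)\to N^1(Y)$ coming from $\QAlb(X_0)=0$, together with the Hodge index theorem applied to the orthogonal complement of the ample class supported at infinity. You are slightly more complete than the paper, which proves only non-degeneracy and leaves both the passage to real coefficients and the signature count implicit; note that the negative definiteness of $A^\perp$ follows directly from the quoted signature $(1,\rho(Y)-1)$ by Sylvester's law of inertia, so you can replace your bulleted justification, whose last step appeals to the very definiteness being proved.
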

\begin{proof}
	Let $H_0$ be an ample divisor supported at infinity in $Y$, then we have that $H^{d-2} \cdot H_0^2 > 0$.
	Now if $D \in \DivInf(Y)$ is orthogonal to $\DivInf(Y)$, then
	\[
		H^{d-2}\cdot H_0\cdot D=0
		\quad\text{and}\quad
		H^{d-2}\cdot D^2=0.
	\]
	Since $\DivInf(Y)\hookrightarrow N^1(Y)$, a nonzero such $D$ would be a nonzero class orthogonal to the positive class $H_0$ for the Hodge-Riemann form.
	Thus the Hodge-Riemann theorem gives $H^{d-2}\cdot D^2<0$, a contradiction.
	Hence the restriction of the form to $\DivInf(Y)$ is non-degenerate.
\end{proof}

Now, write $E_1, \dots, E_r$ for the irreducible components of $Y$ at infinity.
This is a basis of $\DivInf(Y)_\R$ and we write $\hat E_1, \dots, \hat E_r$ the dual basis for the bilinear product induced by an ample divisor $H$.
Then, $\gamma_{\ord_{E_i}} = H^{d-2} \cdot \hat E_i$ are Cartier classes of curves defined in $Y$ that induce $L_{\ord_{E_i}}$ over $\Cinf$.
Indeed, if $D \in \Winf$, then
	\begin{equation}
		L_{\ord_{E_i}}(D) = L_{\ord_{E_i}} (D_Y) = \gamma_{\ord_{E_i}} \cdot D_Y = \gamma_{\ord_{E_i}} \cdot D.
	\end{equation}

We now define the pushforward on valuations centered at infinity.
Let $f_\tau:X_0\to X_0$ be a dominant twisted endomorphism and let $v\in\Vinf$.
We define the pushforward valuation $(f_\tau)_*v$ on $K[X_0]$ by
\[
	((f_\tau)_*v)(P)=v(f_\tau^*P).
\]
If its center lies in $X_0$, then it induces the zero linear form on $\Cinf$, since every divisor at infinity is disjoint from its center.
In this case, when we work in the cone of valuations centered at infinity with a zero element added, we identify $(f_\tau)_*v$ with $0$.
For a linear form $L$ on $\Cinf$, we define
\[
	((f_\tau)_*L)(D):=L(f_\tau^*D),\qquad D\in\Cinf.
\]

\begin{prop}
	\label{prop:pushforward-dual-curves}
	Let $v$ be a divisorial valuation and let $\gamma_v$ be a dual curve associated to $v$, then for any dominant
	twisted endomorphism $f_\tau$ of $X_0$ we have in $\cNone(X_0)$,
	\begin{equation}
		(f_\tau)_* \gamma_v = \gamma_{(f_\tau)_*v} \mod \Cinf^\perp.
	\end{equation}
\end{prop}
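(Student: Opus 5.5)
To prove $(f_\tau)_*\gamma_v=\gamma_{(f_\tau)_*v}$ modulo $\Cinf^\perp$, it suffices to compare the two sides as linear forms on $\Cinf$. For every $D\in\Cinf$ we want
\[
(f_\tau)_*\gamma_v\cdot D = L_{(f_\tau)_*v}(D).
\]
Here $\Cinf^\perp$ is exactly the set of classes pairing to zero with all of $\Cinf$, so this identity is the whole statement.

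\emph{Left-hand side.} First I rewrite it with the projection formula (Proposition \ref{prop:action-on-cycles}, transported via Proposition \ref{prop:banach-space}). Concretely, choose completions $X,Y$ with $D$ defined on $X$, $\gamma_v$ Cartier on $Y$, and $f_\tau:Y\to X$ regular. Then
\[
(f_\tau)_*\gamma_v\cdot D=\gamma_v\cdot f_\tau^*D.
\]
One point needs care. With the twisted normalisation of degrees, the intersection number on $Y$ and the one computed via $X^{(\tau)}$ differ by $\deg(\tau)$. I will fix the convention that pushforward and pullback are transported through the canonical isomorphisms, as in the definition of $(f_\tau)_*$. Both sides are then computed with the same degree map, so no factor appears.

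\emph{Right-hand side.} By Proposition \ref{prop:pullback-preserves-cinf}, $f_\tau^*D\in\Cinf$. Since $\gamma_v$ induces $L_v$ on $\Cinf$, we get $\gamma_v\cdot f_\tau^*D=L_v(f_\tau^*D)$. It remains to identify $L_v(f_\tau^*D)$ with $L_{(f_\tau)_*v}(D)$, which I check through local equations. Let $\phi$ be a local equation of $D$ at the center $c_X((f_\tau)_*v)$. This center is the image under $f_\tau$ of $c_Y(v)$, because $f_\tau$ is regular from $Y$ to $X$ and the valuation is pushed forward. Then $f_\tau^*\phi$ is a local equation of $f_\tau^*D$ at $c_Y(v)$, so
\[
L_v(f_\tau^*D)=v(f_\tau^*\phi)=((f_\tau)_*v)(\phi)=L_{(f_\tau)_*v}(D).
\]

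\emph{Degenerate case.} If the center of $(f_\tau)_*v$ lies in $X_0$, then $f_\tau(c_Y(v))\in X_0$. Every $D\in\DivInf(X)$ is then trivial near that point, so $\phi$ can be taken to be a unit. Hence both sides vanish, consistent with the convention $(f_\tau)_*v=0$.

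\emph{Divisoriality of $(f_\tau)_*v$.} The symbol $\gamma_{(f_\tau)_*v}$ requires $(f_\tau)_*v$ to be divisorial (or zero). A dominant map pushes a divisorial valuation forward to a valuation whose residue transcendence degree is the same or larger by a finite extension, so it is divisorial up to scaling. If needed, I will instead read $\gamma_{(f_\tau)_*v}$ as any Cartier class inducing the form $L_{(f_\tau)_*v}$, and the computation above shows $(f_\tau)_*\gamma_v$ is such a class.

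\emph{Main obstacle.} The delicate step is the local-equation compatibility for twisted maps. The valuation $v$ is a $K$-valuation, while $f_\tau^*$ is only $\tau$-semilinear on constants. So I need to check that $(f_\tau)_*v$ is trivial on $K^*$; this follows because $\tau^*(K^*)\subset K^*$. I also need the center computation to go through in the twisted setting, which I reduce to the untwisted map $f:Y^{(\tau)}\to X$ via Lemma \ref{lemme:change-of-base}.
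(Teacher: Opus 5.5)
Your argument is correct and is essentially the paper's proof: the projection formula reduces the claim to $L_{(f_\tau)_*v}(D)=L_v(f_\tau^*D)$ for $D\in\Cinf$, using that $f_\tau^*$ preserves $\Cinf$ (Proposition \ref{prop:pullback-preserves-cinf}), and you also supply the local-equation check at the centers $c_X((f_\tau)_*v)=f_\tau(c_Y(v))$, which the paper leaves implicit. One correction: the claim that ``no factor appears'' does not hold in general, because routing the projection formula through $f\colon Y^{(\tau)}\to X$ gives a factor $\deg(\tau)$ relative to the local-equation pullback (degrees on $Y^{(\tau)}$ are $\deg(\tau)$ times those on $Y$); this is harmless here, since $K$ is algebraically closed in this part of the paper, so $\tau$ is an automorphism and $\deg(\tau)=1$.
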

\begin{proof}
	This follows from the corresponding statement for the morphism associated to $f_\tau:X_0\to X_0$ after twisting the source.
	Indeed, $\Cinf$ is invariant under $f_\tau^*$ and, for every $D\in \Cinf$,
	\begin{equation}
		L_{(f_\tau)_*v}(D) = L_v (f_\tau^* D).
	\end{equation}
	The defining duality between $\gamma_v$ and $L_v$ then gives the asserted equality modulo $\Cinf^\perp$.
\end{proof}

\subsection{Proof of the bijection}\label{subsec:proof-bijection}
\begin{thm}
	\label{thm:bijection}
	For every valuation $v \in \Vinf$, we have
	\begin{equation}
		v_{L_v} = v.
	\end{equation}
  And for every linear form over $\Cinf$ that satisfies Conditions \ref{condition:L1} and \ref{condition:L2} we have
	\begin{equation}
		L_{v_L} = L.
	\end{equation}
\end{thm}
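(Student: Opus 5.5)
The proof has two independent halves, and in both I will reduce to the behaviour of valuations on ideals generated by finitely many regular functions, using Corollary \ref{cor:blowup-intersection} and the extension of $L$ to $\cS_\infty(X_0)$ described above.

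\emph{First half: $v_{L_v}=v$.} Fix $v\in\Vinf$ and $P\in A$. For a completion $X$, let $x=c_X(v)$. Near $x$ the rational function $P$ factors as $\phi\cdot u$, where $\phi$ is a local equation of $\div_{\infty,X}(P)$ at $x$ and $u$ is a local equation of the finite part of $\div_X(P)$. Since $u$ is regular at $x$ and $v$ is nonnegative on $\OO_{X,x}$, we get
\[
L_v(\div_{\infty,X}(P))=v(\phi)\le v(P).
\]
Taking the supremum over $X$ gives $v_{L_v}(P)\le v(P)$.

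For the reverse inequality, I will choose $X$ so that the finite part of $\div_X(P)$ does not pass through $c_X(v)$. Here is the construction. Write $P$ near $x$ and consider the ideal sheaf $\OO_X(-D)+ P\cdot\OO_X(-D)$, where $D$ is an effective divisor at infinity with $P\in H^0(X,\OO_X(D))$; such a $D$ exists because $X_0$ is affine. This ideal is trivial on $X_0$. After blowing it up (an isomorphism over $X_0$) and resolving, the pulled-back ideal is locally principal. At the new center $x'$ the generator is then either a unit multiple of $P$ times a local equation at infinity, or a pure equation at infinity. In both cases $v(u)=0$ on the new model, which forces equality. This is the main obstacle: controlling the center so that the strict transform of $\{P=0\}$ avoids it, or at least has $v$-value zero there. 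If this construction is awkward, I will instead use the valuative criterion on the graph of $P:X\dashrightarrow\P^1$, which makes $P$ or $1/P$ regular at the center.

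\emph{Second half: $L_{v_L}=L$.} By linearity and the existence of a completion on which $D$ is defined, it suffices to treat $D\in\Cinf$ effective, and, subtracting multiples of an ample effective divisor at infinity (Goodman), to treat $D=\div_{\infty,X}$ of the ideal generated by finitely many $P_1,\dots,P_s\in A$. Concretely, for $mH_0-D$ globally generated, I write
\[
-D=\bigwedge_i \div_\infty(P_i)-mH_0 .
\]
Property \ref{condition:L2} together with its extension to $\cS_\infty$ gives $L(\bigwedge_i\div_\infty(P_i))=\min_i v_L(P_i)$. The same identity holds for $L_{v_L}$ because $L_{v_L}$ satisfies \ref{condition:L1} and \ref{condition:L2}. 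So both forms agree on such $D$ provided they agree on $H_0$. On $H_0$, I use $H_0=\bigwedge$ of the polar divisors of generators of $A$, and agreement again reduces to the values on functions, where $v_{L_{v_L}}=v_L$ by the first half.

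Before any of this, I still need $v_L$ to be centered at infinity, or $L=0$. If $L\ne0$ then $L(H_0)>0$, hence $v_L(P)<0$ for some $P$, so $v_L$ is not centered in $X_0$.
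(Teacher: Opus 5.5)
\textbf{There is a genuine gap in the reverse inequality $v(P)\le v_{L_v}(P)$ of your first half.}

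Principalizing $\OO_X(-D)+P\,\OO_X(-D)$, or passing to the graph of $P\colon X\dashrightarrow\P^1$, only makes $P$ or $1/P$ regular at the new center. If $1/P$ is regular, the zero locus of $P$ does avoid the center. But in your second case (``a pure equation at infinity'') $P$ is regular and may \emph{vanish} at the center, and then the strict transform of $\{P=0\}$ can still pass through it, so $v(u)>0$.

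For instance, take $X_0=\A^2$, $X=\P^2$, $P=x$ and the monomial valuation with $v(x)=1$, $v(y)=-1$. In the coordinates $s=x/y$, $w=1/y$ at $[0:1:0]$ one has $v(s)=2$ and $v(w)=1$, so this point is the center, and $P=s/w$ is indeterminate there. One blowup resolves $P$. The new center is the origin of the chart $s=s'w$, since $v(s')=v(w)=1$. There $P=s'$ vanishes and the strict transform $\{s'=0\}$ of $\{x=0\}$ passes through the center. On the other hand, $\div_\infty(P)$, which is minus the strict transform of the line at infinity, does not pass through it. So $L_v(\div_\infty(P))=0<1=v(P)$.

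Moreover, the paper's valuations may take the value $+\infty$ on nonzero elements. An example is $v(Q)=-\deg_y Q(0,y)$ on $K[x,y]$ with $P=x$. The center then lies on the closure of $\{x=0\}$ in every completion, so no single completion can give equality, and $v_{L_v}(P)=+\infty$ can only be reached as a limit. You cannot drop this case, because your second half applies the first half to $v_L$, which can be of this kind.

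\textbf{The missing idea.} This is also the mechanism of the paper's proof: transfer $v$-value from the finite part of $\div(P)$ to its part at infinity by repeated blowups over the boundary. Write $P=\phi u$ at the center as you do, and let $x$ be a local equation there of a boundary divisor through the center, so $0<v(x)<+\infty$.

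If $v(u)>v(x)$, blow up the ideal $(u,x)$, which is cosupported at infinity. The new center lies where $x$ generates the pulled-back ideal. There $x$ is a local equation of the exceptional divisor, and the new finite part has a local equation dividing $u/x$. Hence $L_v(\div_\infty(P))$ increases by at least $v(x)$.

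Now repeat with the same $x$. If $v(u)<+\infty$, then after finitely many steps $v(u)\le v(x)$, and one more blowup leaves a finite part of $v$-value $0$ at the center, which gives equality. If $v(u)=+\infty$, then $L_v(\div_\infty(P))\to+\infty=v(P)$. (The paper works on an SNC model and blows up $Y\cap E_1\cap\cdots\cap E_r$. Keeping one fixed $x$ is what keeps the increment bounded below, and so makes both termination and divergence automatic.)

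\textbf{Your second half takes a genuinely different route from the paper.} The paper defines the center $c_X(L)$ of the linear form and takes a general section $P$ of a very ample boundary divisor $H$ whose zero locus avoids that center. This gives $v_L(P)=-L(H)$ directly, and the paper then shifts an arbitrary $D$ by multiples of $H$.

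You instead use global generation. If $P_1,\dots,P_s$ generate $\OO_X(mH_0-D)$, then $\bigwedge_i\div_\infty(P_i)=D-mH_0$. Your displayed formula has a sign slip here, and agreement on $H_0$ is just the case $D=0$. Property (L2) on $\mathcal S_\infty(X_0)$, valid for both $L$ and $L_{v_L}$, then reduces everything to $v_{L_{v_L}}=v_L$.

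This route avoids centers of linear forms and the divisorial/non-divisorial split. The case $D=0$ also shows that $v_L$ is centered at infinity when $L\neq 0$. However, it rests entirely on the first half applied to the possibly infinite-valued $v_L$, so it is complete only once that half is repaired as above.
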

\begin{proof}[Proof of the first assertion]
	We prove that $v_{L_v}=v$.
	Let $P\in k[X_0]$.
	If $P=0$ there is nothing to prove, so we assume $P\neq 0$.
	Choose a completion $X$ on which the rational map $P:X\dashrightarrow \mathbb P^1$ is regular, and write
	\[
		\div_X(P)=D_X+\div_{X,\infty}(P),
	\]
	where $D_X$ is the effective divisor of zeros of $P$ on $X_0$ and $\div_{X,\infty}(P)$ is supported at infinity.
	Since $P$ is a local equation of $\div_X(P)$ at the center of $v$, we have
	\[
		v(P)\geq L_v(\div_{X,\infty}(P)).
	\]
	Taking the supremum over $X$ gives $v(P)\geq v_{L_v}(P)$.

	It remains to prove the reverse inequality.
	If for some completion $X$ the center $c_X(v)$ does not belong to $\Supp D_X$, then near $c_X(v)$ the divisor of $P$ is exactly its part at infinity.
	Hence
	\[
		v(P)=L_v(\div_{X,\infty}(P))\leq v_{L_v}(P),
	\]
	and we are done.

	Assume now that $c_X(v)\in\Supp D_X$ for every completion $X$.
	After blowing up over the boundary, we may suppose near $c_X(v)$ that $\Supp\div_X(P)$ has simple normal crossings, that there is a unique irreducible component $Y$ of $D_X$ through $c_X(v)$, and that the boundary components through $c_X(v)$ are $E_1,\dots,E_r$ with local equations $x_1,\dots,x_r$.
	Let $z$ be a local equation of $Y$.
	If $v(z)<+\infty$, set
	\[
		\Delta=\min\{v(z),v(x_1),\dots,v(x_r)\}>0.
	\]
	Blow up the smooth center $Y\cap E_1\cap\cdots\cap E_r$.
	If $Y'$ is the strict transform of $Y$, then at the new center of $v$ a local equation $z'$ of $Y'$ satisfies
	\[
		v(z')=v(z)-\Delta.
	\]
	Repeating the construction, after finitely many steps the center of $v$ is no longer contained in the strict transform of $Y$, contradicting the present assumption.
	Therefore this case cannot occur.

	Thus necessarily $v(z)=+\infty$.
	With $\Delta=\min\{v(x_1),\dots,v(x_r)\}>0$, the same sequence of blowups gives completions $X_m$ such that
	\[
		L_v(\div_{X_m,\infty}(P))
		=
		L_v(\div_{X,\infty}(P))+m\Delta.
	\]
	Letting $m\to\infty$ gives $v_{L_v}(P)=+\infty=v(P)$.
	This proves $v_{L_v}=v$.
\end{proof}

Now it remains to show that for any linear form $L : \Cinf \rightarrow \R$ satisfying the two conditions we have $L_{v_L} = L$.
To do so we need to define the \emph{center} of such a linear form.
\subsubsection{The center of $L$}\label{subsubsec:center-of-L}
For a linear form satisfying the two conditions, we can define its center using Corollary \ref{cor:blowup-intersection}.
\begin{prop}
	For every $X$, there exists a minimal closed subvariety $Y \subset X$ such that $Y \subset \BD$ and if $Z \rightarrow X$ is the blowup of $X$ with respect to $Y$ with exceptional divisor $\tilde E$, then $L (\tilde E) > 0$.
	The generic point of $Y$ is called the \emph{center} of $L$ over $X$ and we denote it by $c_X (L)$.
\end{prop}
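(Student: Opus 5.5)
Throughout, $L$ is a linear form on $\Cinf$ satisfying \ref{condition:L1} and \ref{condition:L2}, and we fix a completion $X$. The idea is to intersect all closed subsets of $\BD$ whose blowup has positive $L$-value, and to use Corollary \ref{cor:blowup-intersection} together with \ref{condition:L2} to show that this family is stable under intersection. For a closed subscheme $Y\subset X$ supported in $\BD$, write $E_Y\in\Cinf$ for the Cartier divisor at infinity determined by the exceptional divisor of the blowup of $Y$. It is effective, so $L(E_Y)\geq 0$ by \ref{condition:L1}. Set
\[
\mathcal{F}_X:=\{\,Y\subset \BD \text{ closed} : L(E_Y)>0\,\}.
\]

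The first step is to check that $\mathcal{F}_X$ is nonempty, which is where we assume $L\neq 0$. Since $X_0$ is affine, $\BD$ supports an ample effective divisor, hence every $D\in\Cinf$ defined on some completion $Z$ is dominated by a multiple of the pullback of a divisor supported on $\BD$. I expect the main obstacle to lie in this reduction. If $L(E_{\BD})=0$, then $L$ vanishes on all boundary components $E_i$ of $X$: indeed $0\leq L(E_i)\leq L(\sum_j E_j)=L(E_{\BD})$, using the reduced boundary, which is the blowup of $\BD$. But $L$ may still be nonzero on exceptional divisors over $X$. To handle this, write such a divisor $D\geq 0$ on a higher completion $Z$. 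Since $Z\to X$ is a blowup of an ideal supported in $\BD$, some multiple of the exceptional ideal contains a power of the ideal of $\BD$. Hence $N\pi^*E_{\BD}\geq D$ as Cartier divisors for some $N$, which gives $L(D)\leq N\,L(E_{\BD})=0$. Since $\Cinf$ is spanned by effective classes, $L=0$. So if $L\neq0$, then $\BD\in\mathcal{F}_X$.

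The second step is stability under intersection. Let $Y,Y'\in\mathcal{F}_X$. By Corollary \ref{cor:blowup-intersection}, which applies verbatim to closed subschemes via ideal sums, we have $E_{Y\cap Y'}=E_Y\wedge E_{Y'}$. Then \ref{condition:L2} gives $L(E_{Y\cap Y'})=\min(L(E_Y),L(E_{Y'}))>0$. If $Y\cap Y'=\emptyset$, the blowup is trivial, $E_\emptyset=0$, and $L=0$, which is a contradiction. So the intersection is nonempty and lies in $\mathcal{F}_X$.

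By noetherianity, $\mathcal{F}_X$ therefore has a unique minimal element $Y_{\min}$, equal to the intersection of all its members. Next we show that $Y_{\min}$ is irreducible. Write $Y_{\min}=Y_1\cup Y_2$ with ideals $I_1,I_2$. Then $I_1I_2\subset I_{Y_{\min}}$, so $E_{Y_{\min}}\leq E_{Y_1}+E_{Y_2}$, where we also use that blowing up a product of ideals gives the sum of the exceptional divisors. One of $L(E_{Y_1})$, $L(E_{Y_2})$ is therefore positive, and minimality forces $Y_{\min}$ to equal that component. Hence $Y_{\min}$ is an irreducible subvariety, and its generic point is $c_X(L)$.

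Finally, we need to pass from subschemes to the reduced subvarieties in the statement. Blowing up a subscheme and blowing up its reduction give exceptional divisors with the same support, and each is bounded by a multiple of the other. So positivity of $L$ on one is equivalent to positivity on the other, and minimality is the same in both senses.
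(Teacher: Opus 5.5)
Your proposal is correct and follows essentially the same route as the paper. Nonemptiness comes from dominating any effective divisor at infinity on a higher completion by a multiple of a boundary divisor of $X$ and applying \ref{condition:L1}; stability under intersection comes from Corollary \ref{cor:blowup-intersection} together with \ref{condition:L2}; noetherianity then gives the minimal element. Your extra check that this minimal element is reduced and irreducible, so that $c_X(L)$ makes sense, fills in a point the paper leaves implicit; just establish reducedness before you use the decomposition $Y_{\min}=Y_1\cup Y_2$ with $I_1I_2\subset I_{Y_{\min}}$.
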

\begin{proof}
	Let $Y_1, Y_2$ be two subvarieties of $X$ satisfying this property and let $\tilde E_1, \tilde E_2$ be the exceptional divisor obtained when blowing up respectively $Y_1$ and $Y_2$.
	Then, let $Y = Y_1 \cap Y_2$, if $\tilde E$ is the exceptional divisor obtained when blowing up $Y$, then by Corollary \ref{cor:blowup-intersection}, $\tilde E = \min (\tilde E_1, \tilde E_2)$ and $L (\tilde E) > 0$.
	In particular, $Y_1$ and $Y_2$ must intersect.

	Also there must exist an irreducible component $E \subset \BD$ such that $L(E)>0$ otherwise for any completion $\pi: Z \rightarrow X$ above $X$ and any effective divisor $D_Z \in \DivInf (Z)$, there exists a divisor $D_X \in \DivInf(X)$ such that $D_X \geq D_Z$ and we would get $L (D_Z) = 0$ and therefore $L = 0$.

		Thus the collection of such subvarieties is non-empty and is stable under pairwise intersections.
		By noetherianity, it has a minimal element.
		So the center of $L$ is defined as this minimal subvariety $Y$ such that blowing up $Y$ yields an exceptional divisor $\tilde E$ such that $L(\tilde E) > 0$.
	\end{proof}

We now show that $L_{v_L} = L$.
We say that $L$ is \emph{divisorial} if there exists $X$ such that the center of $L$ over $X$ is the generic point of an irreducible component $E$ of $\BD$.
In that case, $v_L = L(E) \ord_E$ and it not hard to see that $L_{v_{L}} = L$.

Suppose that $L$ is not divisorial.
Let $X$ be a completion of $X_0$ with very ample boundary, whose existence follows from Goodman's theorem, and let $H$ be a very ample divisor over $X$ such that $\Supp H = \BD$.
Pick $P$ general in $\Gamma (X, H)$, we have that $\div_X (P)$ is of the form
\begin{equation}
	\div_X (P) = Z_P - H
\end{equation}
where $Z_P$ is an effective divisor supported over $X_0$ and the center of $L$ in $X$ does not belong to $\Supp Z_P$.
Then, $1/P$ is a local equation of $H$ at $c_X (L)$ and
\begin{equation}
	v_L (P) = \sup_Z L (\div_{Z, \infty} (P)).
\end{equation}

Now, for every morphism of completions $\pi : Z \rightarrow X$, we have
\begin{equation}
	\div_Z (P) = \pi^* Z_P - \pi^* H = \pi ' Z_P + D_Z - \pi^* H.
\end{equation}
And $\div_{Z, \infty} (P) = D_Z - \pi^* H$ but by definition of $c_X (L)$ we must have $L (D_Z) = 0$ and therefore
\begin{equation}
	L (\div_{Z, \infty} (P)) = -L(H).
\end{equation}
Therefore, we get
\begin{equation}
	L_{v_L} (H) = v_L (1/P) = -v_L(P)= L (H),
\end{equation}
	where we use the extension of $v_L$ to the function field.
	Now, if $D \in \DivInf (X)$ is any divisor, then there exists an integer $m > 0$ such that $D + mH$ is effective and
	very ample and we have
\begin{equation}
	L_{v_L} (D) = L_{v_L} (D + mH) - L_{v_L} ( mH) = L (D +mH) - L (mH) = L(D).
\end{equation}
	The result is shown.

\begin{cor}
	\label{cor:pushforward-linear-form}
	let $f_\tau :X_0 \rightarrow X_0$ be a twisted dominant endomorphism, then
	\begin{equation}
		v_{(f_\tau)_* L} = (f_\tau)_* v_L.
	\end{equation}
\end{cor}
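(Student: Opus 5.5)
The plan is to use the bijection of Theorem \ref{thm:bijection} so that the equality reduces to an equality of linear forms on $\Cinf$. Set $v := v_L$. By Theorem \ref{thm:bijection} we have $L = L_v$. It therefore suffices to prove
\[
	L_{(f_\tau)_* v} = (f_\tau)_* L_v
\]
as linear forms on $\Cinf$, together with the fact that $(f_\tau)_* L$ again satisfies \ref{condition:L1} and \ref{condition:L2}. Granting both, the first part of Theorem \ref{thm:bijection}, applied to the valuation $(f_\tau)_*v$, gives
\[
	(f_\tau)_* v_L = (f_\tau)_*v = v_{L_{(f_\tau)_*v}} = v_{(f_\tau)_* L_v} = v_{(f_\tau)_*L},
\]
which is the claim. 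If the center of $(f_\tau)_*v$ lies in $X_0$, both sides are the zero object under the convention fixed before Proposition \ref{prop:pushforward-dual-curves}, and that case is immediate.

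To check that $(f_\tau)_*L$ satisfies the two conditions, start from Proposition \ref{prop:pullback-preserves-cinf}, which says $f_\tau^*$ preserves $\Cinf$. It also preserves effectivity, since pulling back an effective divisor at infinity along a regular lift $Y \to X$ gives an effective divisor; this yields \ref{condition:L1}. For \ref{condition:L2} we need $f_\tau^*(D\wedge D') = f_\tau^* D \wedge f_\tau^* D'$. Take effective representatives, which is allowed by the reduction in Lemma \ref{LemmeMinOfCartierIsCartier}. Then $D\wedge D'$ is the divisor of the ideal $\OO(-D)+\OO(-D')$, by Claim \ref{ClaimPullBackOfIdealSheafIsMinimumCartierClass}. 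Pullback of ideals commutes with sums, and the order along any divisorial valuation of a pulled back ideal is the minimum of the orders of its generators. Running the same valuative computation as in the Claim on a model where the lift of $f_\tau$ is regular gives the identity.

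For the main identity, fix $D\in\Cinf$ defined on a completion $X$, and choose a completion $Y$ such that $f_\tau$ lifts to a twisted morphism $Y\to X$. The center $c_X((f_\tau)_*v)$ is the image under $f_\tau$ of $c_Y(v)$. Let $\phi$ be a local equation of $D$ at $c_X((f_\tau)_*v)$. Then $f_\tau^*\phi$ is a local equation of $f_\tau^*D$ at $c_Y(v)$, so
\[
	L_{(f_\tau)_*v}(D) = ((f_\tau)_*v)(\phi) = v(f_\tau^*\phi) = L_v(f_\tau^* D).
\]
This uses $(f_\tau)_*v$ extended to the local ring at the center, equivalently to the function field; I would note that this extension is the one compatible with the definition on $K[X_0]$. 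This is the step I expect to need the most care: the identification of centers under a twisted map and the extension of $(f_\tau)_*v$ beyond $A$. I would handle it by passing to the ordinary morphism $X^{(\tau)}\to X$, for which these are standard facts about valuations, and transporting back via Lemma \ref{lemme:changing-structure-morphism}.
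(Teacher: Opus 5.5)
Your proposal is correct and follows the same route as the paper. Both arguments set $v=v_L$, establish $L_{(f_\tau)_*v}=(f_\tau)_*L_v$ (the paper treats this as immediate from the definitions, while you justify it via centers and local equations on a regular lift), and then invoke the bijection of Theorem \ref{thm:bijection}; your separate check that $(f_\tau)_*L$ satisfies \ref{condition:L1} and \ref{condition:L2} is correct but redundant, since it already follows from the identity $(f_\tau)_*L=L_{(f_\tau)_*v_L}$.
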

\begin{proof}
	By the definitions above, for any valuation $v\in\Vinf$ we have
	\begin{equation}
		L_{(f_\tau)_* v}= (f_\tau)_*L_v,
	\end{equation}
	where both sides are understood to be zero if $(f_\tau)_*v$ has center in $X_0$.
	Applying this to $v=v_L$ and using $L_{v_L}=L$ gives
	\begin{equation}
		L_{(f_\tau)_* v_L}= (f_\tau)_*L.
	\end{equation}
	The bijection between valuations centered at infinity, with the zero element allowed, and linear forms satisfying the two conditions gives the result.
\end{proof}

\begin{thm}
	\label{thm:eigenvaluations}
	Let $X_0$ be a smooth affine variety over a field of characteristic zero with $\QAlb(X_0) = 0$.
	Let $f_\tau : X_0 \rightarrow X_0$ be a twisted dominant endomorphism such that $\lambda_1 (f_\tau)^2 > \lambda_2(f_\tau)$, then there exists a valuation $v_*$ centered at infinity such that $(f_\tau)_* v_* = \lambda_1 (f_\tau) v_*$.
\end{thm}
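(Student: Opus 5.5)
The plan is to build the eigenvaluation from the linear form $L$ of Theorem \ref{thm:dynamical-degrees}, restricted to divisors at infinity, and then convert it into a valuation through the bijection of Theorem \ref{thm:bijection}. By base change invariance (Proposition \ref{prop:dyn-degrees-base-change}) we may assume the field is algebraically closed. Fix a completion $X$ of $X_0$ with an ample divisor $H$ supported at infinity, which exists by Goodman's theorem. Since $\QAlb(X_0)=0$, Proposition \ref{prop:trivial-quasi-albanese-injective-map} lets us regard $\Cinf_\R$ as a subspace of $\cNone(X_0)\subset N^1_\Sigma(X_0)$. By Proposition \ref{prop:pullback-preserves-cinf}, $f_\tau^*$ preserves $\Cinf$. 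We then define $\Lambda:\Cinf\to\R$ by $\Lambda(D):=L(D)$, where $L$ is the continuous linear form from Theorem \ref{thm:dynamical-degrees}. The convergence \eqref{eq:convergence-iterates-divisor} gives $\Lambda(f_\tau^*D)=\lambda_1\Lambda(D)$, because $\lambda_1^{-n}(f_\tau^{n})^*f_\tau^*D\to\lambda_1 L(D)\theta_1^*$. Equivalently, $(f_\tau)_*\Lambda=\lambda_1\Lambda$.

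Next we check that $\Lambda$ satisfies \ref{condition:L1} and \ref{condition:L2}, and that it is nonzero. Condition \ref{condition:L1} holds because $L(\alpha)\ge0$ for $\alpha\ge0$. For nonvanishing, take $H$ big and nef; then $L(H)\,\theta_1^*\cdot H^{d-1}=\lim\lambda_1^{-n}\deg_{1,H}(f_\tau^n)$. By Theorem \ref{thm:croissance-degrees}, or directly by the constant $C>0$ there, this limit is positive. Since $\theta_1^*\cdot H^{d-1}>0$, we get $\Lambda(H)>0$.

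The main obstacle is \ref{condition:L2}, namely $\Lambda(D\wedge D')=\min(\Lambda(D),\Lambda(D'))$. Here is how I would try to prove it. After subtracting a common divisor we may assume $D\wedge D'=0$ with $D,D'\ge0$, and we must show that $\min(\Lambda(D),\Lambda(D'))=0$. Pullback commutes with $\wedge$, since $f_\tau^*$ is computed by local equations and valuations (Lemma \ref{LemmeMinOfCartierIsCartier}). Hence $(f^n_\tau)^*D\wedge(f^n_\tau)^*D'=0$. The claim would follow from a Hodge-index type inequality: two effective divisors at infinity with disjoint supports on a model satisfy $D_n\cdot D'_n\cdot H^{d-2}\ge0$. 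This can be combined with the eigenclass $\theta_1^*$ satisfying $(\theta_1^*)^2\cdot H^{d-2}=0$, which holds because $\lambda_1^2>\lambda_2$. From these one deduces that $\lambda_1^{-n}(f_\tau^n)^*D$ and $\lambda_1^{-n}(f_\tau^n)^*D'$ cannot both converge to positive multiples of $\theta_1^*$. More precisely, the product $\lambda_1^{-2n}(f_\tau^n)^*(D)\cdot(f_\tau^n)^*(D')\cdot H^{d-2}$ is bounded by $\lambda_1^{-2n}\deg_2$-type terms, which tend to zero. On the other hand, it converges to $\Lambda(D)\Lambda(D')(\theta_1^*)^2\cdot H^{d-2}$, and this forces a contradiction only if we use instead the $\Sigma$-norm positivity. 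If this route fails, the fallback is to mimic the argument of \cite{dangSpectralInterpretationsDynamical2021} and \cite{abboudDynamicsEndomorphismsAffine2023}. That argument pairs with dual curves $\gamma_v$ of Definition \ref{dfn:dual-curve}: it realizes $\Lambda$ as a limit of normalized forms $\lambda_1^{-n}(f_\tau^n)_*L_{v}$ for a divisorial $v=\ord_E$, using Proposition \ref{prop:pushforward-dual-curves}. Each $L_{(f_\tau^n)_*v}$ satisfies \ref{condition:L1} and \ref{condition:L2}, and both conditions are closed under pointwise limits and positive scaling. The limit is identified with $\Lambda$ up to a positive constant via the uniqueness of $\theta_1^*$, after using the symmetric description of $L$ by the pushforward eigenclass $\theta_{1,*}$ on $N^1_\Sigma$.

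Finally, given $\Lambda$ satisfying \ref{condition:L1} and \ref{condition:L2} and nonzero, set $v_*:=v_\Lambda$. By Theorem \ref{thm:bijection}, $v_*$ is a valuation centered at infinity with $L_{v_*}=\Lambda$. Corollary \ref{cor:pushforward-linear-form} then gives $(f_\tau)_*v_*=v_{(f_\tau)_*\Lambda}=v_{\lambda_1\Lambda}=\lambda_1 v_*$. The last equality uses homogeneity of $L\mapsto v_L$, which is clear from the definition.
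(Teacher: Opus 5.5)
Your outer structure is the paper's. You restrict the form $L$ of Theorem~\ref{thm:dynamical-degrees} to $\Cinf$, check \ref{condition:L1}, \ref{condition:L2} and $(f_\tau)_*L=\lambda_1 L$, then apply Theorem~\ref{thm:bijection} and Corollary~\ref{cor:pushforward-linear-form}. Your nonvanishing check $\Lambda(H)>0$ is a welcome detail that the paper leaves implicit.

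Your first route to \ref{condition:L2} is a dead end, as you suspected. Since $(\theta_1^*)^2\cdot H^{d-2}=0$, the normalized products tend to $0$ whatever $\Lambda(D)\Lambda(D')$ is.

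Your fallback is the paper's mechanism, but it has a genuine gap at the step ``the limit is identified with $\Lambda$ up to a positive constant via the uniqueness of $\theta_1^*$''. Pairing \eqref{eq:convergence-iterates-divisor} with $\gamma_{\ord_E}$ gives directly, without uniqueness or $\theta_{1,*}$,
\[
\lim_{n}\lambda_1^{-n}L_{(f_\tau^n)_*\ord_E}(D)=c\,\Lambda(D),\qquad c:=\gamma_{\ord_E}\cdot\theta_1^* .
\]
Nothing, however, makes $c>0$ for an arbitrary divisorial $\ord_E$. If $c=0$, the limit form is zero and its \ref{condition:L2} says nothing about $\Lambda$. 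This really happens. Take $f(x,y)=(x^2,y)$ on $\A^2$ with the completion $\P^1\times\P^1$, and boundary components $E_1=\{x=\infty\}$, $E_2=\{y=\infty\}$. Then $\theta_1^*$ is a positive multiple of $E_1$, and the dual basis is $\hat E_1=E_2$, $\hat E_2=E_1$. So $\gamma_{\ord_{E_2}}=E_1$ and $c$ is a multiple of $E_1^2=0$. Indeed $(f^n)_*\ord_{E_2}=\ord_{E_2}$ does not grow at all.

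The missing idea is how to choose $E$. Take $X$ with an ample $H$ supported on $X\setminus X_0=E_1\cup\dots\cup E_r$. Let $\hat E_i$ be the dual basis of $\DivInf(X)_\R$ for $H^{d-2}\cdot(\cdot)\cdot(\cdot)$, which is non-degenerate by Proposition~\ref{prop:intersection-form-non-degenerate-at-infinity}. Then $H=\sum_i(H^{d-1}\cdot E_i)\hat E_i$, so
\[
0<\theta_1^*\cdot H^{d-1}=\sum_i (H^{d-1}\cdot E_i)\,\gamma_{\ord_{E_i}}\cdot\theta_1^* .
\]
Each $H^{d-1}\cdot E_i>0$, so some $E_i$ has $c>0$. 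With that $E_i$, $\Lambda=c^{-1}\lim_n\lambda_1^{-n}L_{(f_\tau^n)_*\ord_{E_i}}$ is a pointwise limit of forms satisfying \ref{condition:L2}. The rest of your argument then goes through as written.
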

\begin{proof}
	By Theorem \ref{thm:dynamical-degrees}, there exists a unique $\theta^* \in N^1_\Sigma (X_0) \cap \wBPF^1(X_0)$ and a
	continuous linear form $L$ on
	$N^1_\Sigma(X_0)$ such that for any $\alpha \in N^1_\Sigma (X_0)$
	\begin{equation}
		\frac{1}{\lambda_1(f_\tau)^n} (f_\tau^n)^* \alpha \xrightarrow[n \rightarrow +\infty]{}
		L(\alpha) \theta^* \label{eq:iteration}
	\end{equation}
	such that $\alpha \geq 0 \Rightarrow L(\alpha) \geq 0$ and for any $\omega \in \cNone(X)$ big and nef, $\theta^* \cdot \omega^{d-1} > 0$.
	In particular, $L$ induces a continuous linear form on $\Cinf$ since $\Cinf \hookrightarrow \cNone(X_0)$.
	We show that $L$ satisfies Conditions \ref{condition:L1} and \ref{condition:L2}.
	It is clear that $L$ satisfies Condition \ref{condition:L1}.
	We show that $L$ satisfies Condition \ref{condition:L2}.
	By Goodman's theorem we have that there exists $X$ a completion of $X_0$ such that $X$ admits an ample effective divisor $H$ with $\Supp H = X \setminus X_0$.
	There must exist a divisor at infinity $E$ in $X$ such that $\gamma_{\ord_E} \cdot \theta^* > 0$.
	Indeed, we have that $\theta^* \cdot H^{d-1} > 0$.
  Let $\gamma_{\ord_{E_i}} = H^{d-2}\cdot \hat E_i$ be the classes of curves associated to the components at infinity in $X$ as in \S \ref{subsec:dual-curves-divisorial-valuations}. Since $\hat E_i$ is the dual basis of $E_i$ for the intersection product induced by $H^{d-2}$ we have 
  \begin{equation}
  \theta^* \cdot H^{d-1} = \theta^*_X \cdot H^{d-1} = \sum_i (H^{d-1}\cdot E_i) H^{d-2} \cdot \hat E_i \cdot \theta_X^* = \sum_i \left( H^{d-1} \cdot E_i \right) \gamma_{\ord_{E_i}} \cdot \theta^*.
\end{equation}
And since $H^{d-1} \cdot E_i > 0$ we have that there must exist $i$ such that $\gamma_{\ord_{E_i}} \cdot \theta^* > 0$.

	Now, let $D \in \Cinf$, recall that $f_\tau^*$ preserves $\Cinf$ even if $f_\tau$ is not proper by Proposition \ref{prop:pullback-preserves-cinf}.
	Intersecting
	\eqref{eq:iteration} with $\gamma_{\ord_E}$ (which is possible because
	$\gamma_{\ord_E} \in N_1(X) \subset \cNN_1(X_0)$ and the intersection product is continuous) we have that
	\begin{equation}
		\gamma_{\ord_E} \cdot\left(\frac{1}{\lambda_1(f_\tau)^n} (f_\tau^n)^* D\right) =  L_{\ord_E} \left(\frac{1}{\lambda_1(f_\tau)^n}
			(f_\tau^n)^* D\right) =
		L_{\frac{1}{\lambda_1(f_\tau)^n}(f_\tau^n)_* \ord_E} (D) \rightarrow
		L(D) \gamma_{\ord_E}\cdot \theta^*.
		\label{ }
	\end{equation}
	To show that $L$ satisfies Condition \ref{condition:L2} we have to prove
	\begin{equation}
		\forall D, D' \in \Cinf, \quad L(D \wedge D') = \min (L(D), L(D')).
	\end{equation}
	Write $\gamma_{\ord_E} \cdot \theta^* =: c > 0.
	$ We have
	\begin{align}
		c \cdot L(D \wedge D') & = \lim_n L_{\ord_E}\left(\frac{1}{\lambda_1(f_\tau)^n}  (f_\tau^n)^* (D \wedge D')\right)                            \\
		                       & = \lim_n L_{\frac{1}{\lambda_1(f_\tau)^n}(f_\tau^n)_* \ord_E} (D \wedge D')                                          \\
		                       & = \lim_n \frac{1}{\lambda_1(f_\tau)^n} \min \left( L_{(f_\tau^n)_* \ord_E} (D),  L_{(f_\tau^n)_* \ord_E} (D')\right) \\
		                       & = \lim_n \min  \left( L_{\ord_E}\left(\frac{1}{\lambda_1(f_\tau)^n}  (f_\tau^n)^* (D)\right), L_{\ord_E}
		\left(\frac{1}{\lambda_1(f_\tau)^n}  (f_\tau^n)^* D'\right)\right)                                                                            \\
		                       & = \min\left( c \cdot L(D), c \cdot L(D') \right).
	\end{align}
	Since $c>0$, this proves Condition \ref{condition:L2}.
	Thus there exists a valuation $v_*$ over $K[X_0]$ such that $L = L_{v_*}$, by construction it is clear that $(f_\tau)_* L = \lambda_1(f_\tau) L$ so that we get by Corollary \ref{cor:pushforward-linear-form} that $(f_\tau)_* v_* = \lambda_1(f_\tau) v_*$.
\end{proof}

\begin{cor}
	\label{cor:dynamical-degree-algebraic}
	Let $X_0$ be a smooth affine variety over a field of characteristic zero and $\QAlb(X_0) = 0$.
	If $f_\tau : X_0 \rightarrow X_0$ is a twisted endomorphism with $\lambda_1(f_\tau)^2 > \lambda_2 (f_\tau)$, then $\lambda_1(f_\tau)$ is an algebraic number of degree $\leq \dim X_0$.
\end{cor}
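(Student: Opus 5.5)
The plan is to follow Dang--Favre's argument for polynomial maps: the eigenvaluation $v_*$ from Theorem \ref{thm:eigenvaluations} should force $\lambda_1(f_\tau)$ to be the eigenvalue of an integral matrix of size at most $d=\dim X_0$. First, by Proposition \ref{prop:dyn-degrees-base-change}, I reduce to $K$ algebraically closed of characteristic zero; the hypotheses $\QAlb(X_0)=0$ and $\lambda_1^2>\lambda_2$ survive this base change. Theorem \ref{thm:eigenvaluations} then gives a valuation $v_*$ centered at infinity with $(f_\tau)_*v_*=\lambda v_*$, where $\lambda=\lambda_1(f_\tau)$. By definition of the pushforward this means $v_*(f_\tau^*P)=\lambda\, v_*(P)$ for every $P\in K[X_0]$.

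Next I consider the value group $\Gamma=v_*(K(X_0)^\times)\subset\R$. Since $v_*$ is trivial on $K$, Abhyankar's inequality (Theorem \ref{thm:abyankhar}) gives $r:=\dim_\Q\Gamma\otimes\Q\le\trdeg K(X_0)/K=d$. The identity $v_*(f_\tau^*\phi)=\lambda v_*(\phi)$ holds for every rational function $\phi$, because it is multiplicative and $K(X_0)$ is the fraction field of $K[X_0]$. Hence multiplication by $\lambda$ maps $\Gamma$ into itself, since $f_\tau^*\phi\in K(X_0)^\times$. Note that the twist $\tau$ is harmless here: $f_\tau^*$ is $\tau$-semilinear on constants, but $v_*$ vanishes on $K^\times$. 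Because $v_*\neq 0$, the group $\Gamma$ is nonzero, so $\Gamma\otimes\Q$ is a nonzero $\Q$-vector space of dimension $r\le d$ stabilized by multiplication by $\lambda$. Therefore $\lambda$ is an eigenvalue of a rational $r\times r$ matrix, which makes it algebraic of degree $\le r\le d$.

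The step needing the most care is making sure $v_*$ really is a nonzero, real-valued valuation on $K(X_0)$, so that Abhyankar's inequality applies. If $v_*$ took the value $+\infty$ on some nonzero $P$, I would pass to the prime ideal $\mathfrak p=\{v_*=+\infty\}$ and work on $\mathrm{Frac}(K[X_0]/\mathfrak p)$, whose transcendence degree is still $\le d$. I would also need $f_\tau^*$ to induce a map there, which follows from $v_*(f_\tau^*P)=\lambda v_*(P)$ since this gives $f_\tau^*\mathfrak p\subset\mathfrak p$. Nontriviality of $v_*$ follows from $L=L_{v_*}$ being nonzero, which holds because $c\,L(D)>0$ for suitable $D$, using $\theta^*\cdot\omega^{d-1}>0$ in Theorem \ref{thm:eigenvaluations}.

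Finally, a subtlety: the eigenvalue relation was derived from the $\Cinf$ side via Corollary \ref{cor:pushforward-linear-form}. The case where $(f_\tau)_*v_*$ is centered in $X_0$ is excluded since then it would be identified with $0$, while $\lambda v_*\ne0$ because $\lambda\ge1$.
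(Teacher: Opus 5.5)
Your proposal follows the paper's proof: take the eigenvaluation $v_*$ from Theorem~\ref{thm:eigenvaluations}, bound $\dim_\Q \Gamma\otimes\Q$ by Abhyankar's inequality, and observe that multiplication by $\lambda_1(f_\tau)$ is a $\Q$-linear endomorphism of this nonzero space, so $\lambda_1(f_\tau)$ is a root of a rational characteristic polynomial of degree at most $\dim X_0$. Your extra care about $+\infty$ values (passing to $\mathfrak p=\{v_*=+\infty\}$) and about $v_*\neq 0$ covers points the paper passes over; for the latter, note that $\theta^*\cdot\omega^{d-1}>0$ alone does not give $L\neq 0$ on $\Cinf$. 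What you need is $L(H)>0$ for an ample $H$ supported at infinity, which follows from the identity $L(H)\,\theta^*\cdot H^{d-1}=\lim_n \lambda_1(f_\tau)^{-n}\deg_{1,H}(f_\tau^n)$ together with the positivity of the constant $C$ in Theorem~\ref{thm:croissance-degrees}.
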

\begin{proof}
	By Theorem \ref{thm:eigenvaluations}, there exists a valuation $v_*$ centered at infinity such that
		\[
			(f_\tau)_*v_*=\lambda_1(f_\tau)v_*.
		\]
		Let $\Gamma=v_*(K(X_0)^*)$ be its value group and put $\Gamma_\Q=\Gamma\otimes_\Z\Q$.
		By Abhyankar's inequality, $\Gamma_\Q$ is finite dimensional and
		\[
			\dim_\Q\Gamma_\Q\leq \dim X_0.
		\]
		The pullback by $f_\tau$ induces a $\Q$-linear endomorphism
		\[
			T:\Gamma_\Q\longrightarrow\Gamma_\Q,\qquad
			T(v_*(P))=v_*(f_\tau^*P).
		\]
		This is well defined: if $v_*(P)=v_*(Q)$, then $v_*(P/Q)=0$, hence
		\[
			v_*(f_\tau^*(P/Q))=\lambda_1(f_\tau)v_*(P/Q)=0
		\]
		by the eigenvaluation relation.
		Moreover this relation shows that $T$ acts as multiplication by $\lambda_1(f_\tau)$ on the nonzero vector space $\Gamma_\Q$.
		After choosing a $\Q$-basis of $\Gamma_\Q$, the map $T$ is represented by a matrix with rational coefficients.
Therefore $\lambda_1(f_\tau)$ is an algebraic number as it is a root of the characteristic polynomial of this matrix, and its degree is at most $\dim_\Q\Gamma_\Q\leq \dim X_0$.
	\end{proof}
\section{Algebraicity of first dynamical degree for twisted endomorphisms over affine
  varieties}\label{sec:algebraicity-dyn-degrees}
Start with the following lemma.
\begin{lemme}
	\label{lemme:induction-fibratino}
	Let $X,Y$ be quasiprojective varieties over $K$ with a dominant morphism $q : X \rightarrow Y$ and let $f_\tau, g_\tau$ be dominant twisted rational maps over $X$ and $Y$ such that $q \circ f_\tau = g_\tau \circ q$.
	Let $f_\eta$ denote the induced twisted rational map on the generic fiber of $q$.
	Suppose that $\lambda_1 (f_\tau)^2 > \lambda_2 (f_\tau)$.
	Then for $u \in \left\{ f_\eta, g_\tau \right\}$ such that $\lambda_1
		(f_\tau) = \lambda_1 (u)$ we have
	\begin{equation}
		\lambda_1^2 (u) > \lambda_2 (u).
	\end{equation}
\end{lemme}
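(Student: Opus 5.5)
The plan is to read everything off the mixed degree formula, Theorem \ref{thm:relative-degree-formula}, applied for $i=1$ and $i=2$. Write $e=\dim X-\dim Y$ and $\ell=\dim Y$. Dynamical degrees are birational invariants, so we may pass to projective completions of $X$, $Y$ and to a model on which $q$ is a morphism. Then $\lambda_1(f_\tau)=\max(\lambda_1(f_\eta),\lambda_1(g_\tau))$, where a term is omitted if the corresponding dimension is zero. Also
\[
\lambda_2(f_\tau)=\max\bigl(\lambda_2(f_\eta),\ \lambda_1(f_\eta)\lambda_1(g_\tau),\ \lambda_2(g_\tau)\bigr),
\]
where each term occurs only when the dimension constraints $a\le e$, $b\le \ell$ allow it. All these dynamical degrees are $\geq 1$.

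\emph{Main case.} Take $u\in\{f_\eta,g_\tau\}$ with $\lambda_1(u)=\lambda_1(f_\tau)$. If $\lambda_2(u)$ is defined, meaning $\dim\geq 2$ for the variety on which $u$ acts, then $\lambda_2(u)$ is one of the terms in the maximum. Hence
\[
\lambda_2(u)\le\lambda_2(f_\tau)<\lambda_1(f_\tau)^2=\lambda_1(u)^2,
\]
which is exactly the claim.

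\emph{Degenerate case.} The only remaining situation is when $u$ acts on a variety of dimension $\le 1$. There I take the convention $\lambda_2(u)=0$, or whatever convention the paper uses for degrees beyond the dimension. Since $\lambda_1(u)\geq 1>0$, the inequality holds trivially.

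One detail is whether, if $\dim Y=0$ or $e=0$, the generic fiber map or $g_\tau$ is really allowed in the statement. In those cases the formula degenerates to $f_\tau$ essentially equal to $u$, and the claim is immediate.

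The only point needing care is the \emph{main obstacle}: justifying that the mixed formula applies to the quasiprojective setting here. I would handle it by compactifying and using birational invariance (Proposition \ref{prop:dyn-degrees-base-change}), noting that the generic fiber map $f_\eta$ is unchanged up to birational conjugacy. The same twist $\tau$ on both $X$ and $Y$ fits the hypothesis of Theorem \ref{thm:relative-degree-formula}, with $q$ an ordinary morphism, that is, $\sigma=\mathrm{id}$.
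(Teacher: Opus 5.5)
Your proposal is correct and is essentially the paper's proof. Both apply the mixed degree formula (Theorem \ref{thm:relative-degree-formula}) for $i=1,2$ and note that $\lambda_2(u)$ is one of the terms defining $\lambda_2(f_\tau)$, so $\lambda_2(u)\le\lambda_2(f_\tau)<\lambda_1(f_\tau)^2=\lambda_1(u)^2$; the paper phrases this as a contradiction with $\lambda_2(u)=\lambda_1(u)^2$ (tacitly using log-concavity), while your direct inequality skips that step and also spells out the low-dimensional convention and the compactification needed in the quasiprojective setting.
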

\begin{proof}
	We have by Theorem \ref{thm:relative-degree-formula} that
	\begin{equation}
		\lambda_1 (f_\tau) = \max (\lambda_1 (u) ,\lambda_1 (v)), \quad \lambda_2 (f_\tau) =  \max \left( \lambda_2
			(u), \lambda_2 (v), \lambda_1 (u) \lambda_1 (v) \right)
	\end{equation}
	for $\left\{ u,v \right\} = \left\{ f_\eta, g_\tau \right\}$.
	Assume $\lambda_1 (f_\tau) = \lambda_1 (u)$ which means $\lambda_1 (u) \geq \lambda_1 (v)$.

	Suppose that $\lambda_2 (u) = \lambda_1 (u)^2$.
	Then
	\[
		\lambda_2(f_\tau)\geq \lambda_2(u)=\lambda_1(u)^2=\lambda_1(f_\tau)^2,
	\]
	contradicting the assumption $\lambda_1(f_\tau)^2>\lambda_2(f_\tau)$.
\end{proof}

We now prove Theorem \ref{bigthm:dyn-degrees-affine-varieties}, restated here for convenience.
\begin{thm}
	\label{thm:first-dyn-degree-algebraic}
	Let $X_0$ be a smooth affine variety of dimension $d$ over a field $K$ of characteristic zero and let $f_\tau : X_0 \rightarrow X_0$ be a dominant twisted endomorphism such that $\lambda_1 (f_\tau)^2 > \lambda_2 (f_\tau)$, then $\lambda_1 (f_\tau)$ is an algebraic number of degree $\leq (d-1)^2$ if $d\geq 3$, and $\leq d$ if $d=1,2$.
\end{thm}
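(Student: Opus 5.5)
We argue by induction on $d=\dim X_0$, after first reducing by Proposition \ref{prop:dyn-degrees-base-change} to $K$ algebraically closed, so that $\tau$ is an automorphism. There are two cases, according to whether the quasi-Albanese variety is trivial.

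\emph{Trivial quasi-Albanese.} If $\QAlb(X_0)=0$, Corollary \ref{cor:dynamical-degree-algebraic} applies directly. It gives that $\lambda_1(f_\tau)$ is algebraic of degree $\leq d$, which is within both bounds. This also covers the base case $d=1$, except when $X_0$ is a curve with nontrivial quasi-Albanese; that situation falls under the second case.

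\emph{Nontrivial quasi-Albanese.} Let $\alpha:X_0\to Q=\QAlb(X_0)$ be the quasi-Albanese map. By Lemma \ref{lemme:changing-structure-morphism-quasi-albanese} there is a twisted endomorphism $g_\tau$ of $Q$ with $\alpha\circ f_\tau=g_\tau\circ\alpha$. Set $V=\overline{\alpha(X_0)}$, which is a closed subvariety of $Q$ stable under $g_\tau$, and note $g_\tau$ is dominant on $V$ because $f_\tau$ is dominant. Let $G_V^0$ be the connected stabilizer. By Theorem \ref{thm:quotient-general-type-quasi-abelian}, $V/G_V^0$ is of log general type, and by Proposition \ref{prop:log-kodaira-iitaka-fibration} every dominant twisted self-map of a log general type variety is semiconjugate to a twisted automorphism of a variety of the same dimension, so its dynamical degrees are $1$.

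We now choose the fibration to feed into the mixed degree formula (Theorem \ref{thm:relative-degree-formula}). If $\dim(V/G_V^0)>0$, we use $X_0\dashrightarrow V/G_V^0$. Its base has all dynamical degrees equal to $1$. By Lemma \ref{lemme:induction-fibratino}, either $\lambda_1(f_\tau)=1$, or $\lambda_1(f_\tau)=\lambda_1(f_\eta)$ with $\lambda_1(f_\eta)^2>\lambda_2(f_\eta)$.

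For the generic fibre map $f_\eta$, Lemma \ref{lemme:iitaka-generic-fiber-affine} (or its analogue with a fixed section) gives, after base change, a dominant twisted endomorphism of a smooth affine variety of dimension $<d$. This is the step I expect to be the main obstacle: we need the generic fibre to be affine and the map on it to be regular, not just rational. I would first try to obtain an invariant affine open via the fixed vector of Lemma \ref{lemme:semilinear-eigenvector-after-base-change}, and then apply induction. If the fibre dimension $e$ satisfies $e\le d-1$, induction gives degree at most $(d-2)^2$ when $e\geq3$, and at most $e\le d-1$ when $e\le2$. Both are $\leq(d-1)^2$ for $d\geq 3$.

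If instead $V/G_V^0$ is a point, then $V$ is a translate of the subgroup $G_V^0$, hence itself quasi-abelian. We use $\alpha:X_0\to V$. On the base, Proposition \ref{prop:absolute-dyn-degree-quasi-abelian} gives degree at most $(\dim V)^2$. The fibres have dimension $d-\dim V<d$, so they are handled by induction as above.

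\emph{Degree bookkeeping.} The case $\dim V=d$ would only give $d^2$, so we must exclude it or sharpen the bound there. When $V=Q$ has dimension $d$, the map $\alpha$ is generically finite. For the abelian part, the generic fibre over $A$ is a torus torsor, and the proof of Proposition \ref{prop:absolute-dyn-degree-quasi-abelian} shows the actual bound is $\max(\dim T,\ a^2)$ where $a=\dim A$. An affine variety cannot map finitely onto a variety with a nontrivial abelian quotient of dimension $d$, so $a\le d-1$, which gives $(d-1)^2$ for $d\ge 3$. For $d=2$, the quantity $\max(\dim T,a^2)$ with $a\le1$ is at most $2$.

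Finally, the statement for $\QAlb=0$ was already established in the first case. Lemma \ref{lemme:induction-fibratino} is what ensures that the hypothesis $\lambda_1^2>\lambda_2$ passes to whichever factor realizes $\lambda_1(f_\tau)$ at each step of the induction.
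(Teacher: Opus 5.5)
The gap is in the branch $V=Q$ with $\dim Q=d$, where $\alpha$ is dominant and generically finite.

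Most of your case analysis is sound, and the step you flag as the main obstacle is not one. The composite $X_0\to Q\to Q/G_V^0$ is a regular morphism, so its generic fibre is automatically affine and smooth by generic smoothness, and $f_\tau$ restricts to a regular twisted endomorphism of it. Lemma \ref{lemme:iitaka-generic-fiber-affine} is only needed for the log Iitaka map, which is merely rational on $X_0$.

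In the branch $V=Q$, $\dim Q=d$, you exclude $a=\dim A=d$ by saying an affine variety cannot map finitely onto such a $Q$. But $\alpha$ is only generically finite, not finite, and the claim is false. Take a smooth irreducible ample divisor $D$ on an abelian variety $A$ of dimension $d\geq 2$. Then $X_0=A\setminus D$ is smooth affine, and $\QAlb(X_0)=A$, because no nonzero multiple of $D$ is algebraically trivial, so there is no torus part. The map $\alpha$ is the open inclusion, so $a=d$. In this configuration your argument only gives the bound $d^2$ from Proposition \ref{prop:absolute-dyn-degree-quasi-abelian}. That exceeds $(d-1)^2$, and for $d=2$ it is $4>2$.

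Closing this case needs the logarithmic Kodaira dimension, which is why the paper splits on $\overline\kappa(X_0)$ instead of on $V/G_V^0$. A dominant generically finite $\alpha$ forces $\overline\kappa(X_0)\geq\overline\kappa(Q)=0$.

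\emph{Case $\overline\kappa(X_0)\geq 1$.} Use the log Iitaka fibration, via Proposition \ref{prop:log-kodaira-iitaka-fibration} and Lemma \ref{lemme:iitaka-generic-fiber-affine}; here the fixed section is genuinely needed. It semiconjugates $f_\tau$ to a twisted projective linear map, so $\lambda_1(f_\tau)$ is realised on an affine fibre of dimension $d-\overline\kappa(X_0)<d$ and induction applies. In the example above $\overline\kappa=d$, so $\lambda_1(f_\tau)=1$ and the hypothesis fails.

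\emph{Case $\overline\kappa(X_0)=0$.} Kawamata's theorem makes $\alpha$ birational, and it is also proper in codimension one. Hence $\OO(X_0)=\OO(Q)=\bigoplus_\chi H^0(A,L_\chi)$, whose transcendence degree is at most $\dim T$. Since $X_0$ is affine of dimension $d$, this forces $A=0$ and $X_0\simeq\G_m^d$, which Proposition \ref{prop:absolute-dyn-degrees-algebraic-tori} handles with bound $d$.

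Your other branches, namely $\dim(V/G_V^0)>0$ and $V=Q$ with $\dim Q<d$, are correct as written. They need no assumption on $\overline\kappa(X_0)$.
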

\begin{proof}
	We proceed by induction on $d=\dim X_0$.
	If $d = 1$, then $\lambda_1 (f_\tau)$ is an integer.
	Assume now that $d\geq 2$ and that the result has been shown for smooth affine varieties of dimension $<d$.
	If $\QAlb(X_0) = 0$ then we have the result by Corollary \ref{cor:dynamical-degree-algebraic}.

	Otherwise, we can assume that $K$ is algebraically closed by Proposition \ref{prop:dyn-degrees-base-change} and in particular $\tau: K \rightarrow K$ is a field automorphism.

	Assume $\overline \kappa (X_0) \geq 1$.
		By Proposition \ref{prop:log-kodaira-iitaka-fibration}, for some sufficiently divisible $m$ the logarithmic Iitaka map
		\[
			q_m:X_0\dashrightarrow W_m
		\]
		has image of dimension $\overline\kappa(X_0)$ and satisfies
		\[
			q_m\circ f_\tau=g_\tau\circ q_m
		\]
		for a twisted automorphism $g_\tau$ of $W_m$ induced by a twisted projective linear automorphism of the ambient projective space of the Iitaka map.
		Hence $g_\tau$ fixes the restriction of the hyperplane class, and so $\lambda_k(g_\tau)=1$ for every $k=1,\dots,\overline\kappa(X_0)$.
		Let $\eta$ be the generic point of $W_m$, and let $f_\eta$ be the induced twisted rational map on the generic fiber of $q_m$.
By Lemma \ref{lemme:iitaka-generic-fiber-affine}, after an algebraically closed base change, $f_\eta$ can be represented by a dominant twisted endomorphism $\tilde f_\eta$ of a smooth affine variety $V_\eta$ of the same dimension over the function field of the base-changed $W_m$.
Dynamical degrees are unchanged by this base change so that $\lambda_k (f_\eta) = \lambda_k(\tilde f_\eta)$.
		By the mixed degree formula and the equality $\lambda_k(g_\tau)=1$ for all $k$, we get
		\[
			\lambda_1(f_\tau)=\lambda_1(\tilde f_\eta).
		\]
		Moreover, Lemma \ref{lemme:induction-fibratino} gives
		\[
			\lambda_1(\tilde f_\eta)^2>\lambda_2(\tilde f_\eta).
		\]
    Since $\dim V_\eta=d-\overline\kappa(X_0)<d$, the induction hypothesis applies to $\tilde f_\eta$.
		The resulting degree bound is at most the bound required in dimension $d$.

		Suppose now that $\overline \kappa (X_0) = 0$.
		By Theorem 10.1 of \cite{fujinoQUASIALBANESEMAPS2015} we have that the quasi-Albanese morphism $q : X_0 \rightarrow \QAlb (X_0)$ is dominant and with irreducible general fibers.
		Set $Q=\QAlb(X_0)$ and $r=\dim Q$.
		If $Q$ is an algebraic torus, then the result follows from the mixed degree formula, Proposition \ref{prop:absolute-dyn-degrees-algebraic-tori}, and induction on the generic fiber of $q$.
		We now show that the case $r=d$ is automatically of this kind.
		Assume $r=d$.
		Kawamata's theorem on the quasi-Albanese map, in the form recalled in \cite{fujinoQUASIALBANESEMAPS2015}, gives that $q:X_0\to Q$ is birational; moreover $q$ is proper in codimension one by \cite{lopesFootnoteTheoremKawamata2023}.
		Since $X_0$ and $Q$ are normal, it follows that $\OO(X_0)=\OO(Q)$.
		Write
		\[
			0\to T\to Q\xrightarrow{p} A\to0
		\]
		with $T$ a torus and $A$ an abelian variety.
		We recall the standard description of regular functions on such a torus torsor, see for instance \cite{brionAntiAffineAlgebraicGroups2009}.
	    If $M:=\operatorname{Hom}(T,\mathbb G_m)$ is the character group of $T$, then
		\[
			p_*\OO_Q\simeq \bigoplus_{\chi\in M} L_\chi,\qquad L_\chi\in\Pic^0(A).
		\]
		Thus
		\[
			\OO(Q)=\bigoplus_{\chi\in M} H^0(A,L_\chi).
		\]
		A nontrivial line bundle in $\Pic^0(A)$ has no nonzero global sections, so only the characters $\chi$ for which $L_\chi\simeq\OO_A$ contribute to $\OO(Q)$.
		Hence $\operatorname{trdeg}_K\OO(Q)\leq \operatorname{rank}M=\dim T$.
		On the other hand, $X_0$ is affine of dimension $d$, hence $\operatorname{trdeg}_K\OO(X_0)=d$.
		Thus $\dim T=d=\dim Q$, so $A=0$ and $Q\simeq\G_m^d$.
		Then $\OO(X_0)=\OO(Q)$ and the affineness of $X_0$ imply $X_0\simeq Q\simeq\G_m^d$.
		Therefore, after the torus case, we may assume that $r<d$.

		We have that there exists $g_\tau : Q \rightarrow Q$ such that $q \circ f_\tau = g_\tau \circ q$.
		Let $\eta$ be the generic point of $Q$, and let $f_\eta$ be the induced twisted rational map on the generic fiber of $q$.
		By the mixed degree formula,
		\[
			\lambda_1(f_\tau)=\max\{\lambda_1(f_\eta),\lambda_1(g_\tau)\}.
		\]
		The generic fiber is a smooth affine variety of dimension $d-r<d$ over $K(Q)$, while $Q$ is a quasi-abelian variety of dimension $r<d$.
		If $\lambda_1(f_\tau)=\lambda_1(f_\eta)$, then Lemma \ref{lemme:induction-fibratino} allows us to apply the induction hypothesis to $f_\eta$.
		If $\lambda_1(f_\tau)=\lambda_1(g_\tau)$, then Proposition \ref{prop:absolute-dyn-degree-quasi-abelian} shows that $\lambda_1(g_\tau)$ has degree at most $r^2\leq (d-1)^2$.
		In both cases, $\lambda_1(f_\tau)$ is algebraic of degree at most $(d-1)^2$.

	We now assume that $\overline \kappa (X_0) = - \infty$.
	Let $Q = \QAlb(X_0)$ and $q : X_0 \rightarrow Q$ be the quasi-Albanese morphism and let $V = \overline {q(X_0)}$.
	Up to translation we can assume that the neutral element $0 \in Q$ belongs to $V$.
		From Theorem \ref{thm:quotient-general-type-quasi-abelian}, let $G_V = \left\{ x \in Q: x+ V = V \right\}$.
	The twisted endomorphism $f_\tau$ yields a twisted endomorphism $g_\tau$ of $\QAlb(X_0)$ that commutes with $q$.
	Write
	the induced commutative diagram
	\begin{equation}
		\begin{tikzcd}
			X_0^{(\tau)} \ar[r, "f"] \ar[d, "q^{(\tau)}"] & X_0 \ar[d, "q"] \\
			Q^{(\tau)} \ar[r, "g"] & Q
		\end{tikzcd}
	\end{equation}
		We have that $g(G_V^{(\tau)}) \subset G_V$, indeed $g = \tau_x \circ h$ is the composition of a group homomorphism and
	a translation, i.e
	\begin{equation}
		\forall y \in Q^{(\tau)}, g(y) = h(y) + x.
	\end{equation}
		Since $g(V^{(\tau)}) \subset V$ and $0 \in V$ we have that $x \in V$, now if $k \in G_V^{(\tau)}$ then for any
	$v \in V^{(\tau)}$ we have
	\begin{equation}
		g(v+k) = h(v+k) + x = h(k) + g(v).
	\end{equation}
		Since $g (V^{(\tau)}) = V$ we have that $h(G_V^{(\tau)}) \subset G_V$ and in particular this implies that $g$ descends
		to a dominant map
		\begin{equation}
			g: V^{(\tau)} / (G_V^0)^{(\tau)} \rightarrow V / G_V^0.
		\end{equation}
		This yields a twisted endomorphism $g_\tau: V / G_V^0 \rightarrow V / G_V^0$ and we still denote by $q : X_0 \rightarrow V / G_V^0$ the projection.
		We distinguish several cases.

		\textbf{$\dim V / G_V^0 \geq 1$}.-- By Theorem \ref{thm:quotient-general-type-quasi-abelian}, $V / G_V^0$ is of
		log general type.
		Applying Proposition \ref{prop:log-kodaira-iitaka-fibration} to $V/G_V^0$, its logarithmic Iitaka map is birational and semiconjugates $g_\tau$ to a twisted projective linear automorphism.
		By birational invariance of dynamical degrees, $\lambda_1(g_\tau)=1$.
	By Theorem \ref{thm:relative-degree-formula} we have that $\lambda_1 (f_\tau) = \lambda_1 (f_\eta)$ and $f_\eta$ is a twisted endomorphism of the general fiber of $q$ which is an affine variety of lower dimension so the result follows by induction using Lemma \ref{lemme:induction-fibratino}.

		\textbf{$V / G_V^0 = 0$}.-- Since we have translated $V$ so that $0\in V$, this means that $V=G_V^0$.
		But the image of the quasi-Albanese map generates $Q=\QAlb(X_0)$ as a quasi-abelian variety.
		Hence $G_V^0=Q$, so $V=Q$ and $q:X_0\to Q$ is dominant.
	Since $\overline \kappa (X_0) = - \infty$ we cannot have that $q : X_0 \rightarrow Q$ is generically finite, otherwise we would get that $\overline \kappa (X_0) \geq \overline \kappa (Q) = 0$ and this is a contradiction.
	Thus $r:=\dim Q<d$, so $r\leq d-1$.
	Let $\eta$ be the generic point of $Q$, and let $f_\eta$ be the induced twisted endomorphism on the generic fiber of $q$.
	By the mixed degree formula,
	\[
		\lambda_1(f_\tau)=\max\{\lambda_1(f_\eta),\lambda_1(g_\tau)\}.
	\]
	If the maximum is attained by $\lambda_1(f_\eta)$, the result follows by induction using Lemma \ref{lemme:induction-fibratino}.
	If the maximum is attained by $\lambda_1(g_\tau)$, then Proposition \ref{prop:absolute-dyn-degree-quasi-abelian} gives degree at most $r^2\leq (d-1)^2$.
	This proves the result.
\end{proof}

We believe that the optimal bound for the degree of the algebraic number $\lambda_1$ is the dimension of $X_0$.
We see that the only case this bound is not optimal is if our dynamical system fibers through an abelian variety of high enough dimension with non trivial dynamics.

\subsection*{Acknowledgements}
Junyi Xie is supported by the NSFC Grant No. 12271007.
Junyi Xie thanks the AI assistant Xiaozhua for assistance during the writing process, and Liang Xiao and Shuai Chen for providing the AI-agent environment and for resolving related technical issues.
Marc Abboud acknowledges support by the Swiss National Science Foundation Grant “Birational transformations of higher dimensional varieties” 200020-214999.

\bibliographystyle{alpha}
\bibliography{biblio}
\end{document}